\tikzstyle{vertex}=[circle,draw, top color=gray!5, 
\tikzstyle{vertex4}=[circle,draw, fill=green, minimum size=16pt, scale=1, inner sep=0.5pt]
\tikzstyle{vertex_nice}=[circle,draw, fill=cyan, minimum size=16pt, scale=1, inner sep=0.5pt]
\tikzstyle{maycoincide}=[circle,draw, fill=black, minimum size=16pt, scale=1,
\tikzstyle{maycoincide1}=[circle,draw, fill=blue, minimum size=16pt, scale=1,
\tikzstyle{maycoincide2}=[circle,draw, fill=red, minimum size=16pt, scale=1,
\tikzstyle{arc}=[->, very thick]
\tikzstyle{arc_chelou}=[->, very thick,red]
\tikzstyle{edge}=[very thick]
\tikzstyle{edge_chelou}=[very thick, red]
\tikzstyle{chelou}=[->, very thick, red]
\tikzstyle{nonchelou}=[->, very thick, blue]
\newtheorem{theorem}{Theorem}[section]
\newtheorem*{theorem*}{Theorem}
\newtheorem{corollary}[theorem]{Corollary}
\newtheorem{proposition}[theorem]{Proposition}
\newtheorem{lemma}[theorem]{Lemma}
\newtheorem{claim}{Claim}%[theorem]
\theoremstyle{definition}
\newtheorem{conjecture}[theorem]{Conjecture}
\newtheorem{definition}[theorem]{Definition}
\newenvironment{proofclaim}[1][]{\par\noindent {\it Proof of claim}.\ }{\hfill$\lozenge$\par\vspace{11pt}}
\newcommand{\pth}[1]{\left(#1\right )}
\newcommand{\dic}{\vec{\chi}}
\newcommand{\bid}{\overleftrightarrow}
\newcommand{\bigO}{\mathcal{O}}
\newcommand{\wheelodd}{\overrightarrow{\mathcal{W}}_3}
\newcommand{\oddcycle}{\vec{\mathcal{C}}_{odd}}
\renewcommand{\phi}{\varphi}
\renewcommand{\epsilon}{\varepsilon}
\newcommand{\Vquatre}{\mbox{$V_4^s$}}
\title{On the minimum number of arcs in $k$-dicritical oriented graphs}
\author[1]{Pierre Aboulker}
\author[2]{Thomas Bellitto}
\author[3]{Fr\'ed\'eric Havet}
\author[1]{Cl\'ement Rambaud}
\affil[1]{DIENS, \'Ecole normale sup\'erieure, CNRS, PSL University, Paris, France.}
\affil[2]{Sorbonne Université, CNRS, LIP6, Paris, France}
\affil[3]{CNRS, Universit\'e C\^ote d'Azur, I3S, INRIA, Sophia Antipolis, France}
\begin{document}

\maketitle

\begin{abstract} 
The dichromatic number $\dic(D)$ of a digraph $D$ is the least integer $k$ such that  $D$ can be partitioned into $k$ directed acyclic digraphs. A digraph is $k$-dicritical  if $\dic(D) = k$ and each proper subgraph $D'$ of $D$ satisfies $\dic(D') \leq k-1$. An oriented graph is a digraph with no directed cycle of length $2$. 
For integers $k$ and $n$, we denote by $o_k(n)$ the minimum number of edges of a $k$-critical oriented graph on $n$ vertices (with the convention $o_k(n)=+\infty$ if there is no $k$-dicritical
oriented graph of order $n$). 
The main result of this paper is a proof that $o_3(n) \geq \frac{7n+2}{3}$ together with a construction witnessing that $o_3(n) \leq \left \lceil  \frac{5n}{2} \right \rceil$ for all $n \geq 12$.  
We also give a construction showing that for all sufficiently large $n$ and all $k\geq 3$, $o_k(n) < (2k-3)n$, disproving a conjecture of Hoshino and  Kawarabayashi. 
Finally, we prove that, for all $k\geq 2$, $o_k(n) \geq \pth{ k - \frac{3}{4}-\frac{1}{4k-6}} n + \frac{3}{4(2k-3)}$, improving the previous best known lower bound of Bang-Jensen, Bellitto, Schweser and Stiebitz. 

\end{abstract}

\section{Introduction}
%%%%%%%%%%%%%%%%%%%%%%%
Let $G$ be a graph.

We denote by $V(G)$ its vertex set and by $E(G)$ its edge set, and we set $n(G)=|V(G)|$ and $m(G)=|E(G)|$. 
A {\bf subgraph} of $G$ is a graph $G'$ such that $V(G') \subseteq V(G)$
and $E(G')\subseteq E(G)$. A {\bf proper subgraph} of $G$ is a subgraph $G'$ of $G$ such that $V(G') \neq V(G)$ or $E(G')\neq E(G)$.

A {\bf proper $k$-colouring} of a graph $G$ is a partition of the vertex set of $G$ into $k$ disjoint {\bf stable sets} (i.e. sets of pairwise non-adjacent vertices).  
%Alternatively, it is a $k$-colouring $\phi:V(G) \rightarrow [k]$ such that $\phi(u) \neq \phi(v)$ for every edge $uv$.
A graph is {\bf $k$-colourable} if it has a $k$-colouring.  
The  {\bf chromatic number} of a graph $G$, denoted by $\chi(G)$, is the least integer $k$ such that $G$ is $k$-colourable. 
The chromatic number is monotone in the sense that if $G'$ is a subgraph of $G$, then $\chi(G')\leq \chi(G)$.

A graph $G$ is said to be {\bf critical} and {\bf $k$-critical} if every proper subgraph $G'$ of $G$ satisfies $\chi(G') < \chi(G) =k$. Clearly, every graph contains a critical subdigraph with the same chromatic number. Hence many problems concerning the chromatic number can be reduced to critical graphs. The study of critical graphs was initiated by G. A. Dirac in the 1950s and has attracted a lot of attention since then. 
Dirac~\cite{Dirac52,Dirac57, Dirac67} established the basic properties of critical graphs and started to investigate the minimum number of edges possible in a $k$-critical graph of order $n$, denoted by $g_k(n)$. It is easy to show that the minimum degree of a $k$-critical graph is at least $k-1$. Consequently, $g_k(n)\geq \frac{1}{2}(k-1)n$ for all $n\geq k$. Brooks’ Theorem \cite{Bro41} implies that $g_k(n) = \frac{1}{2}(k-1)n$ if and only if $n =k$ or $k=3$ and n is odd. Dirac~\cite{Dirac57} (see also [7]) proved that $g_k(n) \geq \frac{1}{2}(k-1)n + \frac{k -3}{2}$ 
for $k\geq 4$ and $n\geq k+ 2$. Note that there is no $k$-critical graph of order $k+1$. 
Gallai~\cite{Gal63a, Gal63b} proved that the complement of a $k$-critical graph of order at most $2k-2$ is disconnected, and deduced
 the exact values of $g_k(n)$ when $k\geq 4$ and $k+2\leq n \leq 2k-1$.
 Kostochka and Stiebitz~\cite{KoSt99} proved $g_k(n) \geq \frac{1}{2}(k-1)n + k -3$. 
 In 2014, using the potential method, Kostochka and Yancey~\cite{kostochka2014ore} proved that
 $g_4(n) = \left\lceil \frac{5n-2}{3}\right\rceil$ for all $n]\geq 4$, $n\neq 5$.
 Furthermore, they~\cite{KoYa14} established the lower bound $g_k(n) \geq \left\lceil  \frac{(k+1)(k-2)n -k(k-3)}{2k-2}\right\rceil$ 
 % $g_k(n) \geq \left\lceil  \frac{(k+1)(k-2)n -k(k-3)}{2k-2}\right\rceil = \frac{1}{2}(k-1 + 1- \frac{2}{k-1})n + \frac{k(k-3)}{2k-2}$ 
 which is sharp when $k\geq 4$ and $n \equiv 1 \mod (k-1)$.
 In particular, they proved that $g_5(n) \geq \frac{9}{4} n - \frac{5}{4}$ with equality when $n \equiv 1 \mod 4$.

 The {\bf girth} of a graph $G$ is the minimum length of a cycle in $G$ or $+\infty$ if $G$ is acyclic.
 Also using the potential method, Liu and Postle~\cite{liu2017minimum} showed that the minimum number of edges of a $4$-critical graphs is larger than $g_4(n)$ if we impose the graph to have girth $5$:
 If $G$ is a $4$-critical graph of girth at least $5$, then $m(G) \geq \frac{ 5 n(G)+2}{3}$.
 Likewise, Postle~\cite{postle2017minimum} showed that every $5$-critical graph with girth at least $4$ satisfies $m(G) \geq (\frac{9}{4} +\epsilon) n(G) - \frac{5}{4}$ for $\epsilon=\frac{1}{84}$.

 \medskip
 
 Let $D$ be a digraph.

We denote by $VD)$ its vertex set and by $A(D)$ its arc set, and we set $n(D)=|V(D)|$ and $m(D)=|A(D)|$.
A {\bf subdigraph} of $D$ is a digraph $D'$ such that $V(D') \subseteq V(D)$
and $A(D')\subseteq A(D)$. A {\bf proper subdigraph} of $D$ is a subdigraph $D'$ of $D$ such that $V(D') \neq V(D)$ or $A(D')\neq A(D)$.

A {\bf $k$-dicolouring} of a digraph is a partition of its vertex set into $k$ subsets inducing acyclic subdigraphs. Alternatively, it is a $k$-colouring $\phi:V(D) \rightarrow [k]$ such that $D[\phi^-(c)]$ is acyclic for every colour $c\in [k]$.
A digraph is {\bf $k$-dicolourable} if it has a $k$-dicolouring.  
The {\bf dichromatic number} of a digraph $D$, denoted by $\vec{\chi}(D)$, is the least integer $k$ such that $D$ is $k$-dicolourable. 
This notion was introduced and investigated by Neumann-Lara~\cite{Neu82}. 
It can be seen as a generalization of the chromatic number. Indeed, for a graph $G$, the {\bf bidirected graph} $\bid{G}$ is the digraph obtained from $G$ by replacing each edge by a {\bf digon}, that is a pair of oppositely directed arcs between the same end-vertices. Observe that $\chi(G) = \vec{\chi}(\bid{G})$ since any two adjacent vertices in $\bid{G}$ induce a directed cycle of length $2$. A \textbf{tournament} is an orientation of a complete graph. 

Similarly to the chromatic number, the dichromatic number is monotone: if $D'$ is a subdigraph of $D$, then $\dic(D')\leq \dic(D)$. A digraph $D$ is said to be {\bf dicritical} and {\bf $k$-dicritical} if every proper subdigraph $D'$ of $D$ satisfies $\dic(D') < \dic(D) =k$. Clearly, every digraph contains a dicritical subdigraph with the same dichromatic number. 
%Hence several problems concerning the dichromatic number can be reduced to critical digraphs. 
Dicritical digraphs were introduced in Neumann-Lara's seminal paper~\cite{Neu82}.
Clearly, $\dic(D) =1$  if and only if $D$ is acyclic. As a consequence, a digraph $D$ is $2$-dicritical if and only if $D$ is a directed cycle. Bokal, Fijav\v{z}, Juvan, Kayll and Mohar~\cite{Bokal20003} proved that deciding whether a given digraph is $k$-dicolourable is NP-complete for all $k\geq 2$. Hence a characterization of the class of $k$-dicritical digraphs with fixed $k\geq 3$ is unlikely. However, it might be possible to derive bounds on the minimum number of arcs in a $k$-dicritical digraph.
Kostochka and Stiebitz~\cite{kostochka2020minimum} deduced the following from a Brooks-type result for digraphs due to Mohar~\cite{Mohar10}: if $D$ is a $3$-dicritical digraph of order $n\geq 3$, then $m(D)\geq 2n$ and equality holds if and only if n is odd and D is a bidirected odd cycle.

For integers $k$ and $n$, let $d_k(n)$ denote the minimum number of arcs in a $k$-dicritical digraph of order $n$. 
By the above observations, $d_2(n) = n$ for all $n\geq 2$, and $d_3(n) \geq 2n$ for all possible $n$, and equality holds if and only if $n$ is odd and $n \geq 3$.

If $G$ is a $k$-critical graph, then $\bid{G}$ is $k$-dicritical, so
$d_k(n) \leq 2g_k(n)$ provided that there is a $k$-critical graph of order $n$. It is known that, for $k\geq 4$, there is a $k$-critical graph of order $n$ if and only if $n \geq k$ and $n\neq k+1$. Moreover, there is a $k$-dicritical digraph of order $k+1$ for all $k\geq 3$ : take the disjoint union of a directed $3$-cycle $\vec{C}_3$ and a bidirected complete graph on $k-2$ vertices $\bid{K}_{k-2}$, and add a digon between each vertex of $3$-cycle $\vec{C}_3$ and each vertex of $\bid{K}_{k-2}$.

% \CR{Petite remarque pas tres importante (je compte pas la laisser c'est juste pour la curiosite): il me semble que c'est l'unique $k$-dicritique d'ordre $k+1$.}
% {\color{violet}
% If $D$ is $k$-dicritical with $n(D)=k+1$, then every acyclic subdigraph has size at
% most $2$ (otherwise color three vertices with a same colors, and the others with
% $k-2$ colors). Moreover, there is no two disjoints acyclic sets of size $2$.
% In addition, the set of simple arcs (arcs in no digon) must contains a cycle
% (otherwise remove such an edge, which exists as $D$ is not bidirected, and the
% obtained $k-1$-dicolouring is a $k-1$-dicolouring of $D$, contradiction). 
% If this cycle has size larger than $3$, then we can find two disjoints simple arcs, a
% contradiction. Hence the set of simple arcs is a triangle $(x,y,z,x)$. 
% Moreover, if there exists a pair $u,v \in V(D)$ which are not adjacent, 
% then this pair together with an arc in $\{xy,yz,zx\}$ disjoint from $u,v$
% forms a pair of disjoint acyclic sets, each of size at least $2$, contradiction.
% Hence every pair of vertices is linked via a digon, except $\{x,y\},
% \ \{y,z\}$ and $\{z,x\}$. In other words $D$ is a $\bid{K}_{k+1}$ minus
% a directed triangle.}
% \CR{Question: est-ce qu'on connait les $k$-critique d'ordre $k+2$ (dirige ou pas).
% Plus generalement, est-ce qu'on a un algo polynomial pour determiner
% si un graphe est $n-r$-colorable, pour $r$ fixe?}

Kostochka and Stiebitz proved that if $D$ is a $4$-dicritical digraph then $m(D) \geq \frac{10}{3} n(D) - \frac{4}{3}$. This bound is sharp if $n(D) \equiv 1 \mod 3$ or $n(D) \equiv 2 \mod 3$ and $n \neq 5$.
They also proposed the following conjecture.
\begin{conjecture}[Kostochka and Stiebitz~\cite{kostochka2020minimum}] If $D$ is a $k$-dicritical digraph of order $n$ with $k\geq 4$ and $n\geq k$, then $m(D)\geq 2g_k(n)$ and equality implies that $D$ is a bidirected $k$-critical graph. As a consequence, $d_k(n) = 2g_k(n)$ when $n \geq k$ and $n \neq k+1$.
\end{conjecture}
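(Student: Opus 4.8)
The plan is to adapt to digraphs the potential method used by Kostochka and Yancey to bound $g_k(n)$. For a subdigraph $R$ of a digraph, define the \emph{potential}
\[
  \rho_k(R) \;=\; (k+1)(k-2)\,n(R) \;-\; (k-1)\,m(R).
\]
Since $2g_k(n) \ge \frac{(k+1)(k-2)n - k(k-3)}{k-1}$ by the Kostochka--Yancey bound, it suffices to show that every $k$-dicritical digraph $D$ with $n(D) \ge k$ satisfies $\rho_k(D) \le k(k-3)$, and that equality forces $D$ to be bidirected; matching $2g_k(n)$ exactly for all residues of $n$ modulo $k-1$ would, as in the refinements that pinned down $g_4(n)$, additionally require sharpening the potential on boundary cases, but the core difficulty is already present for the clean bound. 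So I would suppose $D$ is a counterexample with $n(D)$ minimum and, subject to that, $m(D)$ minimum. The classical fact that every vertex $v$ of a $k$-dicritical digraph has $\min(d^+(v),d^-(v)) \ge k-1$, hence $d(v) \ge 2(k-1)$, only gives $\rho_k(D) \le (k-3)\,n(D)$, which fails once $n(D) > k$; the entire point is to save the surplus $(k-3)(n(D)-k)$.

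The heart of the argument is a list of structural lemmas about the minimal counterexample $D$, all proved from dicriticality. The key one asserts that $D$ has no proper subdigraph $R$ with $3 \le n(R) < n(D)$ and $\rho_k(R)$ below a suitable threshold (morally $k(k-3)$): otherwise, take a $(k-1)$-dicolouring of an auxiliary digraph obtained from $D$ by deleting or contracting most of $R$ --- it exists by dicriticality and the minimality of $D$ --- and extend it across $R$, using that a digraph of small potential admits $(k-1)$-dicolourings realising many patterns on a small boundary set, thereby contradicting $\dic(D)=k$. A Gallai-type analysis then shows that the set $T$ of \emph{tight} vertices (those with $d(v)=2(k-1)$) induces a very restricted subdigraph --- a ``directed Gallai forest'' --- and in particular no component of $D[T]$ is large or too richly joined to $V(D)\setminus T$.

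With these lemmas, I would run a discharging argument. Give each vertex the charge $\mu(v) = d(v) - \tfrac{2(k+1)(k-2)}{k-1}$, so that $\sum_v \mu(v) = -\tfrac{2}{k-1}\,\rho_k(D)$, and hence $\rho_k(D) \le k(k-3)$ is equivalent to $\sum_v \mu(v) \ge -\tfrac{2k(k-3)}{k-1}$. Tight vertices carry slightly negative charge, every vertex of degree at least $2k-1$ carries positive charge; using the structural lemmas, move charge from high-degree vertices and from the bulk of $D$ onto the directed Gallai forest $D[T]$ so that, after discharging, the total stays above $-\tfrac{2k(k-3)}{k-1}$, contradicting the choice of $D$. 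Finally, tracking the slack in the discharging shows that equality forces $d^+(v)=d^-(v)=k-1$ for every $v$ with all arcs in digons, so $D=\bid{G}$ for a $k$-critical graph $G$ of minimum degree $k-1$; then $m(D)=2m(G)\ge 2g_k(n)$, with equality exactly when $G$ attains $g_k(n)$, as required.

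The main obstacle will be the extension step inside the ``no low-potential subdigraph'' lemma. In the undirected setting one controls extensions with Kempe chains, which are local; in a digraph a colour class need only be acyclic, not independent, so whether a prescribed boundary colouring of $R$ extends hinges on monochromatic directed paths that may run through all of $D$. Making precise the implication ``small potential $\Rightarrow$ flexible dicolourings'' --- and in particular preventing long monochromatic directed paths through $R$ from closing into directed cycles --- calls for a directed analogue of Kempe exchanges (rerouting a vertex out of an acyclic class), and controlling those is where the real work lies; the digons, which must all be concentrated into the extremal configuration, add a further layer of bookkeeping. An inductive route via the known compositions that build $k$-dicritical digraphs seems unlikely to deliver the exact bound, so I would commit to the potential method.
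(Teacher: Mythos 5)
The statement you are asked about is not a theorem of this paper: it is an open conjecture of Kostochka and Stiebitz, which the paper merely records. There is therefore no proof in the paper to compare against, and your proposal has to stand on its own as a complete argument --- which it does not. What you have written is a research programme, not a proof: the two steps that carry all the weight (the ``no low-potential subdigraph'' lemma, which requires showing that a small-potential subset $R$ admits $(k-1)$-dicolourings flexible enough to be extended after contracting colour classes, and the discharging that recovers the surplus $(k-3)(n(D)-k)$ over the trivial degree bound) are described but not executed, and you yourself flag the extension step as ``where the real work lies.'' The paper carries out exactly this kind of argument, but only for $k=3$ (Theorem~\ref{thm:main_potential}), and even there it costs an entire catalogue of exceptional configurations (purses, handcuffs, baskets, bags, turtles), a delicate contraction lemma (Lemma~\ref{lemma:potential_after_contraction}), and a long discharging analysis; nothing in your sketch indicates how the analogous case analysis would be controlled for general $k$.

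There is also a quantitative gap in your reduction. You propose to prove $\rho_k(D)\le k(k-3)$ for the potential $\rho_k(R)=(k+1)(k-2)n(R)-(k-1)m(R)$, which would yield
\[
m(D)\;\ge\;\frac{(k+1)(k-2)\,n - k(k-3)}{k-1}\;=\;2\left\lceil\frac{(k+1)(k-2)n-k(k-3)}{2k-2}\right\rceil\!\Big/\,1\quad\text{only up to rounding,}
\]
i.e.\ twice the Kostochka--Yancey \emph{lower bound} on $g_k(n)$. Since that lower bound equals $g_k(n)$ only when $n\equiv 1 \pmod{k-1}$, your target inequality does not imply $m(D)\ge 2g_k(n)$ for the other residues, and the conjecture demands the exact value $2g_k(n)$ together with a characterisation of equality. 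You acknowledge this (``sharpening the potential on boundary cases'') but then set it aside; it cannot be set aside, because for general $n$ even the exact value of $g_k(n)$ is not known in closed form, so the conjecture is not equivalent to a single clean potential inequality. In short: the approach is the natural one and is consistent with how the $k=3$ and $k=4$ cases were handled, but as written it neither closes the two central technical steps nor targets the right inequality for all $n$.
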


 Kostochka and Stiebitz~\cite{kostochka_number_2000} showed that if a $k$-critical $G$
 is triangle-free (that is has no cycle of length $3$), then $m(G)/n(G) \geq k - o(k)$
 as $k \to + \infty$. Informally, this means that
the minimum average degree of a $k$-critical triangle-free graph is (asymptotically) twice the minimum average degree of a $k$-critical graph.
Similarly to this undirected case, it is expected that the  minimum number of arcs in a $k$-dicritical digraph
of order $n$ is larger than $d_k(n)$ if we impose this digraph to have no short directed cycles, and in particular if the digraph is an {\bf oriented graph}, that is a digraph with no digon.
Let $o_k(n)$ denote the minimum number of arcs in a $k$-dicritical oriented graph of order $n$ (with the convention $o_k(n)=+\infty$ if there is no $k$-dicritical
oriented graph of order $n$). 
Clearly $o_k(n) \geq d_k(n)$.

\begin{conjecture}[Kostochka and Stiebitz~\cite{kostochka2020minimum}] 
There is a constant $c>1$ such that $o_k(n) > c \cdot d_k(n)$ for $k\geq 3$ and $n$ sufficiently large.
\end{conjecture}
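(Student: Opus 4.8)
The plan is to treat $k=3$ and $k\ge4$ by different means. For $k=3$ the statement follows from the results above: the main theorem gives $o_3(n)\ge\frac{7n+2}{3}$, while $d_3(n)\le 2n+O(1)$ — for odd $n$ a bidirected odd cycle is extremal, and for even $n$ one takes, for instance, the directed Haj\'os join of a bidirected odd cycle with the $4$-vertex $3$-dicritical digraph obtained from $\vec C_3$ by adding one vertex joined to all of $\vec C_3$ by digons. Hence $o_3(n)/d_3(n)\ge\frac76-o(1)$, so any fixed $c$ with $1<c<\frac76$ works for $k=3$ once $n$ is large. The substance is therefore $k\ge4$, where the gap is genuine: the best available lower bound $o_k(n)\ge(k-\tfrac34-\tfrac1{4k-6})n+O(1)$ has leading coefficient $k+O(1)$, the same as the upper bound $d_k(n)\le 2g_k(n)=(k-\tfrac2{k-1})n+O(k^2)$ coming from bidirected $k$-critical graphs via Kostochka–Yancey; so a new lower bound on $o_k(n)$ is required whose leading coefficient exceeds $d_k(n)/n$ by a multiplicative factor bounded away from $1$, uniformly in $k$.

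For $k\ge4$ the plan is to carry out the directed analogue of Kostochka–Stiebitz's proof that triangle-free $k$-critical graphs satisfy $m/n\ge k-o(k)$, with ``digon-free'' in the role of ``triangle-free''. The steps: (1) in a $k$-dicritical oriented graph $D$ every vertex has $d^+(v),d^-(v)\ge k-1$, and since there are no digons its in- and out-neighbourhoods are disjoint, so $d(v)\ge 2k-2$; call $v$ \emph{low} if equality holds. (2) Establish a ``directed Gallai-tree'' structure theorem for the low vertices and, crucially, use digon-freeness (and, one hopes, the absence of short directed cycles) exactly as triangle-freeness is used undirectedly: to exclude the dense local configurations around low vertices that would otherwise permit a $(k-1)$-dicolouring, thereby forcing the neighbourhoods of low vertices to be ``spread out''. (3) Bound the number of low vertices from above by a counting of forbidden identifications — two non-adjacent low vertices with largely overlapping neighbourhoods can be merged without creating a digon, contradicting criticality, so neighbourhoods may overlap only a little and the low vertices must collectively meet $\Omega(kn)$ arcs. (4) Feed this into a potential/discharging argument with $p(W)=a\,|W|-b\,|A(D[W])|$, choosing $a,b$ so that the local inequality at low vertices consumes the neighbourhood-spreading from step (3); optimizing should yield $m(D)\ge(1+\delta)\,k\,n-O(k^2)$ for an absolute $\delta>0$, which with $d_k(n)\le(k-\tfrac2{k-1})n+O(k^2)$ and $n$ large (depending on $k$) gives $o_k(n)>c\cdot d_k(n)$ with $c=1+\delta-o(1)$, uniform in $k$.

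The main obstacle is precisely the jump from an $O(1)$ additive improvement to an $\Omega(k)$ improvement in the leading coefficient (steps (2)–(3)). Every current exploitation of digon-freeness in the potential method — including in the proof of $o_k(n)\ge(k-\tfrac34-\tfrac1{4k-6})n$ — yields only a bounded additive gain, because ``$v$'s two neighbourhoods are disjoint'' is already absorbed into the degree bound $d(v)\ge 2k-2$; what is needed is a genuinely global consequence of having no digons, analogous to the Ramsey-type fact that a triangle-free graph of chromatic number $k$ has $\Omega(k^2\log k)$ vertices. A promising concrete target is a bound on the maximum size of an induced acyclic subdigraph of $D$ (the ``directed independence number''): if this is $o(n)$ one already obtains many arcs, and criticality should let one push in the converse direction as well. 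Finally, making $\delta$ — and the threshold ``$n$ sufficiently large'' — uniform over all $k\ge4$, rather than letting the ratio decay as $k\to\infty$ (the failure mode of the known bounds), is the delicate point the discharging weights must be designed around.
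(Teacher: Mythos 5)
The statement you are addressing is not proved in the paper at all: it appears there only as an open conjecture of Kostochka and Stiebitz, and the paper's own results --- the lower bound $o_k(n) \geq \pth{k-\frac{3}{4}-\frac{1}{4k-6}}n + \frac{3}{4(2k-3)}$ and the upper bound $o_k(n) < (2k-3)n$ --- deliberately leave it open for $k \geq 4$. Your $k=3$ case is essentially sound and is exactly what the paper's results yield: Theorem~\ref{thm:main_thm_oriented_critical} gives $o_3(n) \geq \frac{7n+2}{3}$, while $d_3(n)=2n$ for odd $n$ and $d_3(n) \leq 2n+2$ for even $n$ (for instance via the directed Haj\'os join, as in~\cite{bang2019haj}, of a bidirected odd cycle with the $4$-vertex $3$-dicritical digraph described in the introduction), so any fixed $1<c<\frac{7}{6}$ works for $k=3$ once $n$ is large.

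However, the conjecture asserts the inequality for every $k\geq 3$ (with a single constant $c$), and for $k\geq 4$ your text is a research programme rather than a proof. As you yourself concede, steps (2)--(3) --- converting digon-freeness into a gain in the leading coefficient that is a multiplicative factor above $d_k(n)/n$, instead of an additive $O(1)$ --- are missing, and they are precisely the content of the conjecture: since $d_k(n) \leq 2g_k(n) = \pth{k-\frac{2}{k-1}}n + O(k^2)$ while every known lower bound on $o_k(n)$, including the paper's, has leading coefficient $k-\Theta(1)$, the ratio certified by current methods tends to $1$ as $k\to\infty$, and even for a fixed $k\geq 4$ the known lower bound on $o_k(n)$ sits below the known upper bound on $d_k(n)$, so no $c>1$ is established. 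The proposed merging argument for low vertices, the hoped-for spread of their neighbourhoods, and the bound on induced acyclic subdigraphs are plausible directions but are nowhere carried out, and no lemma in the paper can be invoked in their place. The proposal therefore settles only the $k=3$ instance and leaves the conjecture unproved.
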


We now describe the results that we improve.  
Hoshino and Kawarabayashi~\cite{HoKa15} observed that, using iteratively an analogue of Haj\'os construction for oriented graphs, for each $k\geq 3$, one can construct an infinite family of sparse $k$-dicritical oriented graphs $D$ such that $m(D) < \frac{1}{2}(k^2-k+1)n(D)$.
Consequently, $o_k(n) < \frac{1}{2}(k^2-k+1) n$ for infinitely many values of $n$.
When $k=3$, a better result can be obtained using
the unique $3$-dicritical oriented graph with $20$ arcs. It yields $3$-dicritical oriented graphs with $n$ vertices and $\frac{19n}{6}+1$ arcs for all $n \equiv 1 \mod 6$.
Consequently, $o_3(n) \le \frac{19n}{6}+1$ for all $n \equiv 1 \mod 6$. 
In~\cite{bang2019haj}, it is proved that $o_k(n) \geq (k- \frac{5}{6} - \frac{1}{6(3k-2)})n$.
\medskip 

\subsection*{Our results}
In this paper, we improve both the lower and the upper bound on
$o_k(n)$ for $k \geq 3$ and in particular $o_3(n)$.
First, in Subsection~\ref{sec:lower3}, we show that $o_3(n) \leq \lceil\frac{5}{2}n\rceil$ for every $n\geq 12$, by describing a $3$-dicritical oriented graph with $n$ vertices and  $\lceil\frac{5}{2}n\rceil$ arcs for all $n \geq 12$.
Then, in Subsection~\ref{sec:lowerk}, we  extend this construction to prove that for
every $k \geq 3$, $o_k(n) < (2k-3)n$ for every $n$ large enough.
This disproves a conjecture of Hoshino and Kawarabayashi~\cite{HoKa15}
stating that any dicritical oriented
graph $D$ satisfies $\frac{m(D)}{n(D)} \geq \frac{k^2}{2}- \bigO(k)$. 
For the lower bounds, we prove 
$o_k(n) \geq (k-\frac{3}{4}-\frac{1}{4k-6})n+\frac{3}{2(4k-6)}$ for every $k \geq 2$
in Section~\ref{sec:lower_ok(n)}
and in the case $k=3$ we show $o_3(n) \geq \frac{7n+2}{3}$, which constitutes our main
theorem.

%The main result of the paper is the following:
\begin{theorem}\label{thm:main_thm_oriented_critical}
If $D$ is a $3$-dicritical oriented graph, then 
\[
m(D) \geq \frac{7n(D)+2}{3} .
\]
\end{theorem}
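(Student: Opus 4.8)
The plan is to use the potential method, which has proven extremely effective for analogous undirected results (Kostochka–Yancey, Liu–Postle, Postle). Define, for a subdigraph $H$ of a $3$-dicritical oriented graph $D$, a potential function of the form $\rho(H) = a\cdot n(H) - m(H)$ for a suitable constant $a$ (here the target bound suggests $a = \tfrac{7}{3}$, so that $\rho(D) = \tfrac{7}{3}n(D) - m(D)$ and we must show $\rho(D) \le -\tfrac{2}{3}$). The first step is to record the basic structural facts about $3$-dicritical oriented graphs: minimum in-degree and out-degree at least $2$ (so minimum total degree at least $4$), no vertex of total degree $4$ of a bad type, and—crucially—no directed $2$-cycles, so every pair of adjacent vertices contributes only one arc. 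I would also set up the standard dicritical tool: if $D$ is $k$-dicritical and $H \subsetneq D$, then $H$ has a $(k-1)$-dicolouring, and one can reason about how such colourings of $H$ extend (or fail to extend) to $D$ via the arcs between $H$ and $D - V(H)$.

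The heart of the argument is to prove that every proper subdigraph $H$ of $D$ with $|V(H)| \ge 1$ (or perhaps $\ge 2$) satisfies a lower bound $\rho(H) \ge b$ for an appropriate constant $b$ strictly larger than the target value $-\tfrac{2}{3}$—typically $b = a = \tfrac{7}{3}$ for small $H$, with a short list of small exceptional configurations handled by hand. This is established by a minimal-counterexample / discharging-style induction on $|V(H)|$: assuming $H$ is a smallest subdigraph violating the potential bound, one finds a vertex or a small structure to delete, applies the inductive hypothesis to the smaller digraph, and uses the degree constraints together with the $2$-colourability of proper subdigraphs to derive a contradiction. For the global step, one takes a $2$-dicolouring of a maximal proper subdigraph and argues that the arcs returning to the uncoloured part force enough structure (an "identification"/Hajós-type merging argument, or an analysis of a minimal non-extendable set) to pin down $\rho(D)$.

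The main obstacle I expect is the analysis of the \emph{dense} or \emph{obstruction} configurations—the digraph analogues of the Kostochka–Yancey "cloud/hat" structures—where the naive induction fails and one must instead delete a carefully chosen set $S$, take a $2$-dicolouring of $D - S$, and reconstruct a $2$-dicolouring of $D$ using the acyclicity of the colour classes plus a clever recolouring along directed paths. Controlling acyclicity under such recolourings is genuinely harder in the directed setting than controlling independence in the undirected setting, because "being acyclic" is not a local/hereditary-on-edges condition in the convenient way stable sets are. I would expect to need a handful of ad hoc lemmas isolating exactly which low-potential subdigraphs can appear, and the bookkeeping of which of these interact with the digon-freeness hypothesis (which is what buys the improvement from $2n$ up to $\tfrac{7}{3}n$) will be the delicate part. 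A final clean-up step handles the smallest orders $n$ separately, where the asymptotic bound might otherwise be violated.
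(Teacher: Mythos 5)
Your overall strategy is the right one and matches the paper's in outline: a potential function normalised so that the target reads $\rho(D)\le -2$ (the paper uses $7n-3m$ rather than $\tfrac{7}{3}n-m$, which is only a rescaling), induction on a minimal counterexample, a Haj\'os-type identification of the two colour classes of a $2$-dicoloured proper induced subdigraph, a bound of the form ``every proper subset has potential at least $4$,'' a catalogue of low-potential obstruction configurations, and a discharging phase. However, there is a genuine gap in the plan as written: your potential has no term accounting for digons, and you propose to run the induction within the class of oriented graphs. This cannot work. When you contract the two colour classes of $D[R]$ into two vertices joined by a digon, the resulting digraph $D'$ and the $3$-dicritical subdigraph $\tilde D\subseteq D'$ to which you must apply the inductive hypothesis are \emph{not} oriented graphs --- they contain digons by construction, and further digons can be created by the contraction. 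So the inductive statement must be proved for all $3$-dicritical digraphs, and for those the bound $m\ge \tfrac{7n+2}{3}$ is false (bidirected odd cycles have $m=2n$). The paper resolves this by proving a stronger theorem for arbitrary $3$-dicritical digraphs with the potential $\rho(R)=7|R|-3m(D[R])-2\pi(D[R])$, where $\pi$ is the maximum number of disjoint digons in $D[R]$, and by identifying exactly two exceptional families (bidirected odd cycles, with potential $1$, and odd $3$-wheels, with potential $-1$). Without some analogue of the $-2\pi$ correction and the classification of these exceptions, the key inequality $\rho_D(R)\ge 4$ for proper subsets cannot be established and the induction does not close.

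Beyond that structural issue, the proposal is an outline rather than a proof: none of the obstruction configurations is identified (the paper needs five specific families --- purses, handcuffs, baskets, bags, turtles --- each with an explicit potential computation), no discharging rules are given, and the terminal step (partitioning $V(D)$ by degree type and exhibiting an explicit $2$-dicolouring of the supposed counterexample) is absent. These are not routine details; they constitute the bulk of the argument.
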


To prove Theorem~\ref{thm:main_thm_oriented_critical} we use the so-called
potential method introduced by Kostochka and Yancey 
\cite{kostochka2014ore, kostochka2020minimum}.
We also use some ideas introduced by Liu and Postle \cite{postle2017minimum, liu2017minimum}
who were interested in the minimum number of edges in triangle-free critical graphs.

We  actually prove a more general result than Theorem~\ref{thm:main_thm_oriented_critical}. It holds for every $3$-dicritical digraph and takes into account the number of digons.
For any digraph $D$, let $B(D)$ be the (undirected) graph with vertices the vertices of $D$
that are incident to at least one digon, and with edges all pairs of vertices
linked by a digon in $D$. 
Recall that a {\bf matching} in a graph is a set of edges without common end-vertices.
Let $\pi(D)$ be the size of a maximum matching of $B(D)$.
In particular, $\pi(D)=0$ if and only if $D$ is an oriented graph.

\begin{definition}
If $D$ is a digraph and $R \subseteq V(D)$, the {\bf potential}
of $R$ in $D$ is
\[
\rho_D(R) = 7|R| - 3m(D[R]) - 2\pi(D[R])
\]
and we write $\rho(D) = \rho_D(V(D))$.
\end{definition}

Theorem~\ref{thm:main_thm_oriented_critical} is equivalent to the statement
\emph{every $3$-dicritical oriented graph has potential at most $-2$}.
We prove that this statement holds not only
for oriented graphs, but for all $3$-dicritical digraphs except a few exceptions
which have digons.
The first family of exceptions are the bidirected odd cycles, which have
potential $1$. The second family of exceptions are the odd $3$-wheels.
An {\bf odd $3$-wheel} is a digraph obtained by connecting a vertex $c$ to a directed
$3$-cycle $(x,y,z,x)$ by three bidirected odd paths.
It is easy to check that any odd 3-wheel $D$ is a $3$-dicritical digraph
with $m(D)=2n(D)+1$ and $\pi(D) = \frac{n(D)-2}{2}$, so $\rho(D) =-1$.
These are the only exceptions.

%To prove Theorem~\ref{thm:main_thm_oriented_critical}, an idea is to to prove that $\rho(D) \leq -2$ for every $3$-dicritical digraph $D$. But this statement does not hold. Indeed a bidirected odd cycle is clearly $3$-dicritical and has potential $1$. An {\bf odd $3$-wheel} is a digraph obtained by connecting a vertex $c$ to a directed $3$-cycle $(x,y,z,x)$ by three bidirected odd paths. It is easy to check that any odd 3-wheel $D$ is a $3$-dicritical digraph \FH{doit-on le prouver ?} \PA{Non} with $m(D)=2n(D)+1$ and $\pi(D) = \frac{n(D)-2}{2}$ and so $\rho(D) =-1$. However, we shall prove that the bidirected odd cycles and the odd $3$-wheels are the only exceptions.

\begin{theorem}\label{thm:main_potential}
If $D$ is a $3$-dicritical digraph, then
\begin{itemize}
\item $\rho(D) = 1$ if $D$ is a bidirected odd cycles,
\item $\rho(D) = -1$ if $D$ is an odd $3$-wheel,
\item $\rho(D) \leq -2$ otherwise.
\end{itemize}
\end{theorem}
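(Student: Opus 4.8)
The plan is to argue by contradiction: let $D$ be a counterexample minimizing $n(D)$ (and among those, maximizing $m(D)$), so $D$ is $3$-dicritical, is not a bidirected odd cycle, is not an odd $3$-wheel, and satisfies $\rho(D) \geq -1$. Here's my thinking about what structure this forces and how to derive a contradiction.

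Let me think about the key tool. In the potential method, the crucial notion is that of a "clean" set or a set of low potential. For a $3$-dicritical digraph, if $R \subsetneq V(D)$ with $|R| \geq 2$ (or $\geq 1$), then... hmm, I need a lower bound on $\rho_D(R)$ for proper subsets. The point should be: since every proper subdigraph is $2$-dicolourable, we can try to combine a $2$-dicolouring of $D - R$ (suitably modified) with a $3$-dicolouring of $D[R]$ or of a slightly larger digraph built from $R$. Actually the standard move: if $R$ is a proper subset, form the digraph $D'$ obtained from $D[R]$ by adding a small gadget that forces $D'$ to still "see" the constraints from outside; $3$-dicriticality of $D$ says $D[R]$ is $2$-dicolourable, and we want to extend. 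The contradiction comes when $\rho_D(R)$ is too small. So Step 1 is to establish the key lemma: there is a constant $\rho^* $ (probably $\rho^* = 4$ or so, matching "$7 \cdot 1 = 7$", "$7\cdot 2 - 3 = 11$", etc.) such that every $R$ with $1 \le |R| \le n(D)-1$ satisfies $\rho_D(R) \ge \rho^*$, OR $R$ is contained in a special configuration. More precisely, I expect: $\rho_D(R) \ge 6$ for $1 \le |R| \le n-1$ unless $D[R]$ together with the identifications dictated by $3$-dicriticality behaves like a bidirected odd cycle or odd $3$-wheel. This "gap lemma" — bounding the potential of proper subsets from below — is where I'd expect the real work to lie, and it should mirror Kostochka--Yancey's analysis of minimal sets with small potential, adapted to count digons via $\pi$.

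Step 2: use $3$-dicriticality to control the minimum degree and the local structure. Every vertex $v$ has $d^+(v) \ge 2$ and $d^-(v) \ge 2$ (else a $2$-dicolouring of $D-v$ extends, since a vertex with in- or out-degree $\le 1$ cannot be in a directed cycle after colouring — actually one needs $d^+(v), d^-(v) \ge 2$, giving total degree $\ge 4$, but digon-incident vertices need more care). Vertices incident to no digon of degree exactly $4$, and low-degree vertices generally, are the "reducible" configurations. I would catalogue: (a) a vertex of total degree $\le 3$ is impossible; (b) a vertex of degree $4$ incident to a digon is impossible or highly constrained; (c) short structures like two adjacent degree-$4$ vertices. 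Each such configuration, when present, lets us build a smaller $3$-dicritical digraph (by deleting/identifying) whose potential, via the gap lemma, contradicts minimality — this is the "discharging-free" potential argument: $\rho(D) = \rho(D') + (\text{local change})$, and the local change is engineered to be $\le -2 - \rho^{*}_{\text{change}}$.

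Step 3: a global counting / discharging step. Assuming $D$ has none of the reducible configurations, every vertex has degree $\ge 5$, or degree $4$ but then it is not digon-incident and its neighbourhood is constrained, etc. Summing $7|V| - 3m - 2\pi$ and using $\sum_v d(v) = 2m$ together with the degree lower bounds should directly give $\rho(D) \le -2$ unless $D$ is one of the two exceptional families, contradicting $\rho(D) \ge -1$. The delicate accounting is the $\pi$ term: each digon contributes to $m$ but a matched digon also costs $2$ via $\pi$, so bidirected substructures are "cheap" per the potential and must be excluded by showing that a $3$-dicritical digraph with many digons but not equal to a bidirected odd cycle or odd $3$-wheel contains a reducible configuration — I suspect one handles the "many digons" regime by analyzing the graph $B(D)$ and its matching $\pi(D)$ separately, reducing to Kostochka--Stiebitz's characterization of $3$-dicritical digraphs with $m = 2n$.

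The main obstacle, I expect, is Step 1 / the interaction with Step 3 in the digon-heavy case: proving the lower bound on $\rho_D(R)$ for proper subsets requires simultaneously running the colour-extension argument and tracking how the maximum matching of $B$ can change under vertex identification (identifying two vertices can merge or break digons and thus change $\pi$ non-locally), and the two genuine exceptions — bidirected odd cycles and odd $3$-wheels — must emerge exactly as the extremal cases of this inequality. Getting the constants to line up so that "$\rho \ge -1$ and not exceptional" is contradictory, rather than "$\rho \ge 0$", will require the sharp form of every sub-lemma.
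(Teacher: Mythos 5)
Your proposal is an outline of the same strategy the paper uses (minimal counterexample, a lower bound on the potential of proper subsets, reducible configurations, then a global counting step), but at every point where the actual mathematical work happens you write ``I expect'' or ``I would catalogue'' rather than giving an argument, so as it stands it is a plan, not a proof. Two gaps are concrete and serious. First, your Step 1 ``gap lemma'' is only conjectured. Proving it is the heart of the matter: the paper does it (Claim~\ref{claim:high_potential_number_one}) by an induction on $n(D)-|R|$ inside the minimal counterexample, using the contraction construction $D/(R,\phi,X)$ (collapse the two colour classes of a $2$-dicolouring of $D[R]$ onto a digon) together with Lemmas~\ref{lemma:contract_dont_decrease_chi} and~\ref{lemma:potential_after_contraction} to compare $\rho_D(R)$ with the potential of a $3$-dicritical subdigraph of the contracted digraph, and the minimality hypothesis to know that subdigraph has potential $\le -2$ unless it is a bidirected odd cycle or an odd $3$-wheel. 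None of this machinery, nor the bookkeeping of how $\pi$ changes under contraction (the term $t\le 2$), appears in your sketch, and without it the constant in your gap lemma cannot be pinned down, let alone the equality cases that the later arguments rely on.

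Second, your Step 3 is wrong as stated: summing degree bounds does not ``directly give $\rho(D)\le -2$''. The paper's discharging (with the correction terms $\epsilon(v)$ that encode the matching of $B(D)$ via Lemma~\ref{matching-tree} and Claim~\ref{claim:epsilon_correct}) only yields roughly $\rho(D)\le \Sigma(D)-\gamma(D)\le 0$, and closing the final gap requires a separate dicolouring argument (Subsection~\ref{subsec:colouring}): one shows that the surviving vertex set partitions into classes $\Vquatre, V_5^{\pm}, V_5^{d\pm}, V_6$ and then exhibits explicit $2$-dicolourings of $D$ in each remaining case, contradicting $\dic(D)=3$. Moreover, making the discharging and that colouring work requires an extensive catalogue of forbidden configurations (chelou arcs, A$^+$/B$^+$-configurations, purses, handcuffs, baskets, bags, turtles, Claims~\ref{claim:cycle_minus_one_arc}--\ref{claim:no_B_conf}), each excluded by its own potential computation against the gap lemma; your sketch acknowledges that some catalogue is needed but supplies none of it. So the proposal captures the architecture of the paper's proof but leaves both the key lemma and the endgame unproved, and the endgame it does propose (pure counting) would not suffice.
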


This result implies Theorem~\ref{thm:main_thm_oriented_critical} because  $\pi(D)=0$ for every oriented graph $D$, and bidirected odd cycles and odd $3$-wheels have digons.

The proof of Theorem~\ref{thm:main_potential} is spread into three sections. We introduce a few tools in Section~\ref{sec:tools}; then we study some structural properties and the potential of some particular digraphs that pop-up during the proof; finally we give the proof of the theorem in Section~\ref{sec:mainproof}.

\section{Notations}
%%%%%%%%%%%%%%%%%%%%%

A {\bf forest} is an acyclic graph. A {\bf tree} is a connected forest.
The vertices of degree at most $1$ (resp. at least $2$) in a forest are called {\bf leaves} (resp. {\bf internal vertices}).

\medskip

Let $D$ be a digraph.

The {\bf underlying graph} of $D$ is the graph with vertex set $VD)$ in which two verticea are joined by an edge if and only if there is at least one arc between them.

 A {\bf  digon} is a pair of arcs in opposite directions between the same vertices.
The digon $\{xy,yx\}$ is denoted by $[x,y]$. 
The {\bf digon graph} of $D$, denoted by $B(D)$, is the graph with vertex set $V(D)$ in which $uv$ is an edge if and only if $[u,v]$ is a digon on $D$.

A digraph is a {\bf bidirected graph} if every arc is in a digon.
A {\bf bidirected path} (resp. {\bf bidirected cycle}, {\bf bidirected tree}, 
{\bf bidirected complete graph}) is a bidirected graph $D$ such that
$B(D)$ is a path (resp. cycle, tree, complete graph). In other words, it is 
a digraph obtained from a  path (resp. cycle, tree, complete graph) by replacing every edge by a digon.
The {\bf leaves} (resp. {\bf  internal vertices}) of a bidirected tree $T$ are the leaves (resp.  internal vertices) of $B(T)$.

A digraph is an {\bf oriented graph} if is has no digon.
An {\bf oriented forest} is an oriented graph whose underlying graph is a forest.

The {\bf  directed path} $(x_1, \dots , x_n)$ is the oriented graph with vertex set $\{x_1, \dots , x_n\}$ and arc set $\{x_1x_2, \dots , x_{n-1}x_n\}$.

The {\bf  directed cycle} $(x_1, \dots , x_n, x_1)$ is the oriented graph with vertex set $\{x_1, \dots , x_n\}$ and arc set $\{x_1x_2, \dots , x_{n-1}x_n, x_nx_1\}$.

%Its  are $x_2, \dots, x_{n-1}$.
For short, we often abbreviate directed path into {\bf dipath} and 
directed cycle into {\bf dicycle}.

Let $D$ be a digraph. We use the following notations
\begin{itemize}
\item Given a graph $G$, we denote by $\mu(G)$ the maximum size of a matching in $G$.
We set $\pi(D) =\mu(B(D))$.

%\item for any $X,Y \subseteq V(D)$, $m_D(X,Y)$ is the number of arcs from   $X$ to $Y$. \CR{Je suis pas sur qu'on l'utilise.}
\item For any $v \in V(D)$, $d(v)$ is the number of arcs incident to $v$.
\item For any $v \in V(D)$, $n(v)$ is its number of neighbours.
\item $D[X]$ is the subdigraph of $D$ induced by $X \cap V(D)$.
\item For any $X \subseteq V(D)$, $D-X$ is the subdigraph induced by $V(D)\setminus X$. We abbreviate $D-\{x\}$ into $D-x$.
\item For any $X$ such that $X\cap V(D) =\emptyset$, $D+X$ is the digraph $(V(D)\cup X,A(D))$. We abbreviate $D+\{x\}$ into $D+x$.
\item For any $F\subseteq  \binom{V(D)}{2}$, $D\setminus F$ is the subdigraph $(V(D), A(D)\setminus F)$ and $D\cup F$ is the digraph $(V(D), A(D)\cup F)$.

\item The {\bf converse} of $D$ is the digraph $\overleftarrow{D}$ obtained by reversing the direction of all its arcs: $V(\overleftarrow{D}) = V(D)$ and $A(\overleftarrow{D})= \{yx \mid xy \in A(D) \}$.

\end{itemize}

\bigskip

\section{Properties of \texorpdfstring{$k$}{k}-dicritical digraphs}\label{sec:tools}

A graph $G$ is {\bf non-separable} if it is connected and $G-v$ is connected for all $v\in V(G)$.
A {\bf block} of a graph $G$ is a subgraph which is non-separable and is maximal with respect to this property. 
A {\bf block} of a digraph is a block in its underlying graph.

\begin{lemma}[Neumann-Lara~\cite{Neu82}]\label{lemma:degeneracy}
If $D$ is a $k$-dicritical digraph, then for every vertex $v$,
$d^+(v) \geq k-1$ and $d^-(v) \geq k-1$.
\end{lemma}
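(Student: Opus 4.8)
The statement to prove is Lemma~\ref{lemma:degeneracy}: every vertex of a $k$-dicritical digraph has out-degree and in-degree at least $k-1$.

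\medskip

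The plan is to argue by contradiction using the defining minimality property of dicritical digraphs, exactly as in the classical proof that $k$-critical graphs have minimum degree at least $k-1$. Suppose $D$ is $k$-dicritical but some vertex $v$ has, say, $d^+(v) \leq k-2$ (the case $d^-(v)\leq k-2$ is symmetric, or follows by passing to the converse $\overleftarrow{D}$, which is again $k$-dicritical since $\dic(\overleftarrow{D})=\dic(D)$ and acyclicity is preserved under reversing all arcs). The idea is to use a $(k-1)$-dicolouring of the proper subdigraph $D-v$, which exists because $D$ is $k$-dicritical, and then show that it can be extended to a $(k-1)$-dicolouring of $D$, contradicting $\dic(D)=k$.

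\medskip

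Here are the steps. First, since $D-v$ is a proper subdigraph of the $k$-dicritical digraph $D$, we have $\dic(D-v)\leq k-1$; fix a $(k-1)$-dicolouring $\phi$ of $D-v$ with colour classes $V_1,\dots,V_{k-1}$, each inducing an acyclic subdigraph. Next, I would observe that since $d^+(v)\leq k-2$, the out-neighbours of $v$ meet at most $k-2$ of the $k-1$ colour classes, so there is a colour class $V_i$ containing no out-neighbour of $v$. Finally, extend $\phi$ by assigning colour $i$ to $v$: I claim $D[V_i\cup\{v\}]$ is still acyclic. Indeed, any directed cycle in $D[V_i\cup\{v\}]$ would have to pass through $v$ (since $D[V_i]$ is acyclic), and such a cycle would use an arc leaving $v$, i.e. an arc from $v$ to some out-neighbour lying in $V_i$ — but $V_i$ contains no out-neighbour of $v$, a contradiction. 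Hence the extended colouring is a $(k-1)$-dicolouring of $D$, contradicting $\dic(D)=k$. Therefore $d^+(v)\geq k-1$ for every $v$, and the in-degree bound follows by the same argument applied to $\overleftarrow{D}$.

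\medskip

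There is essentially no serious obstacle here: the only point that needs a little care is the verification that adding $v$ to a colour class with no out-neighbour of $v$ (equivalently, using in the in-degree case a class with no in-neighbour) keeps the class acyclic — this is the directed analogue of ``adding a vertex to a stable set'' and relies on the simple observation that a new directed cycle through $v$ must use an out-arc of $v$. One should also note explicitly that ``$k$-dicritical'' forces $\dic(D)=k$ and $\dic(D')\leq k-1$ for every proper subdigraph, which is exactly what licenses the existence of $\phi$; and that $\overleftarrow{D}$ is $k$-dicritical whenever $D$ is, which handles the in-degree case without repeating the argument.
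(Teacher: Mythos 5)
Your proof is correct and is exactly the standard degree argument for dicritical digraphs: remove $v$, take a $(k-1)$-dicolouring of the proper subdigraph $D-v$, place $v$ in a colour class avoided by its (at most $k-2$) out-neighbours, and observe that any new monochromatic directed cycle would have to use an out-arc of $v$ into that class. The paper states this lemma as a known result of Neumann-Lara and gives no proof of its own, so there is nothing to compare against; your argument (including the reduction of the in-degree case to the converse digraph) is complete and sound.
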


\begin{theorem}[Bang-Jensen et al.~\cite{bang2019haj}]
\label{thm:directed_gallai_subdigraph}
If $D$ is a $k$-dicritical digraph, then any block
of the subdigraph induced by vertices of degree $2(k-1)$
is either:
\begin{itemize}
\item an arc, or
\item a directed cycle, or
\item a bidirected odd cycle, or
\item a bidirected complete graph. 
\end{itemize}
\end{theorem}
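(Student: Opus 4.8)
This is a directed analogue of Gallai's description of the low-vertex subgraph of a $k$-critical graph, so the plan is to mirror the classical argument. By Lemma~\ref{lemma:degeneracy}, every vertex $v$ with $d(v)=2(k-1)$ satisfies $d^+(v)=d^-(v)=k-1$; call such a vertex \emph{low}, let $H$ be the subdigraph of $D$ induced by the low vertices, and let $B$ be a block of $H$. Assume for contradiction that $B$ is none of the four listed digraphs. A block of the underlying graph of $H$ is either a single edge or is $2$-connected, and a single-edge block of $H$ corresponds to an arc or to a digon (that is, a bidirected complete graph on two vertices); hence we may assume $B$ is $2$-connected.

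The next step is to reduce the problem to a colouring-extension question. First suppose $V(B)\neq V(D)$. Then $D-V(B)$ is a proper subdigraph of $D$, hence has a $(k-1)$-dicolouring $\phi$. For $v\in V(B)$, exactly $k-1-d^+_B(v)$ of its out-neighbours and exactly $k-1-d^-_B(v)$ of its in-neighbours lie outside $B$. I would define a list $L(v)\subseteq[k-1]$ by discarding the $\phi$-colours of the out-neighbours of $v$ outside $B$ when $d^+_B(v)\geq d^-_B(v)$, and the $\phi$-colours of the in-neighbours of $v$ outside $B$ otherwise; then $|L(v)|\geq\max(d^+_B(v),d^-_B(v))$ in both cases. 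If $B$ admits a list-dicolouring $\psi$ from these lists, then $\phi\cup\psi$ should be a $(k-1)$-dicolouring of $D$: a monochromatic dicycle of $\phi\cup\psi$ lies neither entirely inside $B$ (since $\psi$ dicolours $B$) nor entirely outside (since $\phi$ dicolours $D-V(B)$), so each of its maximal subpaths inside $B$ has a first vertex with an in-neighbour outside $B$ of its own colour and a last vertex with an out-neighbour outside $B$ of its own colour, which one wants the construction of the lists to exclude. In the remaining case $V(B)=V(D)$, the digraph $D=B$ is $2$-connected with every vertex low, and a list-dicolouring of $B$ with the constant lists $[k-1]$ would be a $(k-1)$-dicolouring of $D$, contradicting $\dic(D)=k$.

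Both cases reduce to the engine of the argument, which I would formulate as follows: \emph{a $2$-connected digraph $B$ that is not a directed cycle, a bidirected odd cycle, or a bidirected complete graph is degree-dichoosable}, meaning it is list-dicolourable whenever $|L(v)|\geq\max(d^+(v),d^-(v))$ for every vertex $v$. This is the digraph version of the Erd\H{o}s--Rubin--Taylor and Borodin degree-choosability theorem (proved for digraphs by Harutyunyan and Mohar). One direction is easy: on a directed cycle equal lists force equal colours and hence a monochromatic dicycle, and in the bidirected cases one recovers the non-degree-choosability of complete graphs and of odd cycles. For the other direction I would follow the undirected proof structurally: if the underlying graph of $B$ is a cycle then, $B$ being neither a directed cycle nor a bidirected odd cycle, the task collapses to list-dicolouring an even undirected cycle, which is $2$-choosable; otherwise $B$ has a vertex of underlying degree at least $3$, and one builds an ordering or an ear decomposition of $B$ along which the vertices are dicoloured one at a time, each receiving a colour missed by its already-coloured out-neighbours or by its already-coloured in-neighbours, so that no monochromatic dicycle is ever closed.

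I expect the difficulty to sit in two linked places. First, the positive half of the directed degree-choosability lemma is genuinely harder than its undirected model, because "forbidden at $v$'' is not a local condition: closing a monochromatic dicycle through $v$ needs an in-neighbour \emph{and} an out-neighbour of that colour joined by a monochromatic dipath, so the inductive colouring must be arranged so that some endpoint of every such potential dipath keeps a free colour. Second, and for the same reason, ruling out the "straddling'' monochromatic dicycles in the extension step forces the lists and the choice of $\psi$ to be coordinated over all of $B$ simultaneously---a monochromatic dipath inside $B$ running from an out-heavy boundary vertex to an in-heavy boundary vertex would otherwise escape the vertex-by-vertex definition above---and this is exactly where the directed proof must do strictly more than transcribe Gallai's argument.
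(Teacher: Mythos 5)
The statement you are proving is quoted by the paper from Bang-Jensen, Bellitto, Schweser and Stiebitz~\cite{bang2019haj} and is not proved in the paper, so there is no in-paper argument to compare against; judged on its own, your proposal is a plan with a genuine unresolved gap rather than a proof. The gap is the one you yourself flag at the end: the extension step fails with the lists you define. If $\phi$ is a $(k-1)$-dicolouring of $D-V(B)$ and $L(v)$ discards only the $\phi$-colours of the out-neighbours of $v$ outside $B$ when $d^+_B(v)\ge d^-_B(v)$ (and only the in-side colours otherwise), then a monochromatic dicycle of $\phi\cup\psi$ meeting both $B$ and $D-V(B)$ is not excluded: it can enter $B$ at a vertex $u$ with $d^+_B(u)\ge d^-_B(u)$ (whose outside in-neighbours were never constrained) and leave at a vertex $w$ with $d^-_B(w)>d^+_B(w)$ (whose outside out-neighbours were never constrained), the two being joined by a monochromatic dipath inside $B$; your lists only rule out the degenerate case $u=w$. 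The obvious repairs break the counting: discarding both sides at every vertex leaves only $|L(v)|\ge d_B(v)-(k-1)$, and discarding one fixed side (say the out-side at every vertex) does kill all straddling dicycles but only guarantees $|L(v)|\ge d^+_B(v)$, both in general short of the $\max\{d^+_B(v),d^-_B(v)\}$ your degree-dichoosability engine requires. Resolving this tension --- either by proving a one-sided (out-degree) list Brooks/Gallai theorem for digraphs with the same exceptional blocks, or by choosing $\psi$ with an additional global property that forbids monochromatic dipaths in $B$ from out-heavy entry vertices to in-heavy exit vertices --- is precisely the substance of the theorem, and your write-up names the difficulty without supplying the idea that overcomes it.

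A secondary point: invoking the Harutyunyan--Mohar degree-dichoosability theorem as a black box is legitimate, but the sketch you give of its proof (colour along an ordering or ear decomposition so that ``no monochromatic dicycle is ever closed'') runs into the same non-locality you identified --- whether a colour is safe at $v$ depends on monochromatic dipaths between neighbours of $v$, not on the neighbours alone --- so as written neither the engine nor the reduction to it is complete. The parts that are fine are routine: the use of Lemma~\ref{lemma:degeneracy} to get $d^+(v)=d^-(v)=k-1$ for low vertices, the disposal of single-edge blocks (arc or digon), and the case $V(B)=V(D)$, which is just the directed Brooks theorem once the engine is granted.
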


Forest and trees will be important in the proof because of the followings two lemmas.

\begin{lemma}\label{lem:forestB}
Let $D$ be a $3$-critical digraph.
If $D$ is not a bidirected odd cycle, then $B(D)$ is a forest. 
\end{lemma}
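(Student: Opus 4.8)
The goal is to show that if $D$ is a $3$-dicritical digraph which is not a bidirected odd cycle, then $B(D)$ contains no cycle. I would argue by contradiction: suppose $B(D)$ contains a cycle $C$. Since every edge of $B(D)$ corresponds to a digon of $D$, the cycle $C$ gives a bidirected cycle $\bid{C}$ as a subdigraph of $D$. The crucial observation is that $\dic(\bid{C_\ell}) = 2$ when $\ell$ is even but $\dic(\bid{C_\ell}) = 3$ when $\ell$ is odd (since $\bid{C_\ell}$ is the bidirected graph of an odd cycle, whose chromatic number is $3$). So I would split into two cases according to the parity of the shortest cycle in $B(D)$.

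First I would handle the case where $B(D)$ has an even cycle, or more generally where the bidirected subdigraph on the digon edges is $2$-dicolourable in a way that spreads. Here the idea is to use $3$-dicriticality: removing any arc of $D$ yields a digraph with a $2$-dicolouring. The plan is to pick an edge incident to the cycle, restrict a $2$-dicolouring of the smaller digraph, and derive a contradiction — but the cleaner approach is to invoke the structure theorem. In fact, the bidirected cycle $\bid{C}$ (if even) has $\dic = 2$, and in a $3$-dicritical digraph no proper subdigraph can "carry too much" colouring constraint; I expect the right tool is that a $3$-dicritical digraph with a digon must contain a bidirected odd cycle through any digon, or else one can merge colour classes. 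I would look for a reduction: if $B(D)$ has a cycle that is not an \emph{induced} odd cycle in $B(D)$, one finds either a shorter odd cycle or an even cycle, and the even cycle case should be dispatched by a direct recolouring argument (take a $2$-dicolouring of $D$ minus an arc of the digon, and observe the digon vertices can be handled since a bidirected even cycle is $2$-dicolourable).

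The heart of the matter is the case where $B(D)$ contains an \emph{odd} cycle $C$. Then $\bid{C}$ is itself $3$-dicritical (the bidirected odd cycle is $3$-dicritical, as noted in the introduction). Since $D$ is $3$-dicritical, it contains no proper $3$-dicritical — hence no proper subdigraph with $\dic = 3$ other than... wait, that's automatic. The real point: a $k$-dicritical digraph cannot contain a proper subdigraph $D'$ with $\dic(D') = k$ whose vertex set is properly contained, because then $D'$ would already force $\dic(D) \geq k$ but $D$ being critical means every proper subdigraph has $\dic \leq k-1$. So if $V(\bid{C}) \subsetneq V(D)$ we get an immediate contradiction with $\dic(\bid{C}) = 3$. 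Therefore $V(C) = V(D)$, i.e. $B(D)$ has a spanning odd cycle. It remains to rule out $B(D)$ having \emph{extra} structure beyond a spanning odd cycle (extra digons, or extra arcs). Extra digons would create a shorter odd or even cycle inside $B(D)$ handled by the previous cases; extra non-digon arcs would mean $D \supsetneq \bid{C}$ with $V(D) = V(\bid{C})$, and since $\bid{C}$ is already $3$-dicritical, adding arcs keeps $\dic \geq 3$ but then $\bid{C}$ is a proper subdigraph (proper because the arc set is strictly smaller) with $\dic = 3$, contradicting dicriticality of $D$. Hence $D = \bid{C}$ is a bidirected odd cycle, contradicting our hypothesis.

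**Main obstacle.** I expect the genuinely delicate part to be the even-cycle (and more generally the "non-minimal odd cycle") case — ruling out that $B(D)$ could contain a chord or a second cycle without contradicting $3$-dicriticality. The clean path is: any cycle in $B(D)$ contains a chordless cycle; a chordless even cycle in $B(D)$ yields a bidirected even subdigraph which is $2$-dicolourable, and one must show this $2$-dicolouring extends/interacts with a $2$-dicolouring of $D$ minus one arc to $2$-dicolour $D$ — that requires a careful merging argument (possibly using Lemma~\ref{lemma:degeneracy} on degrees). A chordless odd cycle, if not spanning, is killed instantly by the criticality argument above; if spanning, the "no extra edges/arcs" argument finishes it. So the write-up should first reduce to chordless cycles, then treat odd (easy, via criticality + $\dic(\bid{C_{odd}}) = 3$) and even (the recolouring lemma) separately.
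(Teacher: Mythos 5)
Your treatment of the odd case is correct and is exactly the paper's (implicit) argument: a bidirected odd cycle has dichromatic number $3$, so if $D$ contained one it would either be a proper subdigraph of dichromatic number $3$ (impossible in a dicritical digraph, whose proper subdigraphs are all $2$-dicolourable) or be all of $D$ (excluded by hypothesis). Your detour through chordless cycles is unnecessary but harmless.

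The genuine gap is the even case, which you yourself flag as the ``main obstacle'' and leave as an unproven recolouring step requiring ``a careful merging argument''. No merging is needed; the paper's argument is three lines. Let $C$ be a bidirected even cycle in $D$ and let $xy$ be one arc of one of its digons. By $3$-dicriticality, $D\setminus xy$ has a $2$-dicolouring $\phi$. In $D\setminus xy$ the vertices $x$ and $y$ are still joined by a bidirected path of \emph{odd} length (the rest of $C$), and along any bidirected path consecutive vertices receive distinct colours under any $2$-dicolouring (a monochromatic digon is a monochromatic directed $2$-cycle); the odd parity therefore forces $\phi(x)\neq\phi(y)$. Any monochromatic dicycle of $D$ under $\phi$ would have to use the arc $xy$ and hence contain $x$ and $y$ in the same colour class, which is impossible; so $\phi$ is a $2$-dicolouring of $D$, a contradiction. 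Your sketch points at the wrong reason: what matters is not that a bidirected even cycle is $2$-dicolourable, but that the parity of the surviving bidirected path separates $x$ from $y$, which is what lets you reinsert the deleted arc. Chords are irrelevant to this argument, so the chordless reduction buys nothing. Finally, your speculative ``right tool'' --- that every digon of a $3$-dicritical digraph lies on a bidirected odd cycle --- is false: the odd $3$-wheels of the paper are $3$-dicritical, have many digons, and their digon graph is a tree.
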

\begin{proof} 
Assume for contradiction that $D$ is a $3$-critical digraph, $D$ is not a bidirected odd cycle, and $D$ contains a bidirected cycle $C$. 
Since bidirected cycles of odd length are $3$-critical, $C$ must have  even length. 
 Let $xy$ be an arc in $C$. As $D$ is $3$-dicritical,
$D\setminus xy$ has a $2$-dicolouring $\phi$. 
Since $x$ and $y$ are linked by a bidirected path of odd length,  
we have $\phi(x) \neq \phi(y)$.
Thus $\phi$ is a $2$-dicolouring of $D$, a contradiction.
\end{proof}

\begin{lemma}\label{lemma:vertex_no_almost_only_digons}
Let $D$ be a $k$-dicritical digraph.
There is no vertex $v$ in $D$ with only one neighbour not connected
to $v$ by a digon.
\end{lemma}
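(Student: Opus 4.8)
The plan is to proceed by contradiction and exploit $k$-dicriticality by deleting the \emph{single arc} at such a vertex, rather than the vertex itself. Suppose $D$ is $k$-dicritical and some vertex $v$ has exactly one neighbour, call it $w$, that is not joined to $v$ by a digon; let $N$ be the set of neighbours of $v$ joined to $v$ by a digon, so that $v$ and $w$ are joined by a single arc and every neighbour of $v$ other than $w$ lies in $N$. Since the converse $\overleftarrow{D}$ is again $k$-dicritical and has the same digons (so $v$ still has this property in $\overleftarrow{D}$), we may assume $vw \in A(D)$ and $wv \notin A(D)$. Consequently every in-neighbour of $v$ belongs to $N$ (the only candidate outside $N$ would be $w$, but $wv\notin A(D)$); Lemma~\ref{lemma:degeneracy} is not needed for the argument.

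As $D$ is $k$-dicritical and $D\setminus vw$ is a proper subdigraph, $D\setminus vw$ has a $(k-1)$-dicolouring $\phi$. I claim $\phi$ is already a $(k-1)$-dicolouring of $D$, which contradicts $\dic(D)=k$. Suppose not; then some colour class of $\phi$ contains a dicycle $C$ of $D$. Since the only arc of $D$ missing from $D\setminus vw$ is $vw$, and $\phi$ is a valid dicolouring of $D\setminus vw$, the dicycle $C$ must use the arc $vw$. Let $u'v$ be the arc of $C$ entering $v$. Then $u'\neq w$: otherwise $C$ would be the digon $[v,w]$, but $wv\notin A(D)$. Hence $u'$ is a digon-neighbour of $v$, and since $u'\neq w$ the digon $[v,u']$ is also present in $D\setminus vw$; therefore $\phi(u')\neq\phi(v)$, contradicting that $C$ is monochromatic and passes through both $u'$ and $v$. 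This proves the claim and hence the lemma.

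There is no serious obstacle here: the one point that needs care is the orientation of the arc at $w$, handled by passing to the converse so that $w$ is an out-neighbour of $v$ and every in-neighbour of $v$ is a digon-partner. The reason deleting the arc $vw$ works where deleting $v$ would be awkward is precisely that re-inserting $vw$ into a properly dicoloured $D\setminus vw$ can only create a monochromatic dicycle that re-enters $v$ through one of its digon-neighbours, which a proper dicolouring already forbids.
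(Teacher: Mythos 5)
Your proposal is correct and follows essentially the same argument as the paper: delete the single arc between $v$ and $w$, take a $(k-1)$-dicolouring of the resulting proper subdigraph, and observe that every other neighbour of $v$ is a digon-neighbour and hence coloured differently from $v$, so re-inserting the arc cannot create a monochromatic dicycle. The reduction to the converse digraph and the explicit tracing of the dicycle through $v$ are just more detailed presentations of the same idea.
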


\begin{proof}
Suppose for a contradiction that such a vertex $v$ exists and
let $w$ be its unique neighbour not connected to $v$ by a digon.
Let $D'$ be obtained from $D$ by removing the (unique) arc connecting $v$ and $w$.
As $D$ is $3$-dicritical, $D'$ has a $(k-1)$-dicolouring $\phi$.
In this dicolouring, all neighbours of $v$ in $D'$ have a colour different from $\phi(v)$ because they are connected to $v$ by a digon.
Hence adding the arc between $v$ and $w$ cannot create a monochromatic dicycle and thus $\phi$ is a $(k-1)$-dicolouring of $D$, a contradiction. 
\end{proof}

\section{Dicritical oriented graphs with few arcs}\label{sec:lower}
%%%%%%%%%%%%%%%%%%%%%%%%%%%%%%%%%%%%%%%%%%%%%%

\subsection{\texorpdfstring{$3$}{3}-dicritical oriented graphs with few arcs}\label{sec:lower3}
%%%%%%%%%%%%%%%%%%%%%%%%%%%%%%%%%%%%%%%%%%%%%%%%%%%%%%%%%%%%%%%%%%%%%%%%%%%%%

The {\bf knob of height 1} is the tournament $\vec{K}_1$ defined by:
\begin{eqnarray*}
V(\vec{K}_1) & = &  \{x_1, x_2, y_1, y_2, y_3\}, \\ 
A(\vec{K}_1) & = &  \{x_1x_2, y_1y_2, y_2y_3, y_3y_1,  y_1x_1, y_2x_1, y_3x_1, x_2y_1, x_2y_2, x_2y_3\}.
\end{eqnarray*}
Hence $(y_1, y_2, y_3, y_1)$ and $(x_1, x_2, y_i, x_1)$ for $i\in [3]$ are directed $3$-cycles.
The {\bf base} of the knob is the arc $x_1x_2$.

For all integers $i\geq 2$, the {\bf knob of height $i$}, denoted by $\vec{K}_i$, is the oriented graph obtained from $\vec{K}_{i-1}$ by adding two new vertices $z_1z_2$ and the arcs of the two directed $3$-cycle $(z_1, z_2, x, z_1)$ for all end-vertex $x$ of the base of $\vec{K}_{i-1}$.
The {\bf base} of $\vec{K}_i$ is the arc $z_1z_2$.

We also define a knob with an even number of vertices as follows.
Let $\vec{K}'_1$ be the oriented graph defined by
\[
\begin{split}
    V(\vec{K}'_1) &= \{x_1,x_2,y_1,y_2,y_3,y_4\} \\
    A(\vec{K}'_1) &= \{x_1x_2, y_1y_2,y_2y_3,y_3y_4,y_4y_1, y_1x_1,y_2x_1,y_3x_1,y_4x_1, x_2y_1,x_2y_2,x_2y_3,x_2y_4\} \\
\end{split}
\]
and we call the arc $x_1x_2$ its {\bf base}.
Informally, this is a knob $\vec{K}_1$ where the $3$-cycle $(y_1,y_2,y_3,y_1)$
is replaced by a $4$-cycle $(y_1,y_2,y_3,y_4,y_1)$.

A {\bf knob} is either $\vec{K}'_1$ or 
a knob of height $i$ for some positive integer $i$.
The following proposition is easy and the proof is left to the reader.
\begin{proposition}\label{prop:knob}
Let $\vec{K}$ be a knob.
\begin{itemize}
\item[(i)] Every precolouring of the two vertices of its base can be extended into a $3$-dicolouring of $\vec{K}$.
\item[(ii)] In every $2$-dicolouring of $\vec{K}$, the two end-vertices of its base are coloured differently. 
\item[(iii)] For every arc $a\in A(\vec{K})$, there is a $2$-dicolouring of $\vec{K}\setminus a$ such that the two end-vertices of the base are coloured the same. 
\end{itemize}
\end{proposition}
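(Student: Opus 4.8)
The three statements are proved by induction on the height of the knob, with $\vec{K}_1$ and $\vec{K}'_1$ serving as the base cases; the inductive step handles the passage from $\vec{K}_{i-1}$ to $\vec{K}_i$. First I would set up notation: for a knob $\vec{K}$ with base $ab$ (so $ab \in A(\vec{K})$), I want to understand exactly which pairs $(\phi(a),\phi(b))$ of colours in $[2]$ extend, and which extend if we delete one arc. The key observation is that the base $x_1x_2$ of $\vec{K}_1$ behaves like a ``soft digon'': in any $2$-dicolouring the two endpoints get distinct colours (statement (ii)), any precolouring extends to a $3$-dicolouring (statement (i)), and deleting any single arc lets the endpoints of the base become monochromatic in some $2$-dicolouring (statement (iii)).

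For the base case $\vec{K}_1$, statements (i) and (ii) are a short finite check. For (ii): suppose $\phi$ is a $2$-dicolouring with $\phi(x_1)=\phi(x_2)$. Since $x_1x_2$ is an arc, this is allowed in itself, but then for each $i\in[3]$ the dicycle $(x_1,x_2,y_i,x_1)$ forces $y_i$ to receive the other colour, so $y_1,y_2,y_3$ are all monochromatic — contradicting that $(y_1,y_2,y_3,y_1)$ is a dicycle. Hence $\phi(x_1)\neq\phi(x_2)$. For (i), given a precolouring of $x_1,x_2$: if $\phi(x_1)\neq\phi(x_2)$, colour all of $y_1,y_2,y_3$ with $\phi(x_1)$ — none of the dicycles is monochromatic; if $\phi(x_1)=\phi(x_2)$, colour $y_1,y_2$ with the other colour and $y_3$ with $\phi(x_1)$ (only a proper $3$-colouring of $\{y_1,y_2,y_3\}$ is needed since those are the only remaining dicycles, and we may also use a third colour if allowed). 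For (iii), one case-checks the arcs: deleting $x_1x_2$ itself trivially allows $\phi(x_1)=\phi(x_2)$ (colour the $y_i$'s properly among themselves); deleting an arc $y_ix_1$ or $x_2y_i$ breaks the dicycle $(x_1,x_2,y_i,x_1)$, after which we set $\phi(x_1)=\phi(x_2)$, $\phi(y_i)$ equal to them, and the remaining two $y_j$'s to the other colour; deleting an arc of $(y_1,y_2,y_3,y_1)$ lets us $2$-colour the $y_i$'s however needed to break the three triangles through $x_1,x_2$. The same routine works for $\vec{K}'_1$ with the $4$-cycle replacing the triangle, using that a $4$-cycle is properly $2$-colourable.

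For the inductive step, let $\vec{K}=\vec{K}_i$ be obtained from $\vec{K}'=\vec{K}_{i-1}$ with base $x_1x_2$ by adding $z_1,z_2$ and the arcs of the dicycles $(z_1,z_2,x_1,z_1)$ and $(z_1,z_2,x_2,z_1)$; the new base is $z_1z_2$. For (i): given a precolouring of $z_1,z_2$, if $\phi(z_1)\neq\phi(z_2)$, precolour $x_1,x_2$ both with $\phi(z_1)$ (so neither new dicycle is monochromatic) and extend over $\vec{K}'$ by induction~(i); if $\phi(z_1)=\phi(z_2)=c$, precolour $x_1$ with $c$ and $x_2$ with the other colour (breaking both new dicycles) and extend by induction~(i). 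For (ii): in a $2$-dicolouring, if $\phi(z_1)=\phi(z_2)$ then both new dicycles force $\phi(x_1)$ and $\phi(x_2)$ to equal the opposite colour, so $\phi(x_1)=\phi(x_2)$, contradicting induction~(ii) applied to $\vec{K}'$. For (iii): if the deleted arc $a$ lies in $\vec{K}'$, use induction~(iii) to get a $2$-dicolouring of $\vec{K}'\setminus a$ with $\phi(x_1)=\phi(x_2)=:c$, then set $\phi(z_1)=\phi(z_2)$ to the colour different from $c$, which makes neither new dicycle monochromatic, so both base vertices $z_1,z_2$ are monochromatic; if $a$ is the new base $z_1z_2$, take any $2$-dicolouring of $\vec{K}'$ (exists since a knob is acyclic after... rather, since $\vec{K}'$ is not $3$-dicritical one checks directly it is $2$-dicolourable), colour $x_1,x_2$ with distinct colours by induction~(ii) and then $z_1,z_2$ with any colour avoiding the two new triangles — here after deleting $z_1z_2$ the triangles $(z_1,z_2,x_j,z_1)$ are paths, so colour $z_1,z_2$ both with the colour of $x_1$ if $\phi(x_1)\ne\phi(x_2)$, wait we need $\phi(z_1)=\phi(z_2)$ which is exactly what we want; if $a$ is one of the four new arcs, it lies on exactly one new triangle, say $(z_1,z_2,x_1,z_1)$; pick a $2$-dicolouring of $\vec{K}'$ with $\phi(x_1)=\phi(x_2)$... but that may fail by induction~(ii), so instead observe $x_1,x_2$ get distinct colours, the surviving new triangle on $x_2$ forces the colour of the $z$ vertices, and the broken one on $x_1$ imposes nothing, giving $\phi(z_1)=\phi(z_2)$ as required.

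The main obstacle is the bookkeeping in statement (iii): one must check that for \emph{every} arc of the knob, deleting it genuinely frees the two base endpoints to become monochromatic, and the new-arc subcases of the inductive step are where the argument is most delicate, because deleting one of the four attaching arcs destroys only one of the two triangles hanging on the old base, and one must verify the surviving triangle still leaves enough freedom. Everything else is a finite verification or a direct application of the induction hypothesis, exactly as the authors indicate when they say the proof is ``left to the reader''.
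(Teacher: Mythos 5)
Your overall strategy (induction on the height, with $\vec{K}_1$ and $\vec{K}'_1$ as base cases) is the natural one, and your arguments for (ii) and for parts of (iii) are sound; but several of your explicit colour assignments fail because they create monochromatic directed triangles. In the base case of (i), when $\phi(x_1)\neq\phi(x_2)$ you colour $y_1,y_2,y_3$ all with $\phi(x_1)$, which makes the dicycle $(y_1,y_2,y_3,y_1)$ monochromatic; when $\phi(x_1)=\phi(x_2)$ you colour $y_3$ with $\phi(x_1)$, which makes $(x_1,x_2,y_3,x_1)$ monochromatic. In the inductive step of (i), when $\phi(z_1)=\phi(z_2)=c$ you precolour $x_1$ with $c$, which makes the new triangle $(z_1,z_2,x_1,z_1)$ monochromatic, so the parenthetical claim that this ``breaks both new dicycles'' is false. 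The correct choices are essentially the opposite ones: when the two base endpoints get distinct colours, any non-constant colouring of the attached $y$-cycle works; when they both get colour $c$, colour the $y_i$'s using only the two colours of $[3]\setminus\{c\}$ and not all equal, and in the inductive step give both $x_1$ and $x_2$ colours different from $c$ before invoking induction (i).

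There is also a structural omission that affects every case: you only verify the ``new triangles'' $(z_1,z_2,x_j,z_1)$ and the dicycles inside $\vec{K}_{i-1}$, but $\vec{K}_i$ has longer dicycles through $z_1,z_2$ that dip into the old knob, e.g. $z_1\to z_2\to x_2\to y_1\to x_1\to z_1$ in $\vec{K}_2$. What makes the (corrected) assignments work is the fact that the only arc leaving $z_1$ and the only arc entering $z_2$ is the base $z_1z_2$, so every dicycle through $z_1$ or $z_2$ contains both of them and at least one of $x_1,x_2$; and after deleting $z_2x_1$ or $x_1z_1$ in (iii), every such dicycle must pass through $x_2$, which is why setting $\phi(z_1)=\phi(z_2)=\phi(x_1)$ is safe in your Case C (checking only the surviving triangle is not enough). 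You need to state and use this observation explicitly, both in (i) and in (iii). Finally, in (iii) with $a=z_1z_2$ you need that $\vec{K}_{i-1}$ itself is $2$-dicolourable; ``not $3$-dicritical'' does not give this, but it follows by an easy induction (extend a $2$-dicolouring of the smaller knob by giving the two new vertices distinct colours), which should be said rather than waved at.
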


Let ${\cal O}_3$ be the family of the oriented graphs that are obtained from an odd directed cycle by adding a copy of a knob with base $a$ for every arc $a$ of this cycle. See Figure~\ref{fig:3critical_ad_5}.

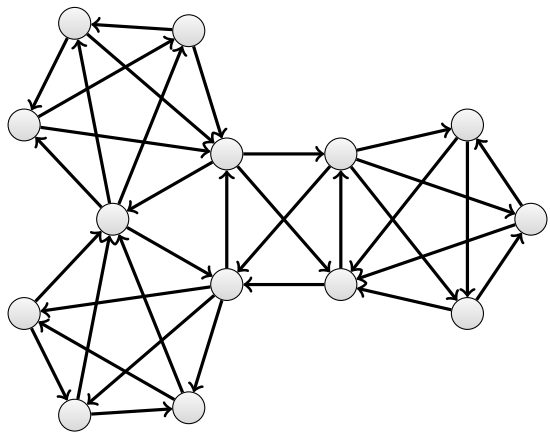
\begin{figure}[ht]
\begin{center}
\begin{tikzpicture}[scale=1]
	\tikzstyle{vertexX}=[circle,draw, top color=gray!5, 
	    bottom color=gray!30, minimum size=12pt, scale=1, inner sep=0.5pt]
	\tikzstyle{arc}=[->, very thick]
	\node (a) at (60:1) [vertexX] {};
	\node (b) at (180:1) [vertexX] {};
	\node (c) at (300:1) [vertexX] {};

    \node (xc) at (90:2.5) [vertexX] {};
    \node (yc) at (120:3) [vertexX] {};
    \node (zc) at (150:2.5) [vertexX] {};

\begin{scope}[xshift=1.5cm]
    \node (a1) at (60:1) [vertexX] {};
	\node (c1) at (300:1) [vertexX] {};
    \node (xb) at (330:2.5) [vertexX] {};
    \node (yb) at (360:3) [vertexX] {};
    \node (zb) at (390:2.5) [vertexX] {};
\end{scope}    

    \node (za) at (-90:2.5) [vertexX] {};
    \node (ya) at (-120:3) [vertexX] {};
    \node (xa) at (-150:2.5) [vertexX] {};
    
    \draw[arc] (a) to (b);
    \draw[arc] (b) to (c);
    \draw[arc] (c) to (a);
    \draw[arc] (c1) to (a1);
    
    \draw[arc] (a) to (a1);
    \draw[arc] (a) to (c1);
    \draw[arc] (a1) to (c);
    \draw[arc] (c1) to (c);
    
    \draw[arc] (xc) to (a);
    \draw[arc] (yc) to (a);
    \draw[arc] (zc) to (a);
    \draw[arc] (b) to (xc);
    \draw[arc] (b) to (yc);
    \draw[arc] (b) to (zc);

    \draw[arc] (xa) to (b);
    \draw[arc] (ya) to (b);
    \draw[arc] (za) to (b);
    \draw[arc] (c) to (xa);
    \draw[arc] (c) to (ya);
    \draw[arc] (c) to (za);

    \draw[arc] (xb) to (c1);
    \draw[arc] (yb) to (c1);
    \draw[arc] (zb) to (c1);
    \draw[arc] (a1) to (xb);
    \draw[arc] (a1) to (yb);
    \draw[arc] (a1) to (zb);
    
    \draw[arc] (xa) to (ya);
    \draw[arc] (ya) to (za);
    \draw[arc] (za) to (xa);

    \draw[arc] (xb) to (yb);
    \draw[arc] (yb) to (zb);
    \draw[arc] (zb) to (xb);
    
    \draw[arc] (xc) to (yc);
    \draw[arc] (yc) to (zc);
    \draw[arc] (zc) to (xc);
\end{tikzpicture}
\end{center}
\caption{\label{fig:3critical_ad_5}A digraph of ${\cal O}_3$ of order 14.}
\end{figure}

Since there are knobs of any odd order at least $5$, and $K'_1$ has order 6, there are elements of ${\cal O}_3$ of every order at least $12$.

\begin{proposition}\label{prop:O}
If $D \in {\cal O}_3$, then $D$ is $3$-dicritical.
\end{proposition}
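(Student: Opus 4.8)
The plan is to show that any $D \in \mathcal{O}_3$ satisfies the two defining conditions of $3$-dicriticality: that $\dic(D) = 3$, and that $\dic(D') \le 2$ for every proper subdigraph $D'$. Throughout I will use the description of $D$ as an odd dicycle $C = (v_1, v_2, \dots, v_{2\ell+1}, v_1)$ together with, for each arc $a = v_iv_{i+1}$, a copy $\vec{K}^{(i)}$ of a knob glued along its base so that the base of $\vec{K}^{(i)}$ is exactly the arc $a$. The crucial tools are Proposition~\ref{prop:knob}(i)--(iii): every precolouring of the base extends to a $3$-dicolouring of the knob; in every $2$-dicolouring of a knob the two base endpoints get distinct colours; and for every arc $e$ of a knob, $\vec{K}\setminus e$ has a $2$-dicolouring making the base endpoints equal.

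First I would prove $\dic(D) \le 3$. Colour the vertices of $C$ with colours from $\{1,2,3\}$ arbitrarily so that no arc of $C$ is monochromatic — e.g. alternate $1,2$ along the path and use $3$ once to break the odd cycle; then $C$ is properly $2$-coloured in the graph sense, in particular acyclic in each colour class. For each $i$, the base of $\vec{K}^{(i)}$ is now precoloured, so by Proposition~\ref{prop:knob}(i) we extend this to a $3$-dicolouring of each knob; since distinct knobs share only the two base vertices of $C$ (which already agree), these extensions combine into a $3$-dicolouring of $D$. Hence $\dic(D) \le 3$.

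Next I would prove $\dic(D) \ge 3$, i.e. $D$ has no $2$-dicolouring. Suppose $\phi$ is a $2$-dicolouring of $D$. Restricting $\phi$ to each knob $\vec{K}^{(i)}$ gives a $2$-dicolouring of that knob, so by Proposition~\ref{prop:knob}(ii) the two endpoints of its base — the consecutive vertices $v_i, v_{i+1}$ of $C$ — receive different colours. Thus $\phi$ restricted to $V(C)$ is a proper $2$-colouring of the odd cycle $C$ (in the undirected sense), which is impossible since odd cycles are not bipartite. This contradiction gives $\dic(D) = 3$.

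Finally, for criticality I must show $\dic(D \setminus e) \le 2$ for every arc $e$ of $D$, and $\dic(D - v) \le 2$ for every vertex $v$ (the latter follows from the former, since removing a vertex removes at least one arc and dichromatic number is monotone, but I would state it for completeness). Every arc $e$ of $D$ lies in exactly one knob $\vec{K}^{(i_0)}$ (arcs of $C$ are bases of their knob; other arcs lie in a unique knob). By Proposition~\ref{prop:knob}(iii), $\vec{K}^{(i_0)} \setminus e$ has a $2$-dicolouring $\phi_0$ in which the two base vertices $v_{i_0}, v_{i_0+1}$ get the same colour. Now view $C$ with the arc $v_{i_0}v_{i_0+1}$ contracted / with that adjacency dropped: the remaining graph on $V(C)$ is a path, which is bipartite, so there is a proper $2$-colouring of $V(C)$ agreeing with $\phi_0$ on $\{v_{i_0}, v_{i_0+1}\}$ and making every other arc of $C$ bichromatic. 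This precolours the base of every other knob $\vec{K}^{(i)}$, $i \ne i_0$, with distinct colours; extend each by Proposition~\ref{prop:knob}(i), and glue with $\phi_0$ on $\vec{K}^{(i_0)}\setminus e$. The result is a $2$-dicolouring of $D \setminus e$: within each knob (minus $e$ for $i_0$) it is a valid $2$-dicolouring, the only arcs not inside a knob are those of $C$ and they are all bichromatic, so no monochromatic dicycle survives. Hence $D$ is $3$-dicritical.

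The only genuinely delicate point is the gluing argument: one must check that a collection of $2$-dicolourings of the knobs that agree on the shared base vertices really does yield a $2$-dicolouring of $D$, i.e. that no new monochromatic dicycle is created across knobs. This holds because any dicycle of $D$ is either contained in a single knob (knobs meet only in the two base vertices of $C$, which have degree-cut structure preventing a dicycle from using two knobs and leaving via $C$ in a consistent way) or uses arcs of $C$; in the latter case it must traverse all of $C$, but $C$'s arcs are bichromatic in our colouring, so it is not monochromatic. Making this crossing analysis precise — essentially observing that $V(C)$ is a cut set partitioning $D$ into the knobs, so every dicycle lives in one knob or meets $C$ in a way forcing it to contain an arc of $C$ — is the main thing to get right; everything else is a direct application of Proposition~\ref{prop:knob}.
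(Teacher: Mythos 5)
Your overall strategy is exactly the paper's: colour the odd cycle properly, extend into each knob via Proposition~\ref{prop:knob}(i) for the upper bound; use (ii) plus non-bipartiteness of the odd cycle for the lower bound; and for criticality use (iii) on the knob containing the deleted arc and recolour the rest. The gluing concern you raise at the end is handled in the paper by the same observation you make (every dicycle either lies in one knob or uses an arc of $C$, and all such arcs are bichromatic).

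There is one step that fails as written. In the criticality argument you colour the other knobs $\vec{K}^{(i)}$, $i\neq i_0$, by ``extending each by Proposition~\ref{prop:knob}(i)''; but (i) only produces a \emph{3}-dicolouring of the knob, so your union need not be a $2$-dicolouring of $D\setminus e$. What you actually need is a $2$-dicolouring of each knob $\vec{K}^{(i)}$, $i\neq i_0$ (whose base endpoints are then automatically coloured differently, by (ii)), permuted to agree with your proper $2$-colouring of the path $C$ minus the arc $v_{i_0}v_{i_0+1}$. The existence of such $2$-dicolourings of knobs is true and easy (and is exactly what the paper invokes at this point), but it is not what statement (i) gives you; replace the appeal to (i) by this fact and the proof is complete.
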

\begin{proof}
Let $D$ be an oriented graph in ${\cal O}_3$ constructed on an odd directed cycle $\vec{C}$.
It is clear from the construction that $m(D) = \frac{5}{2} n(D)$.

 Using a proper $3$-colouring of $\vec{C}$ and extending to each knob by Proposition~\ref{prop:knob}~(i), we obtain a $3$-dicolouring of $D$.
 Suppose for a contradiction that $D$ admits a $2$-dicolouring.
 Then there is an arc $a$ of the directed cycle with both end-vertices coloured the same.
 Thus the knob with base $a$ contradicts Proposition~\ref{prop:knob}~(ii).
 Consequently, $\dic(D) =3$.
 
 Consider now an arc $a$ of $D$.
 A $2$-dicolouring of $D\setminus a$ can be obtained as follows.
 Take a $2$-dicolouring of the knob from which $a$ is removed such that the end-vertices of its base are coloured the same (it exists by Proposition~\ref{prop:knob}~(iii)), and $2$-dicolourings of the other knobs (for which the end-vertices of the base are coloured differently), and make sure that they agree on the vertices of the directed cycle. This is possible because $\vec{C}$ is odd. Then the union of those $2$-dicolourings is a $2$-dicolouring of $D\setminus a$.

Altogether, this proves that $D$ is $3$-dicritical.
\end{proof}

With the observation that for every even integer $n\geq 12$, there exists
a digraph in $\mathcal{O}_3$ with $n$ vertices and $\frac{5}{2}n$ arcs
(by taking the three knobs of the form $\vec{K}_i$),
and that for every odd integer $n \geq 13$, there exists a digraph in
$\mathcal{O}_3$ with $n$ vertices and $\frac{5n+1}{2}$ arcs
(by taking two knobs of the form $\vec{K}_i$, and for the last one
$\vec{K}'_1$), we deduce the following corollary.

\begin{corollary}\label{coro:construction_3_dicritical_odd_and_even_order}
For every integer $n\geq 12$, there is a $3$-dicritical oriented graph
with $n$ vertices and $\lceil \frac{5}{2}n\rceil$ arcs.
In other words, $o_3(n) \leq \lceil \frac{5}{2}n \rceil$ for every $n \geq 12$.
\end{corollary}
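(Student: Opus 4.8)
The plan is to combine the two structural facts we have already accumulated about the family $\mathcal{O}_3$: namely that every element of $\mathcal{O}_3$ is $3$-dicritical (Proposition~\ref{prop:O}), and that the number of vertices and arcs of a member of $\mathcal{O}_3$ are completely determined by the multiset of orders of the three knobs glued onto a fixed odd dicycle of length $3$. So the whole statement reduces to a counting exercise: for each $n \geq 12$, exhibit a choice of three knobs whose orders, together with the three vertices of the base dicycle, sum to $n$ and whose arc count equals $\lceil \frac{5}{2}n\rceil$.

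First I would record the arithmetic of a single knob. A knob $\vec{K}_i$ of height $i$ has $5 + 2(i-1) = 2i+3$ vertices — an odd number, ranging over all odd integers $\geq 5$ — and by the recursive construction it has $10 + 6(i-1)$ arcs; one checks $m = \frac{5}{2}(n-1)$ relative to its $n = 2i+3$ vertices in the sense that a knob of height $i$ contributes $2i+3$ vertices but its base arc $x_1x_2$ is shared bookkeeping-wise with the dicycle. Concretely, when we glue a knob with base arc $a$ onto an arc $a = uv$ of the odd dicycle, we add all vertices of the knob except the two endpoints of its base (which are identified with $u,v$) and all its arcs except $a$ itself; the even knob $\vec{K}'_1$ has $6$ vertices and $13$ arcs. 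From the proof of Proposition~\ref{prop:O} we already know $m(D) = \frac{5}{2}n(D)$ whenever all three knobs have odd order (so $n(D)$ is even), and a direct recount shows that replacing exactly one odd knob by $\vec{K}'_1$ adds one vertex and three arcs, yielding $n(D)$ odd and $m(D) = \frac{5n(D)+1}{2} = \lceil \frac{5}{2} n(D) \rceil$. Both cases therefore meet the target $\lceil \frac{5}{2} n \rceil$ exactly.

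Next I would verify surjectivity onto $\{n : n \geq 12\}$. For even $n \geq 12$: the three glued knobs of heights $i_1, i_2, i_3$ contribute $2(i_1 + i_2 + i_3)$ new vertices plus the $3$ dicycle vertices, so $n = 2(i_1+i_2+i_3) + 3$ — but this is odd, which signals I must be careful about exactly how many vertices each glued knob contributes. The cleaner way is simply to quote the fact, already asserted in the excerpt just before the corollary, that three knobs of the form $\vec{K}_i$ realize every even $n \geq 12$ (choosing $i_1, i_2, i_3$ freely among positive integers gives every sufficiently large value in the appropriate residue class, and $n = 12$ is attained by three copies of $\vec{K}_1$), and that swapping one of them for $\vec{K}'_1$ shifts $n$ by $1$ and hence realizes every odd $n \geq 13$. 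Combining with Proposition~\ref{prop:O}, each such $D$ is a $3$-dicritical oriented graph on $n$ vertices with $\lceil \frac{5}{2}n\rceil$ arcs, giving $o_k(n) \le o_3(n) \le \lceil \frac{5}{2}n\rceil$ as claimed. I expect no genuine obstacle here — the only thing to be careful about is the vertex/arc bookkeeping at the identification of base arcs with dicycle arcs, and confirming that the three residue classes modulo the available step sizes of the knob orders cover every integer $\geq 12$; the hand-checked small cases $n = 12, 13$ anchor the induction and the rest is routine.

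\begin{proof}
By Proposition~\ref{prop:O}, every digraph in $\mathcal{O}_3$ is $3$-dicritical, and by construction such a digraph is an oriented graph, so it suffices to exhibit, for each $n \geq 12$, a member of $\mathcal{O}_3$ on $n$ vertices with $\lceil \frac{5}{2} n \rceil$ arcs.

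A knob $\vec{K}_i$ of height $i$ has odd order $2i+3 \geq 5$, and every odd integer $\geq 5$ arises this way; the knob $\vec{K}'_1$ has order $6$. When a digraph $D \in \mathcal{O}_3$ is built from the directed $3$-cycle by gluing three knobs along its three arcs, identifying the two base-vertices of each knob with the endpoints of the corresponding arc, the proof of Proposition~\ref{prop:O} shows $m(D) = \frac{5}{2} n(D)$ in the case where all three knobs have odd order; in that case $n(D)$ is even. Replacing exactly one of these odd knobs by $\vec{K}'_1$ increases the order by $1$ and the number of arcs by $3$, so the resulting digraph $D' \in \mathcal{O}_3$ has odd order $n(D') = n(D)+1$ and $m(D') = \frac{5 n(D') + 1}{2} = \lceil \frac{5}{2} n(D') \rceil$. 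In both cases $m = \lceil \frac{5}{2} n \rceil$.

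It remains to see that every $n \geq 12$ is realized. For even $n \geq 12$, choosing three knobs of the form $\vec{K}_i$ with heights summing appropriately yields a member of $\mathcal{O}_3$ on exactly $n$ vertices (for $n = 12$ take three copies of $\vec{K}_1$, and larger even values are obtained by increasing the heights). For odd $n \geq 13$, take the construction for $n-1$ and replace one of its three knobs by $\vec{K}'_1$ as above. In every case we obtain a $3$-dicritical oriented graph on $n$ vertices with $\lceil \frac{5}{2} n \rceil$ arcs, whence $o_3(n) \leq \lceil \frac{5}{2} n \rceil$ for all $n \geq 12$.
\end{proof}
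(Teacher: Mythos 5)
Your proposal is correct and follows essentially the same route as the paper: members of $\mathcal{O}_3$ built on a directed $3$-cycle, using three knobs $\vec{K}_i$ for even $n\geq 12$ and two knobs $\vec{K}_i$ plus one copy of $\vec{K}'_1$ for odd $n\geq 13$, with Proposition~\ref{prop:O} supplying $3$-dicriticality. Two harmless slips: $\vec{K}_i$ has $10+5(i-1)=5i+5$ arcs (not $10+6(i-1)$), and swapping one knob for $\vec{K}'_1$ increases the order by exactly $1$ only when the swapped knob is $\vec{K}_1$, so for odd $n$ one should pick heights $(1,i_2,i_3)$ (or simply compute $n=9+2(i_2+i_3)$ and $m=23+5(i_2+i_3)=\frac{5n+1}{2}=\lceil \tfrac{5}{2}n\rceil$ directly); neither affects the conclusion.
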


\subsection{\texorpdfstring{$k$}{k}-dicritical oriented graphs with few arcs}\label{sec:lowerk}
%%%%%%%%%%%%%%%%%%%%%%%%%%%%%%%%%%%%%%%%

In~\cite{HoKa15}, the authors conjectured that for any $k$-dicritical oriented
graph $D$, $\frac{m(D)}{n(D)} \geq \frac{k^2}{2}- \bigO(k)$.
We disprove this conjecture by generalising the construction of the 
previous subsection.

\begin{theorem}\label{thm:general_construction}
For any integer $k \geq 2$, there is an integer $N_k$ such that
for every $n \geq N_k$, there exists a $k$-dicritical oriented
graph with $n$ vertices and $m$ arcs,
such that $m \leq (2k - 3) n$. Moreover, this last inequality is strict if $k>2$. 
%\PA{On devrait pas faire le théorème seulement pour $k \geq 3$? les 2-critiques on les connait}
\end{theorem}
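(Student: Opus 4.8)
The goal is to generalize the construction of $\mathcal{O}_3$ from $k=3$ to arbitrary $k\geq 2$. For $k=2$ the statement is essentially trivial: a directed cycle is $2$-dicritical with $m=n=(2k-3)n$, so the inequality holds (with equality, as the ``moreover'' clause allows). So assume $k\geq 3$.

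The core idea is to replace the ``odd directed cycle plus knobs'' template by a $(k-1)$-dicritical \emph{oriented} graph $H$ playing the role of the base structure, and to attach, to each arc of $H$, a gadget that forces its endpoints to receive distinct colours in any $(k-1)$-dicolouring but is freely $(k-1)$-extendable given any prescribed colouring of the two base vertices. First I would isolate the needed properties of such a gadget by abstracting Proposition~\ref{prop:knob}: I want a ``$k$-knob'' $\vec K$ with a distinguished base arc $uv$ such that (i) every precolouring of $\{u,v\}$ extends to a $k$-dicolouring of $\vec K$; (ii) in every $(k-1)$-dicolouring of $\vec K$, $u$ and $v$ get different colours; (iii) for every arc $a$ of $\vec K$, $\vec K\setminus a$ has a $(k-1)$-dicolouring in which $u$ and $v$ get the same colour. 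One clean way to build such a gadget with few arcs is to take a small fixed $k$-dicritical oriented graph (e.g., one obtained by iterating the oriented Haj\'os construction, as in~\cite{HoKa15, bang2019haj}), delete one arc to get a $(k-1)$-dicolourable ``link'', and identify/route its two special vertices to $u,v$; the arc count of such a gadget is $O_k(1)$ per base arc, independent of $n$. Then I would follow the proof of Proposition~\ref{prop:O} almost verbatim: starting from a $(k-1)$-dicritical oriented graph $H$ on a bounded number of vertices whose underlying graph has a Hamiltonian-type structure allowing colourings to be ``rotated'' (an odd directed cycle for $k=3$; for general $k$ one needs an $H$ all of whose arcs can be given the ``same endpoints colour'' in some $(k-1)$-dicolouring of $H\setminus a$, which is exactly $(k-1)$-dicriticality of $H$), attach a $k$-knob to each arc, argue $\dic = k$ via (i)+(ii) and via $(k-1)$-dicriticality of $H$, and argue criticality via (iii) together with the fact that deleting any arc inside a single knob can be compensated.

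To get \emph{every} sufficiently large $n$ and the bound $m\leq (2k-3)n$, I would make the knobs come in an infinite family of orders (as with $\vec K_i$ versus $\vec K'_1$ in the $k=3$ case, giving both parities), so that by choosing the sizes of the knobs one realizes every residue and every large total order $n$. The arc-count computation is the crux of the ``moreover'': if $H$ has $n_0$ vertices and $m_0$ arcs, and each of the $m_0$ knobs we attach has roughly $\alpha$ arcs per vertex asymptotically (for $\vec K_i$ in the $k=3$ case, $\alpha = 5/2$), then $m(D)/n(D)\to$ (a weighted average dominated by $\alpha$), and one needs $\alpha < 2k-3$ strictly. So I must design the knob family so that its asymptotic arc density is strictly below $2k-3$; since a knob is essentially ``a thin tower of $3$-cycles attached to a $(k-1)$-structure'', its density can be pushed down to roughly $k-1+o(1)$ (far below $2k-3$ for $k\geq 3$), which gives the strict inequality with room to spare. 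A short explicit estimate then yields $m\leq(2k-3)n$ for all $n\geq N_k$.

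\textbf{Main obstacle.} The delicate point is \emph{not} the arc count (which is comfortably below $2k-3$ once the gadget is thin) but constructing the $k$-knob and verifying properties (i)--(iii) for all $k$ simultaneously, i.e.\ producing, for each $k$, an oriented gadget that (a) is $k$-colourable for \emph{any} boundary colouring, (b) genuinely forces $u\neq v$ at level $k-1$, and (c) is ``arc-minimally'' so, in the sense of (iii). Getting all three from a single construction requires care: (i) wants the gadget to be ``easy'' once two vertices are fixed, while (ii) wants it to be ``rigid'' at level $k-1$ — these pull in opposite directions, and the oriented (digon-free) constraint rules out the easy bidirected tricks available for the analogous undirected statements. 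I expect to resolve this by taking the gadget to be (a deletion of one arc from) a known small $k$-dicritical oriented graph with a prescribed ``colour-difference-forcing'' pair of vertices, and then proving (i)--(iii) by local recolouring arguments analogous to Lemmas~\ref{lem:forestB} and~\ref{lemma:vertex_no_almost_only_digons}; assembling these carefully, and checking the ``rotation'' step on $H$ uses $(k-1)$-dicriticality of $H$ exactly as oddness of $\vec C$ was used for $k=3$, will be where the real work lies.
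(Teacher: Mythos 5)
There is a genuine gap, and it lies in the role you assign to the base structure $H$. You propose to take $H$ to be a $(k-1)$-dicritical \emph{oriented} graph and claim that $(k-1)$-dicriticality of $H$ is ``exactly'' what makes both halves of the argument work. It is not. For the lower bound, if the whole composite digraph had a $(k-1)$-dicolouring, your gadget property (ii) only forces every arc of $H$ to be bichromatic, i.e.\ the restriction to $V(H)$ is a proper $(k-1)$-colouring of the \emph{underlying undirected graph} of $H$; to reach a contradiction you need $\chi(\mathrm{UG}(H))\geq k$, and $\dic(H)=k-1$ only gives $\chi(\mathrm{UG}(H))\geq k-1$. (For $k=3$ the paper's base is an \emph{odd} directed cycle precisely because oddness makes the underlying cycle $3$-chromatic; ``$2$-dicritical'' alone, i.e.\ any directed cycle, would not do.) Similarly, for criticality you need, for each arc $a$ of $H$, a $(k-1)$-colouring of $V(H)$ that is monochromatic on $a$ and bichromatic on \emph{all other} arcs (so the intact gadgets can be coloured); this is the statement $\chi(\mathrm{UG}(H)\setminus e)\leq k-1$, not the statement that $H\setminus a$ has some $(k-2)$- or $(k-1)$-dicolouring, since a dicolouring does not prevent monochromatic arcs elsewhere. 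In short, the property you need of the base is that $\mathrm{UG}(H)$ is $k$-critical in the undirected sense; the paper secures it trivially by taking the base to be a \emph{tournament} on $k$ vertices (underlying graph complete, so pigeonhole yields a monochromatic base arc in any $(k-1)$-colouring, and deleting any edge of $K_k$ allows a $(k-1)$-colouring proper elsewhere).

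The second problem is that the part you yourself flag as the ``main obstacle''---the construction of the gadget with properties (i)--(iii) for all $k$---is exactly the content of the paper's key step and is left unresolved in your plan: ``delete one arc from a small $k$-dicritical oriented graph and route its two special vertices to $u,v$'' is not a construction one can verify. The paper's gadget is the $D$-knob $\vec{K}(D)$: add just two vertices $z_1,z_2$ to a $k$-dicritical oriented graph $D$ together with $z_1z_2$ and all arcs $z_2u,uz_1$ ($u\in V(D)$), so that $z_1,z_2$ coloured alike would force a $(k-1)$-dicolouring of $D$ (Lemma~\ref{lem:general_knob}); the whole construction is then a recursion $\vec{G}^i_{k+1}$ = tournament on $k+1$ vertices with a knob $\vec{K}(\vec{G}_k)$ on each arc, and the density $<2k-3$ comes out of the recursion (it approaches $2k-3$, so your estimate that the gadget density can be pushed to ``roughly $k-1$'' is also unsupported, though that is secondary). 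As written, your argument for $\dic=k$ and for criticality would fail for a base $H$ whose underlying graph is only $(k-1)$-chromatic, so the proposal does not establish Theorem~\ref{thm:general_construction}.
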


First we need to define a generalisation of the knob:
for any oriented graph $D$, we define the {\bf $D$-knob with base $z_1z_2$},
denoted by $\vec{K}(D)$,
to be the oriented graph with vertex set $V(D) \cup \{z_1,z_2\}$ and 
arc set $A(D) \cup \{z_1z_2\} \cup \{z_2u, uz_1 \mid u \in V(D)\}$.
For example, $\vec{K}_1$ is the $\vec{C}_3$-knob, 
and $\vec{K}'_1$ is the $\vec{C}_4$-knob, where $\vec{C}_k$ is the directed cycle of order $k$ for every $k\geq 3$.

\begin{lemma}\label{lem:general_knob}
Let $D$ be a $k$-dicritical oriented graph and $\vec{K}(D)$ be the $D$-knob
with basis $z_1z_2$.
\begin{enumerate}[label=(\roman*)]
    \item $\dic(\vec{K}(D)) =k$, and  $z_1$ and $z_2$
        receive different colours in every $k$-dicolouring of $\vec{K}(D)$. \label{item:knob_a_b_different_colour}
    \item For any arc $a$ of $\vec{K}(D)$, there exists a $k$-dicolouring of
        $\vec{K}(D)\setminus a$ where $z_1$ and $z_2$ receive the same colour.
        \label{item:knob_critical}
\end{enumerate}
\end{lemma}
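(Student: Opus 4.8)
The plan is to prove both parts by explicit constructions of dicolourings, exploiting that $D$ is $k$-dicritical so that every proper subdigraph of $D$ is $(k-1)$-dicolourable. The one structural fact that drives everything is this: in $\vec{K}(D)$ the only arc leaving $z_1$ is $z_1z_2$ and the only arc entering $z_2$ is $z_1z_2$, so every directed cycle meeting $\{z_1,z_2\}$ must pass through both $z_1$ and $z_2$ and must use the arc $z_1z_2$; such a cycle has the shape $z_1,z_2,u,\dots,v,z_1$ with $u,v\in V(D)$, and in particular $(z_1,z_2,u,z_1)$ is a directed $3$-cycle for \emph{every} $u\in V(D)$.

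For \ref{item:knob_a_b_different_colour}, to get $\dic(\vec{K}(D))\leq k$ I would take a $k$-dicolouring of $D$ (which exists since $\dic(D)=k$) and extend it by giving $z_1$ and $z_2$ two distinct colours; by the observation above no monochromatic dicycle can meet $\{z_1,z_2\}$, and cycles lying inside $D$ are killed by the chosen colouring since $\vec{K}(D)[S]=D[S]$ for $S\subseteq V(D)$. For $\dic(\vec{K}(D))\geq k$, the restriction to $V(D)$ of any $(k-1)$-dicolouring of $\vec{K}(D)$ would be a $(k-1)$-dicolouring of $D$, contradicting $k$-dicriticality. Finally, if some $k$-dicolouring $\psi$ gave $z_1$ and $z_2$ a common colour $c$, then no $u\in V(D)$ could have colour $c$, as otherwise $(z_1,z_2,u,z_1)$ would be a monochromatic $3$-cycle; hence $\psi$ would use only $k-1$ colours on $V(D)$, again contradicting $\dic(D)=k$.

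For \ref{item:knob_critical}, I would split into cases on the removed arc $a$. If $a=z_1z_2$, then in $\vec{K}(D)\setminus a$ the vertex $z_2$ is a source and $z_1$ a sink, so a $k$-dicolouring of $D$ extended by colouring $z_1$ and $z_2$ alike works. If $a\in A(D)$, then $D\setminus a$ is a proper subdigraph of $D$, hence $(k-1)$-dicolourable with colours $1,\dots,k-1$; giving $z_1$ and $z_2$ colour $k$ works, since colour class $k$ is then $\{z_1,z_2\}$, which spans only $z_1z_2$ and is acyclic. In the remaining case $a\in\{z_2u_0,\ u_0z_1\}$ for some $u_0\in V(D)$: here $D-u_0$ is a proper subdigraph of $D$, so it has a $(k-1)$-dicolouring with colours $1,\dots,k-1$, which I extend by giving each of $u_0,z_1,z_2$ colour $k$. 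The arcs of $\vec{K}(D)\setminus a$ inside the class $\{z_1,z_2,u_0\}$ are exactly two of the three arcs $z_1z_2$, $z_2u_0$, $u_0z_1$ (the third being $a$), and these two form a directed path, so the class is acyclic; the other classes are unchanged subsets of $V(D)$, hence acyclic. In every case $z_1$ and $z_2$ receive the same colour.

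I do not expect a genuine obstacle: once the shape of dicycles through $z_1$ and $z_2$ is pinned down, the argument is bookkeeping. The only point needing care is that the case analysis in \ref{item:knob_critical} is exhaustive — every arc of $\vec{K}(D)$ lies in $A(D)$, equals $z_1z_2$, or joins exactly one of $z_1,z_2$ to some $u_0\in V(D)$ — and, in the last case, that the deleted arc is indeed one of the three arcs induced by $\{z_1,z_2,u_0\}$, so that removing it leaves a path rather than a triangle.
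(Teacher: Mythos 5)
Your proposal is correct and follows essentially the same route as the paper: extend a $k$-dicolouring of $D$ with distinct colours on $z_1,z_2$ for the upper bound, use the triangles $(u,z_1,z_2,u)$ to force $z_1,z_2$ apart, and in (ii) split on whether $a$ is $z_1z_2$, an arc of $D$, or an arc joining $\{z_1,z_2\}$ to some $u_0$, colouring $\{z_1,z_2\}$ (resp.\ $\{z_1,z_2,u_0\}$) with the extra colour. The only cosmetic difference is that you treat $a=z_2u_0$ and $a=u_0z_1$ in one stroke where the paper invokes directional duality, and you spell out the structural observation about dicycles through $z_1,z_2$ slightly more explicitly.
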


\begin{proof}
(i) $\dic(\vec{K}(D)) \geq \dic(D) =k$. Now every $k$-dicolouring of $D$ can be extended into a $k$-dicolouring of $\vec{K}(D)$ by assigning to $z_1$ and $z_2$ two distinct colours because all directed cycles containing one of those vertices contains the arc $z_1z_2$. Hence $\dic(\vec{K}(D)) =k$.

Suppose for contradiction that $\vec{K}(D)$ has a $k$-dicolouring $\phi$
with $\phi(z_1)=\phi(z_2)$. Without loss of generality, $\phi(z_1)=\phi(z_2)=k$. Then for any vertex $u$ in the copy of $D$,
$(u,z_1,z_2,u)$ is a directed $3$-cycle, so $\phi(u) \neq \phi(z_1)=\phi(z_2)=k$.
Thus $\phi$ induces a $(k-1)$-dicolouring of $D$, contradiction.

\medskip
(ii) Let $a$ be an arc of $\vec{K}(D)$.
If $a=z_1z_2$, then $z_1$ and $z_2$ participate in no dicycle of $\vec{K}(D)\setminus a$.
So one can choose any $k$-dicolouring of the copy of $D$ in $\vec{K}(D)\setminus a$, and give the same colour (any of them) to $z_1$ and $z_2$. 
%the strongly connected components of $\vec{K}(D) \setminus a$ are the copy of $D$, $z_1$ and $z_2$. Thus we can take for $z_1$ and $z_2$ the same colour, and for the copy of $D$ an arbitrary $k$-dicolouring.

If $a=uz_1$ for some vertex $u$ in the copy of $D$. Then consider a $(k-1)$-dicolouring of $D-u$, %uncolour $z_1$ and $z_2$, 
and colour $z_1,z_2,u$ with colour $k$. This yields the desired $k$-dicolouring of $D\setminus a$.
By directional duality, we get the result if $a$ is of the form $z_2u$.

If $a$ is in the copy of $D$, then taking a $(k-1)$-dicolouring of $D\setminus a$, and colouring $z_1$ and $z_2$ with colour $k$, we get the desired $k$-dicolouring.
\end{proof}

We are now ready to prove Theorem~\ref{thm:general_construction}.

\begin{proof}[Proof of Theorem~\ref{thm:general_construction}]
For all $i \geq 1$ and all $k\geq 2$, we  construct a $k$-dicritical oriented graph $\vec{G}^i_k$ in such a way that $\vec{G}^{i+1}_k$  has one more vertex than $\vec{G}^i_k$. We first define $\vec{G}^i_2$ for all $i \geq 1$, and explain how to construct $\vec{G}^i_{k+1}$ from $\vec{G}^i_k$. We then prove that the $\vec{G}^i_k$ satisfy the statement of the theorem. 
\medskip 
%We will define for every $k \geq 2$ and $i \geq 1$ a $k$-dicritical oriented graph $\vec{G}^i_k$ such that $\vec{G}^{i+1}_k$  has one more vertex than $\vec{G}^i_k$.

For every $i \geq 1$, let $\vec{G}^i_2$ be the directed cycle of length $i+2$.  
%The $\vec{G}^i_2$, $i\in \mathbb{N}$, clearly show the statement of the theorem for $k=2$.
Let $i \geq 1$ and  $k \geq 2$. Suppose $\vec{G}^i_k$ is already constructed, and define $\vec{G}^i_{k+1}$ as follows: start with a tournament $T$ on $k+1$ vertices, and  glue on every arc $z_1z_2$ of $T$ a copy of $\vec{K}(\vec{G}^1_{k})$
with base $z_1z_2$, except on a unique arc on which we  glue a copy of
$\vec{K}(\vec{G}^i_k)$.
\medskip

We now prove that $\dic(\vec{G}^i_{k+1}) \geq k+1$. 
Since $T$ has $k+1$ vertices, there is a monochromatic arc in any $k$-dicolouring of $T$. 
So, if $\vec{G}^i_{k+1}$ has a $k$-dicolouring $\phi$, then
$\vec{G}^i_{k+1}$ contains a $\vec{G}^1_{k}$-knob or $\vec{G}^i_k$-knob with base $z_1z_2$ such that
$\phi(z_1)=\phi(z_2)$, a contradiction to  Lemma~\ref{lem:general_knob}~\ref{item:knob_a_b_different_colour}. 
Hence $\dic(\vec{G}^i_{k+1}) \geq k+1$.

Let us now prove that $\vec{G}^i_{k+1}$ is $(k+1)$-dicritical. 
Consider  an arc $a$ of $\vec{G}^i_{k+1}$. A $k$-dicolouring of
$\vec{G}^i_{k+1} \setminus a$ can be obtained as follows.
Let $\vec{K}$ be the $\vec{G}^1_{k}$-knob or $\vec{G}^i_k$-knob containing $a$ and $z_1z_2$ its base.
Let $c$ be a colouring of $V(T)$ such that the $k-1$ vertices of $V(T)\setminus \{z_1,z_2\}$ receive  pairwise distinct colours from $[k-1]$ and $c(z_1)=c(z_2)=k$.
For each arc $xy$ of $A(T)\setminus z_1z_2$, take a $k$-dicolouring of the $\vec{G}^1_{k}$-knob or $\vec{G}^i_k$-knob  with base $xy$. By Lemma~\ref{lem:general_knob}~\ref{item:knob_a_b_different_colour}, the colours
of $x$ and $y$ are distinct. Permute the colours so that $x$ is coloured $c(x)$ and $y$ is coloured $c(y)$.
By Lemma~\ref{lem:general_knob}~\ref{item:knob_critical}, $\vec{K}\setminus a$
has a $k$-dicolouring such that $z_1$ and $z_2$ are coloured the same. Permute the colours so that they are coloured $k$. 
Now, the dicolourings of the $\vec{G}^1_{k}$-knobs and $\vec{G}^i_k$-knob agree with $c$ on $V(T)$.
This easily implies that the union of these dicolourings is a $k$-dicolouring of $\vec{G}^i_{k+1} \setminus a$. Hence $\vec{G}^i_{k+1}$ is $(k+1)$-dicritical.
\medskip

For all $i \geq 1$ and $k \geq 2$, let $n^i_k=n(\vec{G}^i_{k})$ and  $m^i_k=m(\vec{G}^i_{k})$. 
For $k=2$, $n(\vec{G}^i_2)= m(\vec{G}^i_2)=i+2$, and thus $\frac{m(\vec{G}^i_2)}{n(\vec{G}^i_2)}=1 = 2k-3$. 
So, to finish the proof, it suffices to prove that for all $i\geq 1$ and $k \geq 3$, $\frac{m(\vec{G}^i_{k})}{n(\vec{G}^i_{k})} < 2k-3$. 

We have the following relations: 

\[
\begin{split}
&\left\{
\begin{array}{r l l}
n^i_2 =& i+2 & \mbox{for all}~i\geq 1 \\
n^i_k =& (\binom{k}{2}-1)n^1_{k-1} + n^i_{k-1} + k & \mbox{for all}~i\geq 1~\mbox{and}~k \geq 3 \\
\end{array}
\right. \\
&\\
&\left\{
\begin{array}{r l l}
m^i_2 =& i+2 & \mbox{for all}~i\geq 1 \\
m^i_k =& (\binom{k}{2}-1)(2n^1_{k-1} + m^1_{k-1} +1)
    + (2n^i_{k-1} + m^i_{k-1}+1)& \mbox{for all}~i\geq 1~\mbox{and}~k \geq 3 \\
\end{array}
\right.
\end{split}
\]

It follows by induction that $n^i_k = n^1_k + (i-1)$ and $m^i_k = m^1_k + (2k-3)(i-1)$ for every $i \geq 1, k \geq 2$. So we can set $N_k = n^1_k$.

Let first prove $\frac{m^1_k}{n^1_k} < 2k-3$.
For $k=3$, the result is straightforward: $\frac{5}{2} < 3$.
One can easily show by induction that $n^1_k \geq 2^{k-1}$ for every 
$k \geq 2$. Now for $k \geq 4$
\[
\begin{split}
\frac{m^1_k}{n^1_k} & < \frac{m^1_k}{\binom{k}{2}n^1_{k-1}} \\
&\leq 2  + \frac{m^1_{k-1}}{n^1_{k-1}} + \frac{1}{n^1_{k-1}} \\
&\leq 2(k-3) + \frac{m^1_3}{n^1_3} + \sum_{j=3}^{k-1} \frac{1}{n^1_j} \\
&< 2(k-3) + \frac{5}{2} + \sum_{j=3}^{+\infty} 2^{-j+1}  \\
&= 2k - 6 + \frac{5}{2}+ 2^{-1} = 2k-3. \\
\end{split}
\]
It follows that for every $i \geq 1$ 
\[
\begin{split}
\frac{m^i_k}{n^i_k} 
&= \frac{m^1_k + (2k-3)(i-1)}{n^1_k + (i-1)} \\
&< \frac{(2k-3)n^1_k + (2k-3)(i-1)}{n^1_k + (i-1)} = 2k-3 \\
\end{split}
\]
as claimed.
\end{proof}

\section{Improved lower bound on \texorpdfstring{$o_k(n)$}{ok(n)}} \label{sec:lower_ok(n)}
%%%%%%%%%%%%%%%%%%%%%%%%%%%%%%%%%%%%%%%%%%

 A  {\bf Gallai forest} is a digraph whose blocks are either a directed cycle or an arc.

\begin{lemma}\label{lem:oriented-gallai-deg}
Let $H$ be an oriented Gallai forest, then 
\[ m(H) \le \frac{3}{2}(n(H)-1).\]
\end{lemma}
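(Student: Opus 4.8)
The plan is to prove this by induction on the number of blocks of $H$. Recall that an oriented Gallai forest is an oriented graph each of whose blocks is either a single arc or a directed cycle; in particular, $H$ has no digon, so each block contributes only one arc per pair of adjacent vertices it contains.

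First I would handle the base case: if $H$ has at most one block, then $H$ is either a single vertex (so $m(H)=0 \le \frac{3}{2}(n(H)-1) = 0$), a single arc ($m=1$, $n=2$, and $1 \le \frac{3}{2}$), or a single directed cycle of length $\ell \ge 3$, where $m(H) = \ell = n(H)$ and we need $n \le \frac{3}{2}(n-1)$, i.e. $n \ge 3$, which holds. Actually it is cleanest to argue per connected component and per block directly: since both sides of the inequality are additive over connected components after accounting for the ``$-1$'' terms, and a block-decomposition argument reduces to leaf blocks, I would instead peel off one block at a time. Concretely, take a block $B$ of $H$ that contains at most one cut-vertex of $H$ (such a leaf block exists whenever $H$ has at least one edge and is not $2$-connected; if $H$ itself is a single block we are in the base case). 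Let $H' = H - (V(B) \setminus \{v\})$ where $v$ is the unique cut-vertex in $B$ (or $H'$ is empty/a vertex if $B$ is a whole component). Then $n(H) = n(H') + (n(B) - 1)$ and $m(H) = m(H') + m(B)$.

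The key computation is then that $m(B) \le \frac{3}{2}(n(B)-1)$ holds for a single block with the ``$-1$'' normalization replaced appropriately: if $B$ is an arc, $m(B) = 1 = \frac{3}{2}(n(B)-1) \cdot \frac{2}{3}$, and indeed $1 \le \frac{3}{2}\cdot 1$; if $B$ is a directed cycle of length $\ell$, then $m(B) = \ell$ and $n(B) - 1 = \ell - 1$, so we need $\ell \le \frac{3}{2}(\ell-1)$, equivalent to $\ell \ge 3$, which holds. So in all cases $m(B) \le \frac{3}{2}(n(B)-1)$. Combining with the induction hypothesis $m(H') \le \frac{3}{2}(n(H')-1)$ (when $H'$ is nonempty; if $H'$ is empty the bound $m(H) = m(B) \le \frac{3}{2}(n(B)-1) = \frac{3}{2}(n(H)-1)$ is immediate),
\[
m(H) = m(H') + m(B) \le \tfrac{3}{2}(n(H')-1) + \tfrac{3}{2}(n(B)-1) = \tfrac{3}{2}\big(n(H') + n(B) - 1\big) - \tfrac{3}{2} = \tfrac{3}{2}(n(H)-1) - \tfrac{3}{2} < \tfrac{3}{2}(n(H)-1),
\]
which gives the claim (with room to spare once there are at least two blocks). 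One should double-check the degenerate cases $n(H) \in \{0,1\}$ separately, where $m(H)=0$ and the right-hand side is $\le 0$, so the statement holds with equality or vacuously.

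The only mild subtlety — not really an obstacle — is the bookkeeping of the additive decomposition: one must be careful that the ``$-1$'' appears once per connected component, not once per block, which is exactly why peeling a block strictly improves the slack (each extra block ``wastes'' a $\frac{3}{2}$ on the right-hand side that the left-hand side does not need). The crucial structural input is simply that $H$ is an \emph{oriented} Gallai forest: this forbids bidirected blocks (in particular digons and bidirected complete graphs), so every block has at most as many arcs as vertices, which is what makes the per-block inequality $m(B) \le \frac{3}{2}(n(B)-1)$ hold.
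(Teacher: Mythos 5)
Your proposal is essentially the paper's own argument: decompose at a cut-vertex (the paper splits $H$ at a cut-vertex into two pieces and applies induction to both; you peel a leaf block, which is the same idea), and check the base cases arc and directed cycle of length at least $3$. One correction, though: your final display contains an arithmetic slip. Since $H'$ and $B$ share the cut-vertex $v$, we have $n(H') + n(B) = n(H) + 1$, so
\[
\tfrac{3}{2}\bigl(n(H')-1\bigr) + \tfrac{3}{2}\bigl(n(B)-1\bigr) = \tfrac{3}{2}\bigl(n(H') + n(B) - 2\bigr) = \tfrac{3}{2}\bigl(n(H)-1\bigr),
\]
not $\tfrac{3}{2}(n(H)-1) - \tfrac{3}{2}$; there is no ``room to spare'', and indeed equality in the lemma is attained (a single directed triangle, or triangles glued at cut-vertices), so the strict inequality you assert is false. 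Since the lemma only claims the non-strict bound, the corrected computation still completes your induction, so the proof stands; just drop the claim of extra slack (and note the statement is only meaningful for nonempty $H$, as in the paper).
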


\begin{proof}
We prove the result by induction. 
If $H$ is not connected, then the induction applied to each of the connected components yields the result for $H$.
Henceforth, we may assume that $H$ is connected.

If $H$ is non-separable, then $H$ is either an arc or a directed cycle of length at least $3$.
In the first case, $m(H) =1$ and $n(H)=2$ and in the second $m(H)=n(H)\geq 3$. In both cases $m(H) \le \frac{3}{2}(n(H)-1)$.

Assume now that $H$ is separable.
There exists $v$ such that $H-v$ is not connected.
Let $C_1$ be a connected component of $H-v$ and let $H_1=H[V(C_1)\cup\{v\}]$ and $H_2=H-V(C_1)$.
 By the induction hypothesis, $m(H_1)\leq \frac{3}{2}(n(H_1)-1)$ and  $m(H_2)\le \frac{3}{2}(n(H_2)-1)$. The conclusion holds since $n(H)-1 = n(H_1)-1 + n(H_2)-1$, and $m(H)=m(H_1)+m(H_2)$. %Note that equality holds if and only if every block of $H$ is a directed triangle.
\end{proof}

\begin{theorem}\label{theorem:improved_lower_bound_on_ok}
Let $D$ be a $k$-dicritical oriented graph where $k\ge 3$. Then 
\[
m(D) \geq \pth{ k - \frac{3}{4}-\frac{1}{4k-6}} n(D) + \frac{3}{4(2k-3)}.
\]
\end{theorem}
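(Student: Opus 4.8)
The plan is to exploit the structure of the low-degree part of $D$ and then balance two counting bounds. By Lemma~\ref{lemma:degeneracy} every vertex $v$ of $D$ satisfies $d(v)=d^+(v)+d^-(v)\ge 2(k-1)$, so let $L=\{v\in V(D): d(v)=2(k-1)\}$, set $\ell=|L|$, $H=D[L]$, and $h=m(H)$. If $\ell=0$ then $2m=\sum_v d(v)\ge (2k-1)n$, so $m\ge(k-\tfrac12)n$, and since $k\ge 3$ this exceeds $(k-\tfrac34-\tfrac1{4k-6})n+\tfrac{3}{4(2k-3)}$ for every $n\ge 1$ (the slope gap $\tfrac14+\tfrac1{4k-6}$ is larger than $\tfrac{3}{4(2k-3)}$); so we may assume $\ell\ge 1$. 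Since $D$ is an oriented graph it has no digon, and neither does $H$; hence by Theorem~\ref{thm:directed_gallai_subdigraph} every block of $H$ is an arc or a directed cycle, so $H$ is an oriented Gallai forest, and Lemma~\ref{lem:oriented-gallai-deg} gives $h\le \tfrac32(\ell-1)$.

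Next I would extract two inequalities. Splitting the degree sum according to membership in $L$ (and using $d(v)\ge 2k-1$ for $v\notin L$) gives the \emph{degree inequality}
\[ 2m=\sum_{v\in L}d(v)+\sum_{v\notin L}d(v)\ \ge\ 2(k-1)\ell+(2k-1)(n-\ell)\ =\ (2k-1)n-\ell. \]
For the second, let $e$ be the number of arcs of $D$ with exactly one endpoint in $L$; then $\sum_{v\in L}d(v)=2h+e$, so $e=2(k-1)\ell-2h$, and since the arcs counted by $h$ and those counted by $e$ form disjoint subsets of $A(D)$ we get $m\ge h+e=2(k-1)\ell-h\ge 2(k-1)\ell-\tfrac32(\ell-1)$; that is, the \emph{sparsity inequality}
\[ m\ \ge\ \pth{2k-\tfrac72}\ell+\tfrac32 . \]

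To conclude, I would take the convex combination of the degree inequality and the sparsity inequality with weights $a=\frac{4k-7}{4k-6}$ and $b=\frac1{4k-6}$ respectively; these are positive and sum to $1$ for $k\ge 3$, and are chosen so that the coefficients of $\ell$ (namely $-\tfrac a2$ and $\pth{2k-\tfrac72}b$) cancel. What remains is
\[ m\ \ge\ \frac{a(2k-1)}{2}\,n+\frac{3b}{2}\ =\ \frac{(4k-7)(2k-1)}{2(4k-6)}\,n+\frac{3}{2(4k-6)}, \]
and since $(4k-7)(2k-1)=8k^2-18k+7=2(4k-6)\pth{k-\tfrac34-\tfrac1{4k-6}}$ and $\tfrac{3}{2(4k-6)}=\tfrac{3}{4(2k-3)}$, this is exactly the asserted bound. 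I do not expect a genuine obstacle once Theorem~\ref{thm:directed_gallai_subdigraph} and Lemma~\ref{lem:oriented-gallai-deg} are available: those two results carry all the substance, and what is left is a one-parameter optimization — the weights $a,b$ are simply those that eliminate $\ell$. The only points needing a little care are the degenerate case $\ell=0$ and remembering to invoke the Gallai-forest bound only when $\ell\ge 1$.
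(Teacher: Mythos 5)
Your proof is correct and takes essentially the same route as the paper's: the same set of minimum-degree vertices, the directed Gallai theorem plus the Gallai-forest edge bound, the same two counting inequalities, and the same elimination of $\ell=|S|$ (the paper forms $(2k-\tfrac{7}{2})$ times the degree inequality plus the sparsity inequality, which is exactly your convex combination up to scaling by $4k-6$). The only cosmetic difference is your explicit handling of the degenerate case $\ell=0$, which the paper passes over silently.
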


\begin{proof}
Let $S$ be the set of vertices $v\in V(D)$ such that $d^+(v)=d^-(v)=k-1$. 
By Theorem~\ref{thm:directed_gallai_subdigraph}, $D[S]$ is an oriented Gallai forest,
and by Lemma~\ref{lem:oriented-gallai-deg}, 
\[ m(D[S]) \leq \frac{3}{2}(|S|-1)\]
arcs. Note that $(2k-2)|S|$ is the number of arcs of $D$ incident with vertices of
S, counting those in $D[S]$ twice. Hence,
\begin{align}
m(D) \ge (2k-2)|S| - m(D[S]) \ge \pth{2k-\frac{7}{2}}|S| + \frac{3}{2}. \label{eq:arcS}
\end{align}
All vertices in $V(D-S)$ have degree at least $2k-1$, so
\begin{equation}
2m(D) \ge (2k-1)n(D-S) + (2k-2)|S| = (2k-1)n(D)-|S|. \label{eq:arcG}
\end{equation}
By doing $(2k-\tfrac{7}{2})$ \eqref{eq:arcG} + \eqref{eq:arcS}, we obtain
\begin{align*}
\pth{4k-6} m(D) &\ge (2k-1)\pth{2k-\frac{7}{2}}n(D) + \frac{3}{2}\\
&\ge \pth{4k^2-9k+\frac{7}{2}} n(D) \frac{3}{2}\\
m(D) &\ge \pth{ \frac{4k^2-9k+\frac{7}{2}}{4k-6}} n(D) + \frac{3}{2(4k-6)} \\
&\geq  \pth{k - \frac{3}{4} - \frac{1}{4k-6}} n(D) + \frac{3}{2(4k-6)}.
\end{align*}
\end{proof}

% \CR{C'est interessant, j'ai juste rajoute la constante additive qu'on obtient gratuitement.}

% \PA{En suivant la méthode de Krivelevich, c'est à dire utiliser un théorème de Stiebitz qui dit que le nombre de composante connexe dans le sous-graph induit par les petits sommets est au moins le nombre de composantes connexes dans le sous-graphe induit par les gros sommets puis la méthode de Gallai ci-dessus, on arrive à une meilleur borne. Avec Quentin on pense avoir généraliser cette méthode, donc à priori on devrait obtenir une borne un chouilla meilleure que celle-là. C'est en cours d'écriture (on a bouché le dernier trou de la preuve ce matin), je vous en parle demain si vous voulez. Je mets les papiers de Krivelevich et Stiebitz dans le dossier biblio de l'orverleaf au cas où.}
% 

\section{Properties of some digraphs} \label{sec:prop}
%%%%%%%%%%%%%%%%%%%%%%%%%

In this section, we prove some properties of some particular digraphs that are of importance in the proof of Theorem~\ref{thm:main_potential}.
We first consider the exceptional digraphs in this theorem, namely bidirected cycles  and odd $3$-wheels. We then examine the potential of some particular digraphs that pop up in the proof.

We start with a useful lower bound on the size of a maximum matching in a tree.
This result is certainly already known, but we give its easy proof for sake of completeness.

\begin{lemma}\label{matching-tree}
Every tree $T$ with 
$n\geq 2$ vertices and $f$ leaves has a matching of order at least 
$\frac{1}{2} (n - f +1)$.
\end{lemma}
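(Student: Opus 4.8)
The plan is to prove the bound $\mu(T) \ge \frac{1}{2}(n-f+1)$ by induction on the number of vertices $n$, using the classical idea of peeling off a deepest leaf together with its neighbour.

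\medskip

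First I would root the tree $T$ at an arbitrary internal vertex (which exists as soon as $n\ge 2$; indeed if $n=2$ both vertices are leaves and $\mu(T)=1 \ge \frac{1}{2}(2-2+1)$, so the base case is fine). Pick a leaf $\ell$ at maximum depth, and let $v$ be its neighbour. By the choice of $\ell$, every child of $v$ is a leaf. Let $t\ge 1$ be the number of children of $v$; these are $t$ leaves of $T$. Now form $T' = T - \{v, \ell_1, \dots, \ell_t\}$, where $\ell_1,\dots,\ell_t$ are the children of $v$ (so $\ell$ is one of them). Then $T'$ is again a tree (or empty), with $n' = n - t - 1$ vertices. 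I take the edge $v\ell$ together with a maximum matching of $T'$ (when $T'$ is nonempty), which gives $\mu(T) \ge \mu(T') + 1$; when $T'$ has fewer than $2$ vertices we handle the small cases directly.

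\medskip

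The key bookkeeping step is to control how the number of leaves changes. Deleting $v$ and its $t$ leaf-children removes $t$ leaves of $T$; the only vertex of $T'$ whose leaf-status can differ from its status in $T$ is the parent $w$ of $v$, which may become a leaf in $T'$. Hence $f' \le f - t + 1$, i.e. $f - f' \ge t - 1$. Applying the induction hypothesis to $T'$ (when $n' \ge 2$),
\[
\mu(T) \ge 1 + \mu(T') \ge 1 + \tfrac{1}{2}(n' - f' + 1) \ge 1 + \tfrac{1}{2}\big((n - t - 1) - (f - t + 1) + 1\big) = 1 + \tfrac{1}{2}(n - f - 1) = \tfrac{1}{2}(n - f + 1),
\]
as desired. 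It remains to check the degenerate cases $n' \in \{0, 1\}$: if $n' = 0$ then $T$ is a star $K_{1,t}$ with $n = t+1$, $f = t$, and $\mu(T) = 1 = \frac{1}{2}(n-f+1)$; if $n' = 1$ then $w$ is the unique remaining vertex, $T$ consists of $w$ joined to $v$ joined to $t$ leaves, so $n = t+2$, $f = t+1$, and again $\mu(T) = 1 \ge \frac{1}{2}(n-f+1) = 1$.

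\medskip

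I do not expect any serious obstacle here; the only thing to be careful about is the leaf-count inequality $f' \le f - t + 1$ (making sure the parent $w$ is the unique vertex whose leaf status can change, and that $w \notin \{v,\ell_1,\dots,\ell_t\}$ since $w$ is internal in $T$) and the handling of the small base cases $n' \in \{0,1\}$, which is why I would phrase the induction so that these are treated separately rather than by a blanket appeal to the inductive hypothesis.
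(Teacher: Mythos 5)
Your proof is correct and follows essentially the same inductive leaf-peeling strategy as the paper's: both arguments induct on $n$, remove a leaf together with (possibly) its neighbour, and keep track of how the leaf count changes. The paper splits on whether the chosen leaf's neighbour has degree $2$ or at least $3$ (deleting one or two vertices per step), whereas you delete a deepest leaf's parent together with all of its leaf-children at once; this is only a cosmetic reorganization of the same induction, and your bookkeeping ($f' \le f - t + 1$, the degenerate cases $n'\in\{0,1\}$) checks out.
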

\begin{proof}
The proof is by induction on $n$.
If $n = 2,3$, then $f \geq n- 1$ and the results holds.

Now suppose that $n\geq 4$.
Let $v$ be a leaf in $T$ and $u$ its neighbour in $T$.
Note that $d(u)\ge 2$ because $n\geq 4$.

If $u$ has degree at least $3$, then $T-v$ has $f-1$ leaves.
Thus, by the induction hypothesis,
$T-v$ and so $T$ has a matching of size $\frac{1}{2}((n-1) - (f-1) + 1) = 
\frac{1}{2}(n-f+1)$ as wanted.

If $u$ has degree $2$, then $T - \{u,v\}$ is a tree with at most $f$ leaves.
Thus by the induction hypothesis, $T - \{u,v\}$ has a matching $M$ of size
$\frac{1}{2}((n-2)-f+1)$. Hence $M\cup\{uv\}$ is a matching of size
$\frac{1}{2}(n-f+1)$ in $T$.
\end{proof}

\subsection{The exceptional \texorpdfstring{$3$}{3}-dicritical digraphs}
%%%%%%%%%%%%%%%%%%%%%%%%%%%%%%%%%%%%%%%%%%%%%%%%%%%%

We denote by $\oddcycle$ the class of bidirected cycles of odd length. Observe that every bidirected cycle of odd length is $3$-dicritical and has potential $1$.

\medskip

We denote by $\wheelodd$ the class of odd $3$-wheels.
Recall that an odd $3$-wheel is a digraph
obtained by connecting a vertex $c$ to a directed $3$-cycle $(x,y,z,x)$
by three bidirected odd paths that are pairwise disjoint except in $c$.
We call $c$ the {\bf center} of $D$, these three bidirected paths the {\bf spikes} of $D$,
and the directed $3$-cycle $(x,y,z,x)$ the {\bf rim} of $D$.

It is straightforward to check that any odd 3-wheel $D$ is
$3$-dicritical, that $m(D)=2n(D)+1$ and $\pi(D) = \frac{n(D)-2}{2}$.
As a consequence any digraph in $\wheelodd$ has potential
$7n(D) - 3(2n(D)+1) - 2\frac{n(D)-2}{2} = -1$.

\begin{lemma}\label{lemma:digon_3_wheel}
Let $D$ be an odd $3$-wheel with center $c$. For any digon $[x,y]$ in $D$, $\pi(D-\{x,y\}) \geq \pi(D) - 1$.
% \begin{enumerate}
% \item[(i)] For any digon $[x,y]$ in $D$, $\pi(D-\{x,y\}) \geq \pi(D) - 1$.
% \item[(ii)] For any digon $[x,y]$ in $D$, $\pi(D\setminus [x,y]) = \pi(D)$,
% \item[(iii)] For any two neighbours $u,v$ of $c$,
%     $\pi(D\setminus \{[c,u]\cup [c,v]\}) =\pi(D)$.
% \item[(iv)] For any neighbour $v$ of $c$, $\pi(D-v) = \pi(D)$. 
% \end{enumerate}
\end{lemma}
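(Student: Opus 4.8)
The goal is to show that deleting the two endpoints of any digon $[x,y]$ from an odd $3$-wheel $D$ with center $c$ drops the maximum matching of the digon graph by at most one. The key object is $B(D)$: since $D$ is an odd $3$-wheel, $B(D)$ is exactly the union of the three bidirected spikes (the directed $3$-cycle rim contributes no digons), i.e. $B(D)$ is a \emph{spider} (three paths sharing the common endpoint $c$), which in particular is a tree on $n(D)$ vertices. So I want to show $\mu(B(D) - \{x,y\}) \geq \mu(B(D)) - 1$, where the vertices $x,y$ span an edge of $B(D)$. Note first that $\mu(B(D)) = \pi(D) = \tfrac{n(D)-2}{2}$, as recorded just before the lemma.

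\textbf{Key steps.} First I would fix a maximum matching $M$ of $B(D)$ with $|M| = \pi(D)$. The edge $[x,y]$ lies on one of the three spikes $P$, say $P = (c = p_0, p_1, \dots, p_{2\ell})$ is a path of even length (odd number of edges? — here I'd be careful: each spike is a bidirected odd \emph{path}, meaning an odd number of edges, hence an even number of vertices, and the far endpoint is a vertex of the rim). Removing $x$ and $y$ from the path $P$ splits $P$ into (at most) two subpaths $P'$ and $P''$, one of which contains $c$; removing $x,y$ from $B(D)$ therefore leaves the other two spikes attached at $c$ inside the component of $c$, together with possibly one isolated leftover subpath $P''$. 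The natural approach is: take $M$, delete from it the at most two edges of $M$ incident to $x$ or $y$ (there are at most two such edges since $x$ and $y$ are distinct vertices and $M$ is a matching — and in fact the edge $[x,y]$ itself, if in $M$, counts as only one), obtaining a matching of $B(D) - \{x,y\}$ of size $\geq |M| - 2$. This only gives $\pi(D) - 2$, so I need to recover one more edge. The idea is to choose $M$ wisely, or to re-match locally: since each component of $B(D) - \{x,y\}$ is a path, and paths have near-perfect matchings, I can instead directly compute $\mu$ of each resulting path via the formula $\mu(\text{path on } t \text{ vertices}) = \lfloor t/2 \rfloor$, and compare the sum to $\tfrac{n(D)-2}{2} - 1$. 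Concretely, $B(D) - \{x,y\}$ is a disjoint union of (a) a spider-or-path $S_c$ on $n(D) - 2 - |V(P'')|$ vertices containing $c$, and (b) a path $P''$ on $|V(P'')|$ vertices; using Lemma~\ref{matching-tree} on the tree $S_c$ (it has at most $4$ leaves: the three spike-ends plus at most one new leaf created at the cut) and the exact path formula on $P''$, I get a lower bound on $\mu(B(D) - \{x,y\})$ that I then check is at least $\tfrac{n(D)-2}{2} - 1$.

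\textbf{Main obstacle.} The delicate point is the parity bookkeeping: I must verify that the "$-2$ then $+1$" is tight, i.e. that cutting a tree (here a spider) at an edge into two parts whose total vertex count went down by $2$ loses at most one edge of a near-perfect matching. Since the spider $B(D)$ on $n(D)$ vertices (even) has $\mu = \tfrac{n(D)}{2} - 1$ (it is not perfectly matchable: a spider with three odd legs has a deficiency), the arithmetic is: after removing two vertices, the total is $n(D) - 2$ (still even), split as $a + b$ with $a = |V(S_c)|$, $b = |V(P'')|$; and $\mu(S_c) + \mu(P'') = \mu(S_c) + \lfloor b/2 \rfloor$. The worst case is when the cut creates an odd component $P''$ and leaves $S_c$ still a three-legged spider with a deficiency, costing $\tfrac{1}{2}$ from each of two odd pieces plus the spider's intrinsic deficiency; I need to confirm this still only amounts to a total drop of exactly one from $\tfrac{n(D)-2}{2}$. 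I expect this to come down to a short case analysis on which spike $[x,y]$ lies in and whether $\{x,y\}$ includes $c$ or a rim-adjacent vertex, in each case exhibiting an explicit matching of $B(D)-\{x,y\}$ of size $\pi(D)-1$ by taking a perfect matching of each even path and a near-perfect matching of each odd path; the construction is routine once the pieces are identified, so the only real work is enumerating the pieces correctly.
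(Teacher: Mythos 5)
Your plan is correct, and the arithmetic you outline does close: $B(D)$ is a spider with three legs of odd length meeting at $c$, removing the edge $\{x,y\}$ together with its endpoints leaves a component containing $c$ (a smaller spider or a disjoint union of two paths if $x=c$) plus at most one leftover path, and in every parity case the matching numbers of the pieces sum to exactly $\pi(D)-1$; I checked the three cases (cut edge at even position, at odd position, and the degenerate $x=c$) and each gives $a_1+a_2+a_3-3=\pi(D)-1$ where $2a_i$ is the number of vertices of spike $i$. However, the paper takes a shorter route: it observes that in an odd $3$-wheel \emph{every} digon $[x,y]$ lies in some maximum matching $M$ of $B(D)$ (for a cut edge at even distance from $c$ along its spike, take the perfect matching of that spike together with near-perfect matchings of the other two spikes minus $c$; at odd distance, shift the matching on that spike and saturate $c$ through another spike), whence $M\setminus\{[x,y]\}$ is already a matching of $B(D)-\{x,y\}$ of size $\pi(D)-1$ and no counting of components is needed. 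You briefly mention this option ("choose $M$ wisely") before committing to the decomposition; the paper's version buys a two-line proof at the cost of a small existence check, while yours is more mechanical but requires the parity bookkeeping you describe. One notational slip to fix if you write it up: a spike with an odd number of edges has an even number of vertices, so it should be indexed $p_0,\dots,p_{2\ell-1}$ rather than $p_0,\dots,p_{2\ell}$; and the component of $c$ has at most three leaves (two far spike-ends plus the cut vertex), not four, though your bound of four still suffices by integrality when invoking Lemma~\ref{matching-tree}.
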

\begin{proof}
One can easily check that in an odd $3$-wheel, every digon $[x,y]$ is contained in a maximum matching $M$ of digons. Then  $M \setminus \{[x,y]\}$ is a matching of digons of $D-\{x,y\}$ of size $\pi(D)-1$.
%
%
% \medskip
%
% (ii) If $[x,y]$ is a digon, then there is another digon in $D$ incident to $x$ or $y$. Without loss of generality, there is $z\neq x$ such that $[y,z]$ is a digon.
% Then $M([y,z])$ is a matching of digons of $D \setminus [x,y]$.
%
% \medskip
%
% (iii) If $[c,u]$ and $[c,v]$ are two digons incident to the center $c$, then there is another digon $[c,w]$ incident to $c$. Then $M([c,z])$ is a matching of digons of $D\setminus \{[c,u]\cup [c,v]\}$.
%
% \medskip
%
% (iv) Let $R_1,R_2,R_3$ be the three spikes of $D$ and let  $v \in V(R_1)$ be a neighbour of $c$. Let $R'_1= R_1-\{c,v\}$. Note that $R'_1$ is a bidirected path of odd length.
% Then we consider the matching of digons $M$ which consists in:
% \begin{itemize}
% \item a maximum matching of digons in $R'_1$, which has size $\frac{n(R'_1)}{2}$,
% \item a maximum matching of digons in $R_2$, which has size $\frac{n(R_2)}{2}$, and
% \item the matching of digons in $R_3$ which consists in all digons at odd distance
%   in $R_3$ from the center, which has size $\frac{n(R_3)-2}{2}$.
% \end{itemize}
% This matching of digons has size $\frac{n(R'_1) + n(R_2) + n(R_3)-2}{2}
% =\frac{n(R_1) + n(R_2) + n(R_3)-4}{2} = \frac{n(D)-2}{2}$,
% which is maximal.
\end{proof}

\begin{lemma}\label{lemma:wheel_find_odd_cycle}
Let $D$ be an odd $3$-wheel.
\begin{enumerate}
\item[(i)] For any $e\in A(D)$,  $D\setminus e$ contains some bidirected odd cycle minus one arc.
\item[(ii)] If $v$ is not the center of $D$, then $D-v$ contains a bidirected odd cycle minus one arc.
\end{enumerate}
\end{lemma}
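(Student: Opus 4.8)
The plan is to analyze the structure of an odd $3$-wheel $D$ with center $c$, rim $(x,y,z,x)$, and three bidirected odd spikes $P_x, P_y, P_z$ joining $c$ to $x$, $y$, $z$ respectively. The key observation is that for any two of the spikes, say $P_x$ and $P_y$, the concatenation $P_x \cup \{xy\} \cup \overline{P_y}$ (going from $c$ out to $x$, across the rim arc $xy$, and back along $P_y$ to $c$) is almost a bidirected odd cycle: it consists of the two bidirected odd paths (whose combined length is even) plus the single non-digon rim arc $xy$, so it is a bidirected cycle of odd length with one of its two arcs on the $xy$-edge deleted — i.e. a bidirected odd cycle minus one arc. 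Similarly for the pairs $(P_y,P_z)$ and $(P_z,P_x)$ using rim arcs $yz$ and $zx$. So $D$ itself contains three such configurations, and the task is to show one survives the deletion of an arbitrary arc $e$ (part (i)) or vertex $v\neq c$ (part (ii)).

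For part (i), I would case on where $e$ lies. If $e$ is the rim arc $xy$, then the configuration built from $P_y$ and $P_z$ via rim arc $yz$ avoids $e$ entirely, and it is a bidirected odd cycle minus the arc on edge $yz$ (in fact minus $zy$), so it lies in $D\setminus e$. Symmetrically for $e\in\{yz,zx\}$. If $e$ lies in one of the spikes, say $e$ is an arc of $P_z$ (including the case $e$ is an arc incident to $c$ inside $P_z$), then the configuration built from $P_x$ and $P_y$ via rim arc $xy$ uses no arc of $P_z$, hence avoids $e$; it is a bidirected odd cycle minus one arc, and it lies in $D\setminus e$. Since every arc of $D$ is either a rim arc or an arc of some spike, this covers all cases and proves (i).

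For part (ii), let $v\neq c$. Then $v$ lies on (at most) one spike, say $P_z$, or $v$ is one of $x,y,z$ — note that the endpoints of the spikes on the rim, namely $x$, $y$, $z$, are exactly the vertices lying on both a spike and the rim, but each such vertex lies on only one spike. In every case $v$ is disjoint from $P_x$, $P_y$ and the rim arc $xy$ and from $c$ (the only exception would be $v\in\{x,y\}$, but then I simply pick the pair of spikes avoiding $v$: if $v=x$ use $P_y,P_z$ and rim arc $yz$; if $v=y$ use $P_x,P_z$ and rim arc $xz$; if $v$ is an internal vertex of a spike or $v=z$, use $P_x,P_y$ and rim arc $xy$). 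In each case the chosen configuration is a bidirected odd cycle minus one arc contained in $D-v$, proving (ii).

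The main obstacle is purely bookkeeping: making sure the parity count is right (two bidirected odd paths glued at $c$ give a bidirected even path between the rim endpoints, and adding the single rim arc closes it into an odd cycle, of which we have deleted exactly the one oppositely-directed arc on that rim edge), and making sure the case split on the location of $e$ or $v$ is exhaustive and that in each case at least one of the three natural "two-spikes-plus-one-rim-arc" configurations is left untouched. Since we have three such configurations and deleting a single arc or vertex can destroy at most — essentially — the configurations through the affected spike or rim edge, a pigeonhole-style argument always leaves one available; I would just spell this out explicitly for each case rather than invoke pigeonhole abstractly.
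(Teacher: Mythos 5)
Your proof is correct and uses essentially the same idea as the paper: the union of two spikes together with the rim arc joining their endpoints is a bidirected odd cycle minus one arc, and for any deleted arc or non-center vertex at least one of the three such configurations is untouched. The only organizational difference is that the paper derives (i) from (ii) in one line (every arc has an endpoint that is not the center, so $D-v \subseteq D\setminus e$), whereas you redo the case analysis for (i) directly; both are fine.
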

\begin{proof}
(ii) Let $v$ be a vertex which is not the center of $D$,
in particular, $v$ is in exactly one spike $P$.
Then the union of the two other spikes induces a bidirected
odd cycle minus one arc.
This proves~(ii).

\medskip

(i) Let $e=uv \in A(D)$, and suppose without loss of generality
that $v$ is not the center of $D$. Then $D-v \subseteq D \setminus e$
contains a bidirected odd cycle minus one arc by~(ii).
This proves~(i).
\end{proof}

\subsection{Potential of some particular digraphs}
%%%%%%%%%%%%%%%%%%%%%%%%%%

In this section, we prove that some particular digraphs have potential at most $3$.
This will result in forbidden configurations in a minimal counterexample
to Theorem~\ref{thm:main_potential}.

\medskip

A {\bf purse} is a digraph obtained from the oriented graph with vertex set $\{x,y,z, y_1, y_2\}$ and arc set $\{xy, zy, yy_1, yy_2, y_1x, y_1z, y_2x, y_2z\}$ by adding a bidirected path of odd length between $y_1$ and $y_2$ with vertices disjoint from $\{x,y,z\}$.
See Figure~\ref{fig:purse}. We say $\{x,y,z\}$ is the {\bf bottom} of the purse.

\begin{figure}[!hbtp],
\begin{center}
\begin{tikzpicture}[scale=1.25, rotate=90]

\node (x) at (0,-0.75) [vertex] {$x$};
\node (y) at (-0.25,0.5) [vertex] {$y$};
\node (z) at (0,1.75) [vertex] {$z$};
\node (y1) at (1,1) [vertex] {$y_1$};
\node (y2) at (1,0) [vertex] {$y_2$};

\draw[arc] (x) to (y);
\draw[arc] (y) -- (y1);
\draw[arc] (y) -- (y2);
\draw[arc] (y1) -- (z);
\draw[arc] (y2) -- (z);
\draw[arc] (y1) -- (x);
\draw[arc] (y2) -- (x);
\draw[arc] (z) to (y);

\node (a) at ($(y1) + (30:1)$) [vertex] {};
\node (b) at ($(a) + (-60+30:1)$) [vertex] {};
\node (c) at ($(b) + (-60-60+30:1)$) [vertex] {};
\node (d) at ($(c) + (-60-60-60+30:1)$) [vertex] {};

\draw[arc, bend right] (y1) to (a);
\draw[arc, bend right] (a) to (b);
\draw[arc, bend right] (b) to (c);
\draw[arc, bend right] (c) to (d);
\draw[arc, bend right] (d) to (y2);

\draw[arc, bend right] (a) to (y1);
\draw[arc, bend right] (b) to (a);
\draw[arc, bend right] (c) to (b);
\draw[arc, bend right] (d) to (c);
\draw[arc, bend right] (y2) to (d);

\end{tikzpicture}
\end{center}
\caption{\label{fig:purse} A purse.}
\end{figure}

\begin{lemma}\label{lemma:purse}
(i) Every purse has potential $3$.

(ii) Every $3$-dicritical digraph having  a spanning purse  has potential at most $-3$.
\end{lemma}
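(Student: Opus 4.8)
## Proof Plan for Lemma~\ref{lemma:purse}

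The plan is to handle the two parts separately, with part~(i) being a direct computation that feeds into part~(ii).

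For part~(i), I would compute the potential of a purse $P$ directly from the definition $\rho(P) = 7n(P) - 3m(P) - 2\pi(P)$. A purse consists of the five ``base'' vertices $\{x,y,z,y_1,y_2\}$ together with the internal vertices of a bidirected odd path of length $2\ell+1$ joining $y_1$ and $y_2$; this path contributes $2\ell$ new vertices. So $n(P) = 5 + 2\ell$. The arc count: the fixed part has $8$ arcs (namely $xy,zy,yy_1,yy_2,y_1x,y_1z,y_2x,y_2z$), and each of the $2\ell+1$ edges of the bidirected path contributes $2$ arcs, giving $m(P) = 8 + 2(2\ell+1) = 10 + 4\ell$. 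For $\pi(P)$, the digon graph $B(P)$ is exactly the path $y_1,\dots,y_2$ on $2\ell+2$ vertices, whose maximum matching has size $\ell+1$; I should double-check that no larger matching of digons exists, but since the only digons are along this path this is clear. Thus $\rho(P) = 7(5+2\ell) - 3(10+4\ell) - 2(\ell+1) = 35 + 14\ell - 30 - 12\ell - 2\ell - 2 = 3$, independent of $\ell$, as claimed.

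For part~(ii), let $D$ be a $3$-dicritical digraph containing a spanning purse $P$ on vertex set $V(D)$, so $D$ is obtained from $P$ by adding some extra arcs $F$ (and possibly extra digons among existing vertices). The idea is to show $F$ must be substantial. I would argue that $\rho(D) = \rho(P) - 3|F'| - 2(\pi(D)-\pi(P)) \le 3 - 3|F'|$ where $F'$ counts new arcs that are not part of new digons, plus a correction; the cleanest route is to note $\rho_D(V(D)) \le \rho_P(V(D)) - (\text{number of added arcs}) \cdot$(positive amount), since adding an arc decreases the $-3m$ term by $3$ and can only decrease $-2\pi$ further or keep it, so $\rho(D) \le 3 - 3(m(D) - m(P))$. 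Hence it suffices to show $D$ has at least two more arcs than $P$. To see this, I would use Lemma~\ref{lemma:vertex_no_almost_only_digons} and Lemma~\ref{lemma:degeneracy}: in the purse $P$, the vertex $y$ has out-neighbours $y_1,y_2$ and in-neighbours $z$ (via $zy$) — wait, one must recount in/out-degrees — and several base vertices have in- or out-degree only $1$ in $P$ (for instance $x$ has out-degree $1$, only the arc $xy$), which violates Lemma~\ref{lemma:degeneracy} ($d^+ \ge 2$, $d^- \ge 2$ for $3$-dicritical digraphs). So $D$ must contain additional arcs at each such deficient vertex. Carefully: $x$ has $d^+_P(x)=1$ and $d^-_P(x)=2$; $z$ has $d^+_P(z)=1$ and $d^-_P(z)=2$; $y$ has $d^+_P(y) = 2$, $d^-_P(y)=2$. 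The internal path vertices already have in- and out-degree $2$. So at minimum we need one new arc out of $x$ and one new arc out of $z$ — these are distinct arcs (different tails), giving $m(D) \ge m(P) + 2$, hence $\rho(D) \le 3 - 6 = -3$.

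The main obstacle I anticipate is making the monotonicity inequality $\rho(D) \le \rho(P) - 3(m(D)-m(P))$ fully rigorous, since the $\pi$ term is subtle: adding arcs could in principle create new digons, and a new digon adds $2$ arcs ($-6$ to the $-3m$ term) while decreasing the $-2\pi$ term by at most $2$, so the net change is at most $-4 < 0$ — still in our favor. So in all cases each added edge-or-digon strictly decreases the potential by at least $3$ (an arc not in a digon) or $4$ (a digon, i.e.\ two arcs), and with at least two added arcs we land at $\rho(D) \le 3 - 3 \cdot 2 = -3$. A secondary point worth care: I should confirm that the two required new arcs (out of $x$ and out of $z$) are genuinely not already present — they are not, since in $P$ the only arc leaving $x$ is $xy$ and the only arc leaving $z$ is $zy$, and adding a second arc $xy$ or $zy$ is not allowed in a digraph (it would be the digon, handled above and still giving a $-4$ decrement). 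Either way the conclusion $\rho(D) \le -3$ holds.
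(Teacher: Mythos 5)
Your proof is correct and follows essentially the same route as the paper: the same direct computation $\rho = 7(2\ell+5) - 3(4\ell+10) - 2(\ell+1) = 3$ for part (i), and for part (ii) the observation via Lemma~\ref{lemma:degeneracy} that $x$ and $z$ each have out-degree $1$ in the purse, forcing at least two additional arcs and hence $\rho(D) \leq 3 - 6 = -3$. Your extra care about the $\pi$ term only reinforces the inequality, since any new digon can only decrease the potential further.
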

\begin{proof}
(i) Let $H$ be a purse with a bidirected path of length $2\ell+1$.
It has $2\ell+5$ vertices, $4\ell +10$ arcs, and $\pi(H) =\ell +1$.
Hence $\rho(H) = 7(2\ell+5) - 3(4\ell +10) - 2(\ell+1) =3$.

(ii) Let $D$ be a  $3$-dicritical digraph having a spanning purse $H$.
Since $D$ is $3$-dicritical, every vertex of $D$ have in- and out-degree  at least $2$ by Lemma~\ref{lemma:degeneracy}. 
So in $A(D)\setminus A(H)$, there is an arc leaving $x$ and an arc leaving $z$.
Hence $D$ has at least two more arcs  than $H$, and so $\rho(D) \leq \rho(H) - 6 \leq -3$.
\end{proof}

A {\bf handcuff} is a digraph obtained from the undirected graph with vertex set 
$\{x,x',y,z,z'\}$
and arc set 
$\{xy, x'y, yz,yz',z'x',zx',z'x\}$
and by adding two bidirected paths $P_1$ and $P_2$ of odd length such that $P_1$ links $z$ and $z'$, $P_2$ links $x$ and $x'$, and  $V(P_1 - \{z,z'\})$ and $V(P_2 \setminus \{x,x'\})$ may intersect but are disjoint from $\{x,x',y,z,z'\}$.

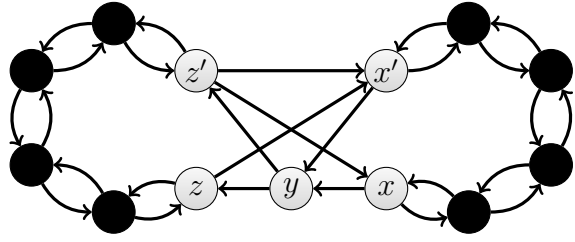
\begin{figure}[!hbtp]
\begin{center}
\begin{tikzpicture}[scale=1.25]

\node (z) at (-1,0) [vertex] {$z$};
\node (y) at (0,0) [vertex] {$y$};
\node (z') at (-1,1.25) [vertex] {$z'$};
\node (x') at (1,1.25) [vertex] {$x'$};
\node (x) at (1,0) [vertex] {$x$};

\draw[arc] (x) -- (y);
\draw[arc] (y) -- (z);
%\draw[arc, bend right] (z) to (x);
\draw[arc] (z') to (x');
\draw[arc] (y) -- (z');
\draw[arc] (x') -- (y);
\draw[arc] (z) -- (x');
\draw[arc] (z') -- (x);

\node (a) at ($(x') + (30:1)$) [maycoincide] {};
\node (b) at ($(a)  + (-60+30:1)$) [maycoincide] {};
\node (c) at ($(b)  + (-60-60+30:1)$) [maycoincide] {};
\node (d) at ($(c)  + (-60-60-60+30:1)$) [maycoincide] {};

\node (aa) at ($(z')  + (180-30:1)$) [maycoincide] {};
\node (bb) at ($(aa) + (60+180-30:1)$) [maycoincide] {};
\node (cc) at ($(bb) + (60+60+180-30:1)$) [maycoincide] {};
\node (dd) at ($(cc) + (60+60+60+180-30:1)$) [maycoincide] {};

\draw[arc, bend right] (x') to (a);
\draw[arc, bend right] (a) to (b);
\draw[arc, bend right] (b) to (c);
\draw[arc, bend right] (c) to (d);
\draw[arc, bend right] (d) to (x);

\draw[arc, bend right] (a) to (x');
\draw[arc, bend right] (b) to (a);
\draw[arc, bend right] (c) to (b);
\draw[arc, bend right] (d) to (c);
\draw[arc, bend right] (x) to (d);

\draw[arc, bend right] (z') to (aa);
\draw[arc, bend right] (aa) to (bb);
\draw[arc, bend right] (bb) to (cc);
\draw[arc, bend right] (cc) to (dd);
\draw[arc, bend right] (dd) to (z);

\draw[arc, bend right] (aa) to (z');
\draw[arc, bend right] (bb) to (aa);
\draw[arc, bend right] (cc) to (bb);
\draw[arc, bend right] (dd) to (cc);
\draw[arc, bend right] (z) to (dd);

\end{tikzpicture}
\end{center}
\caption{\label{fig:handcuff}A handcuff. The two paths of black vertices may intersect but contain no grey vertices. 
} 
\end{figure}

%---------------------------------------------------------------------------

\begin{lemma}\label{lemma:handcuff}
Every handcuff has a subdigraph with potential at most $-2$.
\end{lemma}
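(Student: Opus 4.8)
The plan is to compute the potential of a handcuff directly, exactly as was done for the purse in Lemma~\ref{lemma:purse}(i), and show that a handcuff itself already has potential at most $3$; then, if necessary, pass to a slightly smaller subdigraph to drop below $-2$. Actually, since the statement only asks for \emph{a subdigraph} of potential at most $-2$, I expect the cleanest route is: first show the handcuff $H$ itself has potential exactly $-2$ (or at most $-2$), in which case we are done taking the subdigraph to be $H$. So the first step is a bookkeeping computation. Let $P_1$ have length $2a+1$ and $P_2$ have length $2b+1$, and let $t$ be the number of vertices shared by $V(P_1-\{z,z'\})$ and $V(P_2-\{x,x'\})$ (the paper allows these interior vertices to coincide). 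One counts $n(H)$, $m(H)$, and $\pi(B(H))$ in terms of $a$, $b$, $t$; the digon graph $B(H)$ is exactly $P_1 \cup P_2$ (two odd bidirected paths glued along $t$ interior vertices), whose maximum matching size is what Lemma~\ref{matching-tree} (or a direct argument) controls.

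\textbf{Key steps in order.} First I would handle the generic case where the two bidirected paths are internally disjoint ($t=0$): then $V(H)=\{x,x',y,z,z'\}$ together with $2a$ interior vertices of $P_1$ and $2b$ interior vertices of $P_2$, so $n(H)=5+2a+2b$; the arcs are the $7$ arcs $xy,x'y,yz,yz',z'x',zx',z'x$ plus $2(2a+2)$ arcs from the bidirected path $P_1$ of length $2a+1$ and $2(2b+2)$ from $P_2$, giving $m(H)=7+2(2a+2)+2(2b+2)=15+4a+4b$; and $B(H)$ is a disjoint union of two paths on $2a+2$ and $2b+2$ vertices, each with a perfect matching, so $\pi(H)=(a+1)+(b+1)=a+b+2$. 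Hence $\rho(H)=7(5+2a+2b)-3(15+4a+4b)-2(a+b+2)=35+14a+14b-45-12a-12b-2a-2b-4=-14$. Wait --- that is already far below $-2$, so in the internally disjoint case $H$ itself works. The remaining case is when $P_1$ and $P_2$ share some interior vertices; I would then show that identifying vertices only \emph{decreases} the potential further, or simply redo the count: sharing $t$ interior vertices removes $t$ vertices and at least $2t$ arcs from the above totals while $\pi$ drops by at most... here one must be slightly careful, but $7\cdot(-1) - 3\cdot(-2) - 2\cdot(\text{change in }\pi)$ per identified vertex is still negative provided $\pi$ does not increase, which it cannot. So the potential stays at most $-2$, and we take the subdigraph to be $H$ itself (or, if an identification happened to push the matching count the wrong way in some degenerate sub-configuration, we restrict to one of the two bidirected odd cycles hiding inside $H$, e.g. $P_1 \cup \{z'x', zx'\} \cup \{yz, yz'\}$ type structures, which by the bidirected-odd-cycle computation has potential $1$ --- but that is the wrong direction, so the honest fix is to keep everything and trust the monotone count).

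\textbf{Main obstacle.} The only real subtlety is the case analysis for how $\pi(B(H))$ behaves when the two interior odd paths are allowed to intersect: $B(H)$ is then not a disjoint union of two paths but a tree (or a more general graph) obtained by gluing two paths along a subpath, and I need a clean lower bound on its maximum matching of the form $\pi(B(H)) \le $ (something making $\rho(H)\le -2$). This is exactly the kind of statement Lemma~\ref{matching-tree} is designed for: applying it to the tree $B(H)$ with its few leaves gives $\pi(B(H)) \ge \frac12(n(B(H)) - f + 1)$, and combined with $n(B(H)) = n(H) - 1$ (the vertex $y$ is the only one incident to no digon) one gets the bound in the right direction. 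So the plan is to reduce the intersecting case to an application of Lemma~\ref{matching-tree} plus the elementary arc/vertex count, and I expect the write-up to be a short case split with one invocation of that lemma. The computation shows substantial slack (potential around $-14$ in the clean case), so there is no delicacy at the boundary; the whole lemma is essentially a counting exercise.
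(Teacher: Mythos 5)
Your disjoint-case computation contains an arithmetic error that changes the whole complexion of the lemma: a bidirected path of length $2a+1$ contributes $2(2a+1)=4a+2$ arcs, not $2(2a+2)$, so $m(H)=11+4a+4b$ rather than $15+4a+4b$, and the potential of an internally disjoint handcuff is exactly $-2$, not $-14$. There is therefore no slack at all — the bound of the lemma is attained with equality in the clean case — and your repeated appeal to ``substantial slack'' and a ``monotone count'' to absorb the intersecting case has nothing to lean on. Worse, the heuristic you offer for the intersecting case has the sign of the matching effect backwards: since the potential carries the term $-2\pi$, the dangerous scenario is when the intersection \emph{destroys} matching edges (a decrease in $\pi$ raises the potential), whereas an increase in $\pi$ would only help; your caveat ``provided $\pi$ does not increase, which it cannot'' guards against the harmless direction and leaves the real risk unaddressed. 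With the correct value $-2$ in the disjoint case, any such loss would already break the statement, so the intersecting case genuinely requires its own argument.

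Your closing paragraph does point at the right tool — Lemma~\ref{matching-tree} — but the step is not carried out, and one detail you gloss over matters: when $P_1$ and $P_2$ intersect, $B(H)$ need not be a tree (two paths can meet in two separated stretches and create a cycle), so you cannot apply the matching lemma to $B(H)$ wholesale. The paper instead selects a bidirected tree $T$ inside $H$ containing $x,x',z,z'$, whose leaves lie in $\{x,x',z,z'\}$, so $\pi(T)\ge\frac{1}{2}(n(T)-3)$ by Lemma~\ref{matching-tree}; it then passes to the subdigraph $H'=H[V(T)\cup\{y\}]$ (this is why the lemma is phrased about a subdigraph), which has $n(T)+1$ vertices, at least $2n(T)+5$ arcs and $\pi(H')\ge\frac{1}{2}(n(T)-3)$, giving $\rho(H')\le -5$. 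As it stands your proposal establishes neither case correctly: the disjoint case rests on a miscount (though the corrected count still yields $-2$, so that case is salvageable), and the intersecting case is an unproved heuristic with the inequality pointing the wrong way.
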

\begin{proof}
Let $H$ be a handcuff. 
Let $P_1$ (resp. $P_2$) be the bidirected path of $H$ between $z$ and $z'$ (resp. $x$ and $x'$).
For $i\in \{1,2\}$, let $2\ell_i+1$ be the length of $P_i$.

Assume first that $P_1$ and $P_2$ do not intersect. Then $H$ has $2\ell_1 +2\ell_2+5$ vertices, $4\ell_1+4\ell_2 +11$ arcs, and $\pi(H) =\ell_1 +\ell_2 +2$. Hence $\rho(H) = 7(2\ell_1 +2\ell_2+5) - 3(4\ell_1 +4\ell_2 +11) - 2(\ell_1+\ell_2 +2) =-2$.

Assume now that the two bidirected paths intersect.
Then $H$ has a subdigraph $T$ which is a bidirected tree containing $x,\ x',\ z$ and $z'$
and with leaves in $\{x,x',z,z'\}$. By Lemma~\ref{matching-tree},
and because $T$ has at most $4$ leaves, $\pi(T) \geq \frac{1}{2}(n(T)-3)$.
Let $H'$ be the handcuff $H[V(T)\cup\{y\}]$.
It has $n(T) +1$ vertices, at least $2n(T) +5$ arcs and 
$\pi(H')\geq\pi(T) \geq n(T) - 3/2$.
Hence $\rho(H') \leq 7(n(T)+1) - 3(2n(T)+5) - 2((n(T)-3)/2) = -5$.
%
% Let $P'_2$ (resp. $P''_2$) be the shortest subpath of $P_2$ with $x$ (resp. $x'$) as an end-vertex and the second end-vertex in $P_1$. 
% %{\color{gray}Name $a$ (resp. $b$) the vertex in $V(P'_2) \cap V(P_1)$ (resp in $V(P_2'' ) \cap V(P_1)$). Note that $a = b$ is possible. }
% Then $T=P_1\cup P'_2\cup P''_2$ is a bidirected tree with leaves in  $\{x,x',z,z'\}$.
% Note that $m(T) = 2n(T) -2$. 
% %\CR{J'ai simplifié le calcul du matching grace au Lemme~\ref{matching-tree}.}
% Then by Lemma~\ref{matching-tree}, as $T$ has at most $4$ leaves,
% $T$ has a matching of digons of size at least $\frac{1}{2}(n(T) - 4 +1) = 
% n(T)/2 -3/2$. Let $H'$ be the handcuff $H[V(T)\cup\{y\}]$.
% It has $n(T) +1$ vertices, $2n(T) +5$ arcs and $\pi(H')\geq\pi(T) \geq n(T) - 3/2$.
% Hence $\rho(H') \leq 7(n(T)+1) - 3(2n(T)+5) - 2((n(T)-1)/2 - 1) = -5$.
% Moreover, $P_1$ has a matching of size $n(P_1)/2$ because it is odd and $P'_2-a$ (resp. $P''_2-b$) has a matching of size at least $n(P'_2)/2 -1$ (resp.   $n(P''_2)/2 -1$). 
% The union of those matching is a matching of $T$ of size at least  $\frac{1}{2}(n(P_1) + n(P'_2) + n(P''_2)) -2$ % = n(T)/2 -1$.
% Since $n(P_1) + n(P'_2) + n(P''_2) \geq n(T) + 1$, the matching has size at least $n(T)/2-1$.
% Let $H'$ be the handcuff $H[V(T)\cup\{y\}]$.
% It has $n(T) +1$ vertices, $2n(T) +5$ arcs and $\pi(H')=\pi(T) \geq n(T)/2 -1$.
% Hence $\rho(H') \leq 7(n(T)+1) - 3(2n(T) +5 ) - 2(n(T)/2 -1) = -6$.
% \PA{the computation of the matching should be recheck, it is a bit strange but I think it is correct and avoids to make a special case when $a=b$}
\end{proof}

A {\bf basket} is a digraph obtained from the oriented graph with vertex set $\{x_1, x_2,y,y_0, y_1, y_2\}$ and arc set $\{x_1y, x_2y, yy_0, yy_1, yy_2, y_1x_1, y_2x_2, y_0x_1, y_0x_2\}$
by adding a bidirected path of odd length between $y_0$ and $y_1$ and a bidirected path of odd length between $y_0$ and $y_2$. 
Those two bidirected paths may intersect, but they are always disjoint from $\{x_1,y,x_2\}$,
and $y_0,\ y_1,\ y_2,\ x_1,\ x_2$ are pairwise distinct.
See Figure~\ref{fig:basket}. 

%{\CR{Old:} \color{gray} Those two paths may intersect \PA{do we have that the interior vertices of the paths are disjoint from $\{x_1, x_2,y,y_0, y_1, y_2\}$? otherwise I think there is a problem with the proof} \CR{I think we only need $x_1,x_2,y$ disjoint from these paths} (but $y_0,y_1,y_2,x_1,x_2$ are always pairwise distinct). }

\begin{figure}[!hbtp]
\begin{center}

\begin{tikzpicture}[scale=1.25]

\node (x) at (0,0) [vertex] {$y$};
\node (y1) at (-1,0) [vertex] {$x_1$};
\node (y2) at (1,0) [vertex] {$x_2$};
\node (z') at (0,1) [vertex] {$y_0$};

\draw[arc] (y1) -- (x);
\draw[arc] (y2) -- (x);
\draw[arc] (x) -- (z');
\draw[arc] (z') -- (y1);
\draw[arc] (z') -- (y2);

\node (y1in) at ($(y1) + (-0.66,1)$) [vertex] {$y_1$};
%\node at ($(y1in) + (-0.5,0)$) {$y_1$};
\node (y2in) at ($(y2) + (0.66,1)$) [vertex] {$y_2$};
%\node at ($(y2in) + (0.5,0)$) {$y_2$};

\draw[arc] (y1in) -- (y1);
\draw[arc] (y2in) -- (y2);

\node (d1) at ($(y1in) + (0,1)$) [maycoincide] {};
\node (d2) at ($(d1) + (1,0)$) [maycoincide] {};
\node (d3) at ($(y2in) + (0,1)$) [maycoincide] {};
\node (d4) at ($(d3) + (-1,0)$) [maycoincide] {};

\draw[arc, bend right] (y1in) to (d1);
\draw[arc, bend right] (d1) to (d2);
\draw[arc, bend right] (d2) to (z');
\draw[arc, bend right] (d1) to (y1in);
\draw[arc, bend right] (d2) to (d1);
\draw[arc, bend right] (z') to (d2);

\draw[arc, bend right] (y2in) to (d3);
\draw[arc, bend right] (d3) to (d4);
\draw[arc, bend right] (d4) to (z');
\draw[arc, bend right] (d3) to (y2in);
\draw[arc, bend right] (d4) to (d3);
\draw[arc, bend right] (z') to (d4);

\draw[arc] (x) -- (y1in);
\draw[arc] (x) -- (y2in);

\end{tikzpicture}
\end{center}
\caption{\label{fig:basket} A basket. The two bidirected paths with black internal vertices may intersect, but they are always disjoint from $\{x_1,y,x_2\}$,
and $y_0,\ y_1,\ y_2,\ x_1,\ x_2$ are pairwise distinct.}
\end{figure}
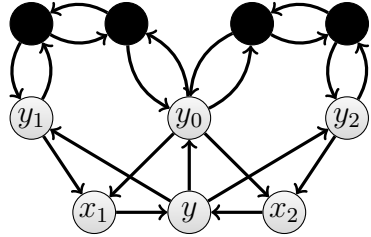

\begin{lemma}\label{lemma:basket}
Let $D$ be a $3$-dicritical digraph with a basket $H$ as a subdigraph.
Then $H$ has a subdigraph $H'$ such that
\begin{enumerate}[label=(\roman*)]
    \item $H'$ has potential at most $2$, and
    \item if $H'$ spans $D$, then $D$ has potential at most $-4$.
\end{enumerate}

% {\color{darkgray} \CR{Old:}
% (i) Every basket has a subdigraph with potential at most $1$.

% (ii) Every $3$-dicritical digraph having a spanning basket has potential at most $-5$.
% }
\end{lemma}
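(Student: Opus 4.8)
The plan is to mimic the proof structure of Lemma~\ref{lemma:handcuff} and Lemma~\ref{lemma:purse}, splitting into the case where the two bidirected paths are disjoint and the case where they intersect, and in each case exhibiting an explicit subdigraph $H'$ of the basket whose potential we can compute or bound.

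First I would handle the case where the two bidirected paths (the one from $y_0$ to $y_1$ and the one from $y_0$ to $y_2$) are internally disjoint. Here I take $H'=H$ itself. If the paths have lengths $2\ell_1+1$ and $2\ell_2+1$, then $H$ has $5 + (2\ell_1 - 1) + (2\ell_2 - 1) = 2\ell_1 + 2\ell_2 + 3$ vertices (counting $x_1,x_2,y,y_0,y_1,y_2$ and the internal vertices of the two paths), the oriented core contributes $9$ arcs, each bidirected path of length $2\ell_i+1$ contributes $2(2\ell_i+1)$ arcs, and a maximum matching of $B(H)$ uses the two disjoint bidirected paths, giving $\pi(H) = \ell_1 + \ell_2 + 1$. (I would double-check these counts carefully against Figure~\ref{fig:basket}, since the vertex $y_0$ is shared by both paths and $y_1,y_2$ are endpoints, so the arithmetic on the number of internal vertices needs care.) Plugging into $\rho_H$, the $\ell_i$ terms should cancel and leave a constant, which I expect to come out to $2$, establishing~(i) in this case.

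Next I would treat the case where the two bidirected paths intersect. As in Lemma~\ref{lemma:handcuff}, the union of the two bidirected paths then contains a bidirected tree $T$ whose leaves lie in $\{y_0,y_1,y_2\}$ (so $T$ has at most three leaves) and which contains all of $y_0,y_1,y_2$. By Lemma~\ref{matching-tree}, $\pi(T) \geq \frac{1}{2}(n(T) - f + 1) \geq \frac{1}{2}(n(T) - 2)$ since $f \leq 3$. I would then set $H' = H[V(T) \cup \{x_1, x_2, y\}]$: this has $n(T)+3$ vertices, at least $2n(T) + 9$ arcs (the $2n(T)-2$ arcs of the bidirected tree plus the $9$ arcs of the oriented core — again to be verified, noting $y_0,y_1,y_2 \in V(T)$), and $\pi(H') \geq \pi(T) \geq \frac{1}{2}(n(T)-2)$. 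Then $\rho(H') \leq 7(n(T)+3) - 3(2n(T)+9) - 2\cdot\frac{1}{2}(n(T)-2) = 7n(T) + 21 - 6n(T) - 27 - n(T) + 2 = -4 \leq 2$, so~(i) holds here too (indeed with lots of room).

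For part~(ii), suppose $H'$ spans $D$. In the disjoint case $H' = H$ has potential $2$; by Lemma~\ref{lemma:degeneracy} every vertex of the $3$-dicritical digraph $D$ has out-degree at least $2$, and $x_1$ and $x_2$ each have out-degree $1$ in $H$ (only the arcs $x_1y$ and $x_2y$), so $D$ has at least two arcs not in $H$, giving $\rho(D) \leq \rho(H) - 3\cdot 2 = 2 - 6 = -4$. In the intersecting case $\rho(H') \leq -4$ already, and adding $D$'s extra arcs only decreases the potential, so $\rho(D) \leq -4$ as well; alternatively one notes $\rho(D) \le \rho(H')$ trivially since $D \supseteq H'$ on the same vertex set. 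The main obstacle I anticipate is purely bookkeeping: getting the vertex and arc counts of the basket exactly right, being careful that $y_0$ is shared between the two spikes and that the endpoints $y_1,y_2$ are not internal vertices, and confirming which vertices fail the out-degree-$2$ (or in-degree-$2$) condition inside $H$ so that the argument for~(ii) has the right slack — there is no conceptual difficulty beyond what already appears in Lemmas~\ref{lemma:purse} and~\ref{lemma:handcuff}.
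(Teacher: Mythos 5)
Your approach is the same as the paper's: bound the potential of the sub-basket built on a bidirected tree $T$ spanning $y_0,y_1,y_2$ via Lemma~\ref{matching-tree}, and get (ii) from the out-degrees of $x_1,x_2$ forced by Lemma~\ref{lemma:degeneracy}. (The paper does not even split into cases: the minimal bidirected tree containing $y_0,y_1,y_2$ exists whether or not the two paths intersect, and one computation covers both.) However, both of your explicit computations contain arithmetic slips. In the disjoint case the basket has $6+2\ell_1+2\ell_2$ vertices, not $2\ell_1+2\ell_2+3$ (six named vertices plus $2\ell_i$ internal vertices per path of length $2\ell_i+1$); with the correct counts $\rho(H)=7(2\ell_1+2\ell_2+6)-3(4\ell_1+4\ell_2+13)-2(\ell_1+\ell_2+1)=1$, which is fine for (i). In the intersecting case the arc count is $m(H')\geq (2n(T)-2)+9=2n(T)+7$, not $2n(T)+9$, so the correct bound is $\rho(H')\leq 7(n(T)+3)-3(2n(T)+7)-(n(T)-2)=2$, not $-4$.

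The second slip matters for part (ii): you justify (ii) in the intersecting case solely by the false claim $\rho(H')\leq -4$, so that step as written fails. The repair is immediate and is exactly what the paper does: in both cases $x_1$ and $x_2$ have out-degree $1$ in $H'$, so $A(D)\setminus A(H')$ contains an arc leaving each of them when $H'$ spans $D$, whence $\rho(D)\leq \rho(H')-6\leq -4$. With these corrections your proof coincides with the paper's.
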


\begin{proof}
% For $i\in [2]$, let $P_i$ be a bidirected path of $B$ between $y_0$ and $y_i$, and let  $2\ell_i+1$ be its length.
% Assume first that $P_1$ and $P_2$ intersect only in $y_0$.  
% Then $n(H) = 2\ell_1 + 2\ell_2 + 6$, $m(H)= 4\ell_1 + 4\ell_2 + 13$, and $\pi(H) = \ell_1 + \ell_2 + 1$.
% Hence $\rho(H) = 7(2\ell_1 + 2\ell_2 + 6) - 3(4\ell_1 + 4\ell_2 + 13) - 2(\ell_1 + \ell_2 + 1) = 1$.

% Assume now that $P_1$ and $P_2$ intersect not only in $y_0$.
% Observe that if some $y_i \in \{y_1,y_0,y_2\}$ is an intern
% vertex of $P_1$ or $P_2$, then $D$ has a cycle of digon, contradicting
% Lemma~\ref{lem:forestB}. So $y_1,y_0$ and $y_2$ are not in the
% interiors of $P_1$ and $P_2$.

% Let $P'_2$ be the shortest subpath of $P_2$ with $y_2$ as an end-vertex and the second end-vertex in $P_1$.
% Then $T=P_1\cup P'_2$ is a bidirected tree with leaves $y_0,y_1, y_2$.
% Note that $m(T) = 2n(T) -2$. Moreover, $P_1$ has a matching of size $n(P_1)/2$ because it is odd and $P'_2-z$
% has a matching of size at least $n(P'_2)/2 -1$. The union of those matching is a matching of size at least 
%  $n(P_1)/2 + n(P'_2)/2 -1 = n(T)/2  -1/2$.

% Let $B'$ be the subdigraph of $B$ induced by $V(T) \cup \{x_1, x_2, y\}$.
% Then $n(B') = n(T) + 3$, $m(B') \geq m(T) + 9 = 2n(T) +7$, and $\pi(B') \ge \pi(T) \geq  n(T)/2  -1/2$.
% Thus $\rho(H') \le 7 \times (n(T)+3) - 3\times (2n(T)+7) - 2 ( n(T)/2  -1/2) = 21 -21 +1 = 1$.

(i) Let $T$ be a minimal bidirected tree included in $H$ containing $y_1,y_0,y_2$, and consider $H'$ the subdigraph of $H$ induced by $V(T) \cup \{x_1,x_2,y\}$.
Observe that the set of leaves of $T$ included in $\{y_0,y_1,y_2\}$, so by Lemma~\ref{matching-tree}, $T$ has a matching of size at least $\frac{1}{2}(n(T) -2)$.
Then $n(H') = n(T) + 3$, $m(H') \geq m(T) + 9 = 2n(T) +7$, and 
$\pi(H') \ge \pi(T) \geq  \frac{1}{2}(n(T) -2)$. As a consequence
$\rho(H') \leq 7(n(T)+3) - 3(2n(T)+7) - 2(n(T)-2)/2 = 2$.

\medskip

(ii) Suppose that $H'$ spans $D$.
Since $D$ is $3$-dicritical, every vertex has in- and out-degree at least $2$ in $D$. So in $A(D)\setminus A(H')$, there is an arc leaving $x_1$ and an arc leaving $x_2$.
Hence $D$ has at least two arcs more than $H'$, so $\rho(D) \leq \rho(H') -6 \leq -4$.
\end{proof}

A {\bf bag} is a digraph obtained from the oriented graph with
vertex set $\{y,x_1,x_2,y_1,y_2,y_3,y_4\}$ and arc set
$\{x_1y,x_2y,yy_1,yy_2,yy_3,yy_4,y_1x_1,y_2x_1,y_3x_2,y_4x_2\}$
by adding a bidirected path of odd length between $y_1$ and $y_2$,
and a bidirected path of odd length between $y_3$ and $y_4$.
These two paths may intersect, but are always disjoint from $x_1,\ y,\ x_2$ and $y_1,\ y_2,\ y_3,\ y_4$ are always pairwise distinct. 
See Figure~\ref{fig:bag}.

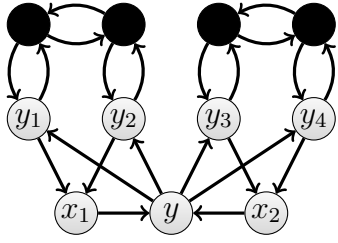
\begin{figure}[!hbtp]
\begin{center}

\begin{tikzpicture}[scale=1.25]

\node (x) at (0,0) [vertex] {$y$};
\node (y1) at (-1,0) [vertex] {$x_1$};
\node (y2) at (1,0) [vertex] {$x_2$};
\node (z') at (-0.5,1) [vertex] {$y_2$};

\node (z'') at (0.5,1) [vertex] {$y_3$};

\draw[arc] (y1) -- (x);
\draw[arc] (y2) -- (x);
\draw[arc] (x) -- (z');
\draw[arc] (x) -- (z'');
\draw[arc] (z') -- (y1);
\draw[arc] (z'') -- (y2);

\node (y1in) at ($(z') + (-1,0)$) [vertex] {$y_1$};
%\node at ($(y1in) + (-0.5,0)$) {$y_1$};
\node (y2in) at ($(z'') + (1,0)$) [vertex] {$y_4$};
%\node at ($(y2in) + (0.5,0)$) {$y_2$};

\draw[arc] (y1in) -- (y1);
\draw[arc] (y2in) -- (y2);

\node (d1) at ($(y1in) + (0,1)$) [maycoincide] {};
\node (d2) at ($(d1) + (1,0)$) [maycoincide] {};
\node (d3) at ($(y2in) + (0,1)$) [maycoincide] {};
\node (d4) at ($(d3) + (-1,0)$) [maycoincide] {};

\draw[arc, bend right] (y1in) to (d1);
\draw[arc, bend right] (d1) to (d2);
\draw[arc, bend right] (d2) to (z');
\draw[arc, bend right] (d1) to (y1in);
\draw[arc, bend right] (d2) to (d1);
\draw[arc, bend right] (z') to (d2);

\draw[arc, bend right] (y2in) to (d3);
\draw[arc, bend right] (d3) to (d4);
\draw[arc, bend right] (d4) to (z'');
\draw[arc, bend right] (d3) to (y2in);
\draw[arc, bend right] (d4) to (d3);
\draw[arc, bend right] (z'') to (d4);

\draw[arc] (x) -- (y1in);
\draw[arc] (x) -- (y2in);

\end{tikzpicture}
\end{center}
\caption{\label{fig:bag} A bag. The two bidirected paths with black internal vertices may intersect, but are always disjoint from $x_1,\ y,\ x_2$ and $y_1,\ y_2,\ y_3,\ y_4$ are always pairwise distinct.}
\end{figure}

\begin{lemma}\label{lemma:bag}
Let $D$ be a $3$-dicritical digraph with a bag $H$ as a subdigraph. Then
$H$ has a subdigraph $H'$ such that
\begin{itemize}
\item[(i)] $H'$ has potential at most $3$, and
\item[(ii)]  if $H'$ spans $D$, then $D$ has potential at most $-3$.
\end{itemize}
\end{lemma}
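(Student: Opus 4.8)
The plan is to mimic the structure of the proof of Lemma~\ref{lemma:basket}, adapting the counting to the slightly different arc-set of a bag. First I would handle part~(i). Given a bag $H$, let $T$ be a \emph{minimal} bidirected subtree of $H$ containing the four vertices $y_1,y_2,y_3,y_4$; by minimality every leaf of $T$ lies in $\{y_1,y_2,y_3,y_4\}$, so $T$ has at most four leaves, and by Lemma~\ref{matching-tree} we get $\pi(T)\geq \frac{1}{2}(n(T)-3)$. Now let $H'$ be the subdigraph of $H$ induced by $V(T)\cup\{x_1,x_2,y\}$. Then $n(H') = n(T)+3$, and $H'$ contains the ten ``skeleton'' arcs $x_1y,x_2y,yy_1,yy_2,yy_3,yy_4,y_1x_1,y_2x_1,y_3x_2,y_4x_2$ together with the (at least) $2(n(T)-1)$ arcs of the bidirected tree $T$, so $m(H')\geq 2n(T)+8$; and $\pi(H')\geq\pi(T)\geq\frac{1}{2}(n(T)-3)$. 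Plugging into the potential,
\[
\rho(H') \leq 7(n(T)+3) - 3(2n(T)+8) - 2\cdot\tfrac{1}{2}(n(T)-3) = 7n(T)+21-6n(T)-24-n(T)+3 = 0,
\]
which is at most $3$, giving~(i). (Here one must double-check the arc count: the ten listed arcs are all distinct and none of them lies in $T$, since $T$ is bidirected and uses only digons among $\{y_1,y_2,y_3,y_4\}$ and the internal path vertices, whereas the listed arcs are single arcs incident to $y$, $x_1$, or $x_2$; this is where a small amount of care is needed, but it is routine.)

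For part~(ii), suppose $H'$ spans $D$. Since $D$ is $3$-dicritical, Lemma~\ref{lemma:degeneracy} gives $d^+(v),d^-(v)\geq 2$ for every vertex. Now examine the out-degrees of $x_1$ and $x_2$ inside $H'$: in $H'$ the only arc leaving $x_1$ is $x_1y$, and the only arc leaving $x_2$ is $x_2y$ (all the path-arcs at $y_1,\dots,y_4$ and the arcs $y_ix_j$ point \emph{into} $x_1,x_2$, and $y_ix_j$ point away from the $y_i$'s). So $D$ must contain at least one arc leaving $x_1$ and at least one arc leaving $x_2$ that are not in $H'$, and these are two distinct arcs (they have distinct tails). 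Hence $m(D)\geq m(H')+2$, and since adding vertices is irrelevant (it spans) while adding arcs only decreases potential, $\rho(D)\leq \rho(H') - 3\cdot 2 \leq 0 - 6 = -6 \leq -3$, which proves~(ii). Actually the bound from~(i) already gives $\rho(H')\le 0$, so the $-3$ in the statement is comfortable; if instead one only proved $\rho(H')\le 3$ in part~(i) using a cruder arc count, the same two-extra-arcs argument still yields $\rho(D)\le 3-6=-3$.

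The only genuine subtlety — and the step I would be most careful about — is the arc-count in part~(i): one must verify that the ten skeleton arcs really are present and really are disjoint from $A(T)$, and that when the two bidirected paths of the bag intersect, the minimal tree $T$ still contains all of $y_1,y_2,y_3,y_4$ with its leaf set confined to those four vertices (so that Lemma~\ref{matching-tree} applies with $f\le 4$). Both points follow directly from the definition of a bag — the two paths are internally disjoint from $\{x_1,y,x_2\}$ and $y_1,\dots,y_4$ are pairwise distinct — but they are exactly the hypotheses that were built into the definition for this purpose, so invoking them explicitly is essential. Everything else is the same bookkeeping as in Lemmas~\ref{lemma:purse}, \ref{lemma:basket}, and the bidirected-tree matching bound.
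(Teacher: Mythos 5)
There is a genuine gap in part~(i): your construction silently assumes that a bidirected subtree of $H$ containing all four of $y_1,y_2,y_3,y_4$ exists, but when the two bidirected paths $P_1$ (from $y_1$ to $y_2$) and $P_2$ (from $y_3$ to $y_4$) are \emph{disjoint}, the digon graph $B(H)$ consists of two separate path components, and no such tree $T$ exists. Your closing remark only checks the tree's leaf set ``when the two bidirected paths of the bag intersect,'' which shows you did not notice that the disjoint case falls entirely outside your argument. The paper splits into exactly these two cases: when $P_1$ and $P_2$ are disjoint it takes $H'=H$ and computes directly, with $2\ell_i+1$ the length of $P_i$, that $n(H)=2\ell_1+2\ell_2+7$, $m(H)=4\ell_1+4\ell_2+14$ and $\pi(H)=\ell_1+\ell_2+2$, so $\rho(H)=3$ (note this is $3$, not $0$, so no subdigraph argument of your type could give $0$ here in general); only when the paths intersect does it run the minimal-tree argument you wrote, arriving at $\rho(H')\leq 0$.

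Your intersecting-case computation itself is correct and matches the paper's ($f\leq 4$ leaves, $\pi(T)\geq\tfrac12(n(T)-3)$, the ten skeleton arcs disjoint from $A(T)$, giving $\rho(H')\leq 0$), and your part~(ii) is exactly the paper's argument: $d^+_{H'}(x_1)=d^+_{H'}(x_2)=1$ forces two extra arcs in $D$, so $\rho(D)\leq\rho(H')-6\leq 3-6=-3$, which as you observe works with the weaker bound $\rho(H')\leq 3$. So the fix is simply to add the missing disjoint case with $H'=H$ and the direct potential count above.
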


\begin{proof}
(i) Let $P_1$ be the bidirected path between $y_1$ and $y_2$, and $P_2$ the bidirected path between $y_3$ and $y_4$.
For $i\in [2]$, let $2\ell_i+1$ be the length of $P_i$.

Assume first  that $P_1$ and $P_2$ do not intersect.
Then $n(H) = 2\ell_1 + 2\ell_2 +7$, 
$m(H) = 4\ell_1 + 4\ell_2 + 14$ and $\pi(B) = \ell_1 + \ell_2+2$.
Hence $\rho(H) = 7(2\ell_1 + 2\ell_2 +7) - 3(4\ell_1 + 4\ell_2 + 14) - 
2(\ell_1 + \ell_2+2) = 49 - 42 - 4 = 3$.

Assume now that $P_1$ and $P_2$ intersect.
Let $T$ be a minimal bidirected tree included in $H$ that contains $y_1,y_2,y_3,y_4$.
Then the leaves of $T$ are in $\{y_1,y_2,y_3,y_4\}$, and so by Lemma~\ref{matching-tree}, $T$ has a matching of size at least $n(T)/2 - 3/2$.
Let $H'$ be the subdigraph of $H$ induced by $V(T) \cup \{x_1, x_2, y\}$.
Then $n(H') = n(T) + 3$, $m(H') \geq m(T) + 10 = 2n(T) +8$, and $\pi(H') \geq \pi(T) \geq  n(T)/2  -3/2$.
Thus $\rho(H') \le 7 \times (n(T)+3) - 3\times (2n(T)+8) - 2 ( n(T)/2  -3/2) = 21 -24 +3 = 0$.

% {\color{gray}\CR{Old:}
% Let $P_3$ (resp. $P_4$) be the shortest subpath of $P_2$ with $y_3$ (resp. $y_4$) as an end-vertex and the second end-vertex $z_3$ (resp. $z_4$) in $P_1$.
% Then $T=P_1\cup P_3 \cup P_4$ is a bidirected tree with leaves in $\{y_1,y_2, y_3, y_4\}$.
% Note that $m(T) = 2n(T) -2$. Moreover, $P_1$ has a matching of size $n(P_1)/2$ because it is odd, $P_3-z_3$
% has a matching of size at least $n(P_3)/2 -1$ and $P_4-z_4$
% has a matching of size at least $n(P_4)/2 -1$. The union of those matching is a matching of size at least 
%  $n(P_1)/2 + n(P_3)/2 - n(P_4)/2 -2 = n(T)/2  -1$.

% Let $B'$ be the subdigraph of $B$ induced by $V(T) \cup \{x_1, x_2, y\}$.
% Then $n(B') = n(T) + 3$, $m(B') \geq m(T) + 10 = 2n(T) +8$, and $\pi(B') \geq \pi(T) \geq  n(T)/2  -1$.
% Thus $\rho(H') \le 7 \times (n(T)+3) - 3\times (2n(T)+8) - 2 ( n(T)/2  -1) = 21 -24 +2 = -1$.
% }

\medskip

(ii) Suppose that $H'$ spans $D$.
Every vertex has in- and out-degree at least $2$ in $D$. So in $A(D)\setminus A(H')$, there is an arc leaving $x_1$ and an arc leaving $x_2$.
Hence $D$ has at least two arcs more than $H'$, so $\rho(D) \leq \rho(H') -6 \leq -3$.
\end{proof}

A {\bf turtle} is a digraph obtained from the oriented graph
with set of vertices $\{y,x_1,x_2,z_1,z_2,z_3,z_4\}$ and set of arcs
$\{ x_1y, x_2y, yz_1, yz_2, yz_3, yz_4, z_2x_2, z_3x_2, z_4x_2, z_1x_1,
z_2x_1\}$ by
\begin{enumerate}[label=(\roman*)]
\item adding a bidirected path $P$ of odd length between $z_1$
     and $z_2$, and
\item adding a directed $3$-cycle $(u_2,u_3,u_4,u_2)$, and
\item for $i=2,3,4$, adding a bidirected path $P_i$ of even length
    with extremities $z_i$, $u_i$.
\end{enumerate}
Note that the bidirected paths $P_2,\ P_3,\ P_4$  and $P$ may intersect 
but they are always disjoint from $\{x_1,y,x_2\}$, and
$y,\ x_1,\ x_2,\ z_1,\ z_2,\ z_3,\ z_4$ are always pairwise distinct.
%\PA{isn't it $P_2 \setminus \{z_2\}$, $P_3 \setminus \{z_3\}$, $P_4 \setminus \{z_4\}$ and $P\setminus \{z_1, z_2\}$ that can intersect?}
Moreover, $P_2$ (resp. $P_3$, $P_4$) may have length $0$, and in this case $u_2=z_2$ (resp. $u_3=z_3$, $u_4=z_4$).
See Figure~\ref{fig:turtle}.

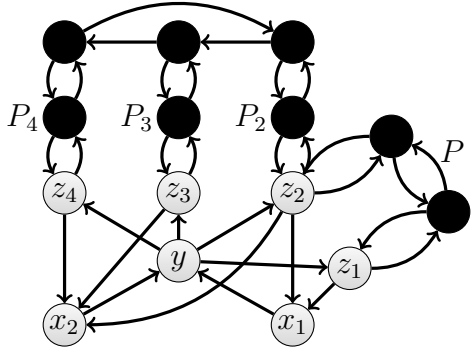
\begin{figure}[!hbtp]
\begin{center}

\begin{tikzpicture}[scale=1, rotate=180]

\node (x1) at (-1.5,0.75) [vertex] {$x_1$};
\node (x2) at (1.5,0.75) [vertex] {$x_2$};

\node (y) at (0,-0.1) [vertex] {$y$};

\node (z1) at (-2.25,0) [vertex] {$z_1$};
\node (z2) at (-1.5,-1) [vertex] {$z_2$};
\node (z3) at (0,-1) [vertex] {$z_3$};
\node (z4) at (1.5,-1) [vertex] {$z_4$};

\draw[arc] (x1) to (y);
\draw[arc] (x2) to (y);

\draw[arc] (y) to (z1);
\draw[arc] (y) to (z2);
\draw[arc] (y) to (z3);
\draw[arc] (y) to (z4);

% B-conf on x_1 y

\draw[arc, bend left] (z2) to (x2);
\draw[arc] (z3) to (x2);
\draw[arc] (z4) to (x2);

\node (a2) at ($(z2)-(0,1)$) [maycoincide] {};
\node (a3) at ($(z3)-(0,1)$) [maycoincide] {};
\node (a4) at ($(z4)-(0,1)$) [maycoincide] {};

\node (b2) at ($(a2)-(0,1)$) [maycoincide] {};
\node (b3) at ($(a3)-(0,1)$) [maycoincide] {};
\node (b4) at ($(a4)-(0,1)$) [maycoincide] {};

\draw[arc, bend right] (z2) to (a2);
\draw[arc, bend right] (a2) to (z2);
\draw[arc, bend right] (z3) to (a3);
\draw[arc, bend right] (a3) to (z3);
\draw[arc, bend right] (z4) to (a4);
\draw[arc, bend right] (a4) to (z4);

\draw[arc, bend right] (b2) to (a2);
\draw[arc, bend right] (a2) to (b2);
\draw[arc, bend right] (b3) to (a3);
\draw[arc, bend right] (a3) to (b3);
\draw[arc, bend right] (b4) to (a4);
\draw[arc, bend right] (a4) to (b4);

\draw[arc] (b2) to (b3);
\draw[arc] (b3) to (b4);
\draw[arc, bend left] (b4) to (b2);

% label P_2, P_3, P_4
\node (label_P2) at ($(a2)+(0.55,0)$) {$P_2$};
\node (label_P3) at ($(a3)+(0.55,0)$) {$P_3$};
\node (label_P4) at ($(a4)+(0.55,0)$) {$P_4$};

% A-conf on x_2 y

\draw[arc] (z1) to (x1);
\draw[arc] (z2) to (x1);

\node (u1) at ($(z1) - (30:1.5)$) [maycoincide] {};
\node (u2) at ($(z2) - (30:1.5)$) [maycoincide] {};

% label P
\node (labelP) at ($(u2) + (-0.8,0.2)$) {$P$};

\draw[arc, bend right] (z1) to (u1);
\draw[arc, bend right] (u1) to (z1);

\draw[arc, bend right] (z2) to (u2);
\draw[arc, bend right] (u2) to (z2);

\draw[arc, bend right] (u1) to (u2);
\draw[arc, bend right] (u2) to (u1);

\end{tikzpicture}
\end{center}
\caption{A turtle. The bidirected paths $P_2,\ P_3,\ P_4$  and $P$ may intersect but they are always disjoint from $\{x_1,y,x_2\}$, and
$y$, $x_1$, $x_2$, $z_1$, $z_2$, $z_3$, $z_4$ are always distinct.}\label{fig:turtle}
\end{figure}

\begin{lemma}\label{lemma:turtle}
Let $D$ be a $3$-dicritical digraph $D$ with a turtle $H$ as a subdigraph.
Then there exists a subdigraph $H'$ of $H$ such that
\begin{enumerate}[label=(\roman*)]
\item $H'$ has potential at most $1$ or is a bidirected cycle minus
    one arc,
\item if $H'$ spans $D$, then $\rho(D) \leq -5$.
\end{enumerate}
\end{lemma}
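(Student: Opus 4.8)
The plan is to mimic the case analysis used in the proofs of Lemmas~\ref{lemma:purse}, \ref{lemma:basket} and \ref{lemma:bag}, but now dealing with the extra directed $3$-cycle $(u_2,u_3,u_4,u_2)$ and the three even bidirected paths $P_2,P_3,P_4$. I would distinguish cases according to which of the paths $P_2,P_3,P_4$ have length $0$, and according to whether the bidirected structure ``degenerates'' (i.e.\ whether $P$ and the $P_i$ intersect or whether the rim cycle collapses because two $u_i$ coincide with the corresponding $z_i$). In each case, $H'$ will be obtained by first replacing the union of the relevant bidirected paths by a minimal bidirected tree $T$ spanning the appropriate set of endpoints, then letting $H'$ be the subdigraph induced by $V(T)\cup\{x_1,y,x_2\}$ together with the rim $3$-cycle when it survives; then I would count $n(H')$, lower-bound $m(H')$ by only keeping the ``mandatory'' arcs, and lower-bound $\pi(H')$ via Lemma~\ref{matching-tree} applied to $T$ (whose leaves lie among the $z_i$ and $u_i$, so the number of leaves is bounded by a small constant).

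The concrete steps are: (1) Handle the generic non-degenerate case where $P_2,P_3,P_4$ all have positive even length $2a_i$, $P$ has length $2\ell+1$, and none of the paths intersect; here $n(H)=7+ (2\ell) + \sum(2a_i) + $ (internal vertices of the rim), compute $m(H)$ and $\pi(H)$ exactly, and check $\rho(H)\le 1$ by arithmetic. (2) For each subset of $\{2,3,4\}$ on which $P_i$ has length $0$ (so $u_i=z_i$), redo the count; the case where all three are length $0$ gives the rim cycle directly on $\{z_2,z_3,z_4\}$, and one should watch for the subcase where the whole turtle collapses to a bidirected odd cycle minus an arc, which is why the statement allows that alternative conclusion in~(i). (3) Handle the degenerate case where some of the bidirected paths intersect: replace them by a minimal bidirected tree $T$ with leaves in a set of size at most $5$, so $\pi(T)\ge \frac12(n(T)-4)$ by Lemma~\ref{matching-tree}; then $H'=H[V(T)\cup\{x_1,y,x_2\}]$ has $n(H')=n(T)+3$, $m(H')\ge 2n(T)+c$ for an explicit small $c$ (counting $x_1y,x_2y,yz_1,\dots$ and the two arcs from each $z_i$ as dictated by the arc set), and the potential bound $\rho(H')\le 1$ follows. (4) For part~(ii): if $H'$ spans $D$, then since $D$ is $3$-dicritical, by Lemma~\ref{lemma:degeneracy} every vertex has in- and out-degree at least $2$; the vertices $x_1$ and $x_2$ each already have out-degree $0$ or the ``wrong'' in/out balance inside $H'$ (they only receive arcs, e.g.\ $z_ix_1$, $x_1y$ shows $x_1$ has out-degree $1$ in $H'$ but needs $\ge 2$), forcing at least two arcs of $D$ outside $A(H')$; moreover one should locate a third forced arc (e.g.\ $y$ or another $z_i$ short on out-degree, or $x_1,x_2$ each needing a second extra incident arc) to get $\rho(D)\le \rho(H')-2\cdot 3=\rho(H')-6\le -5$. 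Actually, since the claimed bound is $-5$ and $\rho(H')\le 1$, I need exactly $2$ extra arcs if $\rho(H')\le -3$, or I must argue $\rho(H')\le 1$ and find three extra arcs; so in step~(3)/(4) I would be careful to either sharpen the bound on $\rho(H')$ or to exhibit three arcs in $A(D)\setminus A(H')$ (for instance, $x_1$ needs out-degree $\ge 2$ giving one arc, $x_2$ needs out-degree $\ge 2$ giving another, and $y$ or some endpoint of the rim needs an additional arc not yet present).

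The main obstacle I anticipate is bookkeeping in the many degenerate configurations: the turtle has four bidirected paths that ``may intersect'' plus a rim cycle whose vertices may or may not coincide with $z_2,z_3,z_4$, so there is a combinatorial explosion of subcases, and in the worst of them I must be sure that the minimal bidirected tree trick still applies (i.e.\ that after contracting intersections we really get a \emph{tree}, not something with a cycle, which would need Lemma~\ref{lem:forestB} or a separate argument) and that the leaf count stays bounded by a constant so Lemma~\ref{matching-tree} is strong enough. The bidirected-odd-cycle-minus-an-arc escape hatch in conclusion~(i) strongly suggests that there is (at least) one genuinely degenerate configuration where the potential bound $1$ fails on the nose and must be replaced by this structural statement, so pinning down exactly which configuration that is, and checking the arithmetic there, will be the delicate point. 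For part~(ii) the subtlety is making sure the three (or appropriately many) extra arcs I invoke are genuinely distinct and genuinely absent from $H'$ in every case.
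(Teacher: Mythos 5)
Your overall architecture matches the paper's in spirit (a generic potential count, a structural fallback for degenerate configurations, and a two-extra-arcs degree argument for (ii)), but your plan defers exactly the point where the paper does its real work, and your proposed substitute is different. The paper avoids the ``combinatorial explosion'' you anticipate with a single observation: if $P_i$ and $P_j$ intersect for distinct $i,j\in\{2,3,4\}$, or if $P$ meets $P_3$ or $P_4$, then $H$ already contains a bidirected cycle minus one arc --- this is precisely the alternative conclusion in (i), not (as you guess) a configuration where the potential bound fails on the nose. After discarding those cases, the only possible intersection left is between $P$ and $P_2$ (which always share $z_2$); the paper truncates $P$ to the subpath $P'$ from $z_1$ to the first vertex of $P_2$ met along $P$, sets $H'=H[V(P')\cup V(P_2)\cup V(P_3)\cup V(P_4)\cup\{x_1,x_2,y\}]$, and a single count ($m(H')=2n(H')+2$ and $\pi(H')\ge\frac{1}{2}(n(H')-7)$, obtained directly from matchings in the paths rather than from Lemma~\ref{matching-tree}) gives $\rho(H')\le 1$ in all remaining cases at once; there is no subcase split on which $P_i$ have length $0$. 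Your bidirected-tree route for the intersecting cases would probably also close numerically (the leaf and arc counts leave slack, and Lemma~\ref{lem:forestB} does guarantee a tree), but you have not carried it out, and you would in any event still need the bidirected-cycle-minus-one-arc observation, or something equivalent, to explain why that alternative belongs in the statement at all. So the key idea of the proof is missing from your plan.

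For (ii), your arithmetic wavering should be settled in favour of two arcs: a spanning $H'$ cannot be a bidirected cycle minus one arc, so $\rho(H')\le 1$ by (i); Lemma~\ref{lemma:degeneracy} forces an arc of $A(D)\setminus A(H')$ leaving $x_1$ and another leaving $x_2$, so $m(D)\ge m(H')+2$ and $\rho(D)\le\rho(H')-2\cdot 3\le 1-6=-5$. No third arc is needed, and hunting for one (as you suggest) would be wasted effort.
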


\begin{proof}
(i)
Observe that if $P_i$ and $P_j$ have a non empty intersection
from some distinct $i,j \in \{2,3,4\}$ then $H$ contains
a bidirected cycle minus one arc.
Similarly, if $P$ intersects $P_3$ or $P_4$, then $H$ contains
a bidirected cycle minus one arc again.
Now suppose that $P_2,P_3,P_4$ are pairwise disjoint and $P$ intersects only
$P_2$. Moreover, $B(H)$ has no cycle by Lemma~\ref{lem:forestB}.
Let $P'$ be the subpath of $P$ joining $z_1$ and
the first vertex in $V(P) \cap V(P_2)$ along $P$.
Let $\ell_2,\ell_3,\ell_4,\ell'$ be the lengths of, respectively,
$P_2,P_3,P_4,P'$.

Let $H'$ be the subdigraph of $H$ induced by $V(P')\cup V(P_2)\cup V(P_3) \cup V(P_4) \cup \{x_1, x_2, y\}$. Note that $n(H')= \ell' + \ell_2 +\ell_3+\ell_4 +6$ and $m(H') = 2(\ell' + \ell_2 + \ell_3 +\ell_4) + 14 = 2n(H') +2$.

Observe that $P_2 \cup P'$ has a matching of digons
of size at least $\frac{\ell_2}{2} + \frac{\ell'-1}{2}$,
and $P_3$ (resp. $P_4$) has a matching of $\frac{\ell_3}{2}$
(resp. $\frac{\ell_4}{2}$) digons.
We deduce that $\pi(H') \geq \frac{\ell_2 + \ell_3 + \ell_4 + \ell' - 1}{2}
= \frac{n(H') - 7}{2}$.
Hence we have
\[
\begin{split}
\rho(H) &\leq 7n(H') - 3(2n(H')+2) - 2\frac{n(H') - 7}{2} \\
&= - 3 \times 2 + 7 = 1
\end{split}
\]
as claimed.

\medskip

(ii) If $H'$ spans $D$, then it is not a bidirected cycle minus one arc,
so by~(i) we have $\rho(H') \leq 1$.
Moreover, every vertex has in- and out-degree at least $2$ in $D$. So in $A(D)\setminus A(H')$, there is an arc leaving $x_1$ and an arc leaving $x_2$.
Hence $D$ has at least two arcs more than $H'$, so $\rho(D) \leq \rho(H') -6 \leq -5$.
.
\end{proof}

\section{Proof of Theorem~\ref{thm:main_potential}} \label{sec:mainproof}
%%%%%%%%%%%%%%%%%%%%%%%%%%%%%%%%%

The goal of this section is to prove Theorem~\ref{thm:main_potential}.

\subsection{Some properties of the potential function}
%%%%%%%%%%%%%%%%%%%%%%%%%%%%%%%%%%%%%%%%%%%%%%%%%%

We recall the definition of the {\bf potential}:
if $D$ is a digraph and $R$ a set of vertices of $D$, we set
$\rho_D(R) = 7n(D[R]) - 3m(D[R]) - 2\pi(D[R])$.
If $R = V(D)$, we write for short $\rho(D)=\rho_D(V(D))$.

We start with an easy observation:
\begin{lemma}\label{lemma:potential_subgraph_not_induced}
Let $H$ be a subdigraph of $D$. Then $\rho(H) \geq \rho_D(V(H))$,
and if $H$ is not an induced subdigraph of $D$, then
$\rho(H) \geq \rho_D(V(H)) + 3$.
\end{lemma}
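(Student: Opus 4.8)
The statement to prove is Lemma~\ref{lemma:potential_subgraph_not_induced}: if $H$ is a subdigraph of $D$, then $\rho(H) \geq \rho_D(V(H))$, with strict inequality by at least $3$ when $H$ is not induced. The plan is to compare the three quantities defining the potential---vertex count, arc count, and matching number of the digon graph---on $H$ versus on $D[V(H)]$. Since $V(H) = V(D[V(H)])$, the vertex terms $7n(H) = 7n(D[V(H)])$ agree exactly. Because $A(H) \subseteq A(D[V(H)])$, we have $m(H) \le m(D[V(H)])$, so $-3m(H) \ge -3m(D[V(H)])$. Finally, since $B(H)$ is a subgraph of $B(D[V(H)])$ on the same vertex set (every digon of $H$ is a digon of $D[V(H)]$), any matching of $B(H)$ is a matching of $B(D[V(H)])$, hence $\pi(H) = \mu(B(H)) \le \mu(B(D[V(H)])) = \pi(D[V(H)])$, giving $-2\pi(H) \ge -2\pi(D[V(H)])$. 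Adding the three inequalities yields $\rho(H) \ge \rho_D(V(H))$.

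For the strict part, suppose $H$ is not an induced subdigraph of $D$. Then $A(H) \subsetneq A(D[V(H)])$, so there is an arc $uv \in A(D[V(H)]) \setminus A(H)$ with $u,v \in V(H)$; in particular $m(D[V(H)]) \ge m(H) + 1$. The subtlety is that adding this one arc to $H$ decreases $-3m$ by $3$ but might also decrease $-2\pi$ by $2$ (if the arc $uv$ together with an existing arc $vu$ of $H$ forms a new digon that enlarges the maximum matching of the digon graph). So naively one only gets a net gain of $3 - 2 = 1$, not $3$. I would handle this by a more careful accounting: partition the ``missing'' arcs $A(D[V(H)]) \setminus A(H)$ and observe that creating a new digon in $B$ requires \emph{two} missing arcs $uv$ and $vu$ (if $H$ already contained one of them, that direction is not missing), or a single missing arc completing a digon whose other arc is in $H$. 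In the first case two missing arcs contribute $-6$ to the $-3m$ term while the matching drops by at most $1$, contributing $+2$, for a net $\ge -4 < -3$; actually more simply, each new edge of $B(D[V(H)])$ not in $B(H)$ ``costs'' at least one missing arc beyond what the matching increase accounts for. I expect the cleanest route is: let $a = m(D[V(H)]) - m(H) \ge 1$ be the number of missing arcs and let $p = \pi(D[V(H)]) - \pi(H) \ge 0$. Each unit of increase in $\pi$ corresponds to an edge in a maximum matching of $B(D[V(H)])$ that is not ``already available'' in $B(H)$; such an edge is a digon requiring two arcs, at least one of which must be missing (else the edge is in $B(H)$), and distinct matching edges use disjoint vertex sets hence disjoint arc pairs, so $a \ge p$. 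Wait---this gives $\rho_D(V(H)) - \rho(H) = 3a + 2p \ge 3a \ge 3$, which suffices, but I should double check that an edge in a maximum matching of $B(D[V(H)])$ avoided by every maximum matching of $B(H)$ genuinely requires a missing arc. Since such an edge, if present in $B(H)$, could be swapped in, the real invariant I want is simply: every edge of $B(D[V(H)])$ that is not an edge of $B(H)$ uses a missing arc, and $\pi(D[V(H)]) \le \pi(B(H)) + |\{\text{new edges of } B\}| \le \pi(H) + a$, while more directly $m(D[V(H)]) \ge m(H)+a$; then $3a - 2a = a \ge 1$ only gives $1$ again.

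The genuinely careful argument, which I would write out, is this: it is enough to show the gain is at least $3$ when $a = 1$ (one missing arc), because each additional missing arc adds $3$ to $3a$ and at most $2$ to $2p$... no, that is the wrong direction. Let me instead argue directly at the level of the single arc: removing one arc from a digraph $D'$ to get $D''$, we have $m(D'') = m(D')-1$ and $\pi(D'') \ge \pi(D') - 1$ (removing an arc removes at most one edge of $B$, and removing one edge of a graph drops the matching number by at most $1$), so $\rho(D'') = 7n - 3m(D'') - 2\pi(D'') \le 7n - 3(m(D')-1) - 2(\pi(D')-1) = \rho(D') + 5$. Hmm, that bounds the wrong side. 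The correct statement is $\rho_{D'}(V) \ge \rho_{D''}(V) + 1$, i.e. adding an arc increases potential by at least $1$... but the Lemma claims $3$. So the one-missing-arc reduction is false, and the Lemma's ``$+3$'' must be using that a subdigraph $H$ arising in the proof fails to be induced in a structured way---\emph{or}, re-reading, perhaps the intended reading is that $H$ being non-induced as a \emph{subdigraph} (not on an arbitrary vertex set but as it appears) means $\rho(H)$ already counts $H$'s own digon graph, and $B(H)$ can be strictly smaller. I would therefore present the proof as: the first inequality is the three-line monotonicity argument above; for the strict inequality I would invoke that a missing arc $uv$ is not in a digon of $H$ unless $vu \in A(H)$ but $[u,v] \notin$ digons of $H$---impossible---so the missing arc creates no new edge of $B$ at all, whence $\pi$ is unchanged and the gain is exactly $3m$-worth, i.e. $\ge 3$. \textbf{Main obstacle.} Pinning down precisely why the missing arc cannot increase $\pi(D[V(H)])$ beyond $\pi(H)$: a missing arc $uv$ enlarges $B$ only if its reverse $vu$ is present in $H$, but then $[u,v]$ would be a digon of $H$ and $uv$ would already be in $A(H)$, contradiction---so no missing arc completes a new digon, $B(D[V(H)]) = B(H)$, and $\rho_D(V(H)) = \rho(H) + 3(m(D[V(H)]) - m(H)) \ge \rho(H) + 3$. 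That resolves it cleanly, and I would write the proof in exactly this order: vertex terms equal, arc monotonicity, digon-graph equality (key observation), then combine.
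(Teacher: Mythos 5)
Your proof of the first inequality is correct and is exactly the paper's argument: the vertex terms agree, $m(H)\le m(D[V(H)])$ and $\pi(H)\le\pi(D[V(H)])$, so $\rho(H)-\rho_D(V(H)) = 3\bigl(m(D[V(H)])-m(H)\bigr)+2\bigl(\pi(D[V(H)])-\pi(H)\bigr)\ge 0$. But your treatment of the strict part goes wrong because of a persistent sign error. You worry that a missing arc which completes a digon ``decreases $-2\pi$ by $2$'' and so nets ``only $3-2=1$''. Look again at the identity you yourself set up: the gap $\rho(H)-\rho_D(V(H))$ equals $3\Delta m + 2\Delta\pi$ with $\Delta m = m(D[V(H)])-m(H)\ge 1$ and $\Delta\pi = \pi(D[V(H)])-\pi(H)\ge 0$. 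An increase in $\pi$ makes $\rho_D(V(H))$ \emph{smaller} and hence makes the gap \emph{larger}; it works for you, not against you. The paper's proof is precisely: drop the nonnegative term $2\Delta\pi$ and use $\Delta m\ge 1$ to get $\rho(H)\ge\rho_D(V(H))+3$. There is no obstacle to overcome.

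The ``clean resolution'' you settle on is, moreover, false. You claim that no missing arc can complete a new digon, because if $uv\in A(D[V(H)])\setminus A(H)$ and $vu\in A(H)$ then ``$[u,v]$ would be a digon of $H$ and $uv$ would already be in $A(H)$''. That inference is backwards: a digon of $H$ requires \emph{both} arcs to lie in $A(H)$, and the configuration $vu\in A(H)$, $uv\notin A(H)$, $uv\in A(D)$ is entirely possible (take $D$ containing the digon $[u,v]$ and $H=D\setminus uv$; then $[u,v]$ is an edge of $B(D[V(H)])$ but not of $B(H)$, so $B(D[V(H)])\ne B(H)$ and $\pi$ may indeed increase). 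Fortunately this false claim is not needed: as noted above, a potential increase of $\pi$ only strengthens the inequality. Delete the digression and the false ``key observation'', keep your opening three-line computation together with $m(D[V(H)])\ge m(H)+1$, and you have the paper's proof.
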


\begin{proof}
We have
\[
\begin{split}
\rho(H) &= \rho_D(V(H)) + 3(m(D[V(H)]) - m(H)) + 2(\pi(D[H]) - \pi(H)) \\
&\geq \rho_D(V(H)) + 3(m(D[V(H)]) - m(H))
\end{split}
\]
Hence $\rho(H) \leq \rho_D(V(H))$ and if $H$ is not an induced subdigraph,
then $m(D[H]) \geq m(H) +1$ and $\rho(H) \geq \rho_D(V(H)) + 3$.
\end{proof}

Let $D$ be a digraph.
Let $k\geq 2$ be an integer.
A {\bf  $k$-thread} in $D$ is a bidirected path of length $k$ whose internal vertices have degree $4$ in $D$.
The digraph obtained from $D$ by {\bf  contracting a $3$-thread} $[w,x,y,z]$ is the digraph $D'$ obtained by replacing the $3$-thread $[w,x,y,z]$ by the digon $[w,z]$, that is $D'=D-\{x,y\} \cup [w,z]$.

Recall that, given a digraph $D$, its potential is $\rho(D) = 7n(D) - 3m(D) - 2\pi(D)$.
\begin{lemma}\label{lemma:subdiv_digon}
Let $D'$ be a digraph obtained from a digraph $D$ by contracting a $3$-thread.
\begin{enumerate}
\item[(i)] $\rho(D') \geq \rho(D)$.
\item[(ii)] If $D$ is not $2$-dicolourable, then $D'$ is not $2$-dicolourable.
%\item[(ii)] If $D$ is $3$-vertex-dicritical, then $D'$ is $3$-vertex-dicritical.
\end{enumerate}
\end{lemma}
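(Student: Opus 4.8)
\textbf{Plan for the proof of Lemma~\ref{lemma:subdiv_digon}.}

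The plan is to prove the two parts separately, both by direct inspection of the effect of contracting a $3$-thread $[w,x,y,z]$, where $x,y$ are internal vertices of degree $4$ in $D$ and $D' = D - \{x,y\} \cup [w,z]$.

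For part (i), I would first record the exact changes to the three quantities appearing in the potential. Contracting the $3$-thread removes the two vertices $x$ and $y$, so $n(D') = n(D) - 2$. It removes the three digons $[w,x]$, $[x,y]$, $[y,z]$ (six arcs) and adds the single digon $[w,z]$ (two arcs), so $m(D') = m(D) - 4$. Thus $7n - 3m$ changes by $7\cdot(-2) - 3\cdot(-4) = -14 + 12 = -2$. It therefore suffices to show $\pi(D') \leq \pi(D) - 1$, i.e. that a maximum matching of $B(D')$ has size at least one less than a maximum matching of $B(D)$; equivalently $\pi(D) \geq \pi(D') + 1$. To see $\pi(D) \geq \pi(D')+1$: take a maximum matching $M'$ of $B(D')$; if $M'$ does not use the edge $wz$, then $M' \cup \{xy\}$ is a matching in $B(D)$ (the digon $[x,y]$ is present and $x,y$ are not touched by $M'$ since they are not in $D'$), giving $\pi(D) \geq \pi(D')+1$; if $M'$ uses $wz$, replace that edge by the two edges $wx$ and $yz$ of $B(D)$ — these are vertex-disjoint from the rest of $M'$ — to get a matching of size $|M'|+1$ in $B(D)$. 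Either way $\pi(D) \geq \pi(D') + 1$, hence $\rho(D') \geq \rho(D)$. (One should double-check the degenerate case where $w$ and $z$ already form a digon or coincide, but since the internal vertices have degree $4$ the $3$-thread is ``clean'' and I expect the argument above to go through, possibly with a short separate remark.)

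For part (ii), I would argue contrapositively: suppose $D'$ has a $2$-dicolouring $\phi'$; I construct a $2$-dicolouring $\phi$ of $D$. On $V(D) \setminus \{x,y\} = V(D')$, set $\phi = \phi'$. Since $[w,z]$ is a digon in $D'$ we have $\phi'(w) \neq \phi'(z)$, say $\phi'(w) = 1$ and $\phi'(z) = 2$. Now extend to $x$ and $y$: because $[w,x]$, $[x,y]$, $[y,z]$ are digons in $D$, any valid colouring must give $\phi(x) \neq \phi(w) = 1$, so $\phi(x) = 2$, and $\phi(y) \neq \phi(x) = 2$ and $\phi(y) \neq \phi(z) = 2$, so $\phi(y) = 1$. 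I would then check this $\phi$ has no monochromatic dicycle. Any dicycle of $D$ either avoids $\{x,y\}$ (then it is a dicycle of $D'$, monochromatic under $\phi'$ only if it already was, impossible), or it enters the thread; since $x$ has only the neighbours $w,y$ and $y$ has only the neighbours $x,z$ in $D$ (degree $4$, all of it consumed by the three digons), any dicycle through $x$ or $y$ must traverse the whole path $w,x,y,z$ (or its reverse), and can be ``shortcut'' to a dicycle of $D'$ through the digon $[w,z]$; a monochromatic such dicycle in $D$ would use only colours in $\{\phi(x),\phi(y)\} = \{1,2\}$ along the thread, which is impossible since $\phi(x) \neq \phi(y)$. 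Hence $\phi$ is a $2$-dicolouring of $D$, contradiction.

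The main obstacle I anticipate is part (ii): carefully justifying that every dicycle of $D$ meeting the thread corresponds to a dicycle of $D'$ through $[w,z]$, and handling the direction of the arcs correctly (the thread is bidirected, so a dicycle may traverse it in either orientation, and I must make sure the colour constraint $\phi(x) \neq \phi(y)$ genuinely blocks a monochromatic dicycle regardless of orientation). The matching bookkeeping in part (i) is routine once the case split on whether $M'$ uses the edge $wz$ is made explicit.
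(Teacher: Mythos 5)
Your proposal is correct and follows essentially the same route as the paper: the same bookkeeping $n(D')=n(D)-2$, the same case split on whether a maximum matching of $B(D')$ uses $wz$ to get $\pi(D')\leq\pi(D)-1$, and for (ii) the same extension $\phi(x)=\phi(z)$, $\phi(y)=\phi(w)$ (your verification of (ii) is in fact more detailed than the paper's). The only point to tidy is the arc count: the paper states $m(D')\leq m(D)-4$ rather than equality, since $w$ and $z$ may already be joined by an arc in $D$ — but as you anticipated, this degenerate case only makes $m$ drop further and hence only strengthens $\rho(D')\geq\rho(D)$.
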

\begin{proof}
Let $[w,x,y,z]$ be the $3$-thread in $D$ whose contraction results in $D'$.

Note that $n(D') = n(D) -2$ and $m(D') \leq m(D) -4$ (equality does not hold  when an arc of $[w,z]$ was already in $A(D)$).

Let us now prove $\pi(D') \leq \pi(D)-1$. 
Let $M'$ be a matching of digons in $D'$.
If it does not contain $[w,z]$, then $M'\cup \{[x,y]\}$ is a matching of digons in $D$, and if it contains  $[w,z]$ then $(M'\setminus \{[w,z]\}) \cup \{[w,x], [y,z]\}$ is a matching of digons in $D$. Hence  $\pi(D') \leq \pi(D)-1$.
Now, $n(D') = n(D) -2$, $m(D') \leq m(D) -4$ and $\pi(D') \leq \pi(D)-1$ directly imply (i).

\medskip

Let us now prove (ii).
If $D'$ has a $2$-dicolouring $\phi$, then  $\phi(w) \neq \phi(z)$, and thus, setting
 $\phi(y)=\phi(w)$ and $\phi(x) =\phi(z)$ results in a $2$-dicolouring of $D$.
\end{proof}

The potential method is based on the following definition and lemma, which, given a $3$-dicritical digraph $D$ and a set $R$ of vertices,
allow to construct a smaller $3$-dicritical digraph. 

\begin{definition}
Let $D$ be a digraph and $R \subseteq V(D)$.
If $\phi$ is a $2$-dicolouring of $D[R]$, we define $D/(R,\phi,X)$
where $X=\{x_1,x_2\}$ as the digraph obtained by contracting 
each $\phi^{-1}(i)$ into a single vertex $x_i$, adding a digon between 
$x_1$ and $x_2$, and removing loops and multiple arcs.  
\end{definition}

The following lemma roughly says that, in a digraph of dichromatic number $3$, contracting  the colour classes of a $2$-dicoloured subdigraph  results in a digraph of dichromatic number at least $3$. 

\begin{lemma}\label{lemma:contract_dont_decrease_chi}
Let $D$ be a digraph and $R \subsetneq V(D)$ such that $D[R]$ is $2$-dicolourable.
If $\vec{\chi}(D) \geq 3$, then for any $2$-dicolouring  $\phi$ of  $D[R]$,
$\vec{\chi}(D/(R,\phi,X)) \geq 3$. 
\end{lemma}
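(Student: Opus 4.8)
The plan is to prove the contrapositive: assume that $D/(R,\phi,X)$ has a $2$-dicolouring $\psi$, and construct from it a $2$-dicolouring of $D$, contradicting $\dic(D)\geq 3$. Write $D' = D/(R,\phi,X)$ with $X=\{x_1,x_2\}$, where $x_i$ is the contraction of the colour class $\phi^{-1}(i)$. The key observation is that $\psi(x_1)\neq\psi(x_2)$, because $x_1$ and $x_2$ are joined by a digon in $D'$; without loss of generality $\psi(x_1)=1$ and $\psi(x_2)=2$.

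First I would define a colouring $\chi$ of $V(D)$ by setting $\chi(v)=\psi(v)$ for every vertex $v\in V(D)\setminus R$ (these vertices survive the contraction, so $\psi$ is defined on them), and $\chi(v)=\phi(v)$ for every vertex $v\in R$. The point of choosing $\psi(x_i)=i$ is precisely to make this gluing consistent: a vertex $v\in\phi^{-1}(i)$ gets colour $i$ in $\chi$, which is exactly $\psi(x_i)$, the colour its image received in $D'$. Then I would check that $\chi$ is a proper $2$-dicolouring of $D$, i.e. that $D[\chi^{-1}(1)]$ and $D[\chi^{-1}(2)]$ are both acyclic. Suppose for contradiction that there is a monochromatic directed cycle $C$ in $D$, say all of colour $c$. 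If $C$ lies entirely inside $R$, it would be a monochromatic cycle for $\phi$ in $D[R]$, contradicting that $\phi$ is a $2$-dicolouring. If $C$ lies entirely outside $R$, its vertices and arcs are untouched by the contraction, so $C$ is a monochromatic cycle for $\psi$ in $D'$, a contradiction. So $C$ must alternate between $R$ and $V(D)\setminus R$.

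The main step — and the place where a little care is needed — is to show that such a "mixed" cycle $C$ projects to a closed monochromatic walk in $D'$. Contract each maximal subpath of $C$ inside $R$: since all vertices of $C\cap R$ have $\chi$-colour $c$, they all lie in the single class $\phi^{-1}(c)$, hence each such subpath is contracted to the single vertex $x_c$. Replacing each maximal $R$-subpath of $C$ by $x_c$ and keeping the arcs of $C$ with at least one endpoint outside $R$ yields a closed directed walk $W$ in $D'$, all of whose vertices have colour $c$ under $\psi$ (the outside vertices keep colour $c$, and $x_c$ has colour $\psi(x_c)=c$). A closed directed walk contains a directed cycle, so $D'$ has a monochromatic directed cycle under $\psi$ — contradiction. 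One subtlety to handle: arcs of $C$ between two vertices of $R$ are simply absorbed into the contraction, and one must note that $C$ does use at least one vertex outside $R$ (else we are in the first case above) and at least one inside $R$ (else the second case), so $W$ is a genuine non-trivial closed walk; also loops and multi-arcs created by the contraction are deleted in $D'$, but a closed walk of length at least two still yields a cycle once we pick a shortest closed subwalk, and a length-one closed walk (a loop at $x_c$) cannot occur because $C$ visits $R$ along a path, entering and leaving through two distinct outside vertices or returning via an outside vertex. Assembling these observations gives that no monochromatic cycle exists, so $\chi$ is a $2$-dicolouring of $D$, the desired contradiction.

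I expect the only real obstacle to be the bookkeeping in this last paragraph — making the "project $C$ down to a closed walk in $D'$" argument airtight, in particular correctly treating the cases where $C$ meets $R$ in several components and where contraction creates loops or parallel arcs that $D'$ then discards. Everything else is a direct gluing of the two given colourings using the digon $[x_1,x_2]$ to force $\psi(x_1)\neq\psi(x_2)$.
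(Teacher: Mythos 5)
Your proposal is correct and follows essentially the same argument as the paper: glue $\phi$ on $R$ with $\psi$ on $V(D)\setminus R$ (using the digon $[x_1,x_2]$ to align the colours), then show a monochromatic cycle would have to be mixed and project to a monochromatic cycle in $D/(R,\phi,X)$. The paper states the projection step more briefly, but your more careful treatment of the closed-walk bookkeeping is just an expanded version of the same idea.
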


\begin{proof}
Suppose for a contradiction that $D'=D/(R,\phi,X)$ has a
$2$-dicolouring $\phi'$.
As $x_1$ is linked via a digon to $x_2$, we have $\phi'(x_1) \neq \phi'(x_2)$, so we may assume without
loss of generality that $\phi'(x_1)=1$ and $\phi'(x_2)=2$.
Define a $2$-colouring $\phi''$ of $D$ as follows:
$\phi''(v)=\phi'(v)$ if $v \not\in R$ and
$\phi''(v)=\phi(v)$ if $v \in R$.
We claim that $\phi''$ is a $2$-dicolouring of
$D$, a contradiction to the fact that $\vec{\chi}(D) \geq 3$.
Indeed, if there is a monochromatic directed cycle $C$
in $D$ coloured by $\phi''$, then $C$
must intersect both $R$ and $V(D) \setminus R$, as the restrictions
of $\phi''$ to $R$ and $V(D) \setminus R$ are $2$-dicolourings.
But then, we can contract all vertices in $C \cap R$
and we get a monochromatic directed cycle in $D'$ coloured by $\phi'$,
contradicting the fact that $\phi'$ is a $2$-dicolouring of $D'$. 
\end{proof}

\begin{lemma}\label{lemma:potential_after_contraction}
Let $D$ be a digraph, $R\subsetneq V(D)$ and $\phi$ a $2$-dicolouring of $R$. 
Let $\Tilde{D}$ be a subdigraph of $D'=D/(R,\phi,X)$, $\tilde{X} = V(\tilde{D}) \cap X$ and
$R' = (V(\Tilde{D}) \setminus \tilde{X}) \cup R$. 
 %and denote by $\tilde{D}[X]$ the subdigraph of $\tilde{D}$ induced by $X \cap V(\tilde{D})$.
Then the following holds:
\[
\rho_D(R') \leq \rho(\Tilde{D}) + \rho_D(R) - 7|\tilde{X}| + 
                3 m(\Tilde{D}[\tilde{X}]) +
                2 t -
                3\left(m(D'[V(\Tilde{D})]) - m(\Tilde{D})\right)
\] 
where $t=\pi(\Tilde{D}) + \pi(D[R]) - \pi(D[R']) \leq 2$.\\ 
Moreover, equality holds only if for every $i\in\{1,2\}$ such that $x_i \in \tilde{X}$, vertices in $\Tilde{D}-\tilde{X}$ have at most one in-neighbour and at most one out-neighbour coloured $i$.
\end{lemma}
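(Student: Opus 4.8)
\textbf{Proof plan for Lemma~\ref{lemma:potential_after_contraction}.}
The plan is to unfold both sides of the claimed inequality using the definition $\rho = 7n - 3m - 2\pi$ and track each of the three quantities (vertex count, arc count, matching number) separately through the contraction. First I would fix the notation: in $D' = D/(R,\phi,X)$ the class $\phi^{-1}(i)$ is collapsed to $x_i$, so $V(D') = (V(D)\setminus R)\cup X$; the set $\tilde D$ is a subdigraph of $D'$, with $\tilde X = V(\tilde D)\cap X$, and $R'$ is defined by ``un-contracting'' $\tilde D$, i.e. $R' = (V(\tilde D)\setminus\tilde X)\cup R$. The key bookkeeping identity for vertices is $|R'| = |V(\tilde D)| - |\tilde X| + |R|$, which is exact because $V(\tilde D)\setminus\tilde X$ is disjoint from $R$.

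Next I would compare arcs. Every arc of $D[R']$ is either an arc of $D[R]$, or an arc of $D$ with at least one endpoint outside $R$. Arcs of $D$ with both endpoints in $V(\tilde D)\setminus\tilde X$, or with one endpoint there and the other in $R$, all project to arcs of $D'[V(\tilde D)]$; conversely each arc of $D'[V(\tilde D)]$ lifts to at least one arc of $D[R']$ — exactly one when it is not incident to $X$, but possibly several when it is incident to some $x_i$ (a single arc $x_i v$ of $\tilde D$ may come from several arcs $uv$, $u\in\phi^{-1}(i)$, of $D$). This is precisely where the correction terms $-3(m(D'[V(\tilde D)]) - m(\tilde D))$ and $+3\,m(\tilde D[\tilde X])$ come from: the former accounts for the difference between the subdigraph $\tilde D$ actually chosen and the full induced subdigraph $D'[V(\tilde D)]$, and the latter removes the arcs internal to $\tilde X$ (at most the single digon $[x_1,x_2]$) which do not lift to arcs of $D[R']$ at all since $D[R]$ has no arcs between the two colour classes that become $x_1,x_2$... wait, rather they lift to arcs between $\phi^{-1}(1)$ and $\phi^{-1}(2)$, which ARE counted in $m(D[R'])$ but not conveniently, so one must be careful: I would write $m(D[R']) \ge m(D[R]) + (m(D'[V(\tilde D)]) - m(\tilde D[\tilde X]))$ and feed this into the $-3m$ term, getting the inequality direction right. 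The term $-7|\tilde X|$ then simply corrects for the fact that $|R'|$ counts $R$ fully but $|V(\tilde D)|$ counts the $x_i\in\tilde X$ which are not ``real'' vertices.

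For the matching term I would introduce $t = \pi(\tilde D) + \pi(D[R]) - \pi(D[R'])$ and argue $t\le 2$: given maximum matchings of digons $M_{\tilde D}$ in $\tilde D$ and $M_R$ in $D[R]$, I would lift $M_{\tilde D}$ to a digon matching of $D[R']$ by replacing each digon of $\tilde D$ incident to some $x_i$ by a lifted digon on the appropriate vertex of $\phi^{-1}(i)$; combined with $M_R$ restricted to avoid the (at most two) vertices of $R$ used by the lift, this produces a digon matching of $D[R']$ of size at least $\pi(\tilde D) + \pi(D[R]) - 2$, whence $\pi(D[R'])\ge \pi(\tilde D)+\pi(D[R])-t$ with $t\le 2$. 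Assembling $7|R'| - 3m(D[R']) - 2\pi(D[R'])$ from the three estimates yields exactly the stated bound. Finally, for the equality clause: equality in the arc estimate forces every arc of $D'[V(\tilde D)]$ incident to a chosen $x_i\in\tilde X$ to lift to exactly one arc of $D[R']$, which is exactly the assertion that each vertex of $\tilde D - \tilde X$ has at most one in-neighbour and at most one out-neighbour in $\phi^{-1}(i)$. The main obstacle I anticipate is getting the inequality directions consistent across all three counting steps simultaneously — in particular making sure the "non-induced subdigraph" slack $m(D'[V(\tilde D)]) - m(\tilde D)$ and the multiplicity slack in lifting arcs through $x_i$ are not double-counted — and cleanly justifying the $t\le 2$ bound when the two colour classes interact with the same vertices of $R$.
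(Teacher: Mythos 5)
Your overall strategy is exactly the paper's: split $\rho$ into the vertex, arc and matching counts, bound each separately through the contraction, and sum; your vertex identity, your arc inequality $m(D[R'])\geq m(D[R])+m(D'[V(\tilde{D})])-m(\tilde{D}[\tilde{X}])$ (which is the paper's bound after simplification), and your reading of the equality clause all coincide with the paper's proof.

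The one step that does not work as written is your treatment of $t\leq 2$ by ``lifting'' the digons of a maximum matching of $B(\tilde{D})$ that meet $\tilde{X}$. A digon $[x_i,v]$ of $\tilde{D}$ only tells you that $v$ has an in-neighbour and an out-neighbour in $\phi^{-1}(i)$; these may be two different vertices of $R$, so there need not be any digon of $D$ between $v$ and $\phi^{-1}(i)$, and the ``lifted digon on the appropriate vertex of $\phi^{-1}(i)$'' can fail to exist. The fix is simpler than the lift: discard the at most two edges of the maximum matching of $B(\tilde{D})$ incident with $\tilde{X}$; what remains is a matching of digons entirely inside $V(\tilde{D})\setminus\tilde{X}$, hence a matching of digons of $D$ disjoint from $R$, and its union with a maximum matching of $B(D[R])$ already gives $\pi(D[R'])\geq\pi(\tilde{D})-2+\pi(D[R])$, i.e.\ $t\leq 2$. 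This is precisely what the paper does, and with that substitution your argument is complete.
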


\begin{proof}
We have
\begin{itemize}
\item $|R'| = n(\Tilde{D}) + |R| - |\tilde{X}|$.
\item $m(D[R']) \geq m(\Tilde{D})  + m(D[R]) - m(\Tilde{D}[\tilde{X}]) + (m(D'[V(\Tilde{D})]) - m(\Tilde{D}))   $
    with equality only if no multiple arc is created between vertices in $\Tilde{D}$ during the contraction, i.e. for every $i\in\{1,2\}$ 
    such that $x_i \in \tilde{X}$, vertices in $\Tilde{D}-\tilde{X}$ have at most one in-neighbour and at most one out-neighbour coloured $i$. 
    %with equality only if no multiple arc is created when contracting
    %the colour classes of $D[R]$. 
    %Note that $m_X \geq m(\Tilde{D}[X])$.
\item By definition of $t$, $\pi(D[R']) = \pi(\Tilde{D}) + \pi(D[R]) - t$.
    Note that we always have $t \leq 2$ because we can construct 
    a matching in $B(D[R'])$ of size $\pi(\Tilde{D}) - 2 + \pi(D[R])$
    by taking the union of a maximum matching in $B(\Tilde{D})$ minus
    the (at most two) edges incident with a vertex of $\tilde{X}$, with a maximum  matching in $B(D[R])$.
\end{itemize}
Finally, we get the result by summing these inequalities.
\end{proof}

The previous lemma will be used as follows:

\begin{corollary}\label{coro:potential}
Let $D$ be a digraph, $R\subsetneq V(D)$ and $\phi$ a $2$-dicolouring of $R$. Let $\Tilde{D}$ be a subdigraph of $D'=D/(R,\phi,X)$, $\tilde{X} = V(\tilde{D}) \cap X$ and let
$R' = (V(\Tilde{D}) \setminus X) \cup R$.\\
If $|\tilde{X}| = 1$, then: 
$$ \rho_D(R) \geq 7 + 3\left(m(D'[V(\Tilde{D})]) - m(\Tilde{D})\right) + \rho_D(R') - \rho(\Tilde{D}) - 2t $$ 
If $|\tilde{X}| = 2$, then:
$$ \rho_D(R) \geq 8 + 3\left(m(D'[V(\Tilde{D})]) - m(\Tilde{D})\right) + \rho_D(R') - \rho(\Tilde{D}) - 2t $$
And in the latter case, equality holds only if, for $i=1,2$, vertices in 
$\Tilde{D} - X$ have at most one in-neighbour and at most one out-neighbour
coloured $i$ in $R$.
\end{corollary}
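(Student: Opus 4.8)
The statement to prove is Corollary~\ref{coro:potential}, which is just a rearrangement of the inequality in Lemma~\ref{lemma:potential_after_contraction} specialised to the cases $|\tilde X|=1$ and $|\tilde X|=2$. So the plan is entirely arithmetic: start from the master inequality
\[
\rho_D(R') \leq \rho(\Tilde{D}) + \rho_D(R) - 7|\tilde{X}| + 3 m(\Tilde{D}[\tilde{X}]) + 2t - 3\bigl(m(D'[V(\Tilde{D})]) - m(\Tilde{D})\bigr),
\]
isolate $\rho_D(R)$ on one side, and bound the remaining terms from above depending on $|\tilde X|$. The main (and essentially only) point of substance is the estimate of the term $3\,m(\Tilde{D}[\tilde{X}])$, which is where the two cases genuinely differ.

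\textbf{Key steps.} First I would rewrite Lemma~\ref{lemma:potential_after_contraction} as
\[
\rho_D(R) \geq \rho_D(R') - \rho(\Tilde{D}) + 7|\tilde{X}| - 3 m(\Tilde{D}[\tilde{X}]) - 2t + 3\bigl(m(D'[V(\Tilde{D})]) - m(\Tilde{D})\bigr).
\]
Then observe that $X=\{x_1,x_2\}$, and that $x_1,x_2$ are joined by a digon in $D'$ but are \emph{not} adjacent to each other inside any \emph{oriented} part — however the only thing we need is a bound on $m(\Tilde D[\tilde X])$. If $|\tilde X|=1$, then $\tilde X$ is a single vertex, so $m(\Tilde{D}[\tilde X])=0$ and $7|\tilde X|=7$; substituting gives exactly the first displayed inequality of the corollary. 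If $|\tilde X|=2$, then $\tilde X=\{x_1,x_2\}$, and since $x_1x_2$ is a digon in $D'$ we have $m(\Tilde D[\tilde X])\le 2$; together with $7|\tilde X|=14$ this yields $7|\tilde X|-3m(\Tilde D[\tilde X])\ge 14-6=8$, giving the second inequality. The equality clause in the $|\tilde X|=2$ case is inherited verbatim from the equality clause of Lemma~\ref{lemma:potential_after_contraction} (equality forces $m(\Tilde D[\tilde X])=2$, i.e. a full digon, and forces the ``at most one in-neighbour / at most one out-neighbour coloured $i$'' condition used there). Finally, $t\le 2$ is already part of the hypothesis of Lemma~\ref{lemma:potential_after_contraction}, so nothing new is needed there.

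\textbf{Main obstacle.} Honestly there is no real obstacle: the proof is a one-line substitution into Lemma~\ref{lemma:potential_after_contraction} for each case, plus the trivial bounds $m(\Tilde D[\tilde X])=0$ (one vertex) and $m(\Tilde D[\tilde X])\le 2$ (a digon between $x_1$ and $x_2$). The only place one must be slightly careful is to make sure the direction of the inequality is not flipped when moving the $-3m(\Tilde D[\tilde X])$ term, and to correctly propagate the equality condition — equality in the corollary's $|\tilde X|=2$ bound requires simultaneously $m(\Tilde D[\tilde X])=2$ and equality in the $m(D[R'])$ estimate of Lemma~\ref{lemma:potential_after_contraction}, which is exactly the stated ``at most one in-neighbour and at most one out-neighbour coloured $i$'' condition, now phrased with $R$ in place of $R'$ as in the original lemma (note $R\subseteq R'$, so a neighbour ``coloured $i$ in $R$'' is the relevant object since the colours only live on $R$). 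I would present it as a short proof that simply says ``apply Lemma~\ref{lemma:potential_after_contraction} and use $m(\Tilde D[\tilde X])\le |\tilde X|-1$ together with $m(\Tilde D[\tilde X])\le 2$'' and spell out the two substitutions.
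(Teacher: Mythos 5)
Your derivation is correct and is exactly the intended one: the paper gives no separate proof of Corollary~\ref{coro:potential}, presenting it as an immediate rearrangement of Lemma~\ref{lemma:potential_after_contraction} using $m(\Tilde{D}[\tilde{X}])=0$ when $|\tilde{X}|=1$ and $m(\Tilde{D}[\tilde{X}])\le 2$ (the two arcs of the digon $[x_1,x_2]$) when $|\tilde{X}|=2$, with the equality clause inherited verbatim from the lemma. The only blemish is the throwaway bound $m(\Tilde{D}[\tilde{X}])\le|\tilde{X}|-1$ in your final sentence, which is false for $|\tilde{X}|=2$; the bounds you actually use in the body of your argument are the right ones.
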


We are now ready to prove  Theorem~\ref{thm:main_potential}.

\begin{theorem*}[Theorem~\ref{thm:main_potential} restated]
If $D$ is a $3$-dicritical digraph, then
\begin{itemize}
\item $\rho(D) = 1$ if $D$ is in $\oddcycle$,
\item $\rho(D) = -1$ if $D$ is in $\wheelodd$,
\item $\rho(D) \leq -2$ otherwise.
\end{itemize}
\end{theorem*}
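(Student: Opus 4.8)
The plan is to argue by contradiction: suppose the theorem fails, and among all counterexamples choose a $3$-dicritical digraph $D$ that is minimal, say first minimizing $n(D)$ and then maximizing $m(D)$ (or, more precisely, minimizing $\rho_D(V(D))$ among $3$-dicritical digraphs that are not in $\oddcycle$, not in $\wheelodd$, and have $\rho(D) \ge -1$). Note that $D$ is not a bidirected odd cycle, so by Lemma~\ref{lem:forestB} $B(D)$ is a forest; also by Lemma~\ref{lemma:degeneracy} every vertex has in- and out-degree at least $2$, so $d(v)\ge 4$ for all $v$. The heart of the argument will be a sequence of structural claims showing that such a minimal $D$ cannot contain various configurations: long threads, low-degree vertices with many digon-neighbours, and — crucially — any of the gadgets analyzed in Section~\ref{sec:prop} (purse, handcuff, basket, bag, turtle).

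First I would establish that $D$ has no $2$-thread, and more generally control threads: if $[w,x,y,z]$ is a $3$-thread, contract it using Lemma~\ref{lemma:subdiv_digon} to get a smaller digraph that is still not $2$-dicolourable, extract a $3$-dicritical subdigraph $D''$, and compare potentials using part~(i) of that lemma together with Lemma~\ref{lemma:potential_subgraph_not_induced}; one must then check $D''$ is not an odd cycle or odd $3$-wheel (or handle those cases by hand), deriving a contradiction with minimality. Next comes the main engine: for a carefully chosen proper subset $R\subsetneq V(D)$ with $D[R]$ $2$-dicolourable (typically $R$ chosen around a low-potential region), apply Lemma~\ref{lemma:contract_dont_decrease_chi} to see $D' = D/(R,\phi,X)$ still has dichromatic number $\ge 3$, take a $3$-dicritical subdigraph $\tilde D$ of $D'$, and feed everything into Corollary~\ref{coro:potential}. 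Since $\tilde D$ is a $3$-dicritical digraph on fewer vertices, minimality gives $\rho(\tilde D) \le 1$ (with the $=1$ and $=-1$ cases corresponding to $\tilde D$ being an odd cycle or odd $3$-wheel, which must be analyzed separately — this is where the gadget lemmas enter, because pulling such an exceptional $\tilde D$ back through the contraction reveals a purse, handcuff, basket, bag, or turtle inside $D$, which then caps $\rho(D)$ well below $-1$). Combining the resulting inequality on $\rho_D(R)$ with the global bound $\rho_D(V(D)) = \rho_D(R) + \rho_D(V(D)\setminus R) - (\text{crossing terms})$ and a lower bound on $\rho_D(V(D)\setminus R)$ coming from minimum degree, one forces $\rho(D) \le -2$.

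The main obstacle, as usual in the potential method, is twofold: (a) choosing the set $R$ so that $D[R]$ is $2$-dicolourable, $R$ is a proper subset, and the arithmetic in Corollary~\ref{coro:potential} actually closes — this requires a clever case analysis on the local structure near a vertex or edge realizing the minimum potential, and is where the bulk of the work lies; and (b) dealing exhaustively with the cases where the contracted digraph $\tilde D$ is itself one of the exceptional families, which is precisely why Section~\ref{sec:prop} catalogues the purse/handcuff/basket/bag/turtle — each such case must be traced back to one of these configurations inside $D$ and dispatched via Lemmas~\ref{lemma:purse}--\ref{lemma:turtle}. I expect the proof to proceed through maybe a dozen claims of the form ``$D$ contains no copy of configuration $\mathcal{C}$'', each proved by a contraction-and-potential-counting argument, culminating in a short global computation. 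A delicate point throughout is tracking the matching parameter $\pi$ under contraction (the quantity $t\le 2$ in Lemma~\ref{lemma:potential_after_contraction}) and ensuring the equality cases are handled, since the theorem asserts exact values $1$ and $-1$ for the exceptional families rather than mere inequalities.
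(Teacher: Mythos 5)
Your plan correctly reproduces the opening phase of the paper's argument: a minimal counterexample $D$, the key claim that every proper subset $R$ with $3\le|R|\le n(D)-1$ has large potential (proved exactly as you describe, by contracting a $2$-dicoloured $D[R]$ via Lemma~\ref{lemma:contract_dont_decrease_chi}, extracting a $3$-dicritical $\tilde D$, invoking minimality, and feeding the exceptional cases for $\tilde D$ into Corollary~\ref{coro:potential}), and the exclusion of the purse/handcuff/basket/bag/turtle gadgets via Claim~\ref{claim:high_potential_number_one} and Lemmas~\ref{lemma:purse}--\ref{lemma:turtle}. The verification of the exact values $1$ and $-1$ for $\oddcycle$ and $\wheelodd$ is indeed a direct computation, as you assume.

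However, there is a genuine gap in your endgame. You propose to conclude by combining the bound on $\rho_D(R)$ with ``a lower bound on $\rho_D(V(D)\setminus R)$ coming from minimum degree'' and an additivity-type decomposition of $\rho_D(V(D))$. This cannot close: minimum in- and out-degree $2$ only gives $m(D)\ge 2n(D)$, i.e.\ $\rho(D)\le n(D)-2\pi(D)$, which is nowhere near $-2$, and the potential does not decompose over a partition in a usable way (the crossing arcs are exactly what one cannot control). The whole difficulty of the theorem is to beat the trivial density $2n$, and the paper does this with two further phases that your plan does not anticipate. First, a discharging argument (Subsection~\ref{subsec:discharging}): each vertex gets charge $w(v)=7-\tfrac32 d(v)-\epsilon(v)$, where $\epsilon$ encodes incident digons via matchings in the forest $B(D)$, and three rules (R1)--(R3) built around ``chelou'' arcs and nice/bad vertices redistribute charge; proving every final charge is non-positive requires many more forbidden configurations (A$^+$/B$^+$-configurations from Claim~\ref{claim:chelou_arc_one}, $2$-threads and their apexes, restrictions on degree-$4$ and degree-$6$ vertices), and even then one only reaches roughly $\rho(D)\le 0$. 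Second, and crucially, the final contradiction is not a potential computation at all: the accumulated structure forces a partition of $V(D)$ into degree classes ($\Vquatre$, $V_5^{\pm}$, $V_5^{d\pm}$, $V_6$, etc.) from which the paper exhibits an explicit $2$-dicolouring of $D$, contradicting $\dic(D)=3$. Without these two ingredients — or a genuine substitute for them — your outline does not yield $\rho(D)\le -2$. Relatedly, your expectation that the exceptional $\tilde D$ cases ``pull back'' directly to a purse/handcuff/basket/bag/turtle is only partly accurate: in the paper those gadgets arise from combining A$^+$/B$^+$-configurations attached to chelou arcs, a layer of case analysis your plan compresses away.
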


\subsection{Properties of a minimal counterexample}
%%%%%%%%%%%%%%%%%%%%%%%%%%%%%%%%%%%%%%%%%%%%%%%%%%%%

We consider for a contradiction a minimum counterexample $D$ with respect to the number of vertices. So
$D$ is $3$-dicritical,  $D\notin \oddcycle \cup \wheelodd$, $\rho(D) \geq -1$, and for any 
$3$-dicritical digraph $D'$ with $n(D') < n(D)$, 
either $D'$ is in $\oddcycle \cup \wheelodd$ or $\rho(D') \leq -2$.
Note also that, by Lemma~\ref{lemma:subdiv_digon}, $D$ does not contain
any $3$-thread.

\begin{claim}\label{claim:high_potential_number_one}
Let $R \subseteq V(D)$ such that $3\leq |R| \leq n(D)-1$.
Then $\rho_D(R) \geq 4$ and equality 
holds only if, either there is a vertex $z$ such that $V(D) = R\cup \{z\}$ and $d(z) = 4$, or $D-R$ is a bidirected path of length $2$ whose vertices have degree $4$ in $D$. 
\end{claim}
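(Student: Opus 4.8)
The claim asserts a lower bound on the potential of proper induced subgraphs of size at least $3$, together with a characterization of the tight case. The natural strategy is to apply the contraction machinery (Lemma~\ref{lemma:contract_dont_decrease_chi} and Corollary~\ref{coro:potential}) to the set $R$. Since $D[R]$ is a proper subdigraph of the $3$-dicritical digraph $D$, it is $2$-dicolourable; fix such a $2$-dicolouring $\phi$ and form $D' = D/(R,\phi,X)$ with $X=\{x_1,x_2\}$. By Lemma~\ref{lemma:contract_dont_decrease_chi}, $\vec{\chi}(D') \geq 3$, hence $D'$ contains a $3$-dicritical subdigraph $\tilde{D}$. The point is that $\tilde{D}$ must actually use both $x_1$ and $x_2$: if $\tilde{D}$ avoided $X$ entirely it would be a $3$-dicritical subdigraph of $D-R$ (a proper subdigraph of $D$), impossible; and if it used exactly one of $x_1,x_2$, then contracting that single vertex back does not help — more carefully, one argues $\tilde{D}$ minus that vertex would already force $\vec{\chi}\ge 3$ inside $D$, contradicting $3$-dicriticality of $D$, or one invokes Lemma~\ref{lemma:vertex_no_almost_only_digons}-type reasoning. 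So $|\tilde{X}| = 2$.

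\textbf{Applying the potential inequality.} With $|\tilde{X}|=2$, Corollary~\ref{coro:potential} gives
\[
\rho_D(R) \geq 8 + 3\bigl(m(D'[V(\tilde{D})]) - m(\tilde{D})\bigr) + \rho_D(R') - \rho(\tilde{D}) - 2t,
\]
where $R' = (V(\tilde{D})\setminus X)\cup R$ and $t\leq 2$. Now $\tilde{D}$ is a $3$-dicritical digraph on fewer vertices than $D$ (it has at most $n(D')=n(D)-|R|+2 \le n(D)-1$ vertices since $|R|\ge 3$), so by the minimality of $D$ and Theorem~\ref{thm:main_potential} applied inductively, either $\rho(\tilde{D})\le -2$, or $\tilde{D}\in\oddcycle$ with $\rho(\tilde{D})=1$, or $\tilde{D}\in\wheelodd$ with $\rho(\tilde{D})=-1$. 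Also $R'$ is a subset of $V(D)$ containing $R$; if $R'=V(D)$ then $\rho_D(R')=\rho(D)\ge -1$, and if $R'\subsetneq V(D)$ one would want to bootstrap from the claim itself — so the cleanest route is to first establish the claim in the case $R' = V(D)$ (which happens exactly when $\tilde{D}$ spans $D'$), giving $\rho_D(R) \ge 8 + 0 + (-1) - \rho(\tilde{D}) - 2t \ge 7 - \rho(\tilde{D}) - 2t$. Plugging $\rho(\tilde{D})\le 1$ and $t\le 2$ yields $\rho_D(R)\ge 7-1-4 = 2$, which is not quite $4$; so the argument must track the slack carefully, using that $m(D'[V(\tilde{D})]) \ge m(\tilde{D})$ always holds and that the exceptional cases $\rho(\tilde{D})\in\{-1,1\}$ force $t$ to be small (a spanning bidirected odd cycle or odd $3$-wheel cannot absorb two matching edges at $X$), and that $t=2$ requires $\pi(\tilde{D})$ large which conflicts with $\rho(\tilde D)$ being high.

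\textbf{The tight case and the main obstacle.} To extract equality, one must push through exactly when each inequality is tight: $t=2$, $\tilde{D}$ is a bidirected odd cycle ($\rho=1$), no multiple arcs created during contraction, and $R'=V(D)$ with $\rho(D)=-1$. Unwinding this: $D'$ has a spanning bidirected odd cycle through $x_1x_2$, which means $D-R$ together with the two contracted vertices forms essentially a bidirected odd path, i.e. $D-R$ is a bidirected path; counting lengths and using that internal vertices of this path have degree exactly $4$ in $D$ (otherwise extra arcs decrease the potential further, as in the spanning-subgraph arguments of Lemmas~\ref{lemma:purse}--\ref{lemma:turtle}) pins down that either $D-R$ is a single vertex $z$ of degree $4$, or a bidirected $2$-path of degree-$4$ vertices — which is exactly the stated dichotomy. \textbf{The hard part} will be the careful bookkeeping that rules out all the "near-tight" configurations: one must verify that whenever $\rho_D(R)\le 3$, the contracted digraph $\tilde D$ is forced to be one of the exceptional digraphs spanning $D'$, and then translate the structure of $\tilde D$ and the tightness of the multiple-arc / matching terms back into precise structural information about $D-R$. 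This requires combining the equality clause of Corollary~\ref{coro:potential} (each vertex of $\tilde D - X$ has at most one in- and one out-neighbour of each colour) with a degree count on $D-R$, and handling separately the subcase where $\tilde D$ is an odd $3$-wheel (which should be excluded outright since it would make $\rho_D(R)$ at least $5$ or more, or force $D$ itself into a forbidden family).
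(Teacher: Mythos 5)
Your overall strategy (contract $R$, take a $3$-dicritical subdigraph $\tilde D$ of $D/(R,\phi,X)$, apply Corollary~\ref{coro:potential} together with the minimality of $D$) is indeed the paper's strategy, but the proposal has genuine gaps at exactly the points you flag as ``to be tracked carefully''. First, your reduction to $|\tilde X|=2$ is wrong: a $3$-dicritical $\tilde D$ containing exactly one of $x_1,x_2$ cannot be excluded by arguing that ``$\tilde D$ minus that vertex would force $\vec\chi\ge 3$ inside $D$'' --- deleting a vertex from a $3$-dicritical digraph always leaves a $2$-dicolourable digraph, so no contradiction arises; the case $|\tilde X|=1$ is a genuine case and must be analysed on its own (it is where the constant $7$ rather than $8$ appears in Corollary~\ref{coro:potential}, and one needs the sharper bound $t\le 1$, resp.\ $t\le 0$ when $\tilde D$ is a bidirected odd cycle, to get $\rho_D(R)\ge 5$ there). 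Second, the case $R'\subsetneq V(D)$ is not handled: since $R'$ depends on $\phi$ and $\tilde D$, ``first establish the claim when $R'=V(D)$'' is not a well-defined sub-statement. The correct mechanism is an induction on $n(D)-|R|$ (descending in the size of the set), so that $R\subsetneq R'$ gives $\rho_D(R')\ge 4$ by the induction hypothesis when $R'\ne V(D)$; your proposal never sets this up.

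Third, even in the case you do treat ($|\tilde X|=2$, $R'=V(D)$), your computation only yields $\rho_D(R)\ge 2$, and the refinements needed to reach $4$ are asserted but not proved: one needs the $+3$ term when $\tilde D$ is not induced in $D'$, the bound $t\le 1$ for an odd $3$-wheel via Lemma~\ref{lemma:digon_3_wheel}, and, for an induced bidirected odd cycle, the bound $t\le 1$ together with the fact that $D$ contains no $3$-thread (Lemma~\ref{lemma:subdiv_digon} plus minimality of $D$), which forces $\tilde D$ to be a bidirected $3$- or $5$-cycle. This last ingredient is not cosmetic: without it, the equality analysis would also admit $D-R$ being a longer bidirected path of degree-$4$ vertices, which is not one of the two structures in the statement. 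So the equality characterization, which you defer entirely to ``careful bookkeeping'', cannot be completed as sketched without the no-$3$-thread argument and the ``no multiple arc created'' equality clause of Lemma~\ref{lemma:potential_after_contraction} being invoked explicitly.
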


\begin{figure}[hbtp]
\begin{center}
\begin{tikzpicture}[scale=1.5]

\node (x) at (0,0.25) [vertex] {};

\node (xi) at ($(x) + (-0.65-0.33,-1.25)$) {};
\node (xo) at ($(x) + (-0.65+0.33,-1.25)$) {};
\node (zi) at ($(x) + (+0.65-0.33,-1.25)$) {};
\node (zo) at ($(x) + (+0.65+0.33,-1.25)$) {};

\draw[arc] (xi) to (x);
\draw[arc] (x) to (xo);
\draw[arc] (zi) to (x);
\draw[arc] (x) to (zo);

\draw (0,-1) ellipse (2 and 0.75);

\node (label_D_R) at (-0.65,-1.5) {$R$};
\end{tikzpicture}
\hspace{1cm}
\begin{tikzpicture}[scale=1.5]

\node (x) at (-1,0) [vertex] {};
\node (y) at (0,0.5) [vertex] {};
\node (z) at (1,0) [vertex] {};

\node (xi) at ($(x) + (-0.4,-1)$) {};
\node (zi) at ($(z) + (-0.4,-1)$) {};
\node (xo) at ($(x) + (0.4,-1)$) {};
\node (zo) at ($(z) + (0.4,-1)$) {};

\draw[arc] (xi) to (x);
\draw[arc] (x) to (xo);
\draw[arc] (zi) to (z);
\draw[arc] (z) to (zo);

\draw[arc, bend right] (x) to (y);
\draw[arc, bend right] (z) to (y);
\draw[arc, bend right] (y) to (x);
\draw[arc, bend right] (y) to (z);

\draw (0,-1) ellipse (2 and 0.75);

\node (label_D_R) at (-0.65,-1.5) {$R$};
\end{tikzpicture}
\end{center}

\caption{\label{fig:equality_case_rho_4} The two equality cases in Claim~\ref{claim:high_potential_number_one}. Claim~\ref{claim:high_potential} will show that only the first one can happen.}
\end{figure}
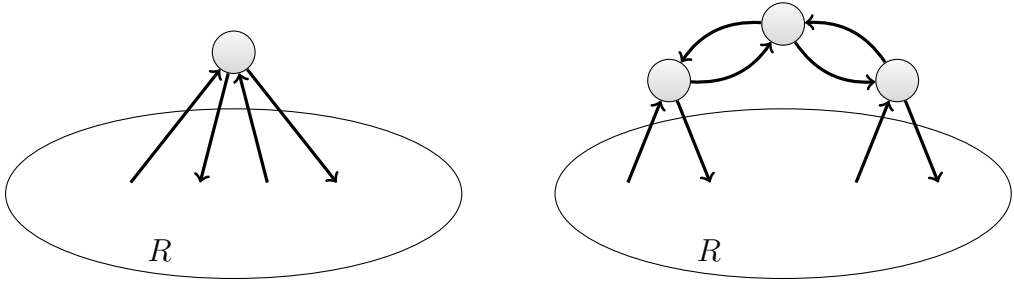

\begin{proofclaim}
We proceed by induction on $n(D) - |R|$. Suppose $|R| \leq n(D) - 1$ and that the result holds for every
set of vertices larger than $R$. Let $\phi$ be a $2$-dicolouring of $D[R]$, 
set $D' = D/(R,\phi,X)$ and let $\Tilde{D}$ be a $3$-dicritical subdigraph of $D'$ (it exists by Lemma~\ref{lemma:contract_dont_decrease_chi}).
Let $R' = (V(\Tilde{D}) \setminus X) \cup R$.
Note that $V(\Tilde{D}) \setminus X \neq \emptyset$ and thus $R \subsetneq R'$.  
Observe that either $R' = V(D)$ and then $\rho_D(R') = \rho(D) \geq -1$, or $R' \neq V(D)$ but $R'$ is larger than $R$  and thus $\rho_D(R') \geq 4$ by the induction hypothesis.
Set $t=\pi(\Tilde{D}) + \pi(D[R]) - \pi(D[R'])$. 
Remember that $t \leq 2$ by Lemma~\ref{lemma:potential_after_contraction}.
\smallskip

\noindent \textbf{Case 1:} $|V(\Tilde{D}) \cap X| = 1$. 
Observe that $t\leq 1$
 because a maximum matching of $\Tilde{D}$ has at most one digon that 
 intersects $X$.
By Corollary~\ref{coro:potential} and since $\rho_D(R') \geq -1$, we get: 
 \[
 \rho_D(R) \geq 6 - \rho(\Tilde{D}) - 2t
 \]
 Assume first that $\Tilde{D}$ is a bidirected odd cycle. 
 Then $\rho(\Tilde{D}) = 1$ and $t \leq 0$ because  $\Tilde{D}$
 has a maximum matching disjoint from $X$. So $\rho_D(R) \geq
 6-1= 5$.

 If $\Tilde{D}$ is an odd $3$-wheel, then $\rho(\Tilde{D})=-1$, so
 $\rho_D(R) \geq 6 + 1 -2 = 5$.

 Assume now that $\Tilde{D}$ is neither a bidirected odd cycle,
 nor an odd $3$-wheel. 
 Then $\rho(\Tilde{D}) \leq -2$ by minimality of $D$.  
 So $\rho_D(R) \geq 6+2-2 = 6$.
\smallskip

\noindent \textbf{Case 2}: $|V(\Tilde{D}) \cap X| = 2$.
By Corollary~\ref{coro:potential}, and since $\rho_D(R') \geq -1$, we get: 
$$ \rho_D(R) \geq 7 - \rho(\Tilde{D}) - 2t $$

Assume first that $\Tilde{D}$ is a bidirected odd cycle. In particular $\rho(\Tilde{D})=1$.
If $\Tilde{D}$ is not an induced subdigraph of $D'$, then by Corollary~\ref{coro:potential},
$\rho_D(R) \geq 10 - \rho(\Tilde{D}) - 2t \geq 10 - 1 - 2 \times 2 = 5$ as $t \leq 2$.
Now suppose $\Tilde{D}$ is an induced subdigraph of $D'$. In particular
every internal vertex of $\Tilde{D}-X$ (which is a path of even length) has degree $4$ in $D'$, and so degree $4$ in $D$ too.
By Lemma~\ref{lemma:subdiv_digon}, $D$ has no $3$-thread  and thus $\Tilde{D}$ is a bidirected $3$-cycle or a bidirected $5$-cycle. 
Then $t \leq 1$ because $\tilde{D}$ has a matching disjoint from $X$ of size $\pi(\tilde{D}) -1$. Hence we get $\rho_D(R) \geq 4$. 
Moreover, equality holds only if, $\rho_D(R') = -1$, that is 
$V(R') = V(D) = (V(\tilde{D})\setminus X) \cup R$, and no multiple arc has been created while contracting $R$. So there are at most $4$ arcs between $R$ and $V(\Tilde{D}) \setminus X$.
If  $\Tilde{D}$ is a bidirected $3$-cycle, this implies that the unique vertex $z$ of $V(\Tilde{D}) \setminus X$ has degree $4$ in $D$.
If  $\Tilde{D}$ is a bidirected $5$-cycle, this implies that all vertices of $V(\Tilde{D}) \setminus X$ have degree $4$ in $D$.

Now assume that $\Tilde{D}$ is in $\wheelodd$, then $\rho(\Tilde{D})=-1$ and by
Lemma~\ref{lemma:digon_3_wheel} we have $t \leq 1$, thus
$\rho_D(R) \geq 7 + 1 - 2 = 6$. 

Now assume that $\Tilde{D}$ is neither an odd wheel nor a bidirected odd cycle,
then $\rho(\Tilde{D})\leq -2$ and as $t \leq 2$ we get  $\rho_D(R) \geq 7 + 2 - 4 = 5$. 
\end{proofclaim}

\begin{claim}\label{claim:cycle_minus_one_arc}
Let $H$ be a digraph in 
$\oddcycle \cup \wheelodd$.
For any arc $e$ in $H$, $D$ does not contain a (not necessarily induced) copy of $H\setminus e$.
\end{claim}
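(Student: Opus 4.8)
The plan is to argue by contradiction, using the minimality of $D$ together with the contraction machinery of Corollary~\ref{coro:potential}. Suppose $D$ contains a copy of $H\setminus e$ for some $H\in\oddcycle\cup\wheelodd$ and some arc $e$ of $H$. The key point is that $H\setminus e$ is itself $2$-dicolourable (since $H$ is $3$-dicritical, removing any arc makes it $2$-dicolourable, and $H\setminus e$ as an abstract digraph is a subdigraph of $H$), so we can take $R$ to be the vertex set of this copy and $\phi$ a $2$-dicolouring of the copy of $H\setminus e$. Actually the cleaner route: let $R=V(H\setminus e)$ inside $D$; since $D[R]$ contains $H\setminus e$ and $D$ is an oriented graph (or at worst we use $D[R]$ directly), one shows $D[R]$ is still $2$-dicolourable — this needs a small argument, or we simply pass to $R'$ via the contraction lemma as in Claim~\ref{claim:high_potential_number_one}.

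**Key steps.** First I would establish that $D[R]$ admits a $2$-dicolouring where the two would-be endpoints of $e$ receive the same colour; this is where the structure of bidirected odd cycles and odd $3$-wheels enters, exactly as in Proposition~\ref{prop:knob}(iii) for knobs — in a bidirected odd cycle minus one arc the two endpoints of the missing edge lie on a bidirected even path, and in an odd $3$-wheel minus an arc one uses Lemma~\ref{lemma:wheel_find_odd_cycle}. Second, apply the contraction construction $D/(R,\phi,X)$: by Lemma~\ref{lemma:contract_dont_decrease_chi} it has dichromatic number at least $3$, so it contains a $3$-dicritical subdigraph $\tilde D$, giving $R'=(V(\tilde D)\setminus X)\cup R$ with $R\subsetneq R'$. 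Third, compute $\rho_D(R)$ directly: since $H\setminus e$ has potential equal to $\rho(H)-3$ if the missing arc is not in a digon, or we simply note $\rho_D(R)\le\rho(H\setminus e)$; for $H$ a bidirected odd cycle, $\rho(H)=1$ so $H\setminus e$ (a bidirected even path plus one arc, or rather minus one arc of a digon) has small potential, and for $H$ an odd $3$-wheel, $\rho(H)=-1$. The goal is to show $\rho_D(R)\le 3$, contradicting Claim~\ref{claim:high_potential_number_one}, which forces $\rho_D(R)\ge 4$ for all $R$ with $3\le|R|\le n(D)-1$ — and we also need $|R|\ge 3$, which is automatic since these graphs have at least $3$ (resp.\ $5$) vertices, and $|R|\le n(D)-1$ must be checked, i.e.\ $D$ is strictly bigger than a copy of $H\setminus e$ (if $D=H\setminus e$ then $D$ would be $2$-dicolourable, contradiction).

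**Main obstacle.** The delicate part is handling the case $R=V(D)$, or more precisely ruling out that the copy of $H\setminus e$ already spans $D$: if it did, $D$ would contain a spanning subdigraph that is $2$-dicolourable and has only a few more arcs, and one must check this forces $\dic(D)\le 2$ or a potential contradiction. If $|R|<n(D)$, then Claim~\ref{claim:high_potential_number_one} applies and the whole argument reduces to the potential count $\rho_D(R)\le 3<4$; here the subtlety is being careful about whether the copy of $H\setminus e$ is induced in $D$ and about digons, since $\pi(D[R])$ could exceed $\pi(H\setminus e)$ if $D$ adds arcs back. I expect the bulk of the work to be the case analysis between $H\in\oddcycle$ and $H\in\wheelodd$, and within the wheel case, between $e$ being a rim arc or a spike arc, each time producing the needed $2$-dicolouring of $D[R]$ and then invoking Claim~\ref{claim:high_potential_number_one} to reach the contradiction.
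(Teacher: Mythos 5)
There is a genuine gap at the heart of your argument: the potential bound you are aiming for is off by one, and that one unit is where all the work in the paper's proof lives. By Lemma~\ref{lemma:wheel_find_odd_cycle}~(i) the whole claim does reduce to excluding a copy $F$ of a bidirected odd cycle minus one arc (you correctly identify this reduction), but for such an $F$ on $n$ vertices one has $m(F)=2n-1$ and $\pi(F)=\frac{n-1}{2}$, so $\rho(F)=7n-3(2n-1)-(n-1)=4$, not $\le 3$. (Deleting an arc \emph{raises} the potential by $3$; your ``$\rho(H)-3$'' has the wrong sign, and you never carry the computation through.) Thus $\rho_D(V(F))\le 4$ exactly meets the threshold of Claim~\ref{claim:high_potential_number_one} instead of violating it, and no contradiction follows at this point. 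The paper's proof must therefore analyse the equality case: $\rho_D(V(F))=4$ forces $F$ to be induced and forces $D-V(F)$ to be either a single vertex of degree $4$ or a bidirected path of length $2$ with all vertices of degree $4$. One then exploits the essentially unique $2$-dicolouring of $F$ (in which the two endpoints of the missing arc get the same colour, and two vertices get different colours iff they are joined in $F$ by a bidirected path of even length) and extends it across the one or three remaining vertices, showing that any obstruction to extending forces $D$ itself to be a bidirected odd cycle or an odd $3$-wheel --- the excluded exceptional digraphs. This extension argument is the bulk of the proof and is absent from your proposal.

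A secondary point: your second step, invoking the contraction $D/(R,\phi,X)$ and Lemma~\ref{lemma:contract_dont_decrease_chi}, is not needed here. That machinery is what \emph{proves} Claim~\ref{claim:high_potential_number_one}; once that claim is available, the present claim only requires a direct potential count on $D[V(F)]$ plus the equality analysis. Your handling of the spanning case is also slightly too quick: if $V(F)=V(D)$ it does not follow that $D=F$, since $D$ may carry extra arcs on the same vertex set; one has to argue (as the paper does) that $D$ has at most one arc more than $F$, else $\rho(D)\le 4-6=-2$, and then that $D$ is either a bidirected odd cycle or $2$-dicolourable.
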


\begin{proofclaim}
By Lemma~\ref{lemma:wheel_find_odd_cycle}~(i), it is enough to prove that $D$ contains no bidirected odd cycle minus one arc as a subdigraph.

Assume for a contradiction  that $D$ contains a (not necessarily induced) copy $F$ of a bidirected odd cycle minus one arc.
Let $xy$ be the arc such that $F\cup xy$ is a  bidirected odd cycle.
%Since $F$ is $2$-dicolourable, $F$ is a proper subdigraph of $D$.  
Set $H= D[V(F)]$. 
Observe that $m(H) \geq 2n(H)-1$ and $\pi(H) = \frac{n(H)-1}{2}$. 
Hence,  $\rho_D(H) \leq 7n(H) - 3(2n(H)-1) - 2 \frac{n(H)-1}{2} = 4$. 

If $D = H$, then $D$ has at most one more arc than $F$, for otherwise $\rho(D) \leq 4-6=-2$, a contradiction. But then,  either $D$ is a bidirected odd cycle, or $D$ is $2$-colourable, a contradiction in both cases. 

So $|V(H)| < |V(D)|$. 
By Claim~\ref{claim:high_potential_number_one}, $\rho_D(H) = 4$, which implies that $H = F$ (i.e. $F$ is an induced subdigraph of $D$) and either $(a)$ $F =D-z$ for some vertex $z$ of degree $4$
or $(b)$ $D-F$ is a bidirected path $[a,b, c]$ with $d_D(a)=d_D(b)=d_D(c)=4$. 
Observe that $F$  has a unique $2$-dicolouring $\phi$ with the following two properties:
$x$ and $y$ receive the same colour, say $1$ ;
There is a bidirected path of even length between every pair of vertices $u,v$ such that $\phi(u) \neq \phi(v)$.

Assume first that we are in case $(a)$. Then $z$ cannot be coloured $2$, so $z$ is linked via a digon to a vertex coloured $2$. Similarly, $z$ cannot be coloured $1$, but if $z$ is linked via a digon to a vertex coloured $1$, then $D$ contain a bidirected odd cycle, and if $z$ forms a directed $3$-cycle with $x$ and $y$, then $D$ is an odd $3$-wheel, a contradiction in both cases. 

Assume now that we are in case $(b)$. 
Assume without loss of generality that $a$ cannot be coloured $1$, and $c$ cannot be coloured $2$. So $c$ is linked via a digon to a vertex coloured $2$. Then $a$ cannot be linked via a digon to a vertex coloured $1$, for otherwise $D$ contains a bidirected odd cycle, so $a$ forms a directed $3$-cycle with $x$ and $y$. But then $D$ is an odd $3$-wheel, a contradiction. 
\end{proofclaim}

\begin{claim}\label{claim:no_purse_handcuff_crib_basket}
$D$ contains no purse, no handcuff, no basket, no bag, no turtle, nor any converse of those digraphs as a subdigraph.
\end{claim}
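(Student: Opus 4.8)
The plan is to treat each forbidden configuration in turn, using the structural lemmas of Section~\ref{sec:prop} (Lemmas~\ref{lemma:purse}, \ref{lemma:handcuff}, \ref{lemma:basket}, \ref{lemma:bag}, \ref{lemma:turtle}) together with Claim~\ref{claim:high_potential_number_one}, Claim~\ref{claim:cycle_minus_one_arc}, and the minimality of $D$. The common mechanism is this: suppose $D$ contains a copy of one of these digraphs, say a purse $H$; each lemma produces a subdigraph $H'$ of $H$ with small potential. If $H'$ spans $D$, the lemma directly gives $\rho(D)\le -3$ (or $-4$, or $-5$), contradicting $\rho(D)\ge -1$. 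If $H'$ does not span $D$, then $3\le |V(H')|\le n(D)-1$, so Claim~\ref{claim:high_potential_number_one} forces $\rho_D(V(H'))\ge 4$; but $\rho_D(V(H'))\le\rho(H')\le 3$ by Lemma~\ref{lemma:potential_subgraph_not_induced} and the potential bound from the relevant lemma, a contradiction. So in each case we are done, except that we must be careful with the turtle, where Lemma~\ref{lemma:turtle}(i) allows the alternative that $H'$ is a bidirected odd cycle minus one arc — but this possibility is excluded by Claim~\ref{claim:cycle_minus_one_arc}, so the potential bound $\rho(H')\le 1$ applies and the argument goes through exactly as before.

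First I would handle the purse: by Lemma~\ref{lemma:purse}(i) every purse has potential $3$, and if it spans $D$ then by Lemma~\ref{lemma:purse}(ii) $\rho(D)\le -3<-1$; otherwise $|V(H)|\le n(D)-1$ and Claim~\ref{claim:high_potential_number_one} gives $\rho_D(V(H))\ge 4>3\ge\rho(H)\ge\rho_D(V(H))$, a contradiction (note $|V(H)|\ge 5\ge 3$). Next the handcuff: Lemma~\ref{lemma:handcuff} directly provides a subdigraph of potential at most $-2$; since $D$ has order at least $5$, either this subdigraph spans $D$, giving $\rho(D)\le -2$, or it has between $3$ and $n(D)-1$ vertices and Claim~\ref{claim:high_potential_number_one} gives a contradiction with $4\le\rho_D(\cdot)\le -2$. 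The basket and bag cases are identical in form, using Lemmas~\ref{lemma:basket} and~\ref{lemma:bag}: we obtain $H'$ with potential at most $2$ (resp.\ $3$), apply Claim~\ref{claim:high_potential_number_one} if $H'$ does not span $D$, and the spanning-case bounds $\rho(D)\le -4$ (resp.\ $-3$) if it does.

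For the turtle I would apply Lemma~\ref{lemma:turtle}: it gives a subdigraph $H'$ of the turtle which either has potential at most $1$ or is a bidirected cycle minus one arc. The latter is impossible because $D$ contains no bidirected odd cycle minus one arc by Claim~\ref{claim:cycle_minus_one_arc} (and $B(D)$ is a forest by Lemma~\ref{lem:forestB}, so a bidirected even cycle minus one arc cannot occur either, being itself a bidirected path only if\ldots{} — in fact $H'$ as constructed in the lemma is a bidirected \emph{odd} cycle minus one arc when it is such, so Claim~\ref{claim:cycle_minus_one_arc} applies directly). Hence $\rho(H')\le 1$, and we conclude exactly as before: if $H'$ spans $D$ then $\rho(D)\le -5$, otherwise Claim~\ref{claim:high_potential_number_one} yields $4\le\rho_D(V(H'))\le\rho(H')\le 1$, a contradiction. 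Finally, since reversing all arcs of $D$ preserves being $3$-dicritical, preserves potential (the quantities $n$, $m$, $\pi$ are all invariant under taking the converse), and preserves membership in $\oddcycle\cup\wheelodd$, the converse $\overleftarrow{D}$ is also a minimum counterexample; applying everything above to $\overleftarrow{D}$ rules out the converses of all five configurations in $D$.

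The main obstacle is bookkeeping rather than a genuine difficulty: one must verify in each case that the subdigraph $H'$ indeed has at least three vertices (so that Claim~\ref{claim:high_potential_number_one} is applicable) and, for the turtle, correctly identify which exceptional subdigraph the lemma produces so that Claim~\ref{claim:cycle_minus_one_arc} can be invoked cleanly. I expect the turtle case to require the most care, since Lemma~\ref{lemma:turtle}(i) is the only one of the five with a disjunctive conclusion.
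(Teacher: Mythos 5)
Your proposal is correct and follows essentially the same route as the paper: for each configuration you invoke the corresponding potential lemma, dispose of the non-spanning case via Claim~\ref{claim:high_potential_number_one} (together with Lemma~\ref{lemma:potential_subgraph_not_induced}) and of the spanning case via the lemma's part~(ii), exclude the exceptional turtle alternative by Claim~\ref{claim:cycle_minus_one_arc}, and handle the converses by directional duality. This is exactly the paper's argument, including its treatment of the turtle's disjunctive conclusion.
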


\begin{proofclaim}
Note that by directional duality, if $D$ contains no $H$ as a subdigraph then it also contains no converse of $H$ as a subdigraph. There it suffices to prove that $D$ contains no purse, no handcuff, no basket, no bag, and no turtle.

Let $H$ be a subdigraph of $D$.

If $H$ is a purse, then by Lemma~\ref{lemma:purse}~(i), 
$D$ has potential at most $3$.
Hence by Claim~\ref{claim:high_potential_number_one} this purse must be
spanning, which is impossible by Lemma~\ref{lemma:purse}~(ii).

If $H$ is a handcuff, then
by Lemma~\ref{lemma:handcuff}, a subdigraph $H'$ of $H$ has potential at most $-2$.
So by Claim~\ref{claim:high_potential_number_one}, $H'$ spans $D$,
but then $\rho(D) \leq -2$, a contradiction.

% {\color{gray} \CR{To remove.}
% If $H$ is a crib, then
% by Lemma~\ref{lemma:crib}~(i), $H$ (and so $D$) has a subdigraph $H'$
%  which is either a bidirected cycle minus one arc, or has potential
% at most $3$.
% By Claim~\ref{claim:cycle_minus_one_arc}, the first case is not possible.
% By Claim~\ref{claim:high_potential_number_one}, $H'$ and thus $H$ spans $D$ 
% %\PA{here we use the fact that  if $H$ is a subdigraph of $D$, then $\rho_D(V(H)) \leq \rho(H)$, we should put that in a Lemma, in the Section dedicated to the potential method maybe} \CR{I agree, and it would be the right place to say that if $H$ is not induced we get a $+3$ bonus},
% but by Lemma~\ref{lemma:crib}, $\rho(D) \leq -3$, a contradiction.
% }

If $H$ is a basket, then by Lemma~\ref{lemma:basket}~(i),
$H$ (and thus $D$) has a subdigraph $H'$ with potential at most $2$. Hence by Claim~\ref{claim:high_potential_number_one},
$H'$ and thus $H$ spans $D$. But by Lemma~\ref{lemma:basket}~(ii), $D$ has potential
at most $-4$, a contradiction.

If $H$ is a bag, then by Lemma~\ref{lemma:bag}~(i),
$H$ has has a subdigraph $H'$ with potential at most $3$. Hence by Claim~\ref{claim:high_potential_number_one},
$H'$ and thus $H$ spans $D$. But by Lemma~\ref{lemma:bag}~(ii), $D$ has potential
at most $-3$, a contradiction.

Finally, if $H$ is a turtle, then by Lemma~\ref{lemma:turtle},
$H$ has a subdigraph $H'$ such that $\rho(H') \leq 1$ or
is a bidirected odd cycle minus one arc, but this latter case
is impossible by Claim~\ref{claim:cycle_minus_one_arc}.
Then by Claim~\ref{claim:high_potential_number_one},
$H'$ must spans $D$, and Lemma~\ref{lemma:turtle}~(ii) implies
that $\rho(D) \leq -5$, a contradiction.
\end{proofclaim}

Next claim will be used many times during the proof. 

\begin{claim}\label{claim:2colouring_minus_v_plus_e}
Let $x,y,z$ be three distinct vertices in $D$,
then $D-x \cup yz$ is $2$-dicolourable,
\end{claim}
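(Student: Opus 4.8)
The plan is to argue by contradiction, using that $D$ is the minimal counterexample fixed in this subsection. Suppose $D-x\cup yz$ is not $2$-dicolourable. First dispose of the trivial case: if $yz\in A(D)$, then $D-x\cup yz=D-x$, a proper subdigraph of the $3$-dicritical digraph $D$, hence $2$-dicolourable; so from now on $yz\notin A(D)$ and $D-x\cup yz$ genuinely contains one more arc than $D-x$. Since $\dic(D-x\cup yz)\le \dic(D-x)+1\le 3$, in fact $\dic(D-x\cup yz)=3$, so it contains a $3$-dicritical subdigraph $\tilde D$. Set $R=V(\tilde D)$; as $x\notin R$ we have $3\le |R|\le n(D)-1$.

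Next I would note that $\tilde D$ must use the added arc: if $yz\notin A(\tilde D)$ then all arcs of $\tilde D$ lie in $A(D-x)$, so $\tilde D\subseteq D-x$, contradicting that $D-x$ is $2$-dicolourable while $\tilde D$ is not. Hence $yz\in A(\tilde D)$, and consequently $\tilde D\setminus yz$ is a subdigraph of $D$, since all of its arcs lie in $A(D-x)\subseteq A(D)$ and $R\subseteq V(D)$.

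Now I would split on $\tilde D$. If $\tilde D\in\oddcycle\cup\wheelodd$, then $\tilde D\setminus yz$ is a copy in $D$ of a member of $\oddcycle\cup\wheelodd$ with one arc deleted, directly contradicting Claim~\ref{claim:cycle_minus_one_arc}. Otherwise $\tilde D$ is a $3$-dicritical digraph not in $\oddcycle\cup\wheelodd$ with $n(\tilde D)<n(D)$, so minimality of $D$ gives $\rho(\tilde D)\le-2$. Since $D[R]\supseteq\tilde D\setminus yz$ we have $m(D[R])\ge m(\tilde D)-1$, and every digon of $\tilde D$ other than possibly $[y,z]$ is a digon of $D[R]$ (both of its arcs lie in $A(D-x)$), so $\pi(D[R])\ge\pi(\tilde D)-1$. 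Plugging into the definition, $\rho_D(R)=7|R|-3m(D[R])-2\pi(D[R])\le\rho(\tilde D)+5\le 3$, contradicting $\rho_D(R)\ge 4$, which holds by Claim~\ref{claim:high_potential_number_one} since $3\le|R|\le n(D)-1$. Either way we reach a contradiction, so $D-x\cup yz$ is $2$-dicolourable.

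The argument is short once set up; the part requiring care is the bookkeeping — that the new arc $yz$ necessarily belongs to $\tilde D$, that $\tilde D\setminus yz$ really embeds in $D$, and that the only digon of $\tilde D$ which might fail to survive into $D[R]$ is $[y,z]$ itself, so that $m$ and $\pi$ each drop by at most $1$ and the potential slack is exactly $+5$. There is no genuine obstacle beyond this: the two exceptional families are absorbed cleanly by Claim~\ref{claim:cycle_minus_one_arc}, and all other cases by the potential bound together with Claim~\ref{claim:high_potential_number_one}.
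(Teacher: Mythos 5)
Your proof is correct and follows essentially the same route as the paper's: extract a $3$-dicritical subdigraph $\tilde D$ of $D-x\cup yz$, rule out the exceptional families via Claim~\ref{claim:cycle_minus_one_arc} applied to $\tilde D\setminus yz$, and otherwise use minimality to get $\rho(\tilde D)\le -2$ and the slack of $+3+2=+5$ from the possibly lost arc and digon to contradict Claim~\ref{claim:high_potential_number_one}. The paper states this more tersely, while you simply make the bookkeeping (that $yz\in A(\tilde D)$, that $\tilde D\setminus yz$ embeds in $D$, and the bounds on $m$ and $\pi$) explicit.
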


\begin{proofclaim}
Suppose by contradiction that $D-x\cup yz$ contains a $3$-dicritical digraph $\Tilde{D}$. Then $\Tilde{D}$ is neither a bidirected odd
cycle nor a digraph in $\wheelodd$ because otherwise 
$\Tilde{D} \setminus yz \subsetneq D$ would contradict Claim~\ref{claim:cycle_minus_one_arc}. 
So $\rho(\Tilde{D}) \leq -2$ by minimality of $D$, and
thus $\rho_D(V(\Tilde{D})) \leq -2 + 3 + 2 = 3$ contradicting
Claim~\ref{claim:high_potential_number_one}.
\end{proofclaim}

\begin{claim}\label{claim:degree_4_not_3_neighbours}
If $v$ has degree $4$, then $n(v) \in \{2, 4\}$.
\end{claim}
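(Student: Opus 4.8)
If $v$ has degree $4$, then $n(v) \in \{2, 4\}$.

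Suppose $v$ has degree $4$; the possible values of $n(v)$ are $2$, $3$, $4$ (since $d(v)=4$ and by Lemma~\ref{lemma:degeneracy} every vertex has in-degree and out-degree at least $2$, so $n(v)\ge 2$). The plan is to rule out $n(v)=3$. In this case, among the four arcs at $v$, exactly one neighbour — call it $w$ — is joined to $v$ by a digon, and the other two neighbours, say $a$ and $b$, are joined to $v$ by single arcs. Since $d^+(v)\ge 2$ and $d^-(v)\ge 2$, one of the single arcs points towards $v$ and the other points away; so up to reversing (and using directional duality, which is available throughout via Claim~\ref{claim:no_purse_handcuff_crib_basket}'s style of argument) we may assume the arcs at $v$ are $av$, $vb$, together with the digon $[v,w]$.

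The idea is to build a $2$-dicolouring of $D$ directly, contradicting $\vec\chi(D)=3$. First I would apply Claim~\ref{claim:2colouring_minus_v_plus_e}: the digraph $D - v \cup ab$ is $2$-dicolourable; fix such a dicolouring $\phi$. The arc $ab$ forces $\phi(a)\ne\phi(b)$, so we may recolour nothing and simply try to extend $\phi$ to $v$ in $D$. In $D$, the neighbours of $v$ are $a$ (arc $av$), $b$ (arc $vb$), and $w$ (digon $[v,w]$). We must choose $\phi(v)\ne\phi(w)$, so $\phi(v)$ is determined: it must be the colour $\ne \phi(w)$, call it $c$. The only way a monochromatic dicycle through $v$ can appear in $D$ (coloured by $\phi$ with $\phi(v)=c$) is via the arcs $av$ and $vb$ both being monochromatic with $v$, i.e. if $\phi(a)=\phi(b)=c$; but $\phi(a)\ne\phi(b)$, so this is impossible. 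Hence $\phi$ extends to a $2$-dicolouring of $D$, a contradiction.

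The one gap to fill is the case $c = \phi(w)$ being forced to equal a value that creates a problem — but there is no such problem, since setting $\phi(v)$ to the unique colour different from $\phi(w)$ is always possible, and the digon $[v,w]$ is the only constraint of "different colour" type while $av$ and $vb$ are "same colour creates a dicycle only if the cycle closes up", which it cannot since $a,b$ get distinct colours. The main subtlety, and the step I would be most careful about, is making sure the reduction to $D - v \cup ab$ is legitimate: we need $a,b,v$ pairwise distinct (true, as $a,b,w$ are the three distinct neighbours of $v$) and we need that adding the arc $ab$ is allowed even if $a$ and $b$ were already adjacent in $D$ — in that case $D-v\cup ab$ just has a multiple arc or a digon, which does not affect $2$-dicolourability arguments, and Claim~\ref{claim:2colouring_minus_v_plus_e} as stated covers this. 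Once $n(v)=3$ is excluded, $n(v)\in\{2,4\}$ as claimed.
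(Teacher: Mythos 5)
Your overall strategy is the same as the paper's: assume $n(v)=3$, so the neighbourhood consists of a digon-neighbour $w$ and two simple neighbours $a$ (in) and $b$ (out), delete $v$, add the short-circuiting arc $ab$, and extend a $2$-dicolouring of $D-v\cup ab$ back to $v$. However, there is a genuine error at the pivotal step: you assert that ``the arc $ab$ forces $\phi(a)\ne\phi(b)$.'' This is false. A $2$-dicolouring only requires each colour class to induce an acyclic subdigraph; a single arc between two vertices of the same colour is perfectly allowed (only a digon forces distinct colours). So the case $\phi(a)=\phi(b)$ is entirely possible and is in fact the whole point of adding the arc $ab$.

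Concretely, the problematic situation is $\phi(a)=\phi(b)\ne\phi(w)$: your rule ``colour $v$ differently from $w$'' then forces $\phi(v)=\phi(a)=\phi(b)$, and you must argue that no monochromatic dicycle through $a\to v\to b$ arises. This is exactly where the added arc $ab$ earns its keep: such a dicycle $C$ would yield a monochromatic dipath $C-v$ from $b$ to $a$ in $D-v$, which together with the arc $ab$ closes into a monochromatic dicycle in $D-v\cup ab$, contradicting the choice of $\phi$. The paper's proof is precisely this three-case analysis ($\phi(a)\ne\phi(b)$; $\phi(a)=\phi(b)=\phi(w)$; $\phi(a)=\phi(b)\ne\phi(w)$), and the third case is the one your argument skips by assuming it cannot occur. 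With that case added, the proof goes through; as written, it has a hole.
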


\begin{proofclaim}
Let $v$ be a vertex of degree $4$ and
assume for a contradiction that $n(v)  = 3$. 
Set $N^+(v) = \{u,x\}$ and
$N^-(v) = \{u,y\}$.
Then, by Claim~\ref{claim:2colouring_minus_v_plus_e},
$D-v \cup yx$ has a $2$-dicolouring $\phi$.
\begin{itemize}
\item If $\phi(x) \neq \phi(y)$, then assigning to $v$ the colour
    different from $\phi(u)$ yields a $2$-dicolouring of $D$, a contradiction.
\item If $\phi(x)=\phi(y)=\phi(u)$ then, again, assigning to $v$ the colour
    different from $\phi(u)$ yields a $2$-dicolouring of $D$, a contradiction.
    \item If $\phi(x) = \phi(y) \neq \phi(u)$, then assigning to $v$ the colour $\phi(x)=\phi(y)$ yields a $2$-dicolouring of $D$, a contradiction.
    Indeed, if there were a monochromatic dicycle $C$, it would have to contain $v$.  So $C-v$ would be a monochromatic dipath from $x$ to $y$
    in $D-v$, whose union with $yx$ would be a monochromatic dicycle in $D-v \cup xy$.
\end{itemize}
\end{proofclaim}

Now we can characterize more precisely the equality case in
Claim~\ref{claim:high_potential_number_one}.

\begin{claim}\label{claim:high_potential}
Let $R \subseteq V(D)$ such that $3 \leq |R| \leq n(D)-1$.
If $\rho_D(R)=4$, then there is a vertex $z$ such that $D-z=D[R]$ and $d(z)=4$.
\end{claim}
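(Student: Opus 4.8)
The plan is to refine the two equality cases from Claim~\ref{claim:high_potential_number_one} by ruling out the second one, namely that $D-R$ is a bidirected path $[a,b,c]$ of length $2$ all of whose vertices have degree $4$ in $D$. So suppose for contradiction that $\rho_D(R)=4$ and we are in this configuration. First I would record what Claim~\ref{claim:high_potential_number_one} gives us in its equality analysis: the $3$-dicritical subdigraph $\Tilde{D}$ of $D'=D/(R,\phi,X)$ used there must itself be a bidirected $5$-cycle which is induced in $D'$, the set $V(\Tilde{D})\setminus X = \{a,b,c\}$ with $\phi$ inducing a proper $2$-colouring pattern on the path, and there are at most $4$ arcs between $R$ and $\{a,b,c\}$ (no multiple arc created during the contraction). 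In particular $a$ and $c$ each send/receive exactly one arc to/from $R$, and these arcs land in prescribed colour classes.

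Next I would exploit the degree constraints. Since $d(a)=d(b)=d(c)=4$ and $B(D)$ restricted to $\{a,b,c\}$ is the path $a$–$b$–$c$ (two digons, i.e.\ $4$ arcs), each of $a$ and $c$ has exactly one further neighbour, which must be a single arc (not a digon) going to $R$; but then $a$ (and $c$) is a vertex with exactly one neighbour not joined to it by a digon, contradicting Lemma~\ref{lemma:vertex_no_almost_only_digons}. Actually one must be slightly careful: the "at most one in-neighbour and at most one out-neighbour coloured $i$'' condition from Corollary~\ref{coro:potential}, together with $d(a)=4$, forces $a$ to have exactly $2$ arcs to $R$ (one in, one out) rather than $0$, since otherwise $a$ would have out-degree or in-degree $<2$ inside the bidirected path, violating Lemma~\ref{lemma:degeneracy}. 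If both arcs at $a$ towards $R$ go to the \emph{same} vertex $w\in R$, that would be a digon $[a,w]$, making $B(D)$ contain the path $w$–$a$–$b$–$c$ and $a$ internal of degree exactly $4$, i.e.\ $[w,a,b,c]$ would be a $3$-thread — contradicting that $D$ has no $3$-thread (Lemma~\ref{lemma:subdiv_digon}). So the two arcs at $a$ go to distinct vertices of $R$, meaning $a$ has a unique non-digon neighbour, and we again invoke Lemma~\ref{lemma:vertex_no_almost_only_digons} for the desired contradiction.

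The main obstacle I anticipate is bookkeeping the arc count precisely: one needs to be sure that in the equality case \emph{exactly} $4$ arcs (not fewer) join $R$ to $\{a,b,c\}$, and to locate them — two at $a$, two at $c$, none at $b$ — using that $d(b)=4$ is already saturated by the two digons $[a,b]$ and $[b,c]$, while $a$ and $c$ each have $2$ of their $4$ incident arcs inside the path and hence exactly $2$ towards $R$. Once the arcs are pinned down, the contradiction is a clean application of Lemma~\ref{lemma:vertex_no_almost_only_digons} (or Lemma~\ref{lemma:subdiv_digon} in the degenerate sub-case). Since the first equality case — a single vertex $z$ of degree $4$ with $D-z = D[R]$ — is exactly the surviving conclusion, the proof ends there; no further argument about that case is needed, as Claim~\ref{claim:high_potential} only asserts that this case occurs, not that it must.
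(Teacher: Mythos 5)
Your overall strategy matches the paper's: take the dichotomy already provided by Claim~\ref{claim:high_potential_number_one}, assume the second alternative ($D-R$ a bidirected path $[a,b,c]$ with all three vertices of degree $4$), and derive a contradiction from a local analysis at an endpoint of the path, using that $D$ has no $3$-thread. The first paragraph of your proposal, re-deriving the equality analysis inside Claim~\ref{claim:high_potential_number_one}, is unnecessary — the claim statement hands you the configuration directly — but it is not wrong.

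There is, however, a genuine error in your final step. Since $d(a)=4$ and the digon $[a,b]$ accounts for two of $a$'s arcs, the remaining two arcs either form a digon $[a,w]$ with a single $w\in R$ (which you correctly kill via the $3$-thread $[w,a,b,c]$ and Lemma~\ref{lemma:subdiv_digon}), or go to two \emph{distinct} vertices $w_1,w_2\in R$. In the latter case $a$ has \emph{two} neighbours not joined to it by a digon, not ``a unique non-digon neighbour'' as you assert, so Lemma~\ref{lemma:vertex_no_almost_only_digons} simply does not apply — indeed a degree-$4$ vertex with one digon can never have exactly one non-digon neighbour, so that lemma can never close this case. The contradiction the paper actually uses is that $n(a)=3$ while $d(a)=4$, which violates Claim~\ref{claim:degree_4_not_3_neighbours} (a degree-$4$ vertex of the minimal counterexample has $2$ or $4$ neighbours). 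Replacing your citation of Lemma~\ref{lemma:vertex_no_almost_only_digons} by a citation of Claim~\ref{claim:degree_4_not_3_neighbours} repairs the proof and makes it coincide with the paper's.
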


\begin{proofclaim}
Suppose for a contradiction that $\rho_D(R)=4$ but there is no vertex $z$ such that $D-z=D[R]$ and $d(z)=4$.
Then, by Claim~\ref{claim:high_potential_number_one},
$D-R$ is a bidirected path $[x,y,z]$ such that $d(x)=d(y) =d(z)=4$.
By Lemma~\ref{lemma:subdiv_digon} $D$ contains no $3$-thread, so $n(x)=3$, a contradiction to~Claim~\ref{claim:degree_4_not_3_neighbours}.
\end{proofclaim}

%\PA{First time next claim is used is~\ref{claim:chelou_arc_one}}
\begin{claim}\label{claim:degree_4_and_out_neighbours_dominated}
There are no vertices $x,y,z$ in $D$ such that:
\begin{enumerate}
\item $\{xy,zx,zy\} \subseteq A(D)$, and
\item $d(x)=4$.
\end{enumerate}
\end{claim}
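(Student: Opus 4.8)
The plan is to assume for contradiction that such $x,y,z$ exist; since $xy,zx,zy$ are arcs of a loopless digraph, $x,y,z$ are pairwise distinct. By Lemma~\ref{lemma:degeneracy}, $d^+(x),d^-(x)\ge 2$, so $d(x)=4$ forces $d^+(x)=d^-(x)=2$; write $N^+(x)=\{y,a\}$ and $N^-(x)=\{z,b\}$. By Claim~\ref{claim:degree_4_not_3_neighbours}, $n(x)\in\{2,4\}$, and inspecting the possible coincidences among $a,b,y,z$ (recall $y\ne z$, $y\ne a$, $z\ne b$) shows $n(x)=4$ exactly when $x,y,a,z,b$ are pairwise distinct, and $n(x)=2$ exactly when $a=z$ and $b=y$, that is, when $[x,y]$ and $[x,z]$ are both digons. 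I treat the two cases separately.

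For $n(x)=4$ the idea is to delete $x$, reconnect by one well-chosen arc, and extend. As $x,b,y$ are distinct, Claim~\ref{claim:2colouring_minus_v_plus_e} provides a $2$-dicolouring $\psi$ of $D-x\cup by$. The two crucial facts are: there is no $\psi$-monochromatic dipath from $y$ to $z$ in $D-x$ (together with the arc $z\to y$ it would form a monochromatic dicycle in $D-x$), and no $\psi$-monochromatic dipath from $y$ to $b$ in $D-x$ (together with the added arc $b\to y$ it would form a monochromatic dicycle in $D-x\cup by$). Colour $x$ with the colour $c\ne\psi(a)$. Any monochromatic dicycle in the resulting colouring of $D$ must pass through $x$, hence contains an out-neighbour $p\in\{y,a\}$ of $x$ with $\psi(p)=c$, an in-neighbour $q\in\{z,b\}$ of $x$ with $\psi(q)=c$, the arc $q\to x$, and a $c$-monochromatic dipath from $p$ to $q$ in $D-x$; since $\psi(a)\ne c$ we get $p=y$, and then $q\in\{z,b\}$ is excluded by the two facts. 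Thus $\psi$ extends to a $2$-dicolouring of $D$, contradicting $\dic(D)=3$.

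For $n(x)=2$ we have $N(x)=\{y,z\}$ with $[x,y],[x,z]$ digons and $z\to y\in A(D)$. First, $y\to z\notin A(D)$: otherwise $B(D)$ would contain the triangle on $\{x,y,z\}$, contradicting Lemma~\ref{lem:forestB} since $D\notin\oddcycle$. Hence $D[\{x,y,z\}]$ has exactly $5$ arcs and $\pi(D[\{x,y,z\}])=1$, so $\rho_D(\{x,y,z\})=21-15-2=4$; moreover $n(D)\ge 4$, because $D[\{x,y,z\}]$ is $2$-dicolourable and so cannot equal $D$. By Claim~\ref{claim:high_potential} there is then a vertex $w$ with $V(D)=\{x,y,z,w\}$ and $d(w)=4$. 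Since the only out-arc of $y$ inside $\{x,y,z\}$ is $y\to x$ and the only in-arc of $z$ inside $\{x,y,z\}$ is $x\to z$, Lemma~\ref{lemma:degeneracy} forces $y\to w$ and $w\to z$ into $A(D)$; then, $w$ not being adjacent to $x$ (its only possible neighbours being $y$ and $z$), Lemma~\ref{lemma:degeneracy} further forces $w\to y$ and $z\to w$ into $A(D)$. So $D$ consists exactly of the digons $[x,y],[x,z],[w,y],[w,z]$ together with the single arc $z\to y$; but then colouring $\{x,w\}$ with one colour and $\{y,z\}$ with the other is a $2$-dicolouring of $D$, contradicting $\dic(D)=3$.

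The main obstacle is the case $n(x)=4$: only one extra arc can be afforded (via Claim~\ref{claim:2colouring_minus_v_plus_e}), and a careless choice of which pair of $N^-(x)\times N^+(x)$ to join leaves a ``double obstruction'' to colouring $x$ — a monochromatic dicycle through $x$ in each colour, held together through the arc $z\to y$ — that on its own contradicts nothing. Adding precisely $b\to y$, from the second in-neighbour $b$ into the out-neighbour $y$ that $z$ already dominates, is what makes \emph{both} obstructions collapse at once.
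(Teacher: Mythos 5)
Your proof is correct, and it is available without circularity (Claims~\ref{claim:2colouring_minus_v_plus_e}, \ref{claim:degree_4_not_3_neighbours} and \ref{claim:high_potential} all precede this claim). In the generic case $n(x)=4$ your argument is essentially the paper's in mirrored form: the paper applies Claim~\ref{claim:2colouring_minus_v_plus_e} to $D-x\cup zy'$ (adding an arc from $z$ to the \emph{second out-neighbour}) and colours $x$ with $\phi(z)$, shortcutting any monochromatic dicycle through $x$ via one of the two arcs $zy,zy'$ of $D'$; you add the arc from the \emph{second in-neighbour} $b$ to $y$ and colour $x$ unlike $a$, shortcutting via $zy$ or $by$. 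Same rerouting idea, dual choice of added arc; both work. Where you genuinely diverge is the digon case $n(x)=2$: the paper kills it in one line, observing that $D[\{x,y,z\}]$ is a bidirected $3$-cycle or a bidirected $3$-cycle minus one arc, contradicting Claim~\ref{claim:cycle_minus_one_arc}; you instead compute $\rho_D(\{x,y,z\})=4$, invoke the equality case of Claim~\ref{claim:high_potential} to pin down $V(D)=\{x,y,z,w\}$, and exhibit a $2$-dicolouring. This is valid but considerably heavier than needed — note that the two digons $[x,y],[x,z]$ together with $zy$ already form a bidirected odd cycle minus one arc, so Claim~\ref{claim:cycle_minus_one_arc} finishes immediately (this also spares you the case split on $yz\in A(D)$ via Lemma~\ref{lem:forestB}). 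Two small expository points: when you assert that $w$ is not adjacent to $x$, the reason is that the two digons already exhaust $d(x)=4$, which you should state; and once $V(D)=\{x,y,z,w\}$ with $x\not\sim w$ and $yz\notin A(D)$, the colouring $\{x,w\},\{y,z\}$ already yields the contradiction, so the intermediate forcing of the arcs at $w$ via Lemma~\ref{lemma:degeneracy} is superfluous.
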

\begin{proofclaim}
Suppose that such a configuration exists. Let $y'$ be the out-neighbour
of $x$ distinct from $y$, and let $z'$ be in-neighbour
of $x$ distinct from $z$

Assume first that $z\neq y'$.
Then by~Claim~\ref{claim:2colouring_minus_v_plus_e}, $D' = D-x\cup zy'$
has a  $2$-dicolouring $\phi$.
If $z$ and $z'$ (resp. $y$ and $y'$) have the same colour, then assigning to $x$ the opposite colour yields a
$2$-dicolouring of $D$, a contradiction.
Hence, $\phi(z) \neq \phi(z')$ and $\phi(y) \neq \phi(y')$.  
Set $\phi(x)=\phi(z)$. We claim that this results in a $2$-dicolouring of $D$.
If not, there must be a monochromatic dicycle $C$ in $D$, which must go through $x$.
It must use the dipath $(z,x,y^*)$, with $y^*$ the vertex in $\{y,y'\}$ coloured $\phi(z)$.
But, since $zy^*$ is an arc in $D'$, the dicycle $C'$ obtained from $C$ by replacing the dipath $(z,x,y^*)$ by $(z,y^*)$ is a monochromatic dicycle in $D'$ coloured by $\phi$, a contradiction. 

Assume now that $z = y'$. 
Then by~Claim~\ref{claim:degree_4_not_3_neighbours}, $n(x)=2$ and so $y=z'$.
Hence $D[\{x,y,z\}]$ is either a bidirected odd cycle or a bidirected odd cycle
minus one arc, contradicting~Claim~\ref{claim:cycle_minus_one_arc}.
\end{proofclaim}

\begin{claim}\label{claim:digon_degre_5_plus}
There are no vertices $y,y_1,y_2$ in $D$ such that $d^+(y)=d^+(y_1)=2$,
$yy_1,yy_2 \in A(D)$ and $[y_1,y_2]\subset A(D)$.
\end{claim}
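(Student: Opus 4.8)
The plan is to work inside the minimal counterexample $D$ and suppose, for contradiction, that such vertices $y,y_1,y_2$ exist. Since $d^+(y)=2$ we have $N^+(y)=\{y_1,y_2\}$; since $d^+(y_1)=2$ and $y_1y_2\in A(D)$ we may write $N^+(y_1)=\{y_2,w\}$ with $w\notin\{y_1,y_2\}$; and $y,y_1,y_2$ are pairwise distinct. I would split into two cases according to whether $w=y$.

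If $w=y$, then $y_1y\in A(D)$, so $[y,y_1]$ is a digon of $D$. Together with the digon $[y_1,y_2]$ and the arc $yy_2$, the vertices $y,y_1,y_2$ then carry a copy of a bidirected $3$-cycle with the single arc $y_2y$ removed, i.e.\ a bidirected odd cycle minus one arc, which $D$ cannot contain by Claim~\ref{claim:cycle_minus_one_arc}; contradiction.

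If $w\ne y$, I would produce a $2$-dicolouring of $D$ via an auxiliary digraph. Set $D^{\ast}=(D-y)\cup wy_1$. As $y$, $w$, $y_1$ are three distinct vertices, Claim~\ref{claim:2colouring_minus_v_plus_e} gives a $2$-dicolouring $\phi$ of $D^{\ast}$. Since $y_1w\in A(D-y)\subseteq A(D^{\ast})$ and $wy_1\in A(D^{\ast})$, the pair $[y_1,w]$ is a digon in $D^{\ast}$, so $\phi(y_1)\ne\phi(w)$; and $\phi(y_1)\ne\phi(y_2)$ from the digon $[y_1,y_2]$. I then claim that colouring $y$ with $\phi(y_1)$ extends $\phi$ to a $2$-dicolouring of $D$, contradicting $\dic(D)=3$. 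Indeed, a monochromatic dicycle of the extended colouring must pass through $y$ and have colour $\phi(y_1)$, so it leaves $y$ along $yy_1$ (not $yy_2$, since $\phi(y_2)\ne\phi(y_1)$), and then continues from $y_1$ to $y_2$ or to $w$, both of colour different from $\phi(y_1)$ --- impossible --- unless the dicycle is $(y,y_1,y)$, which forces $y_1y\in A(D)$, i.e.\ $w=y$, contrary to assumption.

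The proof is short; the point requiring care is the verification in the second case that the \emph{only} possible obstruction to extending $\phi$ is the length-$2$ dicycle $(y,y_1,y)$, which is exactly the configuration peeled off into the first case. This is where all three hypotheses matter: $d^+(y)=d^+(y_1)=2$ keep the out-neighbourhoods small enough that a $\phi(y_1)$-monochromatic walk leaving $y$ dies after two steps, the digon $[y_1,y_2]$ forces $\phi(y_1)\ne\phi(y_2)$, and the inequality $\phi(y_1)\ne\phi(w)$ comes from the digon $[y_1,w]$ that Claim~\ref{claim:2colouring_minus_v_plus_e} legitimately lets us install in $D^{\ast}$.
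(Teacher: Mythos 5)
Your proof is correct and follows essentially the same route as the paper: delete $y$, add the arc from $y_1$'s second out-neighbour back to $y_1$ to force both out-neighbours of $y_1$ to receive the other colour, and then colour $y$ like $y_1$. The only cosmetic difference is that the paper first disposes of both digons $[y,y_1]$ and $[y,y_2]$ via Claim~\ref{claim:cycle_minus_one_arc}, whereas you only need to peel off the case $w=y$ because your colouring argument works even when $y_2y\in A(D)$.
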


\begin{proofclaim}
Assume for a contradiction that there are three such vertices $y,y_1,y_2$.
If $[y,y_1]\subset A(D)$ or $[y,y_2]\subset A(D)$, then $D[\{y,y_1,y_2\}]$ is a bidirected $3$-cycle
minus at most one arc, contradicting Claim~\ref{claim:cycle_minus_one_arc}.
So we may assume that $y_1y\not\in A(D)$ and similarly $y_2y \notin A(D)$.

Let $z$ be the unique out-neighbour of $y_1$ different from $y_2$. It is
also distinct from $y$ as $y_1y \not\in A(D)$.
By Claim~\ref{claim:2colouring_minus_v_plus_e}, $D-y +zy_1$ has a 
 $2$-dicolouring $\phi$.
As $[z,y_1]$ and $[y_1,y_2]$ are digons, we can suppose without loss of generality
that $\phi(z)=\phi(y_2)=1$ and $\phi(y_1)=2$.
Then we set $\phi(y)=2$ and claim that this is $2$-dicolouring of $D$.
Indeed, assume for a contradiction that there is monochromatic cycle $C$ in $D$. It must contains $y$ and so be of colour $2$. Since $\phi(y_2) =1$, $C$ contains $y_1$. But the out-neighbours of $y_1$, namely $y_2$ and $z$ are coloured $1$, a contradiction.
\end{proofclaim}

%\PA{next claim is used in claim for the first time in~\ref{claim:result_discharging}}
\begin{claim}\label{claim:not_wheel_3}
There is no directed $3$-cycle $(x,y,z,x)$  such that $d^-(x)=d^-(y)=d^-(z)=2$
and $x,y$ and $z$ have a common in-neighbour $v$.
\end{claim}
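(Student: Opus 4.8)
The plan is to reach a contradiction by exhibiting a $2$-dicolouring of $D$. Suppose such a configuration exists: $(x,y,z,x)$ is a directed $3$-cycle, $d^-(x)=d^-(y)=d^-(z)=2$ and $v$ is a common in-neighbour, so $N^-(x)=\{v,z\}$, $N^-(y)=\{v,x\}$, $N^-(z)=\{v,y\}$; in particular no vertex outside $\{v,x,y,z\}$ has an arc into $\{x,y,z\}$. Note also that a digon among $\{x,y,z\}$ would push some in-degree above $2$, and that two digons among $[v,x],[v,y],[v,z]$ would make $\rho_D(\{v,x,y,z\})\le 2$, contradicting Claim~\ref{claim:high_potential_number_one}; so at most one of $[v,x],[v,y],[v,z]$ is a digon.

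The argument deletes the arc $vy$. Since $\dic(D)=3$, the digraph $D\setminus vy$ has a $2$-dicolouring $\phi$, and as $\phi$ is not a $2$-dicolouring of $D$ there is a monochromatic directed cycle of $D$ through $vy$; hence $\phi(v)=\phi(y)$, say both $1$, and there is a colour-$1$ dipath $P$ from $y$ to $v$ in $D\setminus vy$. In $D\setminus vy$ the vertex $y$ has $x$ as its only in-neighbour, and the triangle still lies in $D\setminus vy$, so $\phi$ is not constant on $\{x,y,z\}$; as $\phi(y)=1$ we have $\phi(x)=2$ or $\phi(z)=2$. If $\phi(x)\neq\phi(z)$, recolour $y$ with colour $2$: a newly created monochromatic directed cycle would pass through $y$, hence enter $y$ from $x$ and then enter $x$ from $z$ or from $v$; in each of the two subcases one of these vertices has colour $1$, so no monochromatic colour-$2$ cycle arises. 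Since $vy$ now joins the two colour classes, this is a $2$-dicolouring of $D$ --- a contradiction.

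It remains to treat $\phi(x)=\phi(z)=2$; here both hypotheses $d^-(x)=2$ and $d^-(z)=2$ are needed. Recolouring $y$ alone would monochromatise the triangle, so we recolour $y$ with colour $2$ and one of $x,z$ with colour $1$; say $x$ (when $[v,x]$ is the digon, use $z$ instead --- at most one such digon occurs). In the new colouring the colour-$2$ class is $(\phi^{-1}(2)\setminus\{x\})\cup\{y\}$, which is acyclic because $y$'s in-neighbours $v,x$ both have colour $1$; the colour-$1$ class is $(\phi^{-1}(1)\setminus\{y\})\cup\{x\}$, and since every directed cycle of $D[\phi^{-1}(1)]$ uses the arc $vy$ and hence $y$, this class can fail to be acyclic only through a directed cycle using $x$ --- that is, only if $x$ has a colour-$1$ out-neighbour $x^{+}\notin\{v,x,y,z\}$ from which a colour-$1$ dipath in $D-\{v,x,y,z\}$ reaches a colour-$1$ in-neighbour of $v$. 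If not, we are done; running the symmetric move with $z$, the only surviving case is that $x$ and $z$ both have such escapes, yielding directed cycles $v\to x\to x^{+}\rightsquigarrow v$ and $v\to z\to z^{+}\rightsquigarrow v$ alongside the cycle $v\to y\rightsquigarrow v$ coming from $P$. One then derives a contradiction from this structure together with the triangle and the arcs $vx,vy,vz$: a suitable contraction of a colour class produces a smaller $3$-dicritical digraph contradicting the minimality of $D$ through Claim~\ref{claim:high_potential_number_one}, unless a forbidden subdigraph of Claim~\ref{claim:no_purse_handcuff_crib_basket} --- or a bidirected odd cycle minus an arc, excluded by Claim~\ref{claim:cycle_minus_one_arc} --- already appears.

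The first two cases above are routine, and so is the reduction of the balanced case to a statement about acyclicity of a modified colour class. The main obstacle is the very last step: turning the three directed paths from $x$, $y$, $z$ back to $v$ into an actual contradiction requires care in choosing which vertex set to contract (or which forbidden configuration to exhibit), keeping track of how these paths overlap around $v$ and of their interaction with the triangle.
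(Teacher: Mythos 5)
Your first two cases are fine, but the proof is not complete: the balanced case $\phi(x)=\phi(z)=2$ is never closed. After observing that recolouring $\{y\mapsto 2,\ x\mapsto 1\}$ fails only when $x$ has a colour-$1$ ``escape'' cycle $v\to x\to x^+\rightsquigarrow v$, and symmetrically for $z$, you are left with a configuration of three directed returns to $v$ and you only gesture at ``a suitable contraction'' or ``a forbidden configuration'' without exhibiting either; you acknowledge this yourself in the last paragraph. That residual configuration does not obviously reduce to Claim~\ref{claim:high_potential_number_one}, Claim~\ref{claim:no_purse_handcuff_crib_basket} or Claim~\ref{claim:cycle_minus_one_arc} (the three walks need not be disjoint, need not carry digons, and contracting a colour class requires choosing $R$ and tracking the potential bookkeeping of Corollary~\ref{coro:potential}), so this is a genuine gap, not a routine verification.

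The source of the difficulty is the choice of reduction: deleting the single arc $vy$ gives too little control over $\phi$. The paper instead invokes Claim~\ref{claim:2colouring_minus_v_plus_e} with the triple $(x,z,y)$ to get a $2$-dicolouring of $D-x\cup zy$. Since $yz$ is already an arc of the triangle, the added arc $zy$ creates the digon $[y,z]$, which forces $\phi(y)\neq\phi(z)$ for free; then, because $N^-(x)=\{v,z\}$ and $N^-(z)=\{v,y\}$, either colour of $v$ lets one extend to $x$ immediately (if $\phi(v)=\phi(z)$ give $x$ the colour of $y$, so both in-neighbours of $x$ avoid its class; otherwise give $x$ the colour of $z$, and a monochromatic cycle through $x$ would have to enter via $z$, whose in-neighbours $v$ and $y$ are both in the other class). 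I would suggest reworking your argument along those lines rather than trying to salvage the three-returns case.
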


\begin{proofclaim}
Suppose for a contradiction that there is a directed cycle $(x,y,z,x)$ as in the statement.
By Claim~\ref{claim:2colouring_minus_v_plus_e},
$D'=D-x \cup zy$ has a  $2$-dicolouring $\phi$.
Since  $[y,z]$ is a digon in $D'$, we may assume $\phi(y) =1$ and $\phi(z)=2$.  
If $\phi(v)=2$, then setting $\phi(x) = 1$, we obtain a $2$-dicolouring of $D$,  because the two in-neigbours of $x$ are coloured $2$.
If $\phi(v)=1$, then set $\phi(x) = 2$.  Let us prove that there is no monochromatic directed cycle in $D$. If there is one, it must contain $x$ and so be a colour $2$. But the unique in-neighbour of $x$ coloured $2$ is $z$, which has no in-neighbour coloured $2$, a contradiction. 
Hence in both cases, we obtain a $2$-dicolouring of $D$, a contradiction.
\end{proofclaim}

We denote by $\Vquatre$ the set of vertices of degree $4$ incident to no digon. %\PA{cette définition est un peu seule.} 

%\PA{next claim is used for the first time in~\ref{claim:V4_tree}}
\begin{claim}\label{claim:ind-cycle}
There is no induced dicycle $C$ in $D$ such that
\begin{enumerate}
\item for all $v \in V(C)$, $d^+(v)=2$, and
\item $C$ intersects $\Vquatre$. 
\end{enumerate}
\end{claim}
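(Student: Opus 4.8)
Suppose for contradiction that such an induced dicycle $C=(v_1,v_2,\dots,v_\ell,v_1)$ exists, with every $d^+(v_i)=2$ and with some $v_{i_0}\in\Vquatre$. The natural move, following the proof of Claim~\ref{claim:not_wheel_3} and the other claims of this type, is to delete a well-chosen vertex, add a well-chosen arc so that Claim~\ref{claim:2colouring_minus_v_plus_e} applies, obtain a $2$-dicolouring $\phi$, and then put the deleted vertex (and possibly recolour along $C$) back so as to extend $\phi$ to all of $D$, contradicting $\dic(D)=3$. Concretely, let $v=v_{i_0}$ be a vertex of $C$ in $\Vquatre$; since $C$ is induced and $v$ has degree $4$ with exactly one out-neighbour on $C$ (namely $v_{i_0+1}$, as $d^+(v)=2$ forces the second out-neighbour off $C$ — it cannot be $v_{i_0-1}$ because $C$ is induced), write $N^+(v)=\{v_{i_0+1},a\}$ and $N^-(v)=\{v_{i_0-1},b\}$ with $a,b\notin V(C)$ and $a,b,v_{i_0\pm1}$ all distinct (using that $v$ is incident to no digon, so $a\ne b$ and $a\ne v_{i_0-1}$, $b\ne v_{i_0+1}$; the remaining coincidences are excluded by Claim~\ref{claim:cycle_minus_one_arc} or Claim~\ref{claim:degree_4_and_out_neighbours_dominated}, which should be checked).

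First I would apply Claim~\ref{claim:2colouring_minus_v_plus_e} to get a $2$-dicolouring $\phi$ of $D-v\cup b\,v_{i_0+1}$ (removing $v$ and adding the "shortcut" arc from the special in-neighbour $b$ to the out-neighbour on $C$). Then I distinguish cases according to the colours of $a,b$ and of the two neighbours $v_{i_0-1},v_{i_0+1}$ of $v$ on $C$. The key observation is the one used repeatedly above: any monochromatic dicycle created when we re-insert $v$ and colour it must pass through $v$, hence through the dipath $(z,v,w)$ for the unique in-neighbour $z$ and out-neighbour $w$ of $v$ sharing $v$'s colour; if $z=b$ and $w=v_{i_0+1}$ then the arc $b\,v_{i_0+1}$ already present in $D-v\cup b\,v_{i_0+1}$ lets us shortcut this into a monochromatic dicycle in $D-v\cup b\,v_{i_0+1}$, a contradiction; and if $\phi(v_{i_0-1})=\phi(v_{i_0+1})$ we must moreover rule out the short dicycle through $v_{i_0-1},v,v_{i_0+1}$ and the rest of $C$ — here is where I expect to use the hypothesis that \emph{every} vertex of $C$ has out-degree $2$ together with the fact that $C$ is induced: a monochromatic dipath from $v_{i_0+1}$ back to $v_{i_0-1}$ inside $D-v$ that uses $C$ would force (by following $C$ and using $d^+=2$ along it) a monochromatic dicycle already present, via the shortcut arc, in $D-v\cup b\,v_{i_0+1}$.

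The plan is therefore: (1) fix notation and establish that $a,b,v_{i_0+1},v_{i_0-1}$ are four distinct vertices off/on $C$ as described, invoking Claims~\ref{claim:cycle_minus_one_arc}, \ref{claim:degree_4_not_3_neighbours} and \ref{claim:degree_4_and_out_neighbours_dominated} for the degenerate configurations; (2) take $\phi$ a $2$-dicolouring of $D-v\cup b\,v_{i_0+1}$; (3) in each colour case, choose $\phi(v)\in\{1,2\}$ so that at most one in-neighbour and at most one out-neighbour of $v$ receives $\phi(v)$, and argue via the shortcut-arc trick that no monochromatic dicycle can arise — using the out-degree-$2$ condition along all of $C$ to handle the case where the two $C$-neighbours of $v$ are monochromatic; (4) conclude a $2$-dicolouring of $D$, contradicting $\dic(D)=3$.

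\textbf{Main obstacle.} The delicate point is the case $\phi(v_{i_0-1})=\phi(v_{i_0+1})$: one must show that giving $v$ this common colour creates no monochromatic dicycle that runs "the long way around'' $C$. This is exactly where the full strength of hypothesis~1 (every vertex of $C$, not just $v$, has out-degree $2$) and the inducedness of $C$ must be combined — one traces any putative monochromatic dicycle through $C$, uses that each $v_j\in V(C)$ has a unique out-neighbour besides the one forced off $C$, and shows the offending dicycle is already visible in $D-v\cup b\,v_{i_0+1}$. Getting this propagation argument airtight (in particular handling chords-free structure and the interaction with $a$ and $b$) is the crux; the rest is the standard colour-bookkeeping seen in the preceding claims.
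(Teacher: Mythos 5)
Your reduction deletes only the single vertex $v=v_{i_0}$ and adds the arc $b\,v_{i_0+1}$, and this is where the argument breaks: the colours of the remaining vertices of $C$ are then dictated by $\phi$, not by you. Since $C-v$ is a dipath, a $2$-dicolouring $\phi$ of $D-v\cup b\,v_{i_0+1}$ may perfectly well colour all of $C-v$ with colour $1$; if in addition $\phi(a)=\phi(b)=2$ and $D-v$ contains a $2$-coloured dipath from $a$ to $b$, then colouring $v$ with $1$ closes up $C$ itself as a monochromatic dicycle, while colouring $v$ with $2$ closes up the dicycle $v\to a\to\dots\to b\to v$. Neither dicycle passes through the pair $(b,v_{i_0+1})$, and the added arc $b\,v_{i_0+1}$ is bichromatic in this situation, so no contradiction can be extracted from $D-v\cup b\,v_{i_0+1}$. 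The ``propagation along $C$'' you hope will rescue the case $\phi(v_{i_0-1})=\phi(v_{i_0+1})$ cannot get started, because the offending dicycle is $C$ itself and it uses no arc that your shortcut controls; the hypothesis that every vertex of $C$ has out-degree $2$ is useless as long as you cannot recolour $C-v$.

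The paper's proof fixes exactly this by deleting the \emph{whole} cycle: it takes a $2$-dicolouring $\phi$ of $(D-C)\cup u^-u^+$, where $u\in V(C)\cap\Vquatre$ and $u^-,u^+$ are its two \emph{off-cycle} neighbours (note the shortcut arc joins the two off-cycle neighbours of $u$, not $b$ to the successor on $C$); this digraph is a subdigraph of $D-u\cup u^-u^+$, so Claim~\ref{claim:2colouring_minus_v_plus_e} still applies. One then chooses the colours on all of $C$: writing $w^+$ for the off-cycle out-neighbour of each $w\in V(C)$, if both colours appear among the $w^+$ one colours each $w$ opposite to $w^+$, and otherwise one colours $u$ with the common colour of the $w^+$ and the rest of $C$ with the other colour. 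The hypothesis $d^+(w)=2$ for all $w\in V(C)$ is used precisely to say that any monochromatic dicycle other than $C$ meeting $C$ must use some arc $ww^+$; all such arcs are bichromatic except possibly $uu^+$, and a monochromatic dicycle through $uu^+$ must contain the dipath $(u^-,u,u^+)$, which the added arc $u^-u^+$ shortcuts into a monochromatic dicycle of $(D-C)\cup u^-u^+$, the desired contradiction. You would need to restructure your proof along these lines; as written, the single-vertex contraction cannot be completed.
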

\begin{proofclaim}
Assume for a contradiction that $D$ contains an induced dicycle $C$ as in the statement.
For every $v\in V(C)$, let $v^+$ be the unique vertex such 
that $vv^+\in A(D) \setminus A(C)$. 
Since $C$ is induced, for every $v \in V(C)$,  we have $v^-, v^+ \notin V(C)$.
Let $u \in V(C)\cap \Vquatre$, and let $u^-$ be the unique vertex such 
that $u^-u\in A(D) \setminus A(C)$.

By~Claim~\ref{claim:2colouring_minus_v_plus_e}, there is a $2$-dicolouring $\phi$ of $(D-C)\cup u^-u^+$.

If the two colours appear on $\{v^+ \mid v\in V(C) \}$, that is  $\{\phi(v^+) \mid v\in V(C) \} = \{1,2\}$, then assign to $v$ the colour distinct from the one of $v^+$ (i.e.  $\phi(v) = 3-\phi(v^+)$) for all $v\in V(C)$.
This results in a $2$-dicolouring of $D$. Indeed consider a dicycle $C'$.
If $V(C')$ does not intersect $V(C)$, then it is not monochromatic, because $\phi$ is a $2$-dicolouring of $D-C$.
If $C'=C$, then it is not monochromatic because $\phi(V(C)) = \{3-\phi (v^+) \mid v\in V(C) \} = \{1,2\}$ as the two colours appear on $\{v^+ \mid v\in V(C) \}$.
If $C'\neq C$ and $V(C')$ intersects $V(C)$, then it must contain an arc $vv^+$ and this is not monochromatic.

Assume now that all the $v^+$ are coloured the same. Without loss of generality, $\phi(v^+)=1$ for all $v\in V(C)$.
Set $\phi(u)=1$ and $\phi(v)=2$ for all $v\in V(C)\setminus \{u\}$.
Let us prove that this results in a $2$-dicolouring of $D$. 
Suppose for a contradiction, that there is a monochromatic dicycle $C'$. As above,  $C'\neq C$ and $C'$ intersects $C$, and thus $C'$ contains an arc $vv^+$ for some $v \in V(C)$. This arc must be $uu^+$ because the other such arcs are not monochromatic.
Hence the vertices of $C'$ are coloured $1$, and so $C'$ must contain the dipath $(u^-,u,u^+)$.
But then the dicycle $C''$ obtained from $C'$ by replacing the subdipath $(u^-,u,u^+)$ by $(u^-,u^+)$ is a monochromatic dicycle in $(D-C)\cup u^-u^+$ coloured by $\phi$, a contradiction.
\end{proofclaim}

%\FH{It might be possible to prove the following generalisation of Claim~\ref{claim:ind-cycle}. I do not know if it would be useful.}

%\begin{claim}\label{claim:dir-cycle}
%There is no directed cycle $C$ in $D$ such that
%\begin{enumerate}
%\item for all $v \in V(C)$, $d^+(v)=2$, and
%\item $C$ intersects $V_4$. 
%\end{enumerate}
%\end{claim}

%\begin{proofclaim}
%Assume for a contradiction that there is a cycle with the two above properties.
%Consider such a cycle $C$ with minimum length.
%Let $u$ be a vertex in $V(C)\cap V_4$.
%For any $v\in V(C)$, let $v^+$ be the vertex such that $vv^+\in A(D) \setminus A(C)$. 
%Let $u^-$ be the vertex such that $u^-u\in A(D) \setminus A(C)$.
%Note that $v^+$ might be in $V(C)$ for some $v\in V(C)$. However, by minimality of $C$, $\{u^-, u^+\}\cap V(C) =\emptyset$.

%By~Claim~\ref{claim:2colouring_minus_v_plus_e}, there is a $2$-dicolouring $\phi$ of $D-C\cup u^-u^+$.
%\FH{A finir. Je ne peux pas utiliser le Thm~\ref{thm:directed_gallai_list_coloring}, car je ne sais pas borner l'in-degree d'un sommet.}
%\end{proofclaim}

%\PA{First time we use next claim is in claim~\ref{claim:cycles_in_V5_V4}}
\begin{claim}\label{claim:V4_tree}
$D[\Vquatre]$ is an oriented forest.
\end{claim}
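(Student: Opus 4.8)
The plan is to show that $D[\Vquatre]$ is acyclic by ruling out two kinds of cycles: directed cycles and cycles in the underlying graph that are not directed cycles. Suppose for contradiction that $D[\Vquatre]$ contains a cycle in its underlying graph, and take a shortest such cycle $Q$; by minimality $Q$ is an induced (chordless) cycle of $D[\Vquatre]$, though possibly not induced in $D$. Every vertex of $Q$ lies in $\Vquatre$, so it has degree exactly $4$ in $D$ and is incident to no digon; in particular each $v\in V(Q)$ has $d^+(v)=d^-(v)=2$. Since the two arcs of $D$ incident with $v$ inside $Q$ cannot form a digon, each vertex of $Q$ has exactly one in-neighbour and one out-neighbour on $Q$, so $Q$ is in fact a \emph{directed} cycle. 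Thus it suffices to rule out directed cycles in $D[\Vquatre]$.

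So let $C$ be a shortest directed cycle contained in $D[\Vquatre]$; again by minimality $C$ is induced in $D[\Vquatre]$. The obstruction is that $C$ need not be induced in $D$: there could be chords of $C$ in $D$ between non-consecutive vertices of $C$ (such a chord is an arc of $D$ whose endpoints are both in $\Vquatre$, so it would already be an arc of $D[\Vquatre]$, contradicting that $C$ is induced there). Hence $C$ \emph{is} induced in $D$. Now every vertex of $C$ has out-degree $2$ in $D$, and $C$ meets $\Vquatre$ (indeed $V(C)\subseteq\Vquatre$), so $C$ satisfies the hypotheses of Claim~\ref{claim:ind-cycle}, which is a contradiction. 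Therefore $D[\Vquatre]$ contains no directed cycle, hence no cycle at all, and $D[\Vquatre]$ is an oriented forest.

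I expect the only delicate point to be the reduction from an arbitrary cycle in the underlying graph to a directed cycle: one must argue carefully that a shortest underlying cycle, being chordless in $D[\Vquatre]$ and built from degree-$4$ digon-free vertices, is forced to be a directed cycle, using that two arcs of $D$ between the same pair of vertices would be a digon (excluded since the vertices are in $\Vquatre$). After that, the argument is just an invocation of Claim~\ref{claim:ind-cycle} once we observe that a shortest directed cycle in $D[\Vquatre]$ is automatically induced in $D$.
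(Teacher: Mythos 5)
Your second step (ruling out directed cycles) is sound: a shortest directed cycle in $D[\Vquatre]$ is indeed chordless, hence induced in $D$, and Claim~\ref{claim:ind-cycle} applies since every vertex of $\Vquatre$ has out-degree $2$. The problem is the first step, which you yourself flag as the delicate point: the reduction from an arbitrary underlying cycle to a directed cycle does not work as written. Knowing that each vertex $v$ of the chordless cycle $Q$ has $d^+(v)=d^-(v)=2$ in $D$ and lies in no digon only tells you that $v$ is incident to exactly two arcs of $D$ inside $Q$, one towards each of its two $Q$-neighbours; it does not tell you that one of these arcs enters $v$ and the other leaves it. Nothing in your argument excludes, say, an underlying $4$-cycle $v_1v_2v_3v_4$ carrying the arcs $v_1v_2$, $v_3v_2$, $v_3v_4$, $v_1v_4$, in which $v_1,v_3$ are sources and $v_2,v_4$ are sinks of the cycle: each $v_i$ can still have in- and out-degree $2$ in $D$ via arcs with endpoints outside the cycle, and no digon is involved. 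Since ``oriented forest'' means the \emph{underlying graph} is a forest, ruling out directed cycles alone does not prove the claim.

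The paper closes exactly this gap with Theorem~\ref{thm:directed_gallai_subdigraph} (the directed Gallai-tree theorem): every block of the subdigraph induced by the vertices of degree $2(k-1)=4$ is an arc, a directed cycle, a bidirected odd cycle, or a bidirected complete graph. Since vertices of $\Vquatre$ lie in no digon, the last two types cannot occur, so every block of $D[\Vquatre]$ is an arc or a directed cycle; in particular every cycle of the underlying graph is a directed cycle, and Claim~\ref{claim:ind-cycle} then finishes as in your second step. You need to invoke this theorem (or reprove the relevant case of it); the purely local degree-and-digon argument cannot substitute for it.
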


\begin{proofclaim}
By Theorem~\ref{thm:directed_gallai_subdigraph}, every block of $D[\Vquatre]$ is either an arc or a dicycle. By Claim~\ref{claim:ind-cycle}, $D[\Vquatre]$ has no dicycle, so all its blocks are arcs, hence it is a forest. 
\end{proofclaim}

We say that an arc $xy$ is {\bf chelou} if
$d^+(x)=d^-(y)=2$ and at least one of $y$ and $x$ is incident to no digon. 
If $xy$ is chelou, we say that $y$ (resp. $x$) is a {\bf chelou neighbour} of $x$
(resp. $y$). 

Given a chelou arc $xy$, we say that it is {\bf out-chelou} if $y$ is incident to no digon, {\bf in-chelou} if $x$ is incident with no digon, and {\bf full chelou} if it is both in- and out-chelou. In particular, any arc in $D[\Vquatre]$ is full-chelou. %\PA{This is the only place where we use full chelou}

A vertex $x$ is {\bf nice} if either it is incident to at least two chelou arcs, or it is incident to one chelou arc and $d(x) \geq 5$.
A vertex $x$ is {\bf bad} if it is incident with a unique chelou arc and $d(x) = 4$. %Let $x$ be a vertex incident with at least one chelou arc. We say that $x$ is {\bf nice} if $d(x) \geq 5$ or is incident with at least two chelou arcs; otherwise $x$ is {\bf bad}, (that is $x$ is incident with a unique chelou arc and $d(x) = 4$).  
A nice vertex $y$ linked with a vertex $x$ via a chelou arc is said to be a {\bf nice chelou neighbour} of $x$. 

An {\bf A$^{\mbox{\bf +}}$-configuration} on an out-chelou arc $xy$  is a subdigraph which is the union of a digraph $A$ with vertex set $\{x,y,z,y_1,y_2\}$ and arc set $\{xy, zy, yy_1,yy_2, y_1z, y_2z\}$ and a bidirected path of odd length from $y_1$ to $y_2$ whose internal vertices are disjoint from $V(A)$.
See Figure~\ref{subfig:odd_cycle}.

A {\bf B$^{\mbox{\bf +}}$-configuration} on an out-chelou arc $xy$  is a subdigraph which is the union of a digraph $B$ with vertex set $\{x,y,z,y_1,y_2, y_3\}$ and arc set $\{xy, zy, yy_1,yy_2, yy_3, y_1z, y_2z, y_3z\}$, a directed cycle $C=(c_1,c_2, c_3, c_1)$ and three disjoint bidirected paths  of even length $P_i$, $i\in \{1,2,3\}$, from $y_i$ to $c_i$ whose internal vertices are disjoint from $V(B)\cup V(C)$. Each $P_i$ may have length $0$, that is $c_i$ and $y_i$ might be the same vertex.
See Figure~\ref{subfig:3_wheel}.

An {\bf A$^{\mbox{\bf -}}$-configuration} (resp. {\bf B$^{\mbox{\bf -}}$-configuration}) on an in-chelou arc $yx$ is the converse of an A$^+$-configuration (resp. B$^+$-configuration).

\begin{figure}[hbtp]

\begin{center}
\begin{subfigure}[t]{0.45\textwidth}
\begin{center}
\begin{tikzpicture}[scale=1.5]

\node (x) at (-1,0) [vertex_nice] {$x$};
\node (y) at (0,0) [vertex] {$y$};
\node (z) at (0,1) [vertex] {$z$};
\node (y1) at (1,1) [vertex] {$y_2$};
\node (y2) at (1,0) [vertex] {$y_1$};

\draw[arc_chelou] (x) -- (y);
\draw[arc] (y) -- (y1);
\draw[arc] (y) -- (y2);
\draw[arc] (y1) -- (z);
\draw[arc] (y2) -- (z);
\draw[arc] (z) -- (y);

\node (a) at ($(y1) + (30:1)$) [vertex] {};
\node (b) at ($(a) + (-60+30:1)$) [vertex] {};
\node (c) at ($(b) + (-60-60+30:1)$) [vertex] {};
\node (d) at ($(c) + (-60-60-60+30:1)$) [vertex] {};

\draw[arc, bend right] (y1) to (a);
\draw[arc, bend right] (a) to (b);
\draw[arc, bend right] (b) to (c);
\draw[arc, bend right] (c) to (d);
\draw[arc, bend right] (d) to (y2);

\draw[arc, bend right] (a) to (y1);
\draw[arc, bend right] (b) to (a);
\draw[arc, bend right] (c) to (b);
\draw[arc, bend right] (d) to (c);
\draw[arc, bend right] (y2) to (d);

\end{tikzpicture}
\end{center}
\caption{\label{subfig:odd_cycle} An A$^+$-configuration on $xy$.}
\end{subfigure}
\hspace{1cm}
\begin{subfigure}[t]{0.45\textwidth}
\begin{center}
\begin{tikzpicture}[scale=1.5]

\node (x) at (-1,0) [vertex_nice] {$x$};
\node (y) at (0,0) [vertex] {$y$};
\node (z) at (0,1) [vertex] {$z$};
\node (y3) at (1.35,1.25) [vertex] {$y_3$};
\node (y2) at (1,0.5) [vertex] {$y_2$};
\node (y1) at (1.35,-0.25) [vertex] {$y_1$};

\draw[arc_chelou] (x) -- (y);
\draw[arc] (y) -- (y1);
\draw[arc] (y) -- (y2);
\draw[arc] (y) -- (y3);
\draw[arc] (y1) -- (z);
\draw[arc] (y2) -- (z);
\draw[arc] (y3) -- (z);
\draw[arc] (z) -- (y);

\node (y1b) at ($(y1)+(1,0)$) [vertex] {};
\draw[arc, bend right] (y1) to (y1b);
\draw[arc, bend right] (y1b) to (y1);

\node (y1c) at ($(y1b)+(1,0)$) [vertex] {$c_1$};
\draw[arc, bend right] (y1b) to (y1c);
\draw[arc, bend right] (y1c) to (y1b);

\node (y2b) at ($(y2)+(1,0)$) [vertex] {};
\draw[arc, bend right] (y2) to (y2b);
\draw[arc, bend right] (y2b) to (y2);

\node (y2c) at ($(y2b)+(1,0)$) [vertex] {$c_2$};
\draw[arc, bend right] (y2b) to (y2c);
\draw[arc, bend right] (y2c) to (y2b);

\node (y3b) at ($(y3)+(1,0)$) [vertex] {};
\draw[arc, bend right] (y3) to (y3b);
\draw[arc, bend right] (y3b) to (y3);

\node (y3c) at ($(y3b)+(1,0)$) [vertex] {$c_3$};
\draw[arc, bend right] (y3b) to (y3c);
\draw[arc, bend right] (y3c) to (y3b);

\draw[arc] (y1c) -- (y2c);
\draw[arc] (y2c) -- (y3c);
\draw[arc] (y3c) -- (y1c);

\end{tikzpicture}
\end{center}
\caption{\label{subfig:3_wheel} A B$^+$-configuration on $xy$. }
\end{subfigure}
\end{center}

\caption{\label{fig:chelou_edge}The two possible configurations when $xy$ is out-chelou and  $x$ is nice.}
\end{figure}

%\PA{First time we use the next claim is in claim~\ref{claim:nice_chelou_neighbour_of_degree_5}}
\begin{claim}\label{claim:B+pot}
The digraph induced by a B$^+$-configuration has potential $9$.
\end{claim}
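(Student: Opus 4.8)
The plan is a direct computation of $n(H)$, $m(H)$ and $\pi(H)$ for a B$^+$-configuration $H$ on an out-chelou arc $xy$, followed by substitution into $\rho(H) = 7n(H) - 3m(H) - 2\pi(H)$. Recall that $H$ is the union of the digraph $B$ on the six vertices $\{x,y,z,y_1,y_2,y_3\}$ with its $8$ arcs, the directed $3$-cycle $C = (c_1,c_2,c_3,c_1)$, and three pairwise-disjoint bidirected paths $P_i$ from $y_i$ to $c_i$ whose internal vertices lie outside $V(B)\cup V(C)$. For $i \in \{1,2,3\}$ let $2\ell_i$ denote the (even, possibly zero) length of $P_i$, so that $P_i$ has $2\ell_i+1$ vertices and $4\ell_i$ arcs, with $y_i = c_i$ when $\ell_i = 0$; set $L = \ell_1+\ell_2+\ell_3$.

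First I would count vertices. Since the $P_i$ are pairwise disjoint, contain $y_i$ and $c_i$ among their vertices, and have their remaining vertices outside $V(B)\cup V(C)$, we have $V(H) = \{x,y,z\} \sqcup \bigl(V(P_1)\cup V(P_2)\cup V(P_3)\bigr)$, hence $n(H) = 3 + \sum_{i} (2\ell_i+1) = 6 + 2L$; this is valid also when some $\ell_i = 0$, since then $y_i = c_i$ is a single vertex. Next I would count arcs: the arc sets $A(B)$, $A(C)$ and $A(P_1),A(P_2),A(P_3)$ are pairwise disjoint — the only vertices shared among the three pieces are the $y_i$ and the $c_i$, and no arc of $B$ or $C$ runs along a $P_i$ — so $m(H) = 8 + 3 + \sum_i 4\ell_i = 11 + 4L$. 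Finally, the digon graph $B(H)$ is exactly the disjoint union of the underlying undirected paths of the $P_i$ (no arc of $B$ or $C$, and no pair of such arcs, forms a digon), and a path with $2\ell_i$ edges has maximum matching size $\ell_i$, so $\pi(H) = \sum_i \ell_i = L$.

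Substituting gives $\rho(H) = 7(6+2L) - 3(11+4L) - 2L = 42 - 33 + (14 - 12 - 2)L = 9$, independent of $L$, which is the claim. There is essentially no obstacle here; the only points needing a little care are the degenerate cases where some $P_i$ has length $0$ (so $y_i$ and $c_i$ coincide) and the verification that $A(B)$, $A(C)$ and the $A(P_i)$ are pairwise disjoint, so that no arc is double-counted and no spurious digon appears.
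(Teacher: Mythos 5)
Your computation matches the paper's proof exactly: both count $n(H)=6+2\sum_i\ell_i$, $m(H)=11+4\sum_i\ell_i$, $\pi(H)=\sum_i\ell_i$ and substitute into the potential to get $9$. The extra care you take with the disjointness of the arc sets and the degenerate case $\ell_i=0$ is fine but not a different method; the proposal is correct and essentially identical to the paper's argument.
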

\begin{proofclaim}
Let $H$ be a  B$^+$-configuration.
For $i\in [3]$, let $2\ell_i$ be the length of $P_i$.
Then $n(H)= 2\ell_1 +2 \ell_2 +2\ell_3 +6$, $m(H) = 4\ell_1+4\ell_2 +4\ell_3 + 11$, and $\pi(H) = \ell_1+\ell_2+\ell_3$.
Hence $\rho(H) = 7( 2\ell_1 + 2\ell_2 +2\ell_3 +6) - 3( 4\ell_1+4\ell_2 +4\ell_3 + 11) - 2(\ell_1+\ell_2+\ell_3) = 9$.
\end{proofclaim}

Next claim is one of the main tools of the rest of the proof. We often use the dual proposition that is the one about in-chelou arcs obtained by reversing the direction of all arcs.

\begin{claim}\label{claim:chelou_arc_one}
Let $xy$ be an out-chelou arc in $D$. The following hold:
\begin{enumerate}[label=(\roman*)]
\item $y$ is in a directed $3$-cycle not containing $x$, and \label{it:triangle} 
\item if $x$ is nice, then there is an A$^+$-configuration or a B$^+$-configuration on $xy$. \label{it:nice}
 \end{enumerate}
\end{claim}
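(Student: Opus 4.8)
\medskip

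\noindent Write $N^-(y)=\{x,z\}$, which is possible since $xy$ is out-chelou and so $d^-(y)=2$; note that $z\neq x$ and, since $y$ is incident to no digon, $yx,yz\notin A(D)$. Recall also that $y$ has at least two out-neighbours (Lemma~\ref{lemma:degeneracy}).

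\medskip

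\noindent\textbf{Part (i).} A directed $3$-cycle through $y$ enters $y$ through an in-neighbour of $y$, hence through $z$ if it avoids $x$; so it suffices to find an out-neighbour $a$ of $y$ with $az\in A(D)$, since then $(z,y,a,z)$ is a directed $3$-cycle whose vertices are pairwise distinct and all distinct from $x$ (indeed $a\neq x$ and $a\neq z$, otherwise $y$ would lie in a digon). I would argue by contradiction: assume no out-neighbour of $y$ sends an arc to $z$, and construct a $2$-dicolouring of $D$. Take a $2$-dicolouring $\phi$ of the proper subdigraph $D-y$. First observe $\phi(x)\neq\phi(z)$: otherwise colouring $y$ with the remaining colour extends $\phi$ to a $2$-dicolouring of $D$, as $x$ and $z$ are the only in-neighbours of $y$. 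So take $\phi(x)=1$ and $\phi(z)=2$; since $\phi$ cannot be extended to $y$ with colour $2$, there is an out-neighbour of $y$ joined to $z$ by a monochromatic dipath $Q$ in $D-y$, and $|Q|\geq2$ by our assumption. Choosing $\phi$ and $Q$ so that $|Q|$ is minimum and recolouring the second vertex of $Q$ — repairing, via Claim~\ref{claim:2colouring_minus_v_plus_e} applied to $D$ minus that vertex plus a suitable arc, any monochromatic dicycle this creates — should yield either a shorter such dipath, contradicting minimality, or directly a $2$-dicolouring of $D$. The delicate point of~(i) is exactly this recolouring bookkeeping.

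\medskip

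\noindent\textbf{Part (ii).} Suppose $x$ is nice and, for contradiction, that there is neither an A$^{+}$- nor a B$^{+}$-configuration on $xy$. By~(i) fix $y_1\in N^+(y)$ with $y_1z\in A(D)$, so that $(z,y,y_1,z)$ is a directed triangle. Note that an A$^{+}$-configuration on $xy$ is precisely this triangle together with a second out-neighbour $y_2$ of $y$ with $y_2z\in A(D)$ and a bidirected odd $y_1$--$y_2$ path, while a B$^{+}$-configuration is the triangle together with two further such out-neighbours $y_2,y_3$, a directed triangle $c_1c_2c_3$, and bidirected even paths joining $y_i$ to $c_i$. The plan is to grow this structure by repeatedly taking $2$-dicolourings of proper subdigraphs of $D$ — mainly $D-y$, and $D$ with one of the arcs $zy$, $yy_1$ removed — and using Lemma~\ref{lemma:contract_dont_decrease_chi} together with Corollary~\ref{coro:potential} to contract colour classes into a smaller $3$-dicritical digraph $\tilde D$. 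By minimality of $D$ and the potential bounds of Claims~\ref{claim:high_potential_number_one} and~\ref{claim:high_potential}, $\tilde D$ must lie in $\oddcycle\cup\wheelodd$; unfolding the contraction turns a bidirected odd cycle into the bidirected odd path of an A$^{+}$-configuration and an odd $3$-wheel into the directed triangle and even paths of a B$^{+}$-configuration. The hypothesis that $x$ is nice is used exactly to exclude the degenerate closures: whenever the structure fails to assemble into one of the two target configurations — a needed bidirected path missing or of the wrong parity, or a coincidence among the vertices involved — the extra arc at $x$ (if $x$ meets a second chelou arc) or the extra degree of $x$ (if $d(x)\geq5$) forces $D$ to contain a purse, a handcuff, a basket, a bag, a turtle, a converse of one of these, or a bidirected odd cycle minus an arc, contradicting Claim~\ref{claim:no_purse_handcuff_crib_basket} or Claim~\ref{claim:cycle_minus_one_arc}. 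I expect the main obstacle to be this case analysis — verifying that these forbidden subdigraphs are the only obstructions to producing an A$^{+}$- or B$^{+}$-configuration, and in particular handling the intermediate cases in which only part of the required bidirected structure is present.
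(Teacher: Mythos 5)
There is a genuine gap. The engine of the paper's proof is a single auxiliary digraph that you never construct: with $N^+(y)=\{y_1,\dots,y_\ell\}$ and $N^-(y)=\{x,z\}$, one forms $D'=D-\{x,y\}\cup\{zy_1,\dots,zy_\ell\}$, checks that $D'$ is not $2$-dicolourable (a $2$-dicolouring of $D'$ would extend to $D$ by giving $x$ the colour missed by its other out-neighbour and $y$ the opposite colour), extracts a $3$-dicritical $\tilde{D'}\subseteq D'$ containing some new arc $zy_i$, and compares potentials: $\rho_D(V(\tilde{D'})\cup\{y\})\leq\rho(\tilde{D'})+6$, so Claim~\ref{claim:high_potential_number_one} forces $\rho(\tilde{D'})\geq-2$ and hence the trichotomy $\rho(\tilde{D'})=-2$, or $\tilde{D'}\in\wheelodd$, or $\tilde{D'}\in\oddcycle$. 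Part (i) then falls out in every branch: in the last two because the unfolded configurations contain the triangle $(z,y,y_i,z)$, and in the first because equality in the potential chain forces a digon $[z,y_i]$ to have been created, i.e.\ $y_iz\in A(D)$ already. Your substitute for (i) --- a minimal monochromatic dipath in $D-y$ plus an unspecified ``recolouring bookkeeping'' repaired by Claim~\ref{claim:2colouring_minus_v_plus_e} --- is not carried out, and you yourself flag it as the delicate point; as written there is no argument that the repair terminates or shortens the path, so (i) is unproved.

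For (ii) you correctly guess the endgame (bidirected odd cycle $\leadsto$ A$^+$, odd $3$-wheel $\leadsto$ B$^+$), but the mechanism you propose (contracting colour classes of $D-y$ or of $D$ minus $zy$ or $yy_1$ via Lemma~\ref{lemma:contract_dont_decrease_chi}) is not the one that works, and your account of where niceness enters is wrong. The paper does not use niceness to exclude ``degenerate closures'' via the purse/handcuff/basket/bag/turtle claims; it uses it only to kill the case $\rho(\tilde{D'})=-2$, where the equality analysis shows $V(\tilde{D'})=V(D)\setminus\{x,y\}$, $d(x)=4$, and a short degree count (using Claim~\ref{claim:degree_4_and_out_neighbours_dominated} and $3$-dicriticality of $\tilde{D'}$) shows $xy$ is the unique chelou arc at $x$, i.e.\ $x$ is bad --- contradicting niceness. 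In the wheel case one still needs the observation that $z$ must be the center (else Lemma~\ref{lemma:wheel_find_odd_cycle} plus Claim~\ref{claim:cycle_minus_one_arc} give a contradiction), which your sketch also omits.
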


\begin{proofclaim}
Let $N^+(y) = \{y_1, \dots, y_l\}$ and $N^-(y)=  \{x,z\}$.
Consider $D' = D - \{x ,y\} \cup \{ zy_1, \dots, zy_{\ell}\}$. 
Since $y$ is not in a digon,  $z \notin \{y_1, \dots, y_{\ell}\}$, so $D'$ is well-defined (it has no loop). 

Let us prove that $\vec{\chi}(D') \geq 3$. Assume for a contradiction 
that there is a $2$-dicolouring $\phi$  of $D'$. 
Let $c \in \{1,2\}$ be the colour of the unique vertex in $N^+(x) \setminus\{y\}$.
We extend $\phi$ to $D$ by setting $\phi(x) = 3-c$ and $\phi(y) = c$. A monochromatic dicycle must contain $x$ or $y$.  
The colour of $x$ is distinct from the colour of its two out-neighbours, so $x$ is not in a monochromatic dicycle. If $y$ is in a monochromatic dicycle, it must contain $z$ and one of the $y_i$, which implies a monochromatic dicycle in $D'$, a contradiction. So $\vec{\chi}(D') \geq 3$.

So $D'$ contains a $3$-dicritical digraph $\Tilde{D'}$ as a subdigraph. 
Since $\tilde{D'}$ cannot be a subdigraph of $D$, $\tilde{D'}$ must contain an arc $zy_i$ for some $i$, in particular $z \in V(\tilde{D'})$. 
Let $U = V(\tilde{D'}) \cup \{y\} \subseteq V(D)\setminus \{x\}$ and let us compare $\rho_D(U)$ with $\rho(\tilde{D'})$.  
By construction of $D'$ and since the arc $zy$ is in $D[U]$ but not in $\tilde{D'}$, we have:
$$m(D[U]) - m(\tilde{D'}) \geq 1$$ 
Moreover, since a digon in $\tilde{D'}$ but not in $D$ must be incident to $z$, we have:  
$$\pi(D[U])-\pi(\Tilde{D'}) \geq -1$$
Finally, it is clear that 
$$n(D[U]) - n(\tilde{D'}) = 1 $$ 
All together, this yields: 
\begin{align}
\rho_D(U) 
&=  7n(D[U]) - 3m(D[U]) - 2\pi(D[U]) \nonumber \\
&= \rho(\Tilde{D}) + 7(n(D[U]) - n(\tilde{D'})) - 3(m(D[U]) - m(\Tilde{D})) - 2(\pi(D[U])-\pi(\Tilde{D'}))  \label{eq:potentiel} \\
&\leq \rho(\Tilde{D'}) + 7 - 3 + 2   \nonumber\\
&\leq \rho(\Tilde{D'}) + 6  \label{eq:+6}
\end{align}

By Claim~\ref{claim:high_potential_number_one},  $4 \leq \rho_D(U)$ and thus $\rho(\Tilde{D'}) \geq -2$ by Equation~\eqref{eq:+6}. 
Hence, by the induction hypothesis, we are in one of the three following cases: either $\rho(\tilde{D'}) = -2$, or $\rho(\tilde{D'}) = -1$ and $\tilde{D'} \in \wheelodd$, or $\rho(\tilde{D'}) = 1$ and $\tilde{D'}$ is a bidirected odd cycle.

\medskip
\noindent{\bf Case 1:} $\rho(\Tilde{D'}) = -2$: 

In this case, we shall prove that $x$ is bad, that is $d(x)=4$ and $x$ is incident with a unique chelou arc, and that $y$ is in a directed $3$-cycle not containing $x$.

Since $\rho(\tilde{D'}) = -2$, by Claims~\ref{claim:high_potential_number_one} and Equation~\eqref{eq:+6}, we must have $\rho_D(U) = 4$. 
Hence, we must have $\pi(D[U])- \pi(\Tilde{D'}) = -1$ by Equation~\eqref{eq:potentiel}, so adding the arcs
$zy_1, \dots, zy_l$  created at least one digon. 
Hence $y$ is in a directed $3$-cycle not containing $x$ (namely together with $z$ and one of the $y_i$).

By~Claim~\ref{claim:high_potential}, as $\rho_D(U)=4$, $D[U]=D-x$ and $d(x)=4$.
In particular, $V(\tilde{D'}) = V(D) \setminus \{x,y\}$. 

It now remains to prove that $x$ is incident with a unique chelou arc, namely $xy$. 

Let $y'$ be the out-neighbour of $x$ distinct from $y$.
Observe that $yy' \not\in A(D)$ by Claim~\ref{claim:degree_4_and_out_neighbours_dominated}.
Hence $d^-_{D}(y') = d^-_{\Tilde{D'}}(y') +1$.
But $y'$ is a vertex of $\tilde{D'}$ which is $3$-dicritical, so $d^-_{\Tilde{D'}}(y') \geq 2$.
Hence $d^-_{D}(y') \geq 3$, so $xy'$ is not chelou.
Let $w$ be an in-neighbour of $x$. If $d^-(x) \geq 3$, then $wx$ is not a chelou arc. Suppose now $d^-(x)=2$.
Note that $w\neq z$ by Claim~\ref{claim:degree_4_and_out_neighbours_dominated}.
Hence $d^+_{D}(w) = d^+_{\Tilde{D'}}(w) + 1$.
But $w$ is a vertex of $\tilde{D'}$ which is $3$-dicritical, so $d^+_{\Tilde{D'}}(w) \geq 2$.
Hence $d^+_{D}(w) \geq 3$ and $wx$ is not chelou.

%Assume first  that
%this arc has the form $xx'$ with $d^-(x')=2$. 
%Since $x' \in V(\tilde{D'})$ and $\tilde{D'}$ is $3$-dicritical, we have $d^-_{\Tilde{D'}}(x') \geq 2$, which implies that  $x' = y_i$ for some $i$ (for otherwise $d^-_{\Tilde{D'}}(x') = d^-_D(x') -1 = 1$). 
%Say $x'=y_1$. 
%By Claim~\ref{claim:2colouring_minus_v_plus_e}, $D - y_1 \cup \{yz\}$ has a proper $2$-dicolouring $\phi$. Let us prove that we can extend $\phi$ to a $2$-dicouloring of $D$. If $\phi(x)= \phi(y) = c \in \{1,2\}$, we set $\phi(y_1)=3-c$. The two in-neighbours of $y_1$ (namely $x$ and $y$) have  a different colour from $y_1$, so $y_1$ cannot be in a monochromatic dicyle, and thus we have a $2$-colouring of $D$.  
%Assume now that $\phi(x) \neq \phi(y)$ and set $\phi(y_1)= \phi(y)$. Now, a monochromatic dicycle containing $y_1$ must contain $y$, but the two in-neighbour  of $y$ (namely $x$ and $z$) receive a colour distinct from $y$, so $y_1$ cannot be in a monochromatic dicyle. This rules out the case where the chelou arc has the form $xx'$.   

%Now suppose that the chelou arc has the form $x'x$ with $d^+(x')=2$. 
%Again, since $x' \in V(\tilde{D'}$ and $\tilde{D}$ is $3$-dicritical, we have $d^+_{\Tilde{D'}}(x') \geq 2$, which implies that $x' = z$. 
%But this contradicts claim  \ref{claim:degree_4_and_out_neighbours_dominated}.
%So $xy$ is the unique chelou arc incident with $x$, which conclude the case $\rho(\tilde{D'}) = -2$. 

\medskip
\noindent{\bf Case 2:} $\rho(\tilde{D'}) = -1$ and $\Tilde{D'} \in \wheelodd$.

In this case, we prove that there is a B$^+$-configuration (Figure~\ref{subfig:3_wheel})  on $xy$. So we only need to show that $z$ is the center of $\tilde{D'}$. 
If it is not the case, then by Lemma~\ref{lemma:wheel_find_odd_cycle}~(ii), $\tilde{D'} - z$  contains a bidirected odd cycle minus one arc, and thus $D$ contains a bidirected odd cycle minus one arc, a contradiction to Claim~\ref{claim:cycle_minus_one_arc}.

\medskip
\noindent{\bf Case 3:} $\rho(\tilde{D'}) = 1$ and $\Tilde{D'}$ is a bidirected odd cycle.

In this case, by Claim~\ref{claim:cycle_minus_one_arc}, $\Tilde{D}$ contains at least two arcs in $\{zy_1, \dots zy_{\ell}\}$, and one easily sees that there is an A$^+$-configuration (Figure~\ref{subfig:odd_cycle}) on $xy$. 
\end{proofclaim}

%\CR{Old:}
%{\color{darkgray}
%\begin{claim}\label{claim:path_nice} 
%Let $x,y,z$ be three vertices such that $y\in V_4$ and $y$ is linked with $x$ and $z$ via a chelou arc (in any direction).
%If $z$ is nice, then $x$ is not nice and $x\notin V_4$.
%\end{claim}

%\PA{To me, previous claim is equivalent to this one:}  

%\begin{claim}\label{claim:path_nice} 
%Let $x,y,z$ be three vertices such that
%\begin{itemize}
%\item $y$ is linked with $x$ and $z$ via a chelou arc (in any direction),
%\item $y \in V_4$,
%\item $z$ is nice,
%\end{itemize}
%then $x$ is bad and $x\notin V_4$. \PA{$x$ bad implies: incident with a unique chelou arc and degree $4$. It implies in particular that it is in $V_4$ no? Because if it is not in $V_4$ it is incident with two digons and thus cannot be incident with a chelou arc. Hence the claim implies that there cannot be a $2$ path $xyz$ of chelou arcs such that $y$ is in $V_4$ and $z$ is nice.}
%\end{claim}
%}

\begin{claim}\label{claim:path_nice}
If $y$ is a vertex in $\Vquatre$ and has a nice chelou neighbour $z$,
then $y$ is bad, i.e. it has no other chelou neighbour.
\end{claim}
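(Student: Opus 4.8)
The plan is to pin down the local structure around $y$ using Claim~\ref{claim:chelou_arc_one}, and then, assuming $y$ has a second chelou neighbour, to exhibit inside $D$ a purse or a handcuff, both of which are forbidden by Claim~\ref{claim:no_purse_handcuff_crib_basket}. Since $y\in\Vquatre$ we have $d(y)=4$ and, by Lemma~\ref{lemma:degeneracy}, $d^+(y)=d^-(y)=2$; moreover $y$ is incident to no digon, so the four arcs at $y$ are distinct single arcs. By directional duality we may assume the chelou arc joining $y$ and $z$ is $z\to y$; as $y$ has no digon this arc is out-chelou. Since $z$ is nice, Claim~\ref{claim:chelou_arc_one}~\ref{it:nice} gives an A$^+$-configuration or a B$^+$-configuration on $z\to y$; the latter is impossible, because in a B$^+$-configuration the head of the chelou arc has out-degree at least $3$ whereas $d^+(y)=2$. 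So there is an A$^+$-configuration on $z\to y$ (Figure~\ref{subfig:odd_cycle}), from which we read off $N^-(y)=\{z,z'\}$, $N^+(y)=\{y_1,y_2\}$, the arcs $y_1\to z'$ and $y_2\to z'$, and a bidirected path $Q$ of odd length between $y_1$ and $y_2$ whose internal vertices avoid $\{z,y,z',y_1,y_2\}$. Assume for contradiction that $y$ has a chelou neighbour $w\neq z$; then $w\in\{z',y_1,y_2\}$.

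\emph{Case $w=z'$.} Then $z'\to y$ is out-chelou. If $z'$ is nice, Claim~\ref{claim:chelou_arc_one}~\ref{it:nice} yields (it cannot be a B$^+$-configuration) an A$^+$-configuration on $z'\to y$ whose two ``$y_i$'' are $y_1,y_2$ and whose ``$z$'' is $z$, thereby adding the arcs $y_1\to z$ and $y_2\to z$. Combined with the A$^+$-configuration on $z\to y$, the arcs $z\to y$, $z'\to y$, $y\to y_1$, $y\to y_2$, $y_1\to z$, $y_1\to z'$, $y_2\to z$, $y_2\to z'$ and the path $Q$ form a purse with bottom $\{z,y,z'\}$, contradicting Claim~\ref{claim:no_purse_handcuff_crib_basket}. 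If $z'$ is not nice it must be bad: $d(z')=4$ and $z'\to y$ is its only chelou arc, so $N^-(z')=\{y_1,y_2\}$ and $N^+(z')=\{y,s\}$ for some vertex $s$; one then reaches a contradiction by analysing $s$, using that $z'\to y$ out-chelou forces $y$ into a directed $3$-cycle avoiding $z'$ (Claim~\ref{claim:chelou_arc_one}~\ref{it:triangle}) together with Claims~\ref{claim:cycle_minus_one_arc} and~\ref{claim:degree_4_and_out_neighbours_dominated}.

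\emph{Case $w=y_1$} (the case $w=y_2$ being symmetric). Then $y\to y_1$ is in-chelou, so $d^-(y_1)=2$ and $N^-(y_1)=\{y,p\}$, where $p$ is the neighbour of $y_1$ on $Q$ and $[y_1,p]$ is a digon. If $y_1$ is nice, the directional dual of Claim~\ref{claim:chelou_arc_one}~\ref{it:nice} applied to $y\to y_1$ gives an A$^-$-configuration on $y\to y_1$ (a B$^-$-configuration would force $d^-(y)\geq 3$), which supplies the arc $y_2\to z$ and a bidirected path of odd length between $z$ and $z'$ internally disjoint from $\{z,y,z',y_1,y_2\}$. Combined with the A$^+$-configuration on $z\to y$, the vertex $y$ together with its in-neighbours $z,z'$, its out-neighbours $y_1,y_2$, the arcs $y_1\to z'$, $y_2\to z'$, $y_2\to z$, and the two bidirected odd paths $Q$ (between $y_1$ and $y_2$) and the new one (between $z$ and $z'$) form a handcuff, again contradicting Claim~\ref{claim:no_purse_handcuff_crib_basket}. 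If $y_1$ is not nice it is bad, so $d(y_1)=4$ and $N^+(y_1)=\{z',p\}$, and a contradiction is obtained by the same type of rigid-degree analysis. Hence $y$ has no chelou neighbour other than $z$, i.e.\ $y$ is bad.

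The delicate points are the two ``bad'' sub-cases ($w=z'$ bad, and $w=y_1$ bad): there the extra chelou neighbour has degree exactly $4$ and a single chelou arc, so one cannot bootstrap a further A$^{\pm}$-configuration; instead one must extract a contradiction from the tight degree constraints, since degree $4$ at the new vertex pushes several in- or out-degrees down to $2$, yielding either a second chelou arc there (contradicting badness) or one of the small forbidden patterns of Claims~\ref{claim:degree_4_and_out_neighbours_dominated}--\ref{claim:not_wheel_3}.
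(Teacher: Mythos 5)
Your overall strategy is the paper's: classify the second chelou neighbour of $y$, invoke Claim~\ref{claim:chelou_arc_one}~\ref{it:nice} to build A-configurations, and assemble a forbidden purse or handcuff. Your two ``nice'' subcases ($w=z'$ nice giving a purse, $w=y_1$ nice giving a handcuff) are correct and match the paper. But the two ``bad'' subcases are genuine gaps: you only gesture at ``the same type of rigid-degree analysis'' and never produce the contradiction, and the ingredients you name (Claim~\ref{claim:chelou_arc_one}~\ref{it:triangle}, Claims~\ref{claim:cycle_minus_one_arc} and~\ref{claim:degree_4_and_out_neighbours_dominated}) do not obviously combine to close them. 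Worse, your stated reason for the difficulty --- that when the extra chelou neighbour is bad ``one cannot bootstrap a further A$^{\pm}$-configuration'' --- is false, and recognizing this is exactly the missing idea.

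The key observation is that once $y$ is incident to two chelou arcs, $y$ itself is \emph{nice}, and the niceness hypothesis in Claim~\ref{claim:chelou_arc_one}~\ref{it:nice} concerns the endpoint \emph{opposite} to the one the configuration is centered on. In your case $w=z'$ bad, the arc $z'y$ is full chelou ($z'\in\Vquatre$), so its dual reading as an in-chelou arc with nice head $y$ yields an A$^-$-configuration centered on $z'$ (a B$^-$ is excluded since $d^-(z')=2$); its union with the A$^+$-configuration on $zy$ is the converse of a purse with bottom $\{y,z',s\}$, contradicting Claim~\ref{claim:no_purse_handcuff_crib_basket}. In your case $w=y_1$ bad, no further configuration is even needed: the A$^+$-configuration on $zy$ already places $y_1$ at the end of a bidirected odd path, so $y_1$ is incident to a digon and hence not in $\Vquatre$; but a bad vertex is necessarily in $\Vquatre$ (a degree-$4$ vertex with a digon has all its arcs in digons by Claim~\ref{claim:degree_4_not_3_neighbours} and so is incident to no chelou arc), an immediate contradiction. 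With these two repairs your argument coincides with the paper's proof.
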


\begin{proofclaim}
Suppose for contradiction that $y$ has another chelou neighbour
$x$, and assume $xy \in A(D)$ (the case where $yx \in A(D)$ is symmetric). 
Since $x$ is incident to at least one chelou arc, it is either nice or bad. 

Consider the case where $x$ is nice.
Assume first $yz \in A(D)$. Let $z'$ be the out-neighbour of $y$ distinct from $z$ and $x'$ the in-neighbour of $y$ distinct from $x$. 
By~Claim~\ref{claim:chelou_arc_one}~\ref{it:nice} applied
to the out-chelou arc $xy$, and because  $d^+(y) = 2$, there is an A$^+$-configuration on $xy$.
Similarly, by the dual of Claim~\ref{claim:chelou_arc_one}~\ref{it:nice} applied to the in-chelou arc $yz$, and because $d^-(y) = 2$, there is an A$^-$-configuration on $yz$. The union of those two configurations forms a handcuff $H$ (see Figure~\ref{fig:handcuff}),
which contradicts Claim~\ref{claim:no_purse_handcuff_crib_basket}.

Assume now $zy \in A(D)$. Set $N^+(y) = \{y_1, y_2\}$. 
By~Claim~\ref{claim:chelou_arc_one}~\ref{it:nice} applied on the two out-chelou arcs $xy$ and $zy$ and because $d^+(y) = 2$, %$d^+(y) = d^-(y) = 2$, 
there is an A$^+$-configuration on $xy$ and an A$^+$-configuration on $zy$.
The union of those two configurations form a purse $P$ (Figure~\ref{fig:purse}) with bottom $\{x,y,z\}$, which contradicts Claim~\ref{claim:no_purse_handcuff_crib_basket}.
\medskip

So we may assume that $x$ is bad, and so $x$ is in $\Vquatre$.

Assume first $zy \in A(D)$ and set $N^+(y) = \{y_1, y_2\}$. 
Note that $y$ is nice as it is incident to at least two chelou arcs. 
By~Claim~\ref{claim:chelou_arc_one}~\ref{it:nice} applied on the out-chelou
arc $zy$ and because $d^+(y) = 2$, there is an A$^+$-configuration on $zy$. Observe in particular that it implies that $N^-(x) = \{y_1, y_2\}$. 
Now, by the dual of~Claim~\ref{claim:chelou_arc_one}~\ref{it:nice} applied to the in-chelou arc $xy$, and because $y$ is nice, there is an A$^-$-configuration on $yx$.
The union of those two configurations contains the converse of a purse with bottom $\{y,x,y'\}$ where $y'$ is the out-neighbour of $x$ distinct from $y$. 
This yields a contradiction as above.

Assume now $yz \in A(D)$. 
Then by~Claim~\ref{claim:chelou_arc_one}~\ref{it:nice} applied on the in-chelou
arc $yz$, $x$ is incident to a digon, which
contradicts the fact that $x\in \Vquatre$.
\end{proofclaim}

Note that every arc with both extremities in $\Vquatre$ is full-chelou.

\begin{claim}\label{claim:nice_chelou_neighbour_of_degree_5}
A vertex of degree $5$ or $6$ incident to no digon has at most one nice chelou neighbour.
%\CR{Old: Any vertex of degree $5$ incident to no digon has at most one nice chelou neighbour.}
\end{claim}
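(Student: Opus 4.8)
### Proof proposal for Claim~\ref{claim:nice_chelou_neighbour_of_degree_5}

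The plan is to argue by contradiction: suppose $x$ is a vertex with $d(x)\in\{5,6\}$, incident to no digon, and with two distinct nice chelou neighbours, say $y$ and $y'$. Since $x$ is incident to no digon, every chelou arc at $x$ is either out-chelou (if the other endpoint carries no digon) or in-chelou; in both cases the relevant degree condition at $x$ is just that the corresponding semidegree equals $2$. So the two chelou arcs at $x$ come in one of three types: both out-chelou at $x$ (i.e. $xy, xy'\in A(D)$ with $d^+(x)=2$), both in-chelou at $x$ (i.e. $yx, y'x\in A(D)$ with $d^-(x)=2$), or one of each. By directional duality we may reduce the number of cases: the "both in-chelou" case is the converse of the "both out-chelou" case, so it suffices to treat (a) $xy,xy'\in A(D)$ with $d^+(x)=2$, and (b) $xy\in A(D)$, $y'x\in A(D)$ with $d^+(x)=d^-(x)=2$ (note in case (b), since $d(x)\le 6$ and $d^+(x)=d^-(x)=2$, we have $d(x)\in\{4,5,6\}$, but $d(x)\ge 5$ by hypothesis, so $d(x)\in\{5,6\}$; this is consistent).

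First I would handle case (a): $xy$ and $xy'$ are both out-chelou, $y$ and $y'$ are nice. Since $x$ is nice as well (it is incident to two chelou arcs), I apply Claim~\ref{claim:chelou_arc_one}~\ref{it:nice} to the out-chelou arc $xy$: there is an $A^+$- or $B^+$-configuration on $xy$. The key point is that in either configuration, $y$ lies in a directed $3$-cycle $(z,y,y_i,z)$ (for the common in-neighbour $z$ of $y$ other than $x$) — more precisely $y$ has exactly two out-neighbours $y_1,y_2$ and a second in-neighbour $z$ with $zy, y_1z, y_2z\in A(D)$, and there is a bidirected odd structure (a path or the wheel rim-plus-spokes) attached at $y_1,y_2$. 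Doing the same for $xy'$ yields an analogous structure at $y'$ with vertex set meeting $\{x,y,y'\}$ only in $x$ (by disjointness in the configurations, using that $y\ne y'$). Then I would identify the union of the two configurations, sharing only the vertex $x$ and the arcs $xy,xy'$, as one of the forbidden digraphs of Claim~\ref{claim:no_purse_handcuff_crib_basket} — a basket or a bag, depending on whether the two configurations are of type $A^+$ or $B^+$ (the basket/bag definitions are precisely "a vertex $y$" — here playing the role of $x$ — "with two out-neighbours" — here $y,y'$ — "each feeding into odd bidirected paths or wheels"): an $A^+/A^+$ pair gives a basket, and when a $B^+$ is involved one gets a bag, or more precisely a turtle, all of which are forbidden. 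This contradicts Claim~\ref{claim:no_purse_handcuff_crib_basket}.

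For case (b), $xy$ out-chelou and $y'x$ in-chelou: here I apply Claim~\ref{claim:chelou_arc_one}~\ref{it:nice} to $xy$ (getting an $A^+$- or $B^+$-configuration on $xy$) and the dual of Claim~\ref{claim:chelou_arc_one}~\ref{it:nice} to the in-chelou arc $y'x$ (getting an $A^-$- or $B^-$-configuration on $y'x$). These two configurations share only the vertex $x$, and their union is a handcuff (when both are of type $A$) or contains a bag/turtle (when a $B$-type is involved), again forbidden by Claim~\ref{claim:no_purse_handcuff_crib_basket}. The argument is exactly parallel to the one in Claim~\ref{claim:path_nice}, where the union of an $A^+$- and an $A^-$-configuration at a degree-$4$ vertex was shown to be a handcuff; the only difference is that $x$ now has degree $5$ or $6$, which does not affect the appearance of the forbidden subdigraph (it only means there are extra arcs at $x$ outside the configuration, which is harmless).

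The main obstacle I anticipate is the bookkeeping of \emph{which} forbidden digraph appears as a function of the $A$-versus-$B$ type of each of the two configurations, and checking in each of these sub-cases that the two configurations genuinely overlap only in $x$ (and the two chelou arcs at $x$) — i.e. that the internal vertices of the bidirected paths and the rims/spokes of the wheels in the two configurations can be taken disjoint. Disjointness is plausible because each configuration is obtained by contracting $D-\{x,y\}$ (resp.\ $D-\{x,y'\}$) and reading off a critical subdigraph, but one must verify no vertex of the first configuration is forced to coincide with a vertex of the second other than through $x$; if such a coincidence occurred, $D$ would contain a bidirected odd cycle minus one arc or a small forbidden subdigraph, already excluded by Claim~\ref{claim:cycle_minus_one_arc} and the earlier claims. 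Once disjointness is in hand, matching the union against the catalogue (purse, handcuff, basket, bag, turtle) in Claim~\ref{claim:no_purse_handcuff_crib_basket} is routine.
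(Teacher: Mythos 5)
Your reduction to the case where both chelou arcs leave $x$ is fine as far as it goes (and your case (b) is in fact vacuous: $d^+(x)=d^-(x)=2$ forces $d(x)=4$, contradicting $d(x)\geq 5$, so your parenthetical claim that this is ``consistent'' is wrong, though harmless). The genuine gap is in case (a): you apply Claim~\ref{claim:chelou_arc_one}~\ref{it:nice} to $xy$ and to $xy'$ using the niceness of the common \emph{tail} $x$, which produces one configuration rooted at $y$ (built on $N^+(y)$ and the second in-neighbour of $y$) and another rooted at $y'$. These two configurations meet only in the single vertex $x$, and their union is \emph{not} a purse, basket, bag, handcuff or turtle: every digraph in that catalogue is the union of two configurations overlapping in a common central vertex \emph{together with its in- or out-neighbourhood} (for instance the basket's two bidirected paths attach to out-neighbours of the same vertex, with return arcs to both $x_1$ and $x_2$). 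A union of two A$^+$-configurations glued at one vertex has potential $9+9-7=11$, far above the threshold of Claim~\ref{claim:high_potential_number_one}, so no contradiction can be extracted from it.

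The fix --- and what the paper actually does --- is to point the configurations toward the degree-$5$ or $6$ vertex: either pass to the converse digraph first, so that this vertex becomes the common \emph{head} of the two chelou arcs (this is the paper's ``WLOG $d^-(y)=2$''), or equivalently apply the dual of Claim~\ref{claim:chelou_arc_one}~\ref{it:nice} to the in-chelou arcs $xy$, $xy'$ using the niceness of the heads $y$ and $y'$ --- a hypothesis your argument never uses. Then both resulting A$^-$/B$^-$-configurations live on $N^-(x)$, which has only $3$ or $4$ elements, so they necessarily overlap there, and the case analysis on how many in-neighbours they share yields (the converse of) a purse, basket or bag, or a turtle when a B-type configuration is involved; the paper must also first exclude the B-type via a separate potential computation, a step your proposal defers to ``bookkeeping'' but which is where the turtle and Claim~\ref{claim:B+pot} are genuinely needed.
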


\begin{proofclaim}
Let $y$ be a vertex of degree $5$ or $6$ incident to no digon. 
 Suppose for a contradiction that $y$ has two nice chelou  neighbours $x_1$ and $x_2$.
Since $y$ is incident with chelou arcs, it must have in- or out-degree $2$. Without loss of generality, we may assume $d^+(y)\in\{3,4\}$ and $d^-(y)=2$. Hence $x_1$ and $x_2$ are the in-neighbours of $y$.

By Claim~\ref{claim:chelou_arc_one}~\ref{it:nice}, there is an A$^+$- or a B$^+$-configuration on each of the out-chelou arcs
$x_1y$ and $x_2y$. 

Assume first that there is a B$^+$-configuration $H$ on $x_1 y$.
By Claim~\ref{claim:B+pot}, this configuration has potential $9$.

If the A$^+$-configuration or B$^+$-configuration on $x_2y$
contains two out-neighbours of $y$ which are already in the $B^+$-%
configuration on $x_1y$, then $D[V(H)]$ has at least two more arcs than $H$.
Hence $\rho_D(V(H)) \leq 9-6 =3$.
By Claim~\ref{claim:high_potential_number_one}, this implies that 
$H$ is a spanning subdigraph of $D$.
But as $d^+(x_1), d^+(x_2) \geq 2$, $D$ has at least two more arcs (that is at least four more than $H$).
Thus $\rho(D) \leq 3 - 3 \times 2 = -3$, a contradiction.

So we know that there is only one out-neighbour of $y$ in the intersection
of these two configurations. In particular, there in an A$^+$-configuration
on $x_2y$. But then we precisely have a turtle (see Figure~\ref{fig:turtle}), and this contradicts Claim~\ref{claim:no_purse_handcuff_crib_basket}.
As a consequence, there is no $B^+$ configuration on $x_1y$.

Similarly, we may assume that there is no B$^+$-configuration on $x_2y$.

Assume now that there is  an A$^+$-configuration $H_1$ on $x_1 y$ and an A$^+$-configuration $H_2$ on $x_2 y$.
If the out-neighbourhoods of $y$ in $H_1$ and $H_2$ have two (resp. one, no) vertices in common, then $H_1\cup H_2$ is a purse
(resp. basket, bag).
In any case, this contradicts Claim~\ref{claim:no_purse_handcuff_crib_basket}.
\end{proofclaim}

%{\bf \Large \noindent Discharging:}

\subsection{Discharging} \label{subsec:discharging}
%%%%%%%%%%%%%%%%%%%%%%%%%%%%%%%%%%%%%%%%%%%%%%%%%%%%

Recall that our goal is to get the following contradiction: $\rho(D) = 7n(D) - 3m(D) - 2\pi(D) < -1$. 
In this subsection, we are going to define an initial charge on each vertex in such a way that $\rho(D)$ is at most the sum of the charges of the vertices. A natural way to define this charge is: $w(v) = 7 - \frac{3}{2}d(v)$. 
But such a definition does not take in account the $-2\pi(D)$ that appears in the definition of the potential. 
So we rectify this charge by adding an integer  $\epsilon(v)$ (defined below) to the charge of each vertex, that depends on the number of digon incident to $v$. 
We then prove that the potential of $D$ is at most the sum of the charges (we actually prove something slightly more subtle,  see Claim~\ref{claim:epsilon_correct}). Then we define some discharging rules so that, without changing the sum of the charges, each vertex gets a new charge which is non-positive,  see Claim~\ref{claim:result_discharging}. 
Hence, at the end of this section, we  will have proved $\rho(D) \leq 0$. 
This is not enough, but we are getting closer to the goal.
\medskip

Note that for any vertex $v$, $d(v)-n(v)$ is the number of digons
incident to $v$, in other words it is the degree of $v$ in $B(D)$. % \PA{we could say $d_{B(D)}(v)$ instead of $d(v) - n(v)$}.
For every vertex $v$ in $D$, we define $\epsilon(v)$ as follows:
\begin{itemize}
\item $\epsilon(v) = \frac{1}{2}$ if $d(v)-n(v)=1$,
\item $\epsilon(v) = 2 - \frac{d(v) - n(v)}{2}$ if $d(v)-n(v) \geq 2$,
\item $\epsilon(v)=0$ otherwise.
\end{itemize}

For every vertex $v$, we define its {\bf initial charge} as
$$w(v) = 7 - \frac{3}{2}d(v) - \epsilon(v).$$
Observe that the only vertices of positive initial charge are  the vertices in $\Vquatre$ which have charge equal to $1$.

We denote by $\Sigma(D)$ the total charge : 
\begin{equation}\label{eq-charge}
\Sigma(D) = \sum_{v \in V(D)} w(v) = 7n(D) - 3m(D) - \sum_{v\in V(D)} \epsilon(v)
\end{equation}

\begin{claim}\label{claim:epsilon_correct}
$\rho(D) \leq \Sigma(D) - \gamma(D)$, where $\gamma(D)$ is the number of connected components of $B(D)$ which are odd paths.
\end{claim}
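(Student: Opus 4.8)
The plan is to rewrite the desired inequality so that it can be verified separately on each connected component of the digon graph. Since $\rho(D)=7n(D)-3m(D)-2\pi(D)$ and, by \eqref{eq-charge}, $\Sigma(D)=7n(D)-3m(D)-\sum_{v\in V(D)}\epsilon(v)$, the term $7n(D)-3m(D)$ cancels, and the statement $\rho(D)\le\Sigma(D)-\gamma(D)$ is equivalent to
\[
\sum_{v\in V(D)}\epsilon(v)+\gamma(D)\le 2\pi(D).
\]
So I would prove this last inequality.

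First I would use that $B(D)$ is a forest. Indeed $D$ is $3$-dicritical and $D\notin\oddcycle$, so $D$ is not a bidirected odd cycle, and Lemma~\ref{lem:forestB} applies. A maximum matching of a forest is the disjoint union of maximum matchings of its components, so $\pi(D)=\mu(B(D))=\sum_T\mu(T)$, the sum ranging over the connected components $T$ of $B(D)$. Likewise $\sum_{v}\epsilon(v)=\sum_T\epsilon(T)$, where $\epsilon(T):=\sum_{v\in V(T)}\epsilon(v)$ and where, for $v\in V(T)$, the quantity $d(v)-n(v)$ that governs $\epsilon(v)$ is exactly the degree of $v$ inside $T$; and $\gamma(D)=\sum_T\gamma(T)$ with $\gamma(T)\in\{0,1\}$ equal to $1$ iff $T$ is a path of odd length. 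Hence it suffices to prove, for every component $T$,
\[
\epsilon(T)+\gamma(T)\le 2\mu(T).
\]

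Next I would split on the structure of $T$. If $T$ is a single vertex, all three quantities are $0$. Otherwise $T$ is a tree with $n:=n(T)\ge 2$ vertices and $f$ leaves; here no vertex has $B$-degree $0$, the $f$ leaves have $B$-degree $1$ (contributing $\tfrac12$ each), and every other vertex has $B$-degree $\ge 2$ (contributing $2-\tfrac12 d_B(v)$). Using $\sum_v d_B(v)=2(n-1)$, a one-line computation gives
\[
\epsilon(T)=\tfrac{f}{2}+2(n-f)-\tfrac12\bigl(2(n-1)-f\bigr)=n-f+1.
\]
By Lemma~\ref{matching-tree}, $\mu(T)\ge\tfrac12(n-f+1)$, hence $2\mu(T)\ge n-f+1=\epsilon(T)$. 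If $T$ is not a path of odd length, then $\gamma(T)=0$ and we are done. If $T$ is a path of odd length, say on $2\ell$ vertices, then $f=2$, so $\epsilon(T)=2\ell-1$ and $\mu(T)=\ell$, whence $\epsilon(T)+\gamma(T)=2\ell=2\mu(T)$. This closes every case.

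I do not expect a genuine obstacle. The only delicate point is the bookkeeping identity $\epsilon(T)=n(T)-f(T)+1$ for a tree component, which is precisely matched to the bound of Lemma~\ref{matching-tree}; the correction term $\gamma(D)$ is exactly what is needed to absorb the rounding loss in that lemma, which is tight precisely for odd paths (and for which the inequality $\epsilon(T)+\gamma(T)\le 2\mu(T)$ holds with equality).
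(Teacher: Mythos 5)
Your proposal is correct and follows essentially the same route as the paper: reduce to $\sum_v \epsilon(v) + \gamma(D) \le 2\pi(D)$, use Lemma~\ref{lem:forestB} to see $B(D)$ is a forest, compute $\epsilon(T) = n(T) - f + 1$ on each tree component, apply Lemma~\ref{matching-tree}, and gain the extra unit on odd-path components. The only (immaterial) differences are bookkeeping: you make the componentwise decomposition of $\pi(D)$ explicit and treat single-vertex components separately, whereas the paper simply restricts to components of size at least two.
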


\begin{proofclaim} 
By Equation~\eqref{eq-charge}, it is enough to prove that $\sum_{v\in V(D)} \epsilon(v) \leq 2\pi(D) - \gamma(D)$.
Let ${\cal C}_2$ be the set of components of $B(D)$ of size at least $2$ and let $T$ be a connected component in ${\cal C}_2$.
By Lemma~\ref{lem:forestB}, $T$ is a tree.
Let $f$ be the number of leaves in $T$ and $n_T$ its number of  vertices.
We have 
$$\sum_{v \in V(T)} \epsilon(v) = \sum_{v \in V(T)} \left(2 - \frac{1}{2}d_T(v)\right) - f  = 2n_T - (n_T-1) -f = n_T - f +1.$$
So by Lemma~\ref{matching-tree}, $\sum_{v \in V(T)} \epsilon(v) \leq 2\mu(T)$.
Moreover, if $T$ is a path
of odd length $2\ell+1$, then $\sum_{v \in V(T)} \epsilon(v) = 
\frac{1}{2} + 2\ell + \frac{1}{2} = 2\ell + 1$ and $\pi(T)= \ell+1$ so
$\sum_{v \in V(T)} \epsilon(v) = 2\mu(T) - 1$.

Now, summing over all the connected components of ${\cal C}_2$, we get $$\sum_{v\in V(D)} \epsilon(v) = \sum_{T\in {\cal C}_2} \sum_{v \in V(T)} \epsilon (v) \leq \sum_{T\in {\cal C}_2}2\mu(T) - \gamma(D) \leq 2\pi(D) - \gamma(D) $$
\end{proofclaim}

From now on, we want to prove that $\Sigma(D) - \gamma(D) < -1$ which implies $\rho(D) < -1$, a contradiction. 
In order to do so, we shall redistribute the charges using some discharging rules.

Let $u$ and $v$ be two adjacent vertices.
We say that $u$ is a {\bf simple} in-neighbour (resp. out-neighbour) of $v$ if $u$ is an  in-neighbour (resp. an out-neighbour) of $v$ and there is no digon between $u$ and $v$ (i.e. $[u,v]\not\subset A(D)$). 

We say that $u$ is a {\bf binary} in-neighbour (resp. out-neighbour) of $v$ if $u$ is an in-neighbour (resp. an out-neighbour) of $v$ and $d^-(v) = 2$ (resp. $d^+(v) = 2$). Otherwise, $u$ is a {\bf  standard} in-neighbour (resp. out-neighbour) of $v$.

If $u$ is a simple (resp. binary, standard) out-neighbour or 
simple (resp. binary, standard) in-neighbour of $v$, we sometimes  simply say that $u$ is a {\bf  simple} (resp. {\bf  binary}, {\bf  standard}) neighbour of $v$.
\medskip

%\PA{Ajouter un paragraphe sur l'intuition de R1 R2 R3? Je la comprend pas en fait, on ne reçoit pas des voisins incident a des digon alors qu'ils sont boostés, c'est bizarre...}
%\CR{Le but est d'avoir $w^* < 0$ donc de recevoir le moins possible. Et comme les boostes ont deja $w^*<0$, il ont pas besoin d'envoyer (en gros).}

We can now state our discharging rules. For each vertex $v$:
\medskip

\hfill
\begin{minipage}{0.95\textwidth}
{\it
\begin{itemize}
\item[\rm (R1):] if $d(v) \geq 5$ and $v$ is incident to at least one digon, then $v$ receives $\frac{1}{2}$ from each of its simple standard neighbour incident to no digon,
\item[\rm (R2):] if $d(v) \geq 5$ and $v$ is incident to no digon, then $v$ receives $\frac{1}{3}$ from each of its standard neighbour incident to no digon.     
    \item[\rm (R3):] if $v$ is a bad vertex (that is of degree $4$, incident to no digon, and which has a unique chelou neighbour), then $v$ receives $\frac{1}{3}$ from its unique chelou neighbour. 
\end{itemize}
}
\end{minipage}
\bigskip

Observe that a vertex of degree $4$ has only binary neighbours, in other words has no standard neighbour. So, in rules (R1) and (R2), it is redundant to ask for $v$ to be of degree at least $5$. Anyway, we leave it like this for clarity. 

These three discharging rules can equivalently be stated as follows. 
Given two adjacent vertices $x$ and $v$, we have:
\medskip 

\hfill
\begin{minipage}{0.95\textwidth}
{\it
\begin{itemize}
\item[\rm (R1):] $v$ sends $\frac{1}{2}$ to $x$ if:
\begin{itemize}
\item $d(x) \geq 5$, 
\item $x$ is incident to a digon,
\item $v$ is incident to no digon, and
\item $v$ is a simple standard neighbour of $x$ and

\end{itemize}
\item[\rm (R2):] $v$ sends $\frac{1}{3}$ to $x$ if:
\begin{itemize}
\item $d(x) \geq 5$, 
\item $x$ is incident to no digon,
\item $v$ is incident to no digon, and
\item $v$ is a simple standard neighbour of $x$. 
\end{itemize}
\item[\rm (R3):] $v$ sends $\frac{1}{3}$ to $x$ if:
\begin{itemize}
\item $x$ is bad (that is $d(x) = 4$, $x$ is incident to no digon and with a unique chelou arc),
\item $v$ is the unique chelou neighbour of $x$. 
\end{itemize}
\end{itemize}
}
\end{minipage}
\medskip

A few useful observations on these three rules: 
%\PA{j'ai pas réussi à leur mettre de label} \PA{Je suis pas très content des OBS, en fait elles servent surtout dans le cas d(v) = 4.}
\begin{itemize}
\item[(OBS1)] No charge is sent via digons. 
\item[(OBS2)] Each extremity of a chelou arc  is a binary neighbour of the other extremity. Hence, no charge is sent through a chelou arc by (R1) and (R2). 
\item[(OBS3)] A vertex in $\Vquatre$ does not receive any charge, except if it is bad: then it receives $\frac{1}{3}$ from its unique chelou neighbour by (R3), and nothing else.  
\item[(OBS4)] A vertex in $\Vquatre$ sends $\frac{1}{3}$ or $\frac{1}{2}$ to each of its non-chelou neighbour. Indeed, given two vertices $x,v$ such that $d(v)=4$ and $x$ is a non-chelou neighbour of $v$, $v$ must be a standard neighbour of $x$, and thus $v$ sends $\frac{1}{2}$ or $\frac{1}{3}$ to $x$ by (R1) or (R2), depending whether $x$ is incident to a digon or not. 
\end{itemize}

%Observe that each extremity of a chelou arc  is a binary neighbour of the other extremity. Hence, no charge is sent through a chelou arc by (R1) and (R2).

Let $w^*(v)$ be the new charge of a vertex $v$ after performing these rules.

\begin{claim}\label{claim:result_discharging}
Let $v$ be a vertex of $D$. % \PA{en fait les $\ell_i$ ne sont utilisés que dans ~\ref{deg6-nodigon}. On peut peut-être les définir à ce moment là du coup}
The following hold:
\begin{enumerate}[label=(\roman*)]
\item $w^*(v) \leq 0$; \label{cas1} 

\item If $v\in \Vquatre$ and $v$ has no chelou neighbour, then
    $w^*(v) \leq - \frac{1}{3}$; \label{deg4-nodigon-nochelou}

\item If $v\in \Vquatre$ and $v$ has at least two chelou neighbours, then
    $w^*(v) \leq - \frac{1}{3}$; \label{deg4-nodigon-2chelou}
    
\item If $d(v) = 5$  and $v$ is incident to no digon, then $w^*(v) \leq - \frac{1}{6}$. \label{deg5-nodigon}

\item If $d(v)=5$, $v$ is incident to no digon, has a standard neighbour
    incident to a digon, no chelou neighbour, and is a standard neighbour
    of a vertex incident to a digon, then $w^*(v) \leq 
    -\frac{2}{3}$; \label{deg5-nodigon-particular-case}
    %\CR{Ce cas hyper particulier sert une fois dans Claim~\ref{claim:apex}}

\item If $d(v) =6$ and  $v$ is incident to at least two digons, then $w^*(v) < -1$;  \label{deg6-2digons}

\item If $d(v) = 6$ and $v$ is incident to one digon, then $w^*(v) \leq -\frac{1}{2}$; \label{deg6-1digon}

\item If $d(v) = 6$, $v$ is incident to no digon, and $\min\{d^-(v),d^+(v)\}=2$, then $w^*(v) < -1$. \label{deg6-in2} %\frac{4}{3}$.     

\item If $d(v) = 6$, $d^+(v) = d^-(v) = 3$, $v$ is incident to no digon, $v$ is a simple standard neighbour of $\ell_1$ vertices incident to no digon, $v$ is a simple standard neighbour of $\ell_2$ vertices incident to a digon, and $v$ has $\ell_3$ simple standard neighbours incident to a digon, then $w^*(v) \leq -\frac{\ell_1}{3} - \frac{\ell_2}{2} - \frac{\ell_3}{3}$. \label{deg6-nodigon}

\item If $d(v) \geq 7$, then $w^*(v) < -1$; \label{7digon}

\end{enumerate}
\end{claim}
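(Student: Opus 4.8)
The plan is to bound, for each vertex $v$, the new charge $w^*(v) = w(v) + (\text{charge received}) - (\text{charge sent})$, splitting into cases according to $d(v)$ and the number of digons incident to $v$. The initial charge is $w(v) = 7 - \tfrac32 d(v) - \epsilon(v)$, and I would first record the three elementary facts that drive everything: (a) $v$ receives charge only via rules (R1)--(R3), and (R3) gives at most $\tfrac13$ and only when $v$ is bad; (b) by (OBS1)--(OBS2) no charge travels along digons or chelou arcs, so a neighbour that sends charge to $v$ must be a \emph{simple standard} neighbour, hence in particular $v$ has a simple standard neighbour only if $d(v)\ge 5$ or ... — actually a degree-$4$ vertex has only binary neighbours, so it sends to each non-chelou neighbour by (R1)/(R2); (c) the amount $v$ sends to a neighbour $x$ is $\tfrac12$ if $x$ is incident to a digon and $\tfrac13$ otherwise. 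With these in hand, most items reduce to arithmetic.

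I would then go through the items. For $v\in\Vquatre$ (so $d(v)=4$, $\epsilon(v)=0$, $w(v)=1$): $v$ has $4$ binary neighbours; by Claim~\ref{claim:path_nice} and Claim~\ref{claim:nice...}-type structure a chelou neighbour absorbs some of them, but the key point is that $v$ sends $\tfrac13$ or $\tfrac12$ to each of its $4-c$ non-chelou neighbours, where $c$ is the number of chelou neighbours, and receives at most $\tfrac13$ (only if $c=1$ and $v$ is bad). If $c=0$ (item~\ref{deg4-nodigon-nochelou}): $v$ sends at least $4\cdot\tfrac13$, so $w^*(v)\le 1 - \tfrac43 = -\tfrac13$. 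If $c\ge 2$ (item~\ref{deg4-nodigon-2chelou}): $v$ sends at least $2\cdot\tfrac13$ to the $\le 2$ non-chelou neighbours and receives nothing (a vertex with $\ge 2$ chelou neighbours is nice, not bad), so $w^*(v)\le 1-\tfrac23 = \tfrac13$ — wait, I need $-\tfrac13$, so I must use that a chelou neighbour is either a binary neighbour that is itself of degree $\ge 5$ (in which case there may be a subtlety) — here I would instead argue that when $c=2$ the two chelou neighbours and the two non-chelou neighbours are distinct, $v$ receives nothing, and in fact one shows $v$ must still send to both non-chelou neighbours \emph{and} that one of the configurations forces an extra send; more carefully, the intended bound likely follows because with two chelou arcs $v$ is nice and Claim~\ref{claim:chelou_arc_one}(ii) gives A$^+$/B$^+$-configurations, but for the discharging count the honest statement is $w^*(v)\le 1 - 2\cdot\tfrac13 - \tfrac13$ using that one chelou neighbour is actually forced to send back nothing while a third neighbour situation... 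I would pin this down by re-examining which of the $4$ arcs are chelou. For general $v$ (item~\ref{cas1}): if $d(v)\ge 5$, then $w(v) = 7-\tfrac32 d(v)-\epsilon(v) \le 7 - \tfrac{15}{2} = -\tfrac12 < 0$ minus possibly a bit more for $\epsilon$, and $v$ receives at most $\tfrac12$ per simple standard neighbour incident to a digon or $\tfrac13$ otherwise — but $v$ also \emph{sends} when it qualifies, so I must check $v$ cannot receive too much; the cleanest route is: $v$ has at most $d(v)$ neighbours, receives at most $\tfrac12$ from each, so $w^*(v)\le 7-\tfrac32 d(v)-\epsilon(v)+\tfrac12 d(v) = 7 - d(v) - \epsilon(v) \le 7-5-0 = 2$, which is not good enough, so the finer count using "standard'' (degree $\ge 3$ on that side, so the sending vertex has enough room) and the fact that a vertex sending to $v$ does not also get sent to by $v$ is essential. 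I'd therefore handle $d(v)=5$ (items~\ref{deg5-nodigon}, \ref{deg5-nodigon-particular-case}) and $d(v)=6$ (items~\ref{deg6-2digons}--\ref{deg6-nodigon}) and $d(v)\ge 7$ (item~\ref{7digon}) separately, each time computing $w(v)$ exactly, bounding received charge by the number of simple standard neighbours of the relevant type, and bounding sent charge below using (R1)--(R3).

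The main obstacle, I expect, is the bookkeeping for the degree-$5$ and degree-$6$ vertices incident to no digon, particularly items~\ref{deg5-nodigon-particular-case}, \ref{deg6-in2}, and \ref{deg6-nodigon}: here one must simultaneously (i) know exactly how much $v$ can receive, which depends on how many of $v$'s neighbours are simple standard neighbours of $v$ that are themselves of degree $4$ or bad, and (ii) know how much $v$ must send, which by (OBS4) and the definition of "standard'' depends on whether $\min\{d^-(v),d^+(v)\}=2$ (giving binary neighbours that $v$ need not pay) versus $d^+(v)=d^-(v)=3$ (all neighbours standard, so $v$ pays everyone). The structural claims Claim~\ref{claim:degree_4_and_out_neighbours_dominated}, Claim~\ref{claim:digon_degre_5_plus}, Claim~\ref{claim:path_nice}, Claim~\ref{claim:nice_chelou_neighbour_of_degree_5}, together with Claim~\ref{claim:chelou_arc_one}, are exactly what rule out the bad cases (e.g. a degree-$5$ vertex having too many nice chelou neighbours, or a bad vertex receiving from more than one source), so each sub-item is really "compute the charge, then invoke the appropriate earlier claim to kill the configuration that would violate the bound.'' I would organise the write-up as: first $w(v)$ and $\epsilon(v)$ explicitly for each $(d(v),\#\text{digons})$; then for each item list (received $\le \cdots$) and (sent $\ge \cdots$) with the one-line justification from an earlier claim; then conclude by arithmetic. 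No single step is deep, but item~\ref{deg6-nodigon} with its three parameters $\ell_1,\ell_2,\ell_3$ is the one where I'd be most careful that the inequality $w^*(v)\le -\tfrac{\ell_1}{3}-\tfrac{\ell_2}{2}-\tfrac{\ell_3}{3}$ comes out with the exact constants, since $w(v)=7-9-0=-2$ and the $\ell_2$ neighbours cost $\tfrac12$ each both when received and when sent — I'd double-check the sign conventions there before trusting the bound.
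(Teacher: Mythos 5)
Your overall framework (case analysis on $d(v)$ and the number of incident digons, computing $w(v)$ exactly and bounding sent/received charge) is the same as the paper's, but the proposal has a genuine gap at item~\ref{deg4-nodigon-2chelou}, which you yourself notice and never close. Your count gives $w^*(v)\leq 1-2\cdot\frac{1}{3}=+\frac{1}{3}$ because you only charge $v$ for its non-chelou neighbours. The missing idea is that rule (R3) also makes $v$ \emph{pay} its chelou neighbours: a degree-$4$ vertex with at least two chelou neighbours is nice, so by Claim~\ref{claim:path_nice} every one of its chelou neighbours is bad, and (R3) then forces $v$ to send $\frac{1}{3}$ to each of them. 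Thus $v$ sends at least $\frac{1}{3}$ along \emph{all four} of its arcs ($\frac{1}{3}$ to each bad chelou neighbour by (R3), at least $\frac{1}{3}$ to each non-chelou neighbour by (OBS4)), receives nothing since it is not bad, and $w^*(v)\leq 1-4\cdot\frac{1}{3}=-\frac{1}{3}$. Your speculation about ``one chelou neighbour forced to send back nothing'' or ``an extra send from a third neighbour'' is not the right mechanism.

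A second, softer gap: for items~\ref{deg5-nodigon} and~\ref{deg6-in2} (the $\min\{d^-,d^+\}=2$, no-digon case) you only describe the strategy, but the hard subcase is exactly the one where one of the two in-neighbours is a \emph{nice} chelou neighbour of $v$, so $v$ owes it nothing under (R1)--(R3) and your naive count fails by $\frac{1}{3}$. The paper closes this by a chain you do not identify: Claim~\ref{claim:nice_chelou_neighbour_of_degree_5} guarantees the other in-neighbour is not a nice chelou neighbour (so $v$ does pay it), and then Claim~\ref{claim:chelou_arc_one}~\ref{it:nice} produces a B$^+$-configuration on the chelou arc whose rim, by Claim~\ref{claim:not_wheel_3}, contains an out-neighbour of $v$ of in-degree at least $3$, forcing one more $\frac{1}{3}$ sent by (R2). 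Without naming these specific claims and this specific extra payment, the bound $-\frac{1}{6}$ (resp.\ $<-1$) does not follow from arithmetic alone. The remaining items ((vi), (vii), (ix), (x)) are indeed the routine arithmetic you describe.
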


%\FH{Il faut changer $\ell$ en $\ell_1+\ell_2$ dans la preuve \CR{Fait (a verifier quand meme)}. Et faire reference a (v) et (vii) sans reexpliquer les transferts de charge notamment dans 3.6.22. } \CR{Fait.}

\begin{proofclaim}
We distinguish several cases according to the degree of $v$.

{\medskip \noindent \bf Case 1:} $d(v)=4$.

If $v$ is not in $\Vquatre$, then by Claim~\ref{claim:degree_4_not_3_neighbours}, $v$ is incident to two digons and has no simple neighbour. So, by (OBS1), it does not receive nor send any charge. Moreover, $\epsilon(v) = 1$. Hence  $w^*(v) = w(v) =0$. 

Suppose now that $v$ is in $\Vquatre$. We have $w(v) = 1$. 
We distinguish subcases according to the number of chelou neighbours of  $v$. 
Set $N^+(v) = \{y_1,y_2\}$ and $N^-(v) = \{x_1, x_2\}$. 
Recall that $v$ receives no charge, except if $v$ is bad (see (OBS3)), and, by (OBS4) $v$ sends $\frac{1}{2}$ or $\frac{1}{3}$ to each of its non-chelou neighbours, depending whether it is incident to a digon or not. 
\begin{itemize}
\item Assume $v$ has no chelou neighbour.\\ 
Then $v$ sends at least $\frac{1}{3}$ to each of its neighbours. Moreover, since $v$ is not bad, it receives no charge. So $w^*(v) \leq 1 - 4 \times \frac{1}{3} = -\frac{1}{3}$. This proves \ref{deg4-nodigon-nochelou}.

\item Assume $v$ has a unique chelou neighbour (so  $v$ is bad), and this chelou neighbour is nice.\\
Assume without loss of generality that $x_1$ is the unique chelou neighbour of $v$. 
So $v$ receives $\frac{1}{3}$ from $x_1$ and receives no other charge. Let us now see what it sends. 
Since $x_1$ is assumed to be nice and $d^+(v) = 2$, by Claim~\ref{claim:chelou_arc_one}~\ref{it:nice} there is  an A$^+$-configuration on  $x_1v$.  So both $y_1$ and $y_2$ are incident to a digon. 
Hence, $v$ sends $\frac{1}{2}$ to $y_1$ and to $y_2$ by (R2). Moreover it sends at least $\frac{1}{3}$ to $x_2$. 
Altogether, we get $w^*(v) \leq 1 + \frac{1}{3} -  2\times \frac{1}{2}  - \frac{1}{3}  = 0$. 

\item Assume $v$ has a unique chelou neighbour (so  $v$ is bad), and this chelou neighbour is bad.\\
Assume without loss of generality that $x_1$ is the unique chelou neighbour of $v$. 
Since both $x_1$ and $v$ are assumed to be bad, $v$ receives and sends $\frac{1}{3}$ from/to $x_1$ by (R3), and $v$ does not receive any other charge. Moreover, $v$ sends at least $\frac{1}{3}$ its three other neighbours (because they are non-chelou neighbours). 
Hence $w^*(v) \leq 1 - 3\times \frac{1}{3} = 0$.

\item Assume $v$ has at least two chelou neighbours.\\ 
Then $v$ is nice so it does not receive any charge.
% {\color{gray} \CR{A enlever apres verification:}
% By Claim~\ref{claim:connected_components_of_v4}, it has exactly two chelou neighbours and these two chelou neighbours are bad \PA{je ne comprends pas}.
% So $v$ sends $\frac{1}{3}$ to these chelou neighbours  by (R3), and at least $\frac{1}{3}$ to its two non-chelou neighbours.
% Hence $w^*(v) \leq 1 - 4\times \frac{1}{3} = -\frac{1}{3}$.
% }
Let $\ell$ be the number of chelou neighbours of $v$.
By Claim~\ref{claim:path_nice}, because $v$ is nice,
all the $\ell$ chelou neighbours of $v$ are bad, 
so $v$ sends $\frac{1}{3}$ to each of them by (R3).
Moreover, $v$ sends at least $\frac{1}{3}$ to each of its $4-\ell$ non-chelou neighbours.
Hence, $w^*(v) \leq 1 - \ell \frac{1}{3} - (4-\ell) \frac{1}{3}
=-\frac{1}{3}$. This proves~\ref{deg4-nodigon-2chelou}.
\end{itemize}

{\medskip \noindent \bf Case 2:} $d(v)=5$ and $v$ is incident to a digon.

By Lemma~\ref{lemma:vertex_no_almost_only_digons}, $v$ is incident to exactly one digon.
So $\epsilon(v) =\frac{1}{2}$, and $w(v) = -1$. Moreover, $v$ has exactly two simple standard neighbours from each of which it receives $\frac{1}{2}$ by (R1).
Thus $w^*(v) \leq -1 + 2\times  \frac{1}{2} = 0$.

{\medskip \noindent \bf Case 3:} $d(v)=5$ or $d(v)=6$, 
$\min\{d^-(v),d^+(v)\}=2$ and $v$ is incident to no digon.
We are going to prove \ref{deg5-nodigon} and \ref{deg6-in2}

If $d(v)=5$, then $w(v) = -\frac{1}{2}$, and if $d(v)=6$, then $w(v) = -2$.

We may assume without loss of generality $d^-(v)=2$ and $d^+(v) \in \{3,4\}$.
Then  $v$ receives $\frac{1}{3}$ from each of its $d^+(v)$ standard neighbours that are incident to no digon by (R2) and does not receive any other charge.

Let $x_1,x_2$ be the in-neighbours of $v$. 
Let $x \in \{x_1,x_2\}$. If $x$ is a non-chelou neighbour of $v$, then $d^+(x) \geq 3$ and thus $v$ is a simple standard  neighbour of $x$, so $v$ sends at least $\frac{1}{3}$ to $x$ by (R1) or (R2). If $x$ is a bad chelou neighbour of $v$, then $v$ sends $\frac{1}{3}$ to $x$ by (R3). If $v$ sends at least $\frac{1}{3}$ to both $x_1$ and $x_2$, then $w^*(v) \leq w(v) + 3 \times \frac{1}{3} - 2 \times \frac{1}{3} = -\frac{1}{6}$
if $d(v)=5$ and
$w^*(v) \leq w(v) + 4 \times \frac{1}{3} - 2 \times \frac{1}{3}  < -1$
if $d(v)=6$. So we may assume that it does not happen.

Hence, we may assume that at least one of $x_1$ or $x_2$, say $x_1$, is neither a non-chelou neighbour nor a bad chelou neighbour of $v$. 
So $x_1$ is a nice chelou neighbour of $v$. 
By Claim~\ref{claim:nice_chelou_neighbour_of_degree_5},
$x_2$ is not a nice chelou neighbour of $v$. So $x_2$ is either a non-chelou neighbour or a bad chelou neighbour of  $v$ and thus $v$ sends $\frac{1}{3}$ to $x_2$ as explained above. 

Assume that one of the out-neighbours of $v$, say $y$, is incident to a digon. 
Then $v$ does not receive any charge from $y$, and thus it receives $\frac{1}{3}$ from at most $d^+(v)-1$ of its standard neighbours by (R2).
Hence $w^*(v) \leq w(v) + 2\times \frac{1}{3} - \frac{1}{3} = -\frac{1}{6}$
if $d(v)=5$ and
$w^*(v) \leq w(v) + 3\times \frac{1}{3} - \frac{1}{3}<-1$. 
So we may assume from now on that all out-neighbours of $v$ are incident to no digon.

Let us apply Claim~\ref{claim:chelou_arc_one}~\ref{it:nice} on the out-chelou arc $x_1 v$. Since the out-neighbours of $v$ are incident to no digon, 
there must be a B$^+$-configuration on $x_1v$ with the three bidirected paths of length $0$.
Let $y_1,y_2,y_3$ be the out-neighbours of $v$ which are in this B$^+$-configuration, ordered such that $(y_1, y_2, y_3, y_1)$ is a directed $3$-cycle.
By Claim~\ref{claim:not_wheel_3}, one of the $y_i$, say $y_1$, has in-degree at least $3$.
So  $v$ sends $\frac{1}{3}$ to $y_1$ by (R2). Hence,  
$w^*(v) \leq w(v) + 3\times \frac{1}{3} - 2\times \frac{1}{3} = -\frac{1}{6}$ if $d(v)=5$ and $w^*(v) \leq w(v) + 4\times \frac{1}{3} - 2\times \frac{1}{3} = -\frac{4}{3}$ if $d(v)=6$.

{\medskip \noindent \bf Case 4:} $d(v) = 5$, $v$ is incident to no digon, $v$ has no chelou neighbour, $v$ has a standard neighbour incident to a digon, and $v$ is a standard neighbour of a vertex incident to a digon. 
We are going to prove~\ref{deg5-nodigon-particular-case}, i.e. we prove that $w^*(v) \leq -\frac{2}{3}$.

By (R2), $v$ receives $\frac{1}{3}$ from each of its simple standard neighbours incident to no digon, and receives no other charge. It has three simple standard neighbours, and at least one of them is incident to a digon, so $v$ receives at most $\frac{2}{3}$. 

Moreover, as $v$ is a standard neighbour of at least two vertices  (indeed, since $v$ has no chelou neighbour, $v$ is a simple standard neighbour of its two binary neighbours),
at least one of which is incident to a digon, $v$ sends $\frac{1}{2}$ 
to the latter one  by (R1), and at least $\frac{1}{3}$ to the other ones
by (R1) or (R2). Thus $w^*(v) \leq w(v) + 2\frac{1}{3} - \frac{1}{2}
-\frac{1}{3} = -\frac{2}{3}$. 

{\medskip \noindent \bf Case 5:} $d(v)=6$.\\
We are going to prove~\ref{deg6-2digons}~\ref{deg6-1digon} and~\ref{deg6-nodigon}. By Case 3, we may assume $d^+(v) = d^-(v) = 3$. 

If $v$ is incident to two digons, then $w(v) = -3$ and  $v$ can only receive charge from its two simple neighbours by (R2). Thus $w^*(v) \leq w(v) + 2\times \frac{1}{2} = -2 <-1$. If $v$ is incident to three digons, then it receives no charge at all.
So $w^*(v) = w(v) = -\frac{5}{2} < -1$.
This proves \ref{deg6-2digons}.

If $v$ is incident to one digon, then $w(v) =-\frac{5}{2}$.
Moreover, $v$ can only receive charge from its at most four simple standard neighbours incident to no digon by (R1). Thus $w^*(v) \leq w(v) + 4\times \frac{1}{2} = -\frac{1}{2}$. This proves \ref{deg6-1digon}.

Assume now that $v$ is as in~\ref{deg6-nodigon}, that is 
$v$ is incident to no digon, $v$ is a simple standard neighbour of $\ell_1$ vertices incident to no digon, $v$ is a simple standard neighbour of $\ell_2$ vertices incident to a digon, and $v$ has $\ell_3$ simple standard neighbours incident to a digon. %, then $w^*(v) \leq -\frac{\ell_1}{3} - \frac{\ell_2}{2} - \frac{\ell_3}{3}$
Then $w(v) = -2$.
Now $v$ receives at most $\frac{1}{3}$ from each of its $6-\ell_3$ simple standard neighbours incident to no digon.
Moreover, by (R2), $v$ sends $\frac{1}{3}$  $\ell_1$ of its neighbours and, by (R1,) $v$ sends $\frac{1}{2}$ $\ell_2$ of its neighbours. 
Altogether, we get $w^*(v) \leq -2 + \frac{6-\ell_3}{3} - \frac{\ell_1}{3} - \frac{\ell_2}{2}
=-\frac{\ell_1}{3} -\frac{\ell_2}{2} -\frac{\ell_3}{3}$. 
This proves \ref{deg6-nodigon}.

{\medskip \noindent \bf Case 6:} $d(v)\geq 7$.

Let $b = d(v) - n(v)$ be the number of digons incident to $v$. % and $s = d(v) - 2b=n(v) -b$ the number of simple neighbours incident to $v$.

If $b=0$, then  $w(v)  = 7 -\frac{3}{2}d(v)$. Moreover, $v$ only receives by (R2), so it receives at most $\frac{1}{3}d(v)$. So
 $w^*(v) \leq w(v)  +  \frac{1}{3}d(v) = 7 - \frac{7}{6}d(v) < -1$.

If $b=1$, then $w(v) = \frac{13}{2} -\frac{3}{2}d(v)$. 
Moreover, $v$ only receives by (R1) through its at most $d(v) - 2$ simple standard neighbours, so it receives at most  $\frac{1}{2}(d(v)-2)$.
 So $w^*(v) \leq w(v)  +  \frac{1}{2} (d(v) - 2) = \frac{11}{2} - d(v) < -1$.

If $b \geq 2$, then $w(v) = 5 - \frac{3}{2}d(v) + \frac{b}{2}$. 
Moreover, $v$ only receives by (R1) through its at most $d(v) - 2b$ simple standard neighbours, so it receives at most  $\frac{1}{2}(d(v)-2b)$.
So $w^*(v) \leq w(v) \frac{1}{2}(d(v)-2b) = 5 - d(v) - \frac{b}{2} < -1$ 
%$w^*(v) \leq w(v) + s\times \ \frac{1}{2} = 5 - \frac{3}{2}d(v) + \frac{b+s}{2} =  5 - \frac{3}{2}d(v) + \frac{d(v) -b}{2} \leq 5 - d(v)  < -1$.
This proves \ref{7digon}.
\medskip

Altogether, the above cases imply \ref{cas1}.
\end{proofclaim}

%-----------------------------------------------------------------

\subsection{Some more tools before colouring}\label{subsec:moretools}

In this subsection, we use Claims~\ref{claim:epsilon_correct} and~\ref{claim:result_discharging} to prove some more structural properties on $D$. More, precisely we prove: 
\begin{itemize}
    \item Some restrictions on the possible degrees of the vertices. Claim~\ref{claim:degree_prop_after_discharging}. 
    \item $D$ has no A- nor B-configurations. Claims~\ref{claim:no_A_conf} and~\ref{claim:no_B_conf}.
    \item The connected components  of $B(D)$ are paths of length $2$ and some strong structural properties are forced around them. Claims~\ref{claim:component_of_B(D)}, \ref{claim:2thread}, \ref{claim:apex} and \ref{claim:no6digon}. 
    \item Vertices of degree $6$ are incident to no digon. Claim~\ref{claim:no6digon}
\end{itemize}

A digon (resp. an arc) is {\bf isolated}  if its end-vertices are incident to no other digon (resp. no other arc).

\begin{claim} \label{claim:degree_prop_after_discharging}
For every vertex $v$ of $D$, we have:
\begin{enumerate}[label=(\roman*)]
\item $d(v) \in \{4, 5, 6\}$; \label{claim:deg_in_456}
\item if $d(v) = 6$, then $d^+(v)=d^-(v)=3$; \label{claim:deg6=3-3}
\item if $v$ is incident to more than one digon, then  $d(v) = 4$. \label{claim:digons_deg4}
\end{enumerate}
\end{claim}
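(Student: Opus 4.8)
The plan is to prove each of the three items of Claim~\ref{claim:degree_prop_after_discharging} as an easy consequence of Claims~\ref{claim:epsilon_correct} and~\ref{claim:result_discharging}, which together say $\rho(D) \le \Sigma(D) - \gamma(D) \le \sum_{v} w^*(v) - \gamma(D)$, while we know $\rho(D) \ge -1$. The key structural fact to extract is that, since all the new charges $w^*(v)$ are at most $0$ by Claim~\ref{claim:result_discharging}\ref{cas1} and $\gamma(D) \ge 0$, the inequality $\rho(D) \ge -1$ forces $\sum_v w^*(v) \ge -1$. Hence at most finitely much negative charge can be present; more precisely, no single vertex can have $w^*(v) < -1$, and the total deficit across all vertices is less than $1$ (so in fact at most one vertex can have $w^*(v) \le -\tfrac12$, etc., although for this claim we only need the ``$w^*(v) > -1$ for all $v$'' consequence, plus a tiny bit more for item~\ref{claim:digons_deg4}).

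First I would handle \ref{claim:deg_in_456}. By Lemma~\ref{lemma:degeneracy}, every vertex has $d^+(v), d^-(v) \ge 2$, so $d(v) \ge 4$. If $d(v) \ge 7$, then Claim~\ref{claim:result_discharging}\ref{7digon} gives $w^*(v) < -1$, which combined with $w^*(u) \le 0$ for all other $u$ and $\gamma(D) \ge 0$ yields $\rho(D) \le \sum_u w^*(u) - \gamma(D) < -1$, contradicting $\rho(D) \ge -1$. So $d(v) \in \{4,5,6\}$. Next, for \ref{claim:deg6=3-3}, suppose $d(v)=6$ but $\{d^-(v),d^+(v)\} \ne \{3,3\}$; since the degrees sum to $6$ and each is at least $2$, the only other possibility is $\min\{d^-(v),d^+(v)\} = 2$. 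If $v$ is incident to no digon, Claim~\ref{claim:result_discharging}\ref{deg6-in2} gives $w^*(v) < -1$, again a contradiction. If $v$ is incident to a digon, then item~\ref{claim:digons_deg4} (proved next, or handled simultaneously) already rules out $d(v)=6$ with a digon; alternatively one observes directly that a degree-$6$ vertex with a digon has $w^*(v) \le -\tfrac12$ by \ref{deg6-1digon}/\ref{deg6-2digons}, which is not yet a contradiction on its own, so the cleanest route is to prove \ref{claim:digons_deg4} first.

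For \ref{claim:digons_deg4}, suppose $v$ is incident to at least two digons and $d(v) \ge 5$; by \ref{claim:deg_in_456} we have $d(v) \in \{5,6\}$. If $d(v)=5$, Lemma~\ref{lemma:vertex_no_almost_only_digons} forbids $v$ from being incident to two digons (two digons would leave a single non-digon neighbour), so this case is vacuous. If $d(v)=6$ and $v$ is incident to at least two digons, Claim~\ref{claim:result_discharging}\ref{deg6-2digons} gives $w^*(v) < -1$, contradicting $\rho(D) \ge -1$ as before. Hence any vertex incident to more than one digon has degree $4$. With \ref{claim:digons_deg4} in hand, the residual case of \ref{claim:deg6=3-3} where $d(v)=6$ and $v$ meets a digon is impossible (it would meet exactly one digon by Lemma~\ref{lemma:vertex_no_almost_only_digons} applied in spirit, but more simply: a single digon still does not contradict \ref{claim:digons_deg4}); so I would instead argue directly that if $d(v)=6$, $\min\{d^\pm(v)\}=2$, and $v$ meets exactly one digon, one of \ref{deg6-1digon} does not immediately finish — so the honest statement is that \ref{claim:deg6=3-3} needs the no-digon subcase of \ref{deg6-in2} together with the observation that a degree-$6$ vertex with a digon but $d^-(v)=2$ violates Lemma~\ref{lemma:vertex_no_almost_only_digons} only when it meets $\ge 2$ digons. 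The main obstacle, and the one place to be careful, is exactly this bookkeeping for \ref{claim:deg6=3-3}: one must check that every degree-$6$ vertex with $\min\{d^-,d^+\}=2$ is caught either by \ref{deg6-in2} (no digon) or by \ref{claim:digons_deg4} plus Lemma~\ref{lemma:vertex_no_almost_only_digons} (with a digon). Everything else is a direct quotation of Claim~\ref{claim:result_discharging} against the bound $\rho(D)\ge -1$.
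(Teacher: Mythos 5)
You follow exactly the paper's route: from Claim~\ref{claim:epsilon_correct} and Claim~\ref{claim:result_discharging}\ref{cas1} one gets $-1 \le \rho(D) \le \Sigma(D) \le w^*(v)$ for every vertex $v$, and then each item is read off from the relevant part of Claim~\ref{claim:result_discharging}. Your treatment of items \ref{claim:deg_in_456} and \ref{claim:digons_deg4} is correct and matches the paper: item \ref{claim:deg_in_456} via Lemma~\ref{lemma:degeneracy} for the lower bound (which the paper leaves implicit) and \ref{7digon} for the upper bound; item \ref{claim:digons_deg4} via \ref{deg6-2digons} when $d(v)=6$ and Lemma~\ref{lemma:vertex_no_almost_only_digons} when $d(v)=5$.

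The gap is in item \ref{claim:deg6=3-3}. You correctly notice that \ref{deg6-in2} only covers degree-$6$ vertices incident to no digon, and that \ref{deg6-1digon} gives only $w^*(v)\le -\tfrac12$, which is not by itself a contradiction; but your concluding sentence then asserts that the remaining case --- $d(v)=6$, exactly one digon, $\min\{d^-(v),d^+(v)\}=2$ --- is caught by \ref{claim:digons_deg4} together with Lemma~\ref{lemma:vertex_no_almost_only_digons}. It is not: item \ref{claim:digons_deg4} allows a single digon at a degree-$6$ vertex, and the lemma applies only when all but one neighbour of $v$ are digon-neighbours, whereas such a $v$ has four simple neighbours. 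So, as written, your argument does not dispose of this subcase, and the charge bounds available at this point do not give $w^*(v)<-1$ for it (a direct count gives at best $w^*(v)\le -1$, which is compatible with $\rho(D)\ge -1$). For comparison, the paper's own proof of (ii) simply cites \ref{deg6-in2}, i.e.\ it passes over the same subcase silently rather than offering a justification that fails; if you want more than that one-line citation, a genuine additional argument for a degree-$6$ vertex carrying one digon with a semi-degree equal to $2$ is needed, and neither your patch nor the quoted discharging items supplies it.
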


\begin{proofclaim}
By Claim~\ref{claim:result_discharging}~\ref{cas1}, $\Sigma(D) = \sum_{v \in V(D)} w^*(v) \leq w^*(v)$ for every vertex $v$ in $D$.
In particular, since  $\Sigma(D) \geq \rho(D) \geq -1$, we have $w^*(v) \geq -1$ for every vertex $v$ in $D$.
Hence, the three outputs are implied by respectively Claim~\ref{claim:result_discharging}~\ref{7digon},  Claim~\ref{claim:result_discharging}~\ref{deg6-in2}, and Claim~\ref{claim:result_discharging}~\ref{deg6-2digons} together with Lemma~\ref{lemma:vertex_no_almost_only_digons},
\end{proofclaim}

We shall now deduce even more structure on $D$.

\begin{claim}\label{claim:component_of_B(D)} 
Let $C$ be a connected component of $B(D)$. Then $C$ is a path of length at most 2. Moreover:
\begin{itemize}
    \item if $C$ has length $1$, then its two vertices have degree $5$ in $D$.
    \item if $C$ is a path of length $2$, then its internal vertex has degree $4$ in $D$. 
\end{itemize}
\end{claim}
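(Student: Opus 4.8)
The plan is to analyze a connected component $C$ of $B(D)$ using the structural facts established so far. By Lemma~\ref{lem:forestB}, $C$ is a tree (since $D$ is $3$-dicritical and not a bidirected odd cycle). By Claim~\ref{claim:degree_prop_after_discharging}~\ref{claim:digons_deg4}, every vertex incident to more than one digon has degree $4$ in $D$; since a vertex of degree $4$ incident to a digon is incident to exactly one other arc (a second digon would saturate its degree, a simple arc would contradict Lemma~\ref{lemma:vertex_no_almost_only_digons}), every internal vertex of the tree $C$ has degree $4$ in $D$ and degree exactly $2$ in $B(D)$ — so $C$ has no vertex of degree $\geq 3$, i.e.\ $C$ is a path. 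The endpoints of this path are vertices of $B(D)$-degree $1$, hence of degree $5$ or $6$ in $D$ by Claim~\ref{claim:degree_prop_after_discharging}~\ref{claim:deg_in_456}; but a vertex of degree $6$ incident to a digon is ruled out again by Claim~\ref{claim:digons_deg4}, so the endpoints have degree $5$.

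Next I would bound the length of the path. A path of length $1$ has two endpoints of degree $5$, matching the first bullet. A path of length $2$ has one internal vertex, which by the above has degree $4$, matching the second bullet. So it remains to show $C$ cannot be a path of length $\geq 3$. Suppose $C = [v_0, v_1, \dots, v_\ell]$ with $\ell \geq 3$. Then $v_1, v_2$ are consecutive internal vertices, both of degree $4$, so $[v_0, v_1, v_2, v_3]$ is a $3$-thread in $D$ (its internal vertices $v_1, v_2$ have degree $4$). This contradicts the standing assumption — recorded just after the statement of the theorem in Subsection~\ref{subsec:moretools}'s setup — that $D$ contains no $3$-thread, which follows from Lemma~\ref{lemma:subdiv_digon}. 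Hence $\ell \leq 2$.

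The main obstacle — really the only subtle point — is verifying cleanly that an internal vertex of the path $C$ has degree exactly $4$ in $D$: one must rule out that such a vertex has degree $5$ or $6$. A vertex of degree $6$ incident to a digon is excluded by Claim~\ref{claim:degree_prop_after_discharging}~\ref{claim:digons_deg4}. For degree $5$: an internal vertex $v$ of $C$ is incident to (at least) two digons in $B(D)$, but then it is incident to more than one digon in $D$, so again Claim~\ref{claim:digons_deg4} forces $d(v) = 4$. Thus every internal vertex of $C$ has degree $4$, which simultaneously pins it to $B(D)$-degree $2$ (a degree-$4$ vertex incident to two digons uses all four arcs) and forbids $B(D)$-degree $\geq 3$, so $C$ is genuinely a path; combined with the no-$3$-thread property this closes the argument. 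The endpoint degree claims then follow from the case split on $\ell \in \{1, 2\}$ as above.
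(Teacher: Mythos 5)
Your argument for the first two assertions (that $C$ is a path of length at most $2$, and that the internal vertex of a length-$2$ path has degree $4$) is essentially the paper's: bound the $B(D)$-degree by $2$ via Claim~\ref{claim:degree_prop_after_discharging}~\ref{claim:digons_deg4}, use Lemma~\ref{lem:forestB} to get a path, and kill length $\geq 3$ with the no-$3$-thread property from Lemma~\ref{lemma:subdiv_digon}. That part is fine.

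There is, however, a genuine gap in the length-$1$ case. To show both endpoints have degree $5$ you must exclude degree $6$, and you claim that ``a vertex of degree $6$ incident to a digon is ruled out again by Claim~\ref{claim:degree_prop_after_discharging}~\ref{claim:digons_deg4}.'' That item only forces degree $4$ on vertices incident to \emph{more than one} digon; an endpoint of a length-$1$ component is incident to exactly one digon, so the claim says nothing about it. (Indeed, Claim~\ref{claim:no6digon}, which does assert that degree-$6$ vertices avoid digons, is proved \emph{later} and its proof relies on the present claim, so you cannot appeal to it.) The paper closes this case with a separate discharging argument: by Claim~\ref{claim:result_discharging}~\ref{deg6-1digon} such a vertex $u$ would satisfy $w^*(u) \leq -\frac{1}{2}$, and since the component $[u,v]$ is an odd path it contributes $1$ to $\gamma(D)$, so Claim~\ref{claim:epsilon_correct} gives $\rho(D) \leq \Sigma(D) - 1 \leq w^*(u) - 1 < -1$, a contradiction. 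Some such argument is needed; your proof as written does not supply it.

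A smaller point: to exclude degree-$4$ endpoints you invoke Lemma~\ref{lemma:vertex_no_almost_only_digons}, but a degree-$4$ vertex with exactly one digon has \emph{two} simple neighbours, so that lemma does not apply. The fact is still true — such a vertex would have $n(v)=3$, contradicting Claim~\ref{claim:degree_4_not_3_neighbours}, which is the reference the paper uses — so this is only a mis-citation, not a gap in substance.
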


\begin{proofclaim}
Let $C$ be a connected component of $B(D)$. 
By Claim~\ref{claim:degree_prop_after_discharging}~\ref{claim:digons_deg4},  every vertex of $B(D)$ has degree at most $2$ in $B(D)$, and if it has degree $2$ in $B(D)$, then it has degree $4$ in $D$. 
Let $b, c$ be two vertices of degree $2$ in $B(D)$, and assume $[b,c] \subseteq A(D)$. 
Let $a$ be the other neighbour of $b$ in $B(D)$ and $d$ the other neighbour of $c$. 
If $a = d$, then $\{a,b,c\}$ induces a bidirected $3$-cycle in $D$, a contradiction. 
So we may assume that $a \neq d$ and thus $[a,b,c,d]$ is a $3$-thread, a contradiction to  Lemma~\ref{lemma:subdiv_digon}. 
Hence, the set of vertices of degree $2$ in $B(D)$ is an independent set.  
So $C$ is a path of length at most $2$, and if it is a path of length $2$ then its internal vertex has degree $4$. 

Assume that $C$ is a path of length one, say $C=[u,v]$. 
By Claim~\ref{claim:degree_4_not_3_neighbours}, both $u$ and $v$ have degree at least $5$.
 If one of $u$, $v$, say $u$, has degree at least $6$, then by~\ref{claim:result_discharging}~\ref{deg6-1digon}, $w^*(u) \leq -\frac{1}{2}$, and then, by~Claims~\ref{claim:epsilon_correct} 
 $\rho(D) \leq \Sigma(D) - 1 \leq w^*(u) -1 \leq -\frac{1}{2} - 1 < -1$,
a contradiction. 
\end{proofclaim}

\begin{claim}\label{claim:no_A_conf}
$D$ contains no A$^+$- nor A$^-$-configuration on a chelou arc.
As a consequence, any connected component of $D[\Vquatre]$
is either an isolated vertex incident to no chelou arc, or an isolated arc,  end-vertices of which are incident to no other chelou arc 
% (cf Figures~\ref{subfig:comp_V4_a} and~\ref{subfig:comp_V4_c}).
\end{claim}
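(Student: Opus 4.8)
The plan is to rule out A$^+$-configurations first (the A$^-$ case then follows by directional duality, since the converse of any of the excluded digraphs of Claim~\ref{claim:no_purse_handcuff_crib_basket} is also excluded). Suppose for contradiction that there is an A$^+$-configuration on an out-chelou arc $xy$, consisting of the digraph $A$ on $\{x,y,z,y_1,y_2\}$ together with a bidirected odd path $Q$ from $y_1$ to $y_2$. Write $F = D[V(A)\cup V(Q)]$ for the induced subdigraph. The first step is a potential count: the A$^+$-configuration itself (not necessarily induced) has $2\ell+5$ vertices and $2\ell+1+7 = 2\ell+8$... let me recompute: $A$ has $6$ arcs, $Q$ of length $2\ell+1$ contributes $4\ell+2$ arcs, so $m = 4\ell+8$, $n = 2\ell+5$, and $\pi = \ell+1$ (the matching inside $Q$ plus one more digon), giving potential $7(2\ell+5) - 3(4\ell+8) - 2(\ell+1) = 14\ell+35-12\ell-24-2\ell-2 = 9$. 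So the underlying configuration has potential $9$, hence $\rho_D(V(F)) \le 9$, and if $F$ has even one extra arc over the configuration, $\rho_D(V(F)) \le 6$; two extra arcs give $\le 3$, contradicting Claim~\ref{claim:high_potential_number_one} unless $F$ spans $D$.

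The second step is to produce those extra arcs, i.e.\ to show $\rho_D(V(F))$ is small enough to force $F$ to span $D$, and then derive a contradiction from the degree lower bounds. Since $D$ is $3$-dicritical, every vertex has in- and out-degree at least $2$ (Lemma~\ref{lemma:degeneracy}). In the configuration, $x$ has out-degree $1$ (only the arc $xy$), so $x$ needs at least two more outgoing arcs not in the configuration; similarly one checks $z$ has in-degree $0$ in the configuration (its only incident arcs are $zy, y_1z, y_2z$, all outgoing or… wait: $zy$ is outgoing from $z$, and $y_1 z, y_2 z$ are incoming), so $z$ has out-degree... $z \to y$ only, out-degree $1$, needs one more; and $x$ also has in-degree $0$ in $A$ (arcs at $x$ in $A$: $xy$ out, $y_1$–$y_2$ path doesn't touch $x$... actually in $A$ the only arc at $x$ is $xy$), so $x$ needs at least two in-arcs too. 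Counting carefully: $x$ contributes $\ge 3$ arcs to $D\setminus F$ and $z$ contributes $\ge 1$ more, for at least $2$ extra arcs total even accounting for a shared arc $zx$ — actually if $zx \in A(D)$ that is one extra arc, and $x$ still needs an out-arc, a second extra arc. So $\rho_D(V(F)) \le 9 - 3\cdot 2 = 3$, hence by Claim~\ref{claim:high_potential_number_one} $F$ must span $D$ — but then $\rho(D) \le 3 - 3\cdot(\text{more arcs}) \le 3$, and more carefully $\rho(D) \le \rho_D(V(F)) \le 3 < -1$... wait $3 \not< -1$. The honest route: $\rho_D(V(F)) \le 3$ with $F$ not spanning contradicts Claim~\ref{claim:high_potential_number_one}; if $F$ spans, the two extra arcs at $x$ and $z$ are already counted, so in fact $m(D) \ge m(F)+2 \ge$ (configuration arcs) $+2$, giving $\rho(D) \le 9 - 6 = 3$, still not a contradiction by itself. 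So the real argument must be: $F$ genuinely spanning forces $D$ to be exactly the A$^+$-configuration plus a bounded number of arcs, and then one shows directly that such a $D$ is either $2$-dicolourable or contains a forbidden configuration. The cleaner path, which I expect the authors take, is to observe that an A$^+$-configuration on a chelou arc, once we add the arcs forced by Lemma~\ref{lemma:degeneracy} at $x$, either creates a purse/basket/bag (contradicting Claim~\ref{claim:no_purse_handcuff_crib_basket}) or pushes the potential below $-1$.

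The third step establishes the structural consequence. If some component $K$ of $D[\Vquatre]$ is not an isolated vertex/arc of the stated form, then either $K$ contains a vertex incident to two chelou arcs pointing "the same way" at a common endpoint, or a path of two chelou arcs through a degree-$4$ vertex, or an arc whose endpoint carries a further chelou arc. In each case, combining Claim~\ref{claim:chelou_arc_one}\ref{it:triangle} (each out-chelou arc's head lies in a directed $3$-cycle avoiding the tail) with the first part gives: each such chelou arc $uv$ with $u,v\in\Vquatre$ forces $v$ into a triangle, and by Claim~\ref{claim:chelou_arc_one}\ref{it:nice} (if the tail is nice, which a degree-$4$ vertex with two chelou arcs is) an A$^+$- or B$^+$-configuration appears on it, but A$^+$ is now excluded, so it is B$^+$, and two overlapping B$^+$/A$^+$-configurations at a shared vertex reproduce a purse, basket, bag, handcuff, or turtle as in the proofs of Claims~\ref{claim:path_nice} and~\ref{claim:nice_chelou_neighbour_of_degree_5} — contradiction. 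Hence every component of $D[\Vquatre]$ is an isolated vertex or isolated arc, and minimality/the absence of A-configurations forbids any extra chelou arc at the endpoints.

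\textbf{Main obstacle.} The delicate point is the potential bookkeeping in the "$F$ spans $D$" case: one must argue that the extra arcs forced at $x$ (and $z$) are not over-counted against the digon-matching term $\pi$, and that they genuinely drive $\rho(D)$ below $-1$ rather than merely reproducing a known exception. I expect this is handled exactly as in Lemma~\ref{lemma:purse}(ii) and Lemma~\ref{lemma:basket}(ii): identify within $F \cup \{\text{forced arcs}\}$ an explicit purse or basket (with bottom using $x$, $y$, $z$) and invoke Claim~\ref{claim:no_purse_handcuff_crib_basket}, sidestepping a direct potential computation entirely.
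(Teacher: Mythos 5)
Your proposal for the main part of the claim does not go through, and the gap is exactly at the point you flag yourself. The potential of an A$^+$-configuration is indeed $9$ (your computation is right), but $9>4$, so Claim~\ref{claim:high_potential_number_one} gives no information and nothing forces the configuration to span $D$. The extra arcs you extract from Lemma~\ref{lemma:degeneracy} at $x$ and $z$ need not have both end-vertices inside $V(F)$, so they do not lower $\rho_D(V(F))$ at all; the ``span, then count the forced arcs'' scheme of Lemmas~\ref{lemma:purse}(ii) and~\ref{lemma:basket}(ii) only works because those configurations already have potential $\leq 3$ \emph{before} the forced arcs are added. Your fallback guess --- that a single A$^+$-configuration together with the degree-forced arcs yields a purse, basket or bag --- is also not available: a purse is an A$^+$-configuration \emph{plus} the two arcs $y_1x$ and $y_2x$, and nothing forces those to exist. (In the paper a purse only ever arises from \emph{two} A-configurations sharing a vertex, as in Claim~\ref{claim:path_nice}.)

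The paper's actual argument is of a different nature and relies on the discharging machinery, which is available since this claim sits in Subsection~\ref{subsec:moretools}: Claim~\ref{claim:component_of_B(D)} first forces the bidirected odd path of the configuration to be the single digon $[y_1,y_2]$; then $-1\leq\rho(D)\leq\Sigma(D)-\gamma(D)\leq w^*(y)-1$ together with Claim~\ref{claim:result_discharging}~\ref{deg5-nodigon} forces $d(y)=4$; Claim~\ref{claim:digon_degre_5_plus} gives $d^+(y_1)=d^+(y_2)=3$, the charge bounds rule out degree $6$ for $y_1,y_2$, hence $d^-(y_1)=d^-(y_2)=2$, so both $yy_1$ and $yy_2$ are chelou, contradicting Claim~\ref{claim:path_nice}. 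None of these ingredients ($w^*$, $\Sigma(D)$, $\gamma(D)$, Claim~\ref{claim:component_of_B(D)}, Claim~\ref{claim:digon_degre_5_plus}, Claim~\ref{claim:path_nice}) appears in your main argument, so the core of the claim remains unproved. Your sketch of the structural consequence is closer to the mark (it does follow from the first part, Claim~\ref{claim:chelou_arc_one} and the degree-$4$ constraint on $\Vquatre$), but it depends on the part that is missing.
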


\begin{proofclaim}
Suppose for contradiction that $D$ has an A$^+$-configuration $H$
on an out-chelou arc $xy$ (The proof is similar for an  A$^-$-configuration).
By Claim~\ref{claim:component_of_B(D)}, the path of digons
in $H$ is equal to $[y_1,y_2]$, where $y_1$, $y_2$ are two out-neighbours of $y$.
Hence by Claims~\ref{claim:epsilon_correct} and~\ref{claim:result_discharging},
$0 \leq \rho(D) + 1 \leq \Sigma(D)  \leq w^*(y)$.
If $y$ has degree at least $5$, as $xy$ is out-chelou, $y$ is incident to
no digon, so by Claim~\ref{claim:result_discharging}~\ref{deg5-nodigon}
we have $w^*(y) \leq - \frac{1}{6}$, a contradiction.
Hence $y$ has degree $4$.

By Claim~\ref{claim:digon_degre_5_plus}, $d^+(y_1) = d^+(y_2) = 3$. By Claim~\ref{deg6-1digon}, if $y_1$ or $y_2$ has degree $6$, then it has final charge at most $-\frac{1}{2}$ and thus, as above, $\rho(D) < -1$, a contradiction. 
So $d^-(y_1)=d^-(y_2) = 2$ and thus both $yy_1$ and $yy_2$ are chelou arc, a contradiction to Claim~\ref{claim:path_nice}. 
\end{proofclaim}

\begin{claim}\label{claim:2thread}
Let $[y_1,x,y_2]$ be a $2$-thread.
There exists a vertex $z$ such that $y_1z, zy_2 \in A(D)$ or
$y_2z,zy_1 \in A(D)$.
\end{claim}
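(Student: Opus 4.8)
The plan is to argue by contradiction. Suppose $[y_1,x,y_2]$ is a $2$-thread of $D$ for which no vertex $z$ as in the statement exists. First I would pin down the local picture around $x$: since $d(x)=4$ and $x$ lies in the two digons $[x,y_1]$ and $[x,y_2]$, we have $N(x)=\{y_1,y_2\}$, and by Claim~\ref{claim:component_of_B(D)} the component of $B(D)$ containing $x$ is exactly the path $y_1$--$x$--$y_2$, so $[x,y_1]$ is the only digon at $y_1$ and $[x,y_2]$ the only digon at $y_2$. Moreover, any $2$-dicolouring of the proper subdigraph $D-x$ that gives $y_1$ and $y_2$ the same colour extends to a $2$-dicolouring of $D$ (colour $x$ with the other colour; $x$'s only neighbours are $y_1,y_2$, joined to $x$ by digons, so no monochromatic dicycle is created), so in fact every $2$-dicolouring of $D-x$ colours $y_1$ and $y_2$ differently.

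Next I would form the digraph $D^\sharp$ obtained from $D-x$ by identifying $y_1$ and $y_2$ into a single vertex $y^*$ and deleting the resulting loops and parallel arcs. Two facts drive the argument. First, $\dic(D^\sharp)\ge 3$: a $2$-dicolouring of $D^\sharp$, after splitting $y^*$ back into $y_1,y_2$ with the same colour, gives a $2$-dicolouring of $D-x$ colouring $y_1,y_2$ alike (any monochromatic dicycle of $D-x$ contracts to a non-empty monochromatic closed walk of $D^\sharp$ — non-empty because there is no digon between $y_1$ and $y_2$ — hence to a monochromatic dicycle of $D^\sharp$), contradicting the previous paragraph. Second, under our hypothesis $y^*$ is incident to no digon in $D^\sharp$: a digon $[y^*,u]$ would come either from a digon at $y_1$ or at $y_2$ in $D$ (impossible, their only digons being with $x\notin V(D^\sharp)$) or from a directed path $y_1\to u\to y_2$ or $y_2\to u\to y_1$ with $u\ne x$, i.e.\ from exactly the forbidden vertex $z$.

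Then I would take a $3$-dicritical subdigraph $\tilde D$ of $D^\sharp$. It contains $y^*$, since otherwise $\tilde D\subseteq D-\{x,y_1,y_2\}$, a proper subdigraph of $D$ with dichromatic number $3$, impossible. As $y^*$ is incident to no digon in $\tilde D$, while every vertex of a bidirected odd cycle and of an odd $3$-wheel lies in a digon, $\tilde D$ is in neither $\oddcycle$ nor $\wheelodd$; since $n(\tilde D)<n(D)$, minimality of $D$ gives $\rho(\tilde D)\le -2$. Writing $W=V(\tilde D)\setminus\{y^*\}$ and $R=W\cup\{y_1,y_2\}\subseteq V(D)\setminus\{x\}$, each arc of $\tilde D$ incident to $y^*$ comes from an arc of $D$ incident to $y_1$ or $y_2$, and these arcs are distinct and lie in $D[R]$, so $m(D[R])\ge m(\tilde D)$; also $B(\tilde D)\subseteq B(D[R])$, so $\pi(D[R])\ge\pi(\tilde D)$, and altogether $\rho_D(R)\le\rho(\tilde D)+7\le 5$. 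Finally I would reinsert $x$: $R'=R\cup\{x\}$ has one more vertex than $R$, exactly four more arcs (the two digons at $x$), and $\pi$ larger by exactly $1$ (as $y_1,y_2$ carried no other digon, $B$ only gains the path $y_1$--$x$--$y_2$), so $\rho_D(R')=\rho_D(R)-7\le -2$. If $R'=V(D)$ this contradicts $\rho(D)\ge -1$; otherwise $3\le|R'|\le n(D)-1$ and Claim~\ref{claim:high_potential_number_one} forces $\rho_D(R')\ge 4$ — a contradiction in either case, proving the claim.

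The main obstacle is the tightness of the potential bookkeeping. The bound $\rho_D(R)\le\rho(\tilde D)+7\le 5$ alone does not contradict Claim~\ref{claim:high_potential_number_one} (whose threshold is $4$); only the exact accounting upon reinserting $x$ (vertices $+1$, arcs $+4$, matching number $+1$) pushes $\rho_D(R')$ down to $-2$, and this hinges on Claim~\ref{claim:component_of_B(D)} guaranteeing that $y_1$ and $y_2$ have no digon besides the one with $x$. The other care-point is proving $\dic(D^\sharp)\ge 3$: one must check that a monochromatic dicycle of $D-x$ never collapses entirely under the identification $y_1\equiv y_2$, which again uses the absence of a digon between $y_1$ and $y_2$.
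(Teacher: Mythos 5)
Your proof is correct and takes essentially the same route as the paper's: delete $x$, merge $y_1,y_2$ into one vertex, note the contracted digraph is not $2$-dicolourable, extract a $3$-dicritical subdigraph containing the merged vertex (which has no digon under the contradiction hypothesis), and use minimality together with the potential accounting — your set $R'$ is exactly the paper's set $U$, and your two-step bound $\rho_D(R')\leq\rho(\tilde D)$ matches the paper's one-line computation — to contradict Claim~\ref{claim:high_potential_number_one} or $\rho(D)\geq -1$. Your explicit check that a digon at the merged vertex must arise from a common neighbour $z$ rather than from a digon at $y_1$ or $y_2$ is a detail the paper leaves implicit.
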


\begin{proofclaim}
First note that by Claims~\ref{claim:cycle_minus_one_arc} $y_1$ and $y_2$ are not adjacent, and by Claim~\ref{claim:component_of_B(D)} $x$ has degree $4$. 
Let $D'$ be the digraph obtained from $D$ by removing $x$ and merging
$y_1,y_2$ into a single vertex $y$.
It is enough to prove that $y$ is in a digon in $D'$ 

If $D'$ has a  $2$-dicolouring $\phi$, then 
setting $\phi(x) \neq \phi(y_1) = \phi(y_2) = \phi(y)$ yields a 
$2$-dicolouring of $D$, a contradiction.
Hence $D'$ contains a $3$-dicritical digraph $\Tilde{D}$, which necessarily contains $y$ because $D$ is $3$-dicritical.
Let $U = V(\Tilde{D}) \setminus\{y\} \cup\{y_1,y_2,x\}$.

Suppose for a contradiction that $y$ is incident to no digon in $\tilde{D}$.
Then $\tilde{D}$ is neither a bidirected odd cycle nor an odd 3-wheel.
Hence, by minimality of $D$, $\rho(\Tilde{D}) \leq -2$.
Moreover, if $M$ is a matching of
digons in $\Tilde{D}$,  then $M \cup [y_1,x]$ is a matching of digons  in $D$. So $\pi(D) \geq \pi(\Tilde{D}) + 1$.
Thus $\rho_D(U) \leq \rho(\Tilde{D}) + 2 \times 7 - 3\times 4  -2=  \rho(\Tilde{D})$.
Now  $U=V(D)$, for otherwise, by Claim~\ref{claim:high_potential_number_one},   $4\leq \rho_D(U) \leq  \rho(\Tilde{D}) \leq -2$, a contradiction.
But then $-1 \leq \rho(D) \leq \rho_D(U) \leq  \rho(\Tilde{D}) \leq -2$, a contradiction.
\end{proofclaim}

In view of Claim~\ref{claim:2thread}, we call {\bf apex} of a $2$-thread $[y_1,x,y_2]$ a vertex $z$ such that $y_1z, zy_2 \in A(D)$ or
$y_2z,zy_1 \in A(D)$.

\begin{claim}\label{claim:apex}
Let $z$ be the apex of a $2$-thread. Then
\begin{enumerate}[label=(\roman*)]
\item $z$ has degree at least $5$, \label{no-apex-deg4}
\item  $z$ is the apex of a unique $2$-thread, \label{apex-of-1}
\item $z$ is incident to no digon , and\label{apex-no-digon}
\item $w^*(z) \leq -\frac{2}{3}$.\label{apex-2/3} %In particular, $D$ has at most one $2$-thread. 
\end{enumerate}
As a consequence $D$ contains at most one $2$-thread.
\end{claim}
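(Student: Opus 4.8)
The plan is to analyze a $2$-thread $[y_1,x,y_2]$ together with its apex $z$, say with $y_1z,zy_2\in A(D)$ (the other orientation is symmetric), and to extract strong structure from the fact that the arcs incident with $x$ and the triangle-like configuration through $z$ are highly constrained. First I would record the basic facts: by Claim~\ref{claim:component_of_B(D)} we have $d(x)=4$, so $N^+(x)$ and $N^-(x)$ are determined by which of $y_1,y_2$ is the head/tail of each digon-arc; in particular $d^+(y_1)=d^-(y_2)=2$ or similar, so the arcs $y_1z$ and $zy_2$ behave like chelou arcs unless $z$ is incident to a digon.

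For~\ref{no-apex-deg4} (that $z$ has degree at least $5$): suppose $d(z)=4$. Then $z\in\Vquatre$ (or $z$ is incident to two digons, excluded by Claim~\ref{claim:degree_4_not_3_neighbours} since $y_1z,zy_2$ would force three neighbours — here I would check the neighbour count carefully). With $z,y_1,y_2$ all of degree $4$ and the arcs $y_1z,zy_2$ present, together with the digons $[y_1,x],[x,y_2]$, one gets either a bidirected-odd-cycle-minus-an-arc on $\{x,y_1,z,y_2\}$ (if $zy_1$ or $y_2z$ also happens to be present, contradicting Claim~\ref{claim:cycle_minus_one_arc}), or else the arcs $y_1z$ and $zy_2$ are chelou, whence by Claim~\ref{claim:degree_4_not_3_neighbours}/Claim~\ref{claim:chelou_arc_one} and Claim~\ref{claim:path_nice} one reaches a contradiction; alternatively invoke Claim~\ref{claim:degree_4_and_out_neighbours_dominated} with the triangle through $z$ and one of $y_1,y_2$. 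So $d(z)\ge 5$.

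For~\ref{apex-no-digon} (that $z$ is incident to no digon): if $z$ were incident to a digon, then since $d(z)\ge 5$ by~\ref{no-apex-deg4}, by Claim~\ref{claim:degree_prop_after_discharging}~\ref{claim:digons_deg4} $z$ is incident to exactly one digon, and by Claim~\ref{claim:component_of_B(D)} that forces $d(z)=5$ and the digon to be isolated. I would then try to $2$-colour: remove $x$, merge $y_1,y_2$ into $y$ (as in the proof of Claim~\ref{claim:2thread}), and argue — using that $z$ now has a digon-neighbour and only two non-digon neighbours besides $y_1,y_2$ — that the resulting digraph is $2$-dicolourable, contradicting Claim~\ref{claim:2thread}'s setup; or more directly exhibit that the local structure around $z$ contains a forbidden configuration (a purse or basket, via Claim~\ref{claim:chelou_arc_one} applied to $y_1z$ or $zy_2$ which are now genuinely chelou since $y_1,y_2\in\Vquatre$), contradicting Claim~\ref{claim:no_purse_handcuff_crib_basket}. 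Then~\ref{apex-2/3} follows: with $z$ incident to no digon and $d(z)\ge 5$, if $d(z)=5$ then $z$ is a standard neighbour of $y_1$ and of $y_2$ (both in-degree or out-degree $2$), has no chelou neighbour among $y_1,y_2$ only if... — the cleanest route is to verify $z$ satisfies the hypotheses of Claim~\ref{claim:result_discharging}~\ref{deg5-nodigon-particular-case}, giving $w^*(z)\le-\tfrac23$; if $d(z)=6$ use~\ref{deg6-in2} or~\ref{deg6-nodigon} (note $z$ sends $\tfrac13$ to several binary neighbours, including toward $y_1,y_2$), which gives $w^*(z)\le-\tfrac23$ comfortably. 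Finally~\ref{apex-of-1} and the concluding "at most one $2$-thread" follow from~\ref{apex-2/3} by a global charge count: by Claim~\ref{claim:epsilon_correct} and Claim~\ref{claim:result_discharging}~\ref{cas1}, $\rho(D)\le\Sigma(D)-\gamma(D)\le w^*(z)\le-\tfrac23$ is not yet below $-1$, so I would instead observe that two distinct $2$-threads would yield two distinct apexes $z,z'$ each with $w^*\le-\tfrac23$ (if they are distinct) contributing $\Sigma(D)\le-\tfrac43<-1$, or a single apex $z$ of two $2$-threads would get extra charge pushed away making $w^*(z)<-1$ (re-running the discharging count for $z$ with two threads worth of binary neighbours); either way $\rho(D)<-1$, a contradiction.

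The main obstacle I expect is~\ref{apex-no-digon}: ruling out a digon at $z$ requires carefully combining the thread's digons $[y_1,x],[x,y_2]$ with the triangle $y_1zy_2$ (or $y_2zy_1$) and $z$'s own digon to produce either a $2$-dicolouring or one of the named forbidden subdigraphs, and the case analysis on the orientations of the arcs at $z$ and on whether $y_1,y_2$ lie in $\Vquatre$ is delicate. Once~\ref{apex-no-digon} is in hand, \ref{apex-2/3} and the counting consequence~\ref{apex-of-1} are routine applications of the discharging claims already proved.
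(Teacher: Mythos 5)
Your overall architecture (discharging bounds for the apex plus structural/colouring arguments, and the final count that two apices with $w^*\leq-\frac{2}{3}$ force $\Sigma(D)<-1$) matches the paper, but several of the concrete steps you propose would fail. For~\ref{no-apex-deg4}: the endpoints $y_1,y_2$ of the $2$-thread are \emph{not} adjacent (otherwise Claim~\ref{claim:cycle_minus_one_arc} is violated), so there is no triangle through $z$ and one of $y_1,y_2$, and Claim~\ref{claim:degree_4_and_out_neighbours_dominated} does not apply; moreover $y_1z$ and $zy_2$ need not be chelou, since $y_1,y_2$ are incident to digons and may have out-/in-degree $3$, so the route through Claims~\ref{claim:chelou_arc_one} and~\ref{claim:path_nice} is not available either. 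The paper instead takes a $2$-dicolouring of $(D-z)\cup y_1y_2$ (Claim~\ref{claim:2colouring_minus_v_plus_e}) and extends it to $z$ by a short case analysis on the colours of $z$'s other two neighbours.

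The larger gap is in~\ref{apex-no-digon} and~\ref{apex-2/3}. Your ``cleanest route'' for~\ref{apex-2/3} --- invoking Claim~\ref{claim:result_discharging}~\ref{deg5-nodigon-particular-case} --- requires $z$ to have \emph{no} chelou neighbour, and this can fail: an in-neighbour $y_1^-$ of $z$ with $d^+(y_1^-)=2$ is a nice chelou neighbour. The paper's proof of the statement ``$z$ in no digon implies $w^*(z)\leq-\frac{2}{3}$'' must handle exactly this case, using Claims~\ref{claim:chelou_arc_one} and~\ref{claim:no_A_conf} to force a B$^+$-configuration on $y_1^-z$, which in turn forces $d^-(y_1^+)\geq 3$ so that $z$ still sends $\frac{1}{2}$ to $y_1^+$; the second in-neighbour being a nice chelou neighbour is excluded because the B$^+$-configuration would create a bidirected $3$-cycle minus an arc inside the other $2$-thread. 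Your ellipsis at precisely this point is where the real work lies. For~\ref{apex-no-digon} itself, neither of your two suggested routes is developed enough: the paper's argument is a propagation along a chain of $2$-threads (the digon at $z$ lies in a $2$-thread whose apex $v$ must itself carry a digon, else the $-\frac{2}{3}$ bound kills the charge; iterating and using~\ref{apex-of-1} forces $v\in\{y_1,y_2\}$), finished by an explicit recolouring of $D$. Note also that the paper proves~\ref{apex-of-1} \emph{before}~\ref{apex-no-digon} by its own mix of charge counting and the B$^+$-configuration contradiction, and~\ref{apex-no-digon} uses it; your plan to deduce~\ref{apex-of-1} last from~\ref{apex-2/3} would need to be restructured accordingly.
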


\begin{proofclaim}
Let $[y^-_1,x_1,y^+_1]$ be a $2$-thread such that $y^-_1z, zy^+_1 \in A(D)$. 
\medskip

\ref{no-apex-deg4} Suppose for a contradiction that $z$ has degree $4$. Then $z \in \Vquatre$ for otherwise, by Claim~\ref{claim:degree_4_not_3_neighbours}, both $[z,y_1^-]$ and  $[z,y_1^+]$ must be digons of $D$, and thus $B(D)$ contains a cycle, a contradiction to Lemma~\ref{lem:forestB}. 
Let us denote by $y^-_2$ (resp. $y^+_2$) the in-neighbour (resp. out-neighbour) of $z$ distinct from $y^-_1$ (resp. $y^+_1$).

Consider the digraph $D' = (D - z) \cup y^-_1 y^+_1$ which has a 
$2$-dicolouring $\phi$ by Claim~\ref{claim:2colouring_minus_v_plus_e}.
Without loss of generality, we may assume that $\phi(x_1)=1$.
Thus $\phi(y^-_1)=\phi(y^+_1)=2$.
If $\phi(y^-_2)=2$ or $\phi(y^+_2)=2$, then setting $\phi(z) =1$ yields $2$-dicolouring of $D$, a contradiction.
Thus  $\phi(y^-_2)=\phi(y^+_2)=1$. Set $\phi(z) =2$.
Then there is no monochromatic dicycle $C$ in $D$, for otherwise it must contain $(y^-_1,z,y^+_1)$ and thus the dicycle $C'$ obtained from $C$ by replacing this subpath by $(y^-_1,y^+_1)$ is a monochromatic dicycle in $D'$.
Hence $\phi$ is  a $2$-dicolouring of $D$, a contradiction. This proves~\ref{no-apex-deg4}. 
\medskip

So we may assume from now on that $z$ has degree at least $5$, and by Claim~\ref{claim:degree_prop_after_discharging}~\ref{claim:deg_in_456}, $z$ has degree $5$ or $6$. Without loss of generality, we may assume that $d^+(z)=3$ and $d^-(z) \in \{2,3\}$.
We denote by $y^+_1, y^+_2,y^+_3$ the out-neighbours of $z$ and if $d^-(z) =2$ we denote by $y^-_1$, $y^-_2$ the in-neighbours of $z$.
\medskip

\ref{apex-of-1} Suppose for a contradiction that $z$ is the apex of a $2$-thread distinct from $[y^-_1,x_1,y^+_1]$. By Claim~\ref{claim:component_of_B(D)}, this $2$-thread is disjoint from $[y^-_1,x_1,y^+_1]$. So we may assume without loss of generality that it is $[y^-_2,x_2,y^+_2]$. 
\medskip

If $z$ is incident to a digon, then  $d(z) = 6$ and each of its simple neighbours are incident to a digon, so it receives no charge.
Thus $w^*(z) \leq w(z) = -\frac{5}{2} < -1$, a contradiction. 
Henceforth $z$ is incident to no digon.
\medskip 

If  $z$ has degree $6$,  then, by (R2),  its receives  $\frac{1}{3}$ from its neighbours incident to no digon, which are only two here.
Thus $w^*(z) \leq w(z) + 2\times \frac{1}{3} = -\frac{4}{3}< -1$, a contradiction.
Henceforth $z$ has degree $5$, and in particular $d^-(z) = 2$.
\medskip

If both $y^-_1$ and $y^-_2$ have out-degree at least $3$, then $z$ is a simple standard neighbour of both $y^-_1$ and $y^+_1$ and since they are both incident to a digon, $z$ sends $\frac{1}{2}$ to each of them by (R1).
Moreover, $z$ has at most one standard neighbour incident to no digon, namely $y^+_3$, and so it receives at most $\frac{1}{3}$ by (R2).
Hence $w^*(z) \leq w(z) -2\times\frac{1}{2} + \frac{1}{3} = -\frac{7}{6} < -1$,
a contradiction.

So one of  $y^-_1$ or $y^-_2$, say $y^-_1$, has out-degree $2$. In particular, since $z$ is incident to no digon and $d^-(z) = 2$, $y^-_1z$ is an out-chelou arc.  
Moreover, since $y_1^-$ is incident to a digon, $y_1^-$ is nice. 
So, by Claim~\ref{claim:chelou_arc_one}~\ref{it:nice}, we have either
an A$^+$-configuration or a B$^+$-configuration on $y^-_1z$.
By Claim~\ref{claim:no_A_conf}, it must be a B$^+$-configuration.
But then $y^+_2y^-_2 \in A(D)$, and so $D[\{y^-_2,x_2,y^+_2\}]$ contains a bidirected $3$-cycle minus one arc, a contradiction to Claim~\ref{claim:cycle_minus_one_arc}.
This proves~\ref{apex-of-1}.
\medskip 

We now prove the following assertion:
\medskip

\noindent($\star$) If $z$ is incident to no digon, then $w^*(z) \leq -\frac{2}{3}$. 
\medskip

\noindent{\it Proof of ($\star$)}: Assume that $z$ is incident to no digon. 

If $z$ has degree $6$, then, by (R2),  $z$ receives nothing from $y_1^-$ and $y_1^+$ since these two vertices are incident to a digon, and thus receives $\frac{1}{3}$  from at most four neighbours.  So $w^*(z)\leq w(z) + 4\times\frac{1}{3} = -\frac{2}{3}$. 
So we may assume that $z$ has degree $5$. 
\medskip

If $y^-_2$ is a nice chelou neighbour of $z$, then by Claims~\ref{claim:chelou_arc_one}~\ref{it:nice}
and~\ref{claim:no_A_conf}, there is a B$^+$-configuration on $y^-_2z$. In particular $y^+_1y^-_1 \in A(D)$, so
$D[\{y^-_1,x_1,y^+_1\}]$ contains a bidirected $3$-cycle
minus one arc, a contradiction to Claim~\ref{claim:cycle_minus_one_arc}.
Henceforth $y^-_2$ is not a nice chelou neighbour of $z$.
If $y^-_2$ is  a bad chelou neighbour of $z$, then $z$ sends $\frac{1}{3}$
to $y^-_2$ by (R3). Otherwise, $z$ is a standard neighbour of $y^-_2$ and
is incident to no digon, so $z$ sends at least $\frac{1}{3}$ to $y^-_2$ by 
(R1) or (R2). In both cases, $z$ sends at least $\frac{1}{3}$ to $y^-_2$.
\medskip

If $d^+(y_1^-) \geq 3$, then $z$ has a standard neighbour incident to a digon (namely $y^+_1$), and is a standard neighbour of a vertex incident to a digon (namely $y^-_1$). 
So Claim~\ref{claim:result_discharging}~\ref{deg5-nodigon-particular-case} applied with $\ell_2, \ell_3 \geq 1$ and we have $w^*(z) \leq -\frac{1}{2} - \frac{1}{3} \leq -\frac{2}{3}$.
Henceforth, we may assume that $d^+(y^-_1) = 2$.
Moreover, since $z$ is incident to no digon,  $y_1^-z$ is an out-chelou arc, and since $d(y_1^-) \geq 5$, we get that $y_1^-$ is a nice chelou neighbour of $z$.  
\medskip

So, by Claims~\ref{claim:chelou_arc_one}~\ref{it:nice} and~\ref{claim:no_A_conf},
there is a B$^+$-configuration on $y^-_1z$. 
In particular, $d^-(y^+_1) \geq 3$, so $z$ is a simple standard 
neighbour of $y^+_1$ and thus $z$ sends $\frac{1}{2}$ to $y_1^+$ by (R1). 

% Moreover, it implies that $y_1^+y_2^-, y_2^+y_2^-,y_3^+y_2^-$ are arcs of $D$, so $d^-(y_2^-) \geq 3$, which implies that $d(y_2^-) \geq 5$, and thus $y_2^-$ is not a bad chelou neighbour of $z$. 
% Since we already proved that  $y_2^-$ is not a nice chelou neighbour of $z$, we deduce that $y_2^-$ is not a chelou neighbour of $z$. 
% So $d^+(y_2^-) \geq 3$ and thus $z$ is a simple standard  neighbour of $y^-_2$. 
% So $z$ sends at least $\frac{1}{3}$ to $y_2^-$ by (R1) or (R2).
% \CR{We don't need this last paragraph, because as we know $y_2^-$ is not a bad chelou neighbour, we always have at least $\frac{1}{3}$ which goes from $z$ to $y_2^-$ by either (R1), (R2) or (R3). But maybe it's better to keep it.}

Finally, by (R2) $z$ receives $\frac{1}{3}$ from each of its standard neighbour incident to no  digon which are at most $2$ (since $y_1^+$ is incident to a digon). 
So $w^*(v) \leq w(v) -\frac{1}{2} - \frac{1}{3} + \frac{2}{3} \leq  - \frac{2}{3}$. 
This proves ($\star$). 

\medskip

\ref{apex-no-digon} Suppose for a contradiction that $z$ is incident to a digon. 

If $z$ has degree $6$, then $z$ has at most two simple standard neighbours incident to no digon, so the total charge it receives is at most $2\times \frac{1}{2}$ by (R1) and thus $w^*(z) \leq w(z) + 1 = -\frac{3}{2}<-1$, a contradiction. 
Henceforth $z$ has degree $5$, and is incident to a unique digon by Lemma~\ref{lemma:vertex_no_almost_only_digons}.

Now $z$ has at most one simple standard neighbour incident 
to no digon and so it receives at most once $\frac{1}{2}$ by (R1).
As a consequence $w^*(z) \leq w(z) + \frac{1}{2} = -\frac{1}{2}$.

Now observe that $D$ has no isolated digon as otherwise Claim~\ref{claim:epsilon_correct} gives
$-1 \leq \rho(D) \leq \Sigma(D) -1 \leq - \frac{1}{2} -1 $,
a contradiction. Hence $z$ is in a $2$ thread $[z,x',z']$,  which by Claim~\ref{claim:2thread} has an apex, say $v$.

By Claim~\ref{claim:component_of_B(D)}, $\{y_1^-,x_1,y_1^+\} \cap \{z,x',z'\} = \emptyset$. 

If $v$ is in no digon, then by ($\star$),  $w^*(v) \leq -\frac{2}{3}$ and so
$\Sigma(D) \leq w^*(z) + w^*(v) \leq - \frac{7}{6} < -1$, a contradiction.
Hence $v$ is incident to a digon and thus it is in a $2$-thread because there is no isolated digon.
If this $2$-thread is not  $[y^-_1,x_1,y^+_1]$ (i.e. $v \notin \{y_1^-, y_2^-\}$), let $u$ be its apex. By~\ref{apex-of-1}, $u$, $v$ and $z$ are pairwise distinct.
Moreover, similarly to $z$, $w^*(v)\leq -\frac{1}{2}$, and similarly to $v$, $u$ is incident to a digon and $w^*(u)\leq -\frac{1}{2}$. Thus
$ \Sigma(D) \leq w^*(z) + w^*(v) +w^*(u) \leq - \frac{3}{2} < -1$, a contradiction.
 Henceforth $v \in \{y_1^-, y_2^-\}$ and we may assume without loss of generality that $v=y^-_1$ and $z'y^-_1 \in A(D)$.

By Claim~\ref{claim:2colouring_minus_v_plus_e}, $(D-z)\cup y^-_1z'$
has a  $2$-dicolouring $\phi$. 
Without loss of generality $\phi(z')=\phi(x_1)=1$
and $\phi(y^-_1)=\phi(y^+_1)=\phi(x')=2$.
We set $\phi(z)=1$.
There is no directed cycle coloured $1$ containing $z$, because three of the four neighbours
of $z$, namely $x'$, $y^-_1$ and $y^+_1$, are coloured $2$. Thus $\phi$ is a  $2$-dicolouring of $D$, a contradiction.
This proves~\ref{apex-no-digon}. 
\medskip

\ref{apex-2/3} Follows directly from~\ref{apex-no-digon} and ($\star$). 

\medskip

To see that $D$ has at most one $2$-thread, suppose for contradiction that it has at least two $2$-threads with apices $z$ and $z'$, which are distinct by~\ref{apex-of-1}.
Then by~\ref{apex-2/3}, $w^*(z),w^*(z') \leq -\frac{2}{3}$ and therefore $\rho(D) \leq \Sigma(D) \leq w^*(z) + w^*(z') \leq -\frac{4}{3}<-1$, a contradiction.
\end{proofclaim}

\begin{claim}\label{claim:no6digon}
A vertex of degree $6$ is incident to no digon. 
\end{claim}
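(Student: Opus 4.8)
The plan is to derive a contradiction from the existence of a degree-$6$ vertex $v$ incident to a digon, by exhibiting two vertices whose final charges are bounded well below zero, forcing $\Sigma(D)$ (and hence $\rho(D)$) below $-1$.

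First I would locate $v$ inside the digon graph $B(D)$. By Claim~\ref{claim:degree_prop_after_discharging}~\ref{claim:digons_deg4}, $v$ is incident to exactly one digon, so $v$ is a leaf of its component $C$ of $B(D)$. By Claim~\ref{claim:component_of_B(D)}, $C$ is a path of length at most $2$; it cannot have length $1$ (else $v$ would have degree $5$), so $C=[v,x,u]$ with internal vertex $x$ of degree $4$ in $D$. Hence $[v,x,u]$ is a $2$-thread, and by Claim~\ref{claim:2thread} it has an apex $z$.

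Next I would collect charge bounds. By Claim~\ref{claim:apex}~\ref{no-apex-deg4} and~\ref{apex-no-digon}, $z$ has degree at least $5$ and is incident to no digon; this in particular separates $z$ from $v,x,u$ (both $v$ and $u$ lie on a digon while $z$ does not, and $d(x)=4<5\le d(z)$). Claim~\ref{claim:apex}~\ref{apex-2/3} gives $w^*(z)\le -\tfrac23$, and Claim~\ref{claim:result_discharging}~\ref{deg6-1digon} gives $w^*(v)\le -\tfrac12$. Since $v\neq z$ and every other vertex has $w^*\le 0$ by Claim~\ref{claim:result_discharging}~\ref{cas1}, we get $\Sigma(D)\le w^*(v)+w^*(z)\le -\tfrac76$. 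Then Claim~\ref{claim:epsilon_correct} yields $\rho(D)\le \Sigma(D)-\gamma(D)\le -\tfrac76<-1$, contradicting $\rho(D)\ge -1$.

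There is essentially no obstacle here: all the heavy lifting was already done in Claims~\ref{claim:apex} and~\ref{claim:result_discharging}. The only point requiring a line of care is verifying $z\notin\{v,x,u\}$ so that the two charge estimates $w^*(v)$ and $w^*(z)$ may be added; this follows immediately from the degree and digon-incidence constraints recalled above.
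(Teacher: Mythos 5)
Your proof is correct and follows essentially the same route as the paper's: bound $w^*(v)\leq -\frac{1}{2}$ via Claim~\ref{claim:result_discharging}~\ref{deg6-1digon}, locate the apex $z$ of the $2$-thread containing $v$'s digon, and add $w^*(z)\leq -\frac{2}{3}$ from Claim~\ref{claim:apex} to push $\Sigma(D)$, hence $\rho(D)$, below $-1$. The only (harmless) difference is that you exclude the isolated-digon case by citing Claim~\ref{claim:component_of_B(D)}, whereas the paper disposes of it directly through the $\gamma(D)\geq 1$ term in Claim~\ref{claim:epsilon_correct}; both are valid.
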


\begin{proofclaim}
Suppose for a contradiction that $v$ is a vertex of degree $6$ incident to a digon. 
By Claim~\ref{claim:degree_prop_after_discharging}~\ref{claim:digons_deg4}, $v$ is incident to exactly one digon. 
By Claim~\ref{claim:result_discharging}~\ref{deg6-1digon}, we have
$w^*(v) \leq -\frac{1}{2}$.

If $v$ is in an isolated digon, then by Claim~\ref{claim:epsilon_correct}, $-1 \leq \rho(D) \leq \Sigma(D) - 1 \leq w^*(v) -1 \leq  -\frac{3}{2} <-1$,
a contradiction.
Henceforth $v$ is in a $2$-thread. By Claims~\ref{claim:2thread}, this $2$-thread has an apex $z$ (which is distinct from $v$) and $w^*(z) \leq -\frac{2}{3}$ by Claim~\ref{claim:apex}.
Thus $-1 \leq \Sigma(D) \leq w^*(z) + w^*(v) \leq -\frac{1}{2}-\frac{2}{3}
=-\frac{7}{6} < -1$, a contradiction.
\end{proofclaim}

\begin{claim}\label{claim:no_B_conf}
$D$ contains no B$^+$-configuration on an out-chelou arc $xy$ such that $x$ is nice.
\end{claim}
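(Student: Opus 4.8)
The plan is to argue by contradiction. Suppose $D$ contains a B$^+$-configuration $H$ on an out-chelou arc $xy$ with $x$ nice, and keep the notation $z,y_1,y_2,y_3,C=(c_1,c_2,c_3,c_1),P_1,P_2,P_3$ of the definition; I will deduce $\rho(D)<-1$, contradicting $\rho(D)\geq -1$. First I would pin down degrees. As $xy$ is out-chelou, $d^-(y)=2$ and $y$ lies in no digon, and with $d^+(y)\geq 3$ (because of $yy_1,yy_2,yy_3$) Claim~\ref{claim:degree_prop_after_discharging} forces $d(y)=5$, so $N^-(y)=\{x,z\}$ and $N^+(y)=\{y_1,y_2,y_3\}$. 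From $d^-(z)\geq 3$, Lemma~\ref{lemma:degeneracy} and Claim~\ref{claim:degree_prop_after_discharging} give $d(z)\in\{5,6\}$, $d^-(z)=3$, $N^-(z)=\{y_1,y_2,y_3\}$; applying Claim~\ref{claim:nice_chelou_neighbour_of_degree_5} to $y$ (degree $5$, no digon, with nice chelou neighbour $x$) shows $z$ is not a nice chelou neighbour of $y$, hence, not being bad, $zy$ is not chelou, forcing $d^+(z)=3$, $d(z)=6$, and then $z$ lies in no digon by Claim~\ref{claim:no6digon}. Finally $d^-(x)\neq 2$: otherwise $d(x)=4$, and either $x\in\Vquatre$ is bad by Claim~\ref{claim:path_nice} (as $y$ is a nice chelou neighbour of $x$), contradicting niceness, or $x$ lies in a digon and then $n(x)=2$ by Claim~\ref{claim:degree_4_not_3_neighbours}, forcing two digons at $x$ although the arc $xy$ lies in none. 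So $d(x)=5$, $d^+(x)=2$, $d^-(x)=3$.

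Next I would remove digons. Each $P_i$ has even length, so by Claim~\ref{claim:component_of_B(D)} has length $0$ or $2$, and by Claim~\ref{claim:apex} there is at most one $2$-thread. If $D$ has a $2$-thread, its apex $a$ (Claim~\ref{claim:2thread}) satisfies $w^*(a)\leq -\tfrac23$ (Claim~\ref{claim:apex}) and, as a routine check using Claim~\ref{claim:no6digon} shows, $a\notin\{y,z\}$. If some $P_i$ — say $P_1=[y_1,m_1,c_1]$ — has length $2$, then $y_1$ lies in a digon and also has arcs $yy_1,y_1z$, so $n(y_1)\geq 3$ and hence, by Claims~\ref{claim:degree_4_not_3_neighbours} and~\ref{claim:no6digon}, $d(y_1)=5$ with one digon and $\{d^-(y_1),d^+(y_1)\}=\{2,3\}$; if $d^-(y_1)=3$ then $y$ is a simple standard neighbour of $y_1$ and sends it $\tfrac12$ by (R1), so $w^*(y)\leq -\tfrac23$; if $d^+(y_1)=3$ then $z$ sends $y_1$ the amount $\tfrac12$ by (R1), so $w^*(z)\leq -\tfrac12$ by Claim~\ref{claim:result_discharging}~\ref{deg6-nodigon}. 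In the first case $w^*(y)+w^*(a)<-1$, in the second $w^*(y)+w^*(z)+w^*(a)\leq -\tfrac16-\tfrac12-\tfrac23<-1$. If instead all $P_i$ have length $0$ and $D$ has an isolated digon, then $\gamma(D)\geq 1$ and $\Sigma(D)\leq w^*(y)\leq -\tfrac16<0$ by Claim~\ref{claim:result_discharging}~\ref{deg5-nodigon}. In every such situation $\rho(D)\leq\Sigma(D)-\gamma(D)<-1$ by Claim~\ref{claim:epsilon_correct}, a contradiction. Hence $B(D)$ has no component of size $\geq 2$: $D$ is an oriented graph, $\pi(D)=\gamma(D)=0$, every $P_i$ has length $0$, and $(y_1,y_2,y_3,y_1)$ is a directed $3$-cycle with common in-neighbour $y$ and common out-neighbour $z$.

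Finally I would finish by discharging in this oriented case. Here $w^*(x),w^*(y)\leq -\tfrac16$ (degree $5$, no digon), and $w^*(z)\leq -\tfrac13$: the dual of Claim~\ref{claim:not_wheel_3} gives some $y_j$ with $d^+(y_j)\geq 3$ (hence $d(y_j)\geq 5$), to which $z$ sends $\tfrac13$, and one applies Claim~\ref{claim:result_discharging}~\ref{deg6-nodigon}. By Claim~\ref{claim:not_wheel_3} and its dual, some $y_i$ has $d^-(y_i)\geq 3$ and some $y_j$ has $d^+(y_j)\geq 3$. If a single $y_i$ has both, then $d(y_i)=6$, $y_i$ sends $\tfrac13$ to each of $y$ and $z$, so $w^*(y_i)\leq -\tfrac23$ and $w^*(x)+w^*(y)+w^*(z)+w^*(y_i)\leq -\tfrac43<-1$. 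Otherwise there are distinct $y_a$ with $d^+(y_a)=2$, $d^-(y_a)\geq 3$ and $y_b$ with $d^-(y_b)=2$, $d^+(y_b)\geq 3$, both of degree $5$; let $y_c$ be the third out-neighbour of $y$. If $d(y_c)\geq 5$, then $w^*(y_c)\leq -\tfrac16$ and $x,y,z,y_a,y_b,y_c$ carry total charge at most $-\tfrac76$. If $d(y_c)=4$, then using $d^+(y_a)=2=d^-(y_b)$ one checks that either both cycle-arcs at $y_c$ are chelou or neither is, and the arcs $yy_c,y_cz$ are never chelou (as $d^+(y)=d^-(z)=3$), so $y_c$ has $0$ or $\geq 2$ chelou neighbours and $w^*(y_c)\leq -\tfrac13$ by Claim~\ref{claim:result_discharging}~\ref{deg4-nodigon-2chelou} or~\ref{deg4-nodigon-nochelou}; then $x,y,z,y_a,y_b,y_c$ carry total charge at most $-\tfrac43$. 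Since every other vertex has non-positive charge by Claim~\ref{claim:result_discharging}~\ref{cas1}, in all cases $\Sigma(D)<-1$, so $\rho(D)\leq\Sigma(D)-\gamma(D)=\Sigma(D)<-1$, contradicting $\rho(D)\geq -1$.

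The main obstacle is the charge bookkeeping: one must show that a handful of named vertices together lose strictly more than one unit of charge, while the naive estimates give exactly $-1$ in two borderline situations — the $2$-thread endpoint $y_1$ and the balanced third vertex $y_c$ — so in each case the extra sixth of a unit has to be extracted, either from $y$ or $z$ sending $\tfrac12$ rather than $\tfrac13$ across an edge incident to a digon-vertex, or from the structural fact that $y_c$ cannot have exactly one chelou neighbour.
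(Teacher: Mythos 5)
Your overall strategy (pin down the degrees of $x,y,z$, reduce to the digon-free situation, then discharge on a handful of named vertices) matches the paper's, and your final case analysis in the oriented situation is correct and in places slicker than the paper's (using $w^*(x)\leq-\frac16$ and the ``$0$ or $\geq 2$ chelou neighbours'' dichotomy for $y_c$). However, the step where you conclude that $D$ is an oriented graph has two genuine gaps. First, the assertion that the apex $a$ of the unique $2$-thread satisfies $a\notin\{y,z\}$ is not a routine consequence of Claim~\ref{claim:no6digon}: that claim only says a degree-$6$ vertex lies in no digon, whereas an apex need not lie in any digon — it only needs arcs to and from the thread's end-vertices, and nothing you have established prevents $z$ from playing that role (e.g.\ $z c_1\in A(D)$ is perfectly possible), in which case your second sub-count $w^*(y)+w^*(z)+w^*(a)$ double-counts $z$. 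Second, and more seriously, your dichotomy ``some $P_i$ has length $2$'' versus ``all $P_i$ have length $0$ and $D$ has an isolated digon'' does not exhaust the ways $D$ can contain a digon: all $P_i$ may have length $0$ while $D$ still contains a $2$-thread, either disjoint from the configuration or arising from a digon of the form $[y_i,x]$ or $[y_i,y_j]$ (the configuration is only a subdigraph, so such extra digons are not excluded by its definition). In that missing case $\gamma(D)$ may be $0$ and the charges you exhibit sum to at most $-\frac{2}{3}-\frac16-\frac13$ only under the unproved assumption that the apex is distinct from $y$ and $z$; since the whole final count relies on $D$ being digon-free (e.g.\ $w^*(x)\leq-\frac16$ needs $x$ incident to no digon), the proof does not close.

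For comparison, the paper never tries to show $D$ is digon-free globally. It first shows $V(H)\neq V(D)$, then rules out a $2$-thread inside $H$ by combining the apex charge with a potential computation on $V(H)\setminus\{x\}$ against Claim~\ref{claim:high_potential_number_one}, and finally shows only that $y_1,y_2,y_3$ carry no digons at all: digons leaving $H$ are excluded by the degree bound plus Claim~\ref{claim:no6digon}, and digons inside $H$ (including $[y_i,x]$ and $[y_i,y_j]$) are excluded by potential arguments via Lemma~\ref{lemma:potential_subgraph_not_induced} and Claim~\ref{claim:high_potential_number_one}. You would need some substitute for these potential arguments (or a corrected, exhaustive case analysis on where the digon sits, together with an honest treatment of the possibility $a\in\{y,z\}$) to make your route work.
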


\begin{proofclaim} 
Suppose for a contradiction that there is a B$^+$-configuration $H$ on a
out-chelou arc $xy$ such that $x$ is nice. Let $N^+(y) = \{y_1, y_2, y_3\}$ and let $z$ be the in-neighbour of $y$ distinct from $z$.
In particular, $y_1z,y_2z,y_3z \in A(D)$ and so $d^-(z) \geq 3$.
See Figure~\ref{subfig:3_wheel}.

Since $xy$ is chelou, $d^-(y) = 2$ and thus, by Claim~\ref{claim:degree_prop_after_discharging}~\ref{claim:deg6=3-3}, $d(y) = 5$. 
So $w^*(y) \leq - \frac{1}{6}$ by Claim~\ref{claim:result_discharging}~\ref{deg5-nodigon}.

If $zy$ is chelou, then  $z$ is a nice chelou neighbour of $y$ because 
$d(z) \geq 5$, a contradiction to
Claim~\ref{claim:nice_chelou_neighbour_of_degree_5}.  
So $zy$ is not chelou, and thus $d^+(z) = 3$ and  $d(z) = 6$.

Observe that if $H$ spans $D$, then $A(D) \setminus A(H)$ contains
at least $4$ arcs because $d^+_D(z) = 3$ and $d^-_D(x) \geq 2$.
Hence $\rho(D) \leq \rho(H) - 4 \times 3 \leq -2$, a contradiction.
Therefore $V(H) \neq V(D)$.

Assume that some $y_i$, say $y_1$, is in a $2$-thread $[y_1,x,y'_1]$ in the B$^+$-configuration $H$.
We have $d(z) = 6$, and $z$ has $y_1$ as a simple standard neighbour  which is incident to a digon. 
So we may apply Claim~\ref{claim:result_discharging}~\ref{deg6-nodigon} on $z$ with $\ell_1 \geq 1$, so  $w^*(z) \leq - \frac{1}{3}$. 
By Claim~\ref{claim:2thread}, $[y_1,x,y'_1]$ has an apex $v$, and $w^*(v) \leq - \frac{2}{3}$ by
Claim~\ref{claim:apex}. 
If $v\notin \{y,z\}$, then $-1 \leq \Sigma(D) \leq
w^*(v) + w^*(z) + w^*(y) \leq -\frac{2}{3} -\frac{1}{3} - \frac{1}{6} =-\frac{7}{6} < -1$, a contradiction. Henceforth, $v\in \{y,z\}$. 
In particular, there is an arc between $\{y,z\}$ and $y'_1$. Set $R=V(H)\setminus \{x\}$. Then $D[R]$ has at least as many arcs as $H$ and one vertex less. Thus $\rho(R) \leq \rho(H) -7 =2$ by Claim~\ref{claim:B+pot}, a contradiction to Claim~\ref{claim:high_potential_number_one}. Henceforth, none of the $y_i$ belongs to a $2$-thread in $H$. 

\medskip

Hence the $y_i$ are incident to no digon in $H$, so $(y_1,y_2, y_3, y_1)$ is a directed $3$-cycle.
Moreover, every $y_i$ has degree at least $4$ in $H$, and because vertices
of degree $6$ in $D$ are incident to no digon by
Claim~\ref{claim:no6digon},
we deduce that $y_i$ is in no digon of the form $[y_i,u]$ in $D$ with $u \not\in V(H)$.

Furthermore, if $y_i$ is in a digon $[y_i,u]$ with $u \in V(H-x)$, then
$D[V(H-x)]$ has an arc more than $H-x$, and so by Lemma~\ref{lemma:potential_subgraph_not_induced} $\rho_D(V(H-x)) \leq \rho(H-x)-3 \leq 5 - 3 < 4$, contradicting Claim~\ref{claim:high_potential_number_one}.

Finally, if $[y_i,x]$ is a digon in $D$ for some $i \in [3]$, 
then $D[V(H)]$ has two more arcs than $H$, and so $\rho_D(V(H)) \leq 
\rho(H) - 2\times 3 \leq 9 - 6 < 4$, 
contradicting Claim~\ref{claim:high_potential_number_one} (because $H$ does not spans $D$).

Altogether, this proves that for every $i \in [3]$, $y_i$ is in no digon in $D$.

\medskip 

First suppose that some $y_i$, say $y_1$, has degree $6$. 
Then $y$ is a simple standard neighbour of $y_1$. 
Then by Claim~\ref{claim:result_discharging}~\ref{deg6-nodigon},
we get $w^*(z) \leq - \frac{1}{3}$ when applied on $z$ with $\ell_1+\ell_2 \geq 1$,
and also $w^*(y_1) \leq -\frac{2}{3}$ when applied on $y_1$ with $\ell_1 + \ell_2 \geq 2$ (because $y_1$ is a simple standard neighbour of both $y$ and $z$). 
Recall that $w^*(y) \leq -\frac{1}{6}$. Thus
$-1 \leq \Sigma(D) \leq -\frac{7}{6} < -1$, a contradiction.
Henceforth each $y_i$ has degree $4$ or $5$. 
\medskip

Assume now that some $y_i$, say $y_1$, has degree $4$.
Then by Claim~\ref{claim:ind-cycle} and its directional dual, one of $y_2,\ y_3$ has out-degree $3$ and the other has in-degree $3$. 
Hence both $y_2$ and $y_3$ have degree $5$. 
If $y_1$ has a chelou neighbour, then it must be a nice chelou neighbour (because all neighbours of $y_1$ has degree at least $5$) and thus, by Claim~\ref{claim:chelou_arc_one}~\ref{it:nice}, there is an $A$-configuration on $y$ (it cannot be a $B$-configuration since we assumed $d(y_1) = 4$). A contradiction to  Claim~\ref{claim:no_A_conf}. 
So $y_1$ has no chelou neighbour. 

Thus $d^-(y_2)=d^+(y_3)=3$ and by Claim~\ref{claim:result_discharging}%
~\ref{deg4-nodigon-nochelou} $w^*(y_1) \leq - \frac{1}{3}$.
Moreover, as $d^+(y_3)=3$, by Claim~\ref{claim:result_discharging}%
~\ref{deg6-nodigon} with $\ell_1 \geq 1$, $w^*(z) \leq -\frac{1}{3}$.
Finally, by Claim~\ref{claim:result_discharging}~\ref{deg5-nodigon},
$w^*(y),w^*(y_2),w^*(y_3) \leq -\frac{1}{6}$. 
Thus
$\rho(D) \leq \Sigma(D) \leq -\frac{1}{3}-\frac{1}{3} -
3\times\frac{1}{6} = -\frac{7}{6} < -1$, a contradiction.
Henceforth,  for $i\in [3]$, $d(y_i)=5$ and so $w^*(y_i) \leq -\frac{1}{6}$ by Claim~\ref{claim:result_discharging}~\ref{deg5-nodigon}.
\medskip  

If two of the $y_i$ have out-degree $3$, then they are simple standard neighbours of $z$, so  Claim~\ref{claim:result_discharging}~\ref{deg6-nodigon} applied on $z$ with $\ell_1 \geq 2$, so $w^*(z) \leq -\frac{2}{3}$. 
As a consequence $\Sigma(D)  \leq w^*(z)+
w^*(y_1) + w^*(y_2) + w^*(y_3) \leq -\frac{2}{3}- 3\times \frac{1}{6} = -\frac{7}{6} < -1$, a contradiction.
Hence at least two of the $y_i$, say $y_1,y_2$ have in-degree $3$.
If $y_3$ has out-degree $3$, then by Claim~\ref{claim:result_discharging}%
~\ref{deg6-nodigon} with $\ell_1 \geq 1$, $w^*(z) \leq -\frac{1}{3}$.
Moreover, $y$ receives $\frac{1}{3}$ from each of its three out-neighbours
by (R2), and sends $\frac{1}{3}$ to each of $z,y_1,y_2$ by (R2).
Hence $w^*(y) \leq w(y) +3\times\frac{1}{3} - 3\times\frac{1}{3} = -\frac{1}{2}$,
and  $\Sigma(D)  \leq w^*(y) + w^*(y_1) + w^*(y_2) + w^*(y_3) + w^*(z)
= -\frac{1}{2} - 3\times \frac{1}{6} - \frac{1}{3}
= -\frac{4}{3} < -1$, a contradiction.
Henceforth, for $i\in [3]$, $d^+(y_i)=2$. 

Now, the directed $3$-cycle $(y_1,y_2,y_3,y_1)$ contradicts the directional dual of Claim~\ref{claim:not_wheel_3}.
\end{proofclaim}

\begin{claim}\label{claim:chelou_only_in_V4}
All chelou arcs have both end-vertices in $\Vquatre$.
\end{claim}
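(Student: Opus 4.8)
The plan is to take an arbitrary chelou arc $xy$ and show that both $x$ and $y$ have degree $4$ and are incident to no digon. By directional duality (replacing $D$ by its converse $\overleftarrow{D}$, under which chelou arcs, degrees and digons are preserved) I may assume $xy$ is \emph{out}-chelou, so $d^+(x)=d^-(y)=2$ and $y$ is incident to no digon. The strategy is to rule out, one endpoint at a time, every way in which that endpoint could fail to lie in $\Vquatre$, relying on Claim~\ref{claim:chelou_arc_one} together with the absence of A- and B-configurations established in Claims~\ref{claim:no_A_conf} and~\ref{claim:no_B_conf}; no discharging argument is needed here.

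First I would show $x$ is not nice. If it were, Claim~\ref{claim:chelou_arc_one}~\ref{it:nice} applied to the out-chelou arc $xy$ would give an A$^+$-configuration or a B$^+$-configuration on $xy$; the first is forbidden by Claim~\ref{claim:no_A_conf}, the second (with $x$ nice) by Claim~\ref{claim:no_B_conf}. So $x$ is incident to the chelou arc $xy$ but is not nice. By the definition of nice, a vertex incident to a chelou arc with degree at least $5$ is nice, so (using Claim~\ref{claim:degree_prop_after_discharging}~\ref{claim:deg_in_456}) we get $d(x)=4$. Finally $x$ has no digon: otherwise, since $d(x)=4$, Claim~\ref{claim:degree_4_not_3_neighbours} forces $n(x)=2$, hence $x$ is incident to two digons and every arc at $x$ lies in a digon — impossible, since the arc $xy$ is not part of a digon ($y$ has no digon). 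Thus $x\in\Vquatre$.

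Next I would handle $y$. Since $d^-(y)=2$, Claim~\ref{claim:degree_prop_after_discharging}~\ref{claim:deg6=3-3} rules out $d(y)=6$, so $d(y)\in\{4,5\}$, and as $y$ has no digon it suffices to exclude $d(y)=5$. Assume $d(y)=5$; then $y$ is incident to the chelou arc $xy$ and has degree at least $5$, so $y$ is nice. Since $x\in\Vquatre$ has no digon, the arc $xy$ is also in-chelou. Applying the directional dual of Claim~\ref{claim:chelou_arc_one}~\ref{it:nice} to the in-chelou arc $xy$ with its nice head $y$ produces an A$^-$-configuration or a B$^-$-configuration on $xy$; the former contradicts Claim~\ref{claim:no_A_conf}, the latter contradicts the directional dual of Claim~\ref{claim:no_B_conf}. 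Hence $d(y)=4$ and, having no digon, $y\in\Vquatre$.

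The step I expect to require the most care is not any calculation but the duality bookkeeping: Claims~\ref{claim:chelou_arc_one}, \ref{claim:no_A_conf} and~\ref{claim:no_B_conf} are all phrased for out-chelou arcs with the \emph{tail} nice, so when bounding $d(y)$ one must pass to the in-chelou/converse form and track precisely which endpoint of $xy$ plays the role of the "nice tail" (here the head $y$, which is the tail of $yx$ in $\overleftarrow D$). Once that is set up correctly, the remaining arguments are immediate consequences of the degree restrictions in Claim~\ref{claim:degree_prop_after_discharging} and the nice/bad dichotomy.
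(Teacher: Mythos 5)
Your proposal is correct and follows essentially the same route as the paper's proof: rule out $x$ being nice via Claim~\ref{claim:chelou_arc_one}~\ref{it:nice} together with Claims~\ref{claim:no_A_conf} and~\ref{claim:no_B_conf} (so $x$ is bad, hence of degree $4$ and, by Claim~\ref{claim:degree_4_not_3_neighbours}, digon-free), then exclude $d(y)=5$ by observing that $xy$ would be an in-chelou arc with nice head and applying the directional duals of the same claims. The only difference is that you spell out the duality bookkeeping more explicitly than the paper does; the argument itself is identical.
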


\begin{proofclaim}
Let $xy$ be a chelou arc, and assume without loss of generality that it is out-chelou. So  $d^+(x) = d^-(y) = 2$ and $y$ is incident to no digon. 
By Claims~\ref{claim:chelou_arc_one}, \ref{claim:no_A_conf} and~\ref{claim:no_B_conf}, $x$ must be a bad vertex. 
In particular $d(x) = 4$ and since $x$ is incident to at least one simple arc (namely $xy$), $x$ cannot be incident to a digon  by Claim~\ref{claim:degree_4_not_3_neighbours} and thus $x \in \Vquatre$. 
Similarly, if $d^+(y) = 3$, then $y$ is nice and since $x$ is incident to no digon, $xy$ is an in-chelou arc and we reach a contradiction by Claims~\ref{claim:chelou_arc_one}, \ref{claim:no_A_conf} and~\ref{claim:no_B_conf}. Hence both $x$ and $y$ are in $\Vquatre$.
\end{proofclaim}

\begin{claim}\label{claim:no_isolated_digon}
There is no isolated digon in $D$.
\end{claim}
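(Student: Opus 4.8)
The plan is to argue by contradiction inside the minimum counterexample $D$, following the pattern of the other claims in this section. Suppose $[u,v]$ is an isolated digon. The first move is to extract a great deal from this assumption using Claims~\ref{claim:component_of_B(D)} and~\ref{claim:epsilon_correct}: the connected component of $B(D)$ containing $u$ and $v$ is just the edge $uv$, an odd path, so $\gamma(D)\geq 1$ and $\rho(D)\leq \Sigma(D)-\gamma(D)$. Since $\rho(D)\geq -1$ and every vertex satisfies $w^*(z)\leq 0$ by Claim~\ref{claim:result_discharging}~\ref{cas1}, this forces $\Sigma(D)=0$, $\gamma(D)=1$, and $w^*(z)=0$ for every vertex $z$. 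Thus $[u,v]$ is the \emph{unique} isolated digon, and moreover $D$ has no $2$-thread: by Claim~\ref{claim:apex}~\ref{apex-2/3} its apex would have $w^*\leq -\tfrac23<0$. Also $d(u)=d(v)=5$ by Claim~\ref{claim:component_of_B(D)}.

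Next I would analyze the neighbourhood of $u$. Since $u$ carries exactly one digon it has three simple arcs, and since $d^+(u),d^-(u)\geq 2$ (Lemma~\ref{lemma:degeneracy}), up to replacing $D$ by its converse (which is again a minimum counterexample, with the same isolated digon) we may assume $u$ has two simple out-neighbours and a single simple in-neighbour $a\neq v$, so $d^+(u)=3$ and $d^-(u)=2$. The arc $au$ is not chelou: otherwise Claim~\ref{claim:chelou_only_in_V4} would put $u$ in $\Vquatre$, contradicting $d(u)=5$. Unpacking the definition of a chelou arc, using $d^-(u)=2$ and that $u$ is incident to a digon, "$au$ not chelou" means $d^+(a)\geq 3$ or $a$ is incident to a digon.

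Now a short case analysis on $a$ finishes. If $a$ is incident to a digon, its $B(D)$-component is a path of length $1$ or $2$ (Claim~\ref{claim:component_of_B(D)}) distinct from $\{uv\}$ (as $a\notin\{u,v\}$); a length-$1$ component is a second isolated digon, contradicting $\gamma(D)=1$, and a length-$2$ component is a bidirected path of length $2$ whose internal vertex has degree $4$, i.e. a $2$-thread, contradicting the first paragraph. So $a$ is incident to no digon and $d^+(a)\geq 3$; together with $d^-(a)\geq 2$ this gives $d(a)\in\{5,6\}$ by Claim~\ref{claim:degree_prop_after_discharging}~\ref{claim:deg_in_456}. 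If $d(a)=5$ then $w^*(a)\leq -\tfrac16$ by Claim~\ref{claim:result_discharging}~\ref{deg5-nodigon}; if $d(a)=6$ then $d^+(a)=d^-(a)=3$ (Claim~\ref{claim:degree_prop_after_discharging}~\ref{claim:deg6=3-3}), and $u$ is a simple standard out-neighbour of $a$ incident to a digon, so $\ell_3\geq 1$ in Claim~\ref{claim:result_discharging}~\ref{deg6-nodigon} and $w^*(a)\leq -\tfrac13$. Either way $w^*(a)<0$, contradicting $w^*(a)=0$.

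There is no genuine obstacle here; the one point that needs care is the accounting in the first paragraph, namely checking that $\rho(D)\leq\Sigma(D)-\gamma(D)$ together with $\rho(D)\geq -1$ and $w^*\leq 0$ everywhere really does collapse all charges to $0$ and all odd-path components of $B(D)$ to the single digon $[u,v]$ — after that, the rest is just the local inspection of the vertex $a$ using already-established degree restrictions and the discharging bounds. (Note also that this is why the claim is placed after Claims~\ref{claim:no6digon}, \ref{claim:apex} and \ref{claim:chelou_only_in_V4}: those are exactly what make the local analysis of $a$ go through.)
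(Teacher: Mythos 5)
Your proof is correct, and after the common opening it takes a genuinely different route from the paper's. Both arguments start identically: from Claims~\ref{claim:epsilon_correct}, \ref{claim:result_discharging}~\ref{cas1}, \ref{claim:component_of_B(D)} and \ref{claim:apex} one forces $\Sigma(D)=0$, hence $w^*(z)=0$ for every vertex, no $2$-thread, and $d(u)=d(v)=5$ for the endpoints of the (unique) isolated digon. From there the paper runs a global counting argument: it shows a vertex of $\Vquatre$ must exist, applies Claim~\ref{claim:chelou_arc_one}~\ref{it:triangle} to place it on a directed $3$-cycle through a degree-$5$ vertex, deduces $|\Vquatre|\leq 4$ and $|V_6|\leq 1$, and reaches a numerical contradiction with the initial charges. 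You instead argue locally at the digon itself: after normalizing by the converse so that $d^-(u)=2$, the unique simple in-neighbour $a$ of $u$ cannot give a chelou arc $au$ by Claim~\ref{claim:chelou_only_in_V4} (which the paper proves just before this claim but does not use in its proof), so either $a$ lies in a digon --- excluded because a second odd component of $B(D)$ contradicts $\gamma(D)=1$ and a length-$2$ component is a forbidden $2$-thread --- or $a$ has no digon and $d^+(a)\geq 3$, whence $d(a)\in\{5,6\}$ and Claim~\ref{claim:result_discharging}~\ref{deg5-nodigon} or \ref{deg6-nodigon} (with $\ell_3\geq 1$, since $u$ is a simple standard neighbour of $a$ incident to a digon) gives $w^*(a)<0$, contradicting that all final charges vanish. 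Your application of these claims checks out (in particular $u$ is indeed standard for $a$ because $d^+(a)=3$), and the resulting proof is shorter and more local than the paper's; what the paper's version buys is independence from Claim~\ref{claim:chelou_only_in_V4}, at the cost of the longer charge-counting case analysis.
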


\begin{proofclaim} 
Suppose for contradiction that $D$ has an isolated digon $[y_1,y_2]$. By Claims~\ref{claim:degree_4_not_3_neighbours} and~\ref{claim:no6digon}, we must have $d(y_1) = d(y_2) = 5$. 
By Claims~\ref{claim:epsilon_correct} and~\ref{claim:result_discharging}~\ref{cas1}, we have $ \rho(D) \leq \Sigma(D) -1 \le -1$ and thus $\rho(D) = -1$. 
Hence $[y_1,y_2]$ is the unique isolated digon and for every vertex $v$, $w^*(v) = 0$. 
In particular $D$ has no $2$-threads by Claim~\ref{claim:apex}, and thus $[y_1, y_2]$ is the only digon of $D$. 
By Claim~\ref{claim:result_discharging}~\ref{deg5-nodigon} there is 
no vertex of degree $5$ other than $y_1$ and $y_2$.

If $D$ has no vertex of degree $4$, then $w(x) \leq -\frac{1}{2}$ for every vertex $x$, and thus $\rho(D) = -1 \leq -\frac{1}{2}n(D)$, which implies $n(D) \leq 2$, a contradiction.
Therefore, there is vertex $v$ of degree $4$, which must be in $\Vquatre$ as  $[y_1,y_2]$ is the unique digon of $D$. Since we must have $w^*(v) = 0$, $v$ must be incident to a unique chelou neighbour by~\ref{claim:result_discharging}~\ref{deg4-nodigon-nochelou} and~\ref{deg4-nodigon-2chelou}. 
By Claim~\ref{claim:chelou_arc_one}~\ref{it:triangle}, $v$ is in a directed $3$-cycle $(u,v,w,u)$ where $u$ and $w$ are not the chelou neighbour of $v$ and thus must have degree at least $5$.
If both $u$ and $w$ have degree $6$, then $w^*(u) \leq -\frac{1}{3}$
by Claim~\ref{claim:result_discharging}~\ref{deg6-nodigon} with $\ell_1 \geq 1$, a contradiction.
Hence, one of  $u$, $w$, say $u$, has degree $5$, and so is $y_1$ or $y_2$,
say $y_1$.
We deduce that every vertex of degree $4$ is a simple standard  neighbour of
$y_1$ or $y_2$, and therefore $|\Vquatre| \leq 4$.
But using the initial charges, observing that $w(y_1) = w(y_2) = -1$ and the initial charge of a vertex of degree $6$ is $7 - 3 \times \frac{6}{2} = -2$, we have $0 \leq \Sigma(D) =  |\Vquatre| -  2 - 2|V_6| \leq 2- 2  |V_6|$, where $V_6$ is the set of vertices of degree $6$. 
As a consequence $|V_6| \leq 1$. 
Let $z$ this possible vertex of degree $6$. % and $n \leq 7$.
Altogether, we get that $D$ has two vertices of degree $5$, namely $y_1$ and $y_2$, at most one vertex of degree $6$, $z$ if it exists, and all the other vertices have degree $4$. 

Consider a vertex  in $\Vquatre$. 
As said previously, it is incident with exactly one chelou arc, so it is the standard neighbour of three vertices. Each of these vertices must have degree at least $5$, so  they are $y_1,\ y_2$ and $z$. 
Hence,  $z$ exists and so $|V_6| =1$, and all vertices in $\Vquatre$ are adjacent to  $y_1,\ y_2$ and $z$. 
But $y_1$ has only two simple standard  neighbours, so $|\Vquatre| \leq 2$.
Using again the initial charge, we get $0 \leq \Sigma(D) = 1\times 2 - 1 \times 2 - 2|V_6| =-2$, a contradiction.
%
%and again $0 \leq 2 - 2 - 2n_6$ so there is no vertex of degree $6$.
%Now we colour $y_1$ with $1$ and all the other vertices in $2$.
%There must be a cycle with colour $2$, and as $V_4$ induces a forest
%(by Claim~\ref{claim:V4_tree}), this cycle must contains $y_2$.
%But then $y_2$ has chelou neighbour of degree $4$, contradicting Claim~\ref{claim:no_A_conf}.
\end{proofclaim}

\subsection{Dicolouring} \label{subsec:colouring}

By Claims~\ref{claim:degree_prop_after_discharging}, \ref{claim:component_of_B(D)} and \ref{claim:no6digon}, 
$V(D)$ is partitioned into the following sets: %\PA{Where are the degree $5$ that are in connected component of size $2$ of $B(D)$?}
\begin{itemize}
\item $\Vquatre$, the set of vertices of degree $4$ incident to no digon,
\item $V^d_4$, the set of vertices of degree $4$ which are the middle vertex of a 
    $2$-thread,
\item $V_5^+$ (resp. $V_5^-$), 
    the set of vertices of degree $5$ that are incident to 
    no digon and have out-degree $3$ (resp. in-degree $3$), 
    
\item $V^{d+}_5$ (resp. $V^{d-}_5$), the set of
    vertices of degree $5$ that are the end-vertex of a $2$-thread and have out-degree $3$
    (resp. in-degree $3$), 
\item $V_6$, the set of vertices of degree $6$ incident to no digon and with in- and out-degree $3$. 
\end{itemize}

As the sum of the in-degrees equals the sum of the out-degrees, we have
$|V^-_5| + |V^{d-}_5| = |V^+_5| + |V^{d+}_5|$. 

\begin{claim}\label{claim:cycles_in_V5_V4}
Let $C$ be a directed cycle with vertices in $\Vquatre \cup V_5^+ \cup V^{d+}_5 \cup V_5^-
\cup V^{d-}_5$, then either
\begin{itemize}
\item $V(C)$ is included in $V^-_5 \cup V^{d-}_5$, or
\item $V(C)$ is included in $V^+_5 \cup V^{d+}_5$.
\end{itemize}
In particular, $V(C)\cap \Vquatre = \emptyset$. 
\end{claim}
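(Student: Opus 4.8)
The plan is to analyze a directed cycle $C$ with all vertices of degree $4$ or $5$ (in the specified classes) and show it cannot mix "out-type" and "in-type" vertices, nor contain a vertex of $\Vquatre$. The key structural fact is that each vertex $v$ on $C$ has one of its two incidences (in-arc or out-arc) "used up" by $C$, and the binary nature of these small-degree vertices forces chelou arcs. Concretely, if $v \in \Vquatre$ has both its out-neighbours or both its in-neighbours on $C$, then $C$ uses one arc at $v$ and the other arc $vv^+$ (or $v^-v$) has $d^+(v)=2$ at its tail and $d^-(\cdot)=2$ somewhere, producing a chelou arc; but then I invoke Claim~\ref{claim:chelou_only_in_V4}, Claim~\ref{claim:path_nice}, and the forest structure of $D[\Vquatre]$ (Claim~\ref{claim:V4_tree}) to derive a contradiction, since a directed cycle inside $\Vquatre$ is forbidden by Claim~\ref{claim:ind-cycle} once we check it is induced, or more directly $D[\Vquatre]$ is a forest so it contains no cycle at all.

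First I would set up notation: for $v\in V(C)$ let $v^-$ be the predecessor and $v^+$ the successor on $C$. Since each such $v$ has $\min(d^+(v),d^-(v))=2$ (vertices in $V_5^+$ have $d^-(v)=2$, vertices in $V_5^-$ have $d^+(v)=2$, vertices in $\Vquatre$ have $d^+(v)=d^-(v)=2$), the arc $vv^+$ of $C$ emanates from a vertex whose out-degree is $2$ precisely when $v\in \Vquatre \cup V_5^- \cup V_5^{d-}$, and enters a vertex $v^+$ whose in-degree is $2$ precisely when $v^+ \in \Vquatre \cup V_5^+ \cup V_5^{d+}$. So whenever we have consecutive vertices $v$ (of "minus-or-four" type) followed by $v^+$ (of "plus-or-four" type) on $C$, the arc $vv^+$ satisfies $d^+(v)=d^-(v^+)=2$. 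If additionally one of $v,v^+$ lies in $\Vquatre$ (hence is incident to no digon), then $vv^+$ is a chelou arc. By Claim~\ref{claim:chelou_only_in_V4}, both endpoints of any chelou arc lie in $\Vquatre$.

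The main case analysis then runs as follows. Suppose $V(C)$ is not contained in $V_5^-\cup V_5^{d-}$ and not contained in $V_5^+\cup V_5^{d+}$. Then either $C$ contains a vertex of $\Vquatre$, or $C$ contains both a vertex of $V_5^-\cup V_5^{d-}$ and a vertex of $V_5^+\cup V_5^{d+}$ but no vertex of $\Vquatre$. In the first case, let $v\in V(C)\cap\Vquatre$; then $vv^+$ has $d^+(v)=2$ and $v^+$ is automatically of "plus-or-four" type (it has an arc into it from a degree-$2$-out vertex is not the point — rather $d^-(v^+)=2$ would be needed; here I need to be careful). Actually $d^-(v^+)$ need not be $2$ in general, so I should instead look at the arc $v^-v$ entering $v\in\Vquatre$: we have $d^-(v)=2$, and $v$ incident to no digon, so $v^-v$ is chelou if $d^+(v^-)=2$, which holds iff $v^-\in\Vquatre\cup V_5^-\cup V_5^{d-}$. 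If $v^-\notin\Vquatre$, then $v^-$ has a digon (no: $V_5^-$ vertices have no digon; only $V_5^{d-}$ vertices touch the $2$-thread but not a digon themselves — so $v^-$ has no digon either). Hmm: then $v^-v$ is still chelou (since $v$ has no digon and $d^+(v^-)=d^-(v)=2$), so by Claim~\ref{claim:chelou_only_in_V4} $v^-\in\Vquatre$; iterating, all predecessors lie in $\Vquatre$, so $V(C)\subseteq\Vquatre$ and every arc of $C$ is chelou, forcing by Claim~\ref{claim:path_nice}... — this is the spot where I expect the real obstacle: I must correctly combine "$D[\Vquatre]$ is an oriented forest" (Claim~\ref{claim:V4_tree}) with the fact that $C$ is a dicycle inside $D[\Vquatre]$ to get an immediate contradiction, which handles the $\Vquatre$ subcase outright. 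For the remaining subcase — $C$ alternates between plus-type and minus-type vertices with no $\Vquatre$ vertex — I would find two consecutive vertices $v,v^+$ on $C$ with $v$ of minus-type and $v^+$ of plus-type, observe that both have degree $5$ with no digon, and derive a chelou arc only if one endpoint has no digon — which is true — but chelou requires one endpoint incident to no digon AND that this makes it chelou; wait, chelou as defined needs $d^+(x)=d^-(y)=2$ and at least one of $x,y$ incident to no digon, and here $d^+(v)=2$ (v is minus-type), $d^-(v^+)=2$ (v^+ is plus-type), and both have no digon, so $vv^+$ is chelou; then Claim~\ref{claim:chelou_only_in_V4} forces $v,v^+\in\Vquatre$, contradicting that they have degree $5$. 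This closes the argument, so the hard part is really just being careful about which incidence ($v^-v$ versus $vv^+$) to examine at each vertex type so that the degree-$2$ conditions line up to make the arc chelou.

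In writing this up I would keep it short: state the two reductions (to the $\Vquatre$-containing case, handled by Claim~\ref{claim:V4_tree}, and to the mixed degree-$5$ case, handled by producing a chelou arc between two degree-$5$ vertices and contradicting Claim~\ref{claim:chelou_only_in_V4}). The last sentence "In particular, $V(C)\cap\Vquatre=\emptyset$" then follows since in both surviving alternatives $V(C)$ is contained in the degree-$5$ classes.

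\begin{proof}
Let $C$ be such a directed cycle. For $v\in V(C)$ write $v^-$ and $v^+$ for the predecessor and successor of $v$ along $C$. Note that every vertex of $\Vquatre \cup V_5^+ \cup V^{d+}_5 \cup V_5^- \cup V^{d-}_5$ is incident to no digon, has out-degree $2$ if it lies in $\Vquatre \cup V_5^- \cup V^{d-}_5$, and has in-degree $2$ if it lies in $\Vquatre \cup V_5^+ \cup V^{d+}_5$.

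\medskip
\noindent\textbf{Step 1: $V(C) \cap \Vquatre = \emptyset$.}
Suppose not, and let $v \in V(C) \cap \Vquatre$. Consider the arc $v^- v$ of $C$. We have $d^-(v)=2$ (as $v\in\Vquatre$) and $v$ is incident to no digon. Moreover $v^- \in \Vquatre \cup V_5^+ \cup V^{d+}_5 \cup V_5^- \cup V^{d-}_5$, and in each of these classes the vertex $v^-$ has out-degree either $2$ or $3$.

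If $d^+(v^-) = 2$, then the arc $v^- v$ satisfies $d^+(v^-)=d^-(v)=2$ and $v$ is incident to no digon, so $v^- v$ is a chelou arc. By Claim~\ref{claim:chelou_only_in_V4}, both $v^-$ and $v$ lie in $\Vquatre$. Applying the same argument to the arc $v^{--}v^-$ (using now that $v^-\in\Vquatre$ so $d^-(v^-)=2$ and $v^-$ has no digon), and iterating around $C$, we conclude that $V(C) \subseteq \Vquatre$ and every arc of $C$ is chelou. But then $C$ is a directed cycle in $D[\Vquatre]$, contradicting Claim~\ref{claim:V4_tree}.

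If $d^+(v^-) = 3$, then $v^- \in V_5^+$ or $v^- \in V^{d+}_5$, so $d^-(v^-)=2$. Since $v^-$ is incident to no digon, the arc $v^{--}v^-$ is chelou provided $d^+(v^{--})=2$; but regardless, we may instead restart the analysis at $v^-\in V_5^+ \cup V^{d+}_5$, whose in-arc $v^{--}v^-$ on $C$ satisfies $d^-(v^-)=2$ and $v^-$ incident to no digon. Continuing backwards along $C$, at each step we either produce a chelou arc (forcing the endpoints into $\Vquatre$ by Claim~\ref{claim:chelou_only_in_V4}) or move to a vertex of $V_5^+ \cup V^{d+}_5$; since $C$ is finite and returns to $v \in \Vquatre$, we must eventually produce a chelou arc with an endpoint of degree $5$, contradicting Claim~\ref{claim:chelou_only_in_V4}. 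Hence $V(C)\cap\Vquatre = \emptyset$.

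\medskip
\noindent\textbf{Step 2: $V(C)$ is contained in one of the two classes.}
By Step 1, $V(C) \subseteq V_5^+ \cup V^{d+}_5 \cup V_5^- \cup V^{d-}_5$; all these vertices have degree $5$ and are incident to no digon. Suppose for contradiction that $V(C)$ meets both $V_5^- \cup V^{d-}_5$ and $V_5^+ \cup V^{d+}_5$. Traversing $C$, there exist consecutive vertices $v, v^+$ on $C$ with $v \in V_5^- \cup V^{d-}_5$ and $v^+ \in V_5^+ \cup V^{d+}_5$. Then $d^+(v) = 2$, $d^-(v^+) = 2$, and $v$ (say) is incident to no digon, so the arc $vv^+$ is chelou. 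By Claim~\ref{claim:chelou_only_in_V4}, $v, v^+ \in \Vquatre$, contradicting that they have degree $5$.

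Therefore $V(C) \subseteq V_5^- \cup V^{d-}_5$ or $V(C) \subseteq V_5^+ \cup V^{d+}_5$, and in particular $V(C) \cap \Vquatre = \emptyset$.
\end{proof}
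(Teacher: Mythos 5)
There is a genuine gap. Your argument rests on the assertion that every vertex of $V^{d+}_5 \cup V^{d-}_5$ is incident to no digon, but this is false: by definition these are the \emph{end-vertices} of a $2$-thread, i.e.\ the leaves of a bidirected path of length $2$, so each of them is incident to exactly one digon. Since a chelou arc $xy$ requires, besides $d^+(x)=d^-(y)=2$, that at least one of $x,y$ be incident to no digon, an arc from a vertex of $V^{d-}_5$ to a vertex of $V^{d+}_5$ satisfies the degree conditions but is \emph{not} chelou, and Claim~\ref{claim:chelou_only_in_V4} gives nothing for it. This breaks Step~2 exactly when the minus-to-plus transition arc you exhibit goes from $V^{d-}_5$ to $V^{d+}_5$, and it also breaks Step~1: the backward iteration can stall at such an arc. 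For instance a hypothetical cycle $a\to b\to c\to a$ with $a\in V^{d-}_5$, $b\in V^{d+}_5$, $c\in \Vquatre$ carries no chelou arc at all ($ab$ has both ends in digons, $bc$ has $d^+(b)=3$, $ca$ has $d^-(a)=3$), yet it meets $\Vquatre$ and both $5$-degree classes; your proof does not exclude it.

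The paper closes precisely this case by a separate argument that your proof is missing: after reducing (as you do, via Claim~\ref{claim:chelou_only_in_V4} and Claim~\ref{claim:V4_tree}) to the situation where $C$ must contain an arc $xy$ with $x\in V^{d-}_5$ and $y\in V^{d+}_5$, it invokes Claim~\ref{claim:no_isolated_digon} (every digon lies in a $2$-thread) and Claim~\ref{claim:apex} ($D$ has at most one $2$-thread) to conclude that $x$ and $y$ are the two ends of the \emph{same} $2$-thread $[x,z,y]$; then $\{x,y,z\}$ contains a bidirected $3$-cycle minus one arc, contradicting Claim~\ref{claim:cycle_minus_one_arc}. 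Your chelou-arc bookkeeping for all the other types of arcs is essentially the paper's first half, but without this final ingredient the claim is not proved.
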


\begin{proofclaim}
Let $C$ be a directed cycle with vertices in $\Vquatre \cup V_5^+ \cup V^{d+}_5 \cup V_5^- \cup V^{d-}_5$. 
Assume for a contradiction that $V(C)$ is not included in $V^-_5 \cup V^{d-}_5$
nor in $V^+_5 \cup V^{d+}_5$. 
By Claim~\ref{claim:V4_tree}, $V(C)$ is not included in $\Vquatre$. 
by Claim~\ref{claim:chelou_only_in_V4} all chelou arcs have their end-vertices in $\Vquatre$. This implies that there is no arc from $V_5^-$ to $\Vquatre \cup V_5^+ \cup V_5^{d+}$, nor from $\Vquatre \cup V_5^- \cup V_5^{d-}$ to $V_5^+$ (because such arcs would be chelou arcs). 
Hence, $C$ contains an arc $xy$ between $ V^{d-}_5$ and $V_5^{d+}$. 

Since $D$ has no isolated digon by Claim~\ref{claim:no_isolated_digon}, both $x$ and $y$ are extremities of some $2$-thread . 
By Claim~\ref{claim:apex}, $D$ has only one $2$-thread.  
So there exists a vertex $z$ such that $[x,z,y]$ is a $2$-thread of $D$. 
But as $xy \in A(D)$, $\{x,y,z\}$ induces a bidirected $3$-cycle minus one arc, a contradiction to Claim~\ref{claim:cycle_minus_one_arc}.
\end{proofclaim}

We are now ready to conclude.
We distinguish three cases depending on whether $V_6$ is a stable set
or not and whether $D$ has a digon or not.
Recall that $|V_5^+ \cup V^{d+}_5| = |V_5^- \cup V^{d-}_5|$.

% \begin{claim}\label{claim:V6_stable}
% $V_6$ is a stable set.
% \end{claim}
% \CR{Peut-etre qu'on peut sortir ce cas du claim, et le traiter comme les deux
% derniers cas (ca permet de regrouper les argument du type ''on colore ca
% en 1 et ca en 2'').}

{\medskip \noindent \bf Case 1:} There is an arc $z_1 z_2 \in A(D)$ with $z_1,z_2 \in V_6$.

Since vertices in $V_6$ are incident to no digon (Claim~\ref{claim:no6digon}), $z_1$ is a simple standard neighbour of  $z_2$. 
Therefore, we can apply Claim~\ref{claim:result_discharging}~\ref{deg6-nodigon} on $z_1$ with $\ell_1 \geq 1$, so $w^*(z_1) \leq -\frac{1}{3}$. 
Similarly, $z_2$ is a simple standard neighbour of $z_1$, so $w^*(z_2) \leq -\frac{1}{3}$. 

%Alternative: Hence, by (R2), $z_1$ sends $\frac{1}{3}$ to $z_2$ and receives at most $\frac{1}{3}$ from each of its neighbours, so $w^*(z_1) \leq -2 - \frac{1}{3} +\frac{6}{3} = -\frac{1}{3}$. Similarly, $w^*(z_2) \leq -\frac{1}{3}$.

Assume that $D$ contains a digon. 
Then, by Claims~\ref{claim:component_of_B(D)} and~\ref{claim:no_isolated_digon}, $D$ contains a $2$-thread.
By Claim~\ref{claim:2thread}, this $2$-thread has an apex $v$, and $w^*(v) \leq -\frac{2}{3}$ by Claim~\ref{claim:apex}~\ref{apex-2/3}.
If $v\notin \{z_1,z_2\}$, then $\rho(D) \le \Sigma(D)  \leq w^*(v) + w^*(z_1) + w^*(z_2) \leq -\frac{2}{3} - \frac{1}{3} - \frac{1}{3} < -1$, a contradiction.
So $v\in  \{z_1,z_2\}$ and by directional duality, we may assume $v=z_1$. 
So $z_1$ has two neighbours incident to a digon. 
Hence, by Claim~\ref{claim:result_discharging}~\ref{deg6-nodigon} applied
on $z_1$ with $\ell_1 \geq 1$ (because $z_1$ is a simple standard neighbour of $z_2$) and  $\ell_3 \geq 2$ (because $z_1$ has two standard neighbours incident to a digon), $w^*(z) \leq -\frac{1}{3} - \frac{2}{3} = -1$.
%Hence, by (R2), it receives $\frac{1}{3}$ from at most $4$ of its neighbours, and sends $\frac{1}{3}$ to $z_2$, so $w^*(z_1) \leq -2 - \frac{1}{3} +\frac{4}{3} = -1$ \PA{Ici on pourrait utiliser Claim~\ref{claim:result_discharging}~\ref{deg6-nodigon}}. 
Thus $\rho(D) \le \Sigma(D)  \leq w^*(z_1) + w^*(z_2) \leq -1 - \frac{1}{3} < -1$, a contradiction.
Henceforth $D$ has no digon.

Assume $D[V_6]$ contains a dicycle $C$. 
Then $C$ has at least three vertices. Moreover,  each vertex in $C$ is a simple standard neighbour of its two neighbours in $C$. 
So by Claim~\ref{claim:result_discharging}~\ref{deg6-nodigon} with $\ell_1 \geq 1$,
every vertices in $C$ has final charge at most $-\frac{2}{3}$.
%So, by (R2), each vertex in $C$ sends $\frac{1}{3}$ to its neighbours in $C$, and receives at most $\frac{1}{3}$ each of its neighbours. Hence, vertices in $C$ have final charge at most $-2 - \frac{2}{3} + \frac{6}{3} = -\frac{2}{3}$ \PA{Ici encore on pourrait utiliser Claim~\ref{claim:result_discharging}~\ref{deg6-nodigon}}. 
So $\rho(D) \leq \Sigma(D) \leq \sum_{v \in V(C)} w^*(v) \leq - \frac{2|C|}{3} <-1$, a contradiction.
Henceforth $D[V_6]$ is acyclic.

Every vertex in $V^+_5\cup V^-_5$ has final charge at most $-\frac{1}{6}$
by Claim~\ref{claim:result_discharging}~\ref{deg5-nodigon}. Hence,  
$\rho(D) \leq \Sigma(D) \leq w^*(z_1) + w^*(z_2) + \sum_{v \in V^+_5\cup V^-_5}
w^*(v) \leq  -\frac{2}{3} -\frac{|V_5|}{6}$.  
Therefore $|V^+_5\cup V^-_5| \leq 2$. 
Since there is no digon, $V^{d+}_5\cup V^{d-}_5$ is empty and so $|V_5^+| = |V_5^-|$.
Thus $|V_5^+|, |V_5^-| \leq 1$. 
Therefore there is no directed cycle in $D[V_5^+]$ nor in $D[V_5^-]$, and so $D[\Vquatre \cup V_5^- \cup V_5^+]$ is acyclic by~Claim~\ref{claim:cycles_in_V5_V4}.
Hence colouring the vertices of $\Vquatre \cup V_5^- \cup V_5^+$ with $1$ and those of $V_6$ with $2$ yields a $2$-dicolouring of $D$,
a contradiction. 
% \end{proofclaim}

{\medskip \noindent \bf Case 2:} $D[V_6]$ is stable and there is no digon in $D$.

If both $D[V_5^-]$ and $D[V_5^+]$ are acyclic subdigraphs, 
then so is $D[\Vquatre \cup V^-_5\cup V^+_5]$ by Claim~\ref{claim:cycles_in_V5_V4}.
%Moreover, by Claim~\ref{claim:V6_stable}, $D[V_6]$ is acyclic.
Moreover, by assumption, $D[V_6]$ is acyclic.
Hence colouring the vertices of  $\Vquatre \cup V^-_5\cup V^+_5$ with $1$ and those of $V_6$ with $2$ yields a $2$-dicolouring of $D$, a contradiction.

Henceforth, without loss of generality, we may assume that $V_5^-$ is not acyclic and thus has size at least $3$.
But $|V_5^-| = |V_5^+|$ and $|V_5^-| + |V_5^+| \leq 6$ because every vertex of degree $5$ has final charge at most $-\frac{1}{6}$.
It follows that  $|V_5^-| = |V_5^+| =3$ and $D[V^-_5]$ is a directed $3$-cycle. 
Pick an arbitrary vertex $v^- \in V_5^-$. As $V_5^-$ is a directed $3$-cycle,
$v^-$ has two in-neighbours out of $V_5^-$. In particular,
there exists a vertex $v^+ \in V_5^+$ which is not adjacent to $v^-$
(recall that there are no arcs from $V_5^-$ to $V_5^+$ in $D$ as such an arc would be a chelou arc, a contradiction to~Claim~\ref{claim:chelou_only_in_V4}).
Now, colour $\Vquatre \cup V_5^- \cup V_5^+ \setminus \{v^-,v^+\}$ with colour $1$,
and $V_6 \cup \{v^-,v^+\}$ with colour $2$.
Since $D$ is $3$-dicritical, there must be a monochromatic directed cycle.
But $D[V_5^- \setminus \{v^-\}]$ and $D[V_5^+ \setminus \{v^+\}]$ are acyclic subdigraphs because they are of size $2$.
So $D[(\Vquatre \cup V_5^- \cup V_5^+) \setminus \{v^-,v^+\}]$ is also acyclic
by Claim~\ref{claim:cycles_in_V5_V4}.
Therefore, there is a dicycle in $D[V_6 \cup \{v^-,v^+\}]$ which must contain $v^+$ and $v^-$ because $V_6$ and $\{v^-,v^+\}$ are stable sets.
Thus $v^+$ has an out-neighbour $u$ in $V_6$. But then $v^+$ is a simple standard neighbour of $u$. Thus Claim~\ref{claim:result_discharging}~\ref{deg6-nodigon} applies to $u$ with $\ell_1 \geq 1$, so $w^*(u) \leq - \frac{1}{3}$. Consequently, $-1\leq \Sigma(D) \leq \sum_{v \in V_5^- \cup V_5^+}w^*(v) + w^*(u) \leq -6\times \frac{1}{6} - \frac{1}{3}  < -1$, a contradiction.

{\medskip \noindent \bf Case 3:} $D[V_6]$ is stable and
    there is a digon in $D$. 

By Claims~\ref{claim:component_of_B(D)} and~\ref{claim:no_isolated_digon}, each digon is in a $2$-thread. 
By Claim~\ref{claim:apex}, $D$ contains exactly one $2$-thread. 
Let us denote it by $[v_1,x,v_2]$.
Vertices $v_1$ and $v_2$ do not have degree $6$ by Claim~\ref{claim:no6digon} nor degree $4$ by Claim~\ref{claim:component_of_B(D)}. 
So $\{v_1, v_2\}\subseteq V^{d+}_5 \cup V^{d-}_5$.% \PA{c'est pas $\subseteq$ ) la place de $=$?}.

Let $z$ be the apex of $[v_1,x,v_2]$. 
By Claim~\ref{claim:apex}~\ref{apex-2/3}, $w^*(z) \leq -\frac{2}{3}$. 
As vertices in $V_5^+ \cup V_5^-$ have final charge at most $-\frac{1}{6}$, $|V_5^+ \cup V_5^-|\leq 2$.
This implies that $V^{d-}_5 \cup V_5^-$ and $V^{d+}_5 \cup V_5^+$ both have
size at most $2$ and so induce acyclic subdigraphs.
Therefore $D[\Vquatre \cup  V_5^+ \cup V_5^-\cup V^{d+}_5 \cup V^{d-}_5]$ is acyclic
by Claim~\ref{claim:cycles_in_V5_V4}.
Since the two neighbours ($v_1$ and $v_2$) of $x$ are not in $V_6$, and $V_6$ is a stable set, then 
$V_6 \cup \{x\}$ is a stable set and so $D[V_6\cup \{x\}]$ is acyclic.
Hence, colouring $\Vquatre \cup  V_5^+ \cup V_5^-\cup V'^+_5 \cup V'^-_5$ 
with $1$ and $V_6 \cup \{x\}$ with $2$, we get a $2$-dicolouring of $D$, a contradiction.

This concludes the proof of Theorem~\ref{thm:main_potential}. \qed

\section{Conclusion and further research}
%%%%%%%%%%%%%%%%%%%%%%%%%%%%%%%%%%%%%%%%%

The main question investigated in this paper is to get upper and lower bounds on $o_k(n)$, the minimum number of arcs in a $k$-dicritical oriented graph of order $n$.

By Theorem~\ref{theorem:improved_lower_bound_on_ok} and the results in Subsection~\ref{sec:lowerk}, we have
$$\pth{k - \frac{3}{4}-\frac{1}{4k-6}} n(D) + \frac{3}{4(2k-3)} \leq o_k(n) < (2k-3)n$$
for every $k\geq 3$ and $n$ large enough.

A first step would be to determine the order of $o_k(n)$ that is a constant $c_k$ such that
$o_k(n) = c_k n + o(n)$. We believe that the constructions given to establish the upper bound are nearly optimal. In particular, we conjecture that $o_k(n)$ is getting closer and closer to the upper bound as $k$ tends to infinity.

\begin{conjecture}
 $\frac{c_k}{2k -3} \rightarrow 1$ when $k \rightarrow +\infty$.
\end{conjecture}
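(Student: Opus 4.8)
The plan is to combine the construction of Theorem~\ref{thm:general_construction}, which already gives $o_k(n) < (2k-3)n$ (hence, interpreting $c_k$ as $\liminf_n o_k(n)/n$, $\limsup_k c_k/(2k-3)\le 1$), with a matching asymptotic lower bound $c_k \ge (2-o(1))k$. So the conjecture is equivalent to the statement that for every $\epsilon>0$ there is $K_\epsilon$ such that every $k$-dicritical oriented graph $D$ with $k\ge K_\epsilon$ has $m(D)\ge (2-\epsilon)k\,n(D)-O_\epsilon(1)$. Informally, this says that forbidding digons roughly doubles the minimum average degree of a $k$-dicritical digraph (which is of order $k$), mirroring the theorem of Kostochka and Stiebitz~\cite{kostochka_number_2000} that forbidding triangles roughly doubles the minimum average degree of a $k$-critical graph. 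The natural strategy is therefore to develop a directed, digon-free analogue of their argument.

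I would start from the deficiency identity: setting $\mathrm{def}(v)=d(v)-2(k-1)\ge 0$ (valid by Lemma~\ref{lemma:degeneracy}), we get $m(D)=(k-1)n(D)+\tfrac12\sum_v\mathrm{def}(v)$, so it suffices to prove $\sum_v\mathrm{def}(v)\ge(2-\epsilon')k\,n(D)-O(1)$, i.e.\ that the average deficiency is of order $k$. The vertices of deficiency $0$ span few arcs: as $D$ has no digon, Theorem~\ref{thm:directed_gallai_subdigraph} forces the subdigraph they induce to be an oriented Gallai forest, so Lemma~\ref{lem:oriented-gallai-deg} applies --- this is exactly the mechanism behind the bound $c_k\ge k-\tfrac34-\tfrac1{4k-6}$ of Theorem~\ref{theorem:improved_lower_bound_on_ok}, but it only exploits the deficiency-$0$ stratum. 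The crux is to control the whole regime $0\le\mathrm{def}(v)\lesssim\beta k$ for a suitable constant $\beta=\beta(\epsilon)$: I would aim to show that vertices of small deficiency cannot cluster, and more precisely that each of them is forced to have $\Omega(k)$ neighbours of deficiency $\Omega(k)$, so that a discharging/double-counting argument loads $\Omega(k)$ units of deficiency onto every vertex on average.

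The technical heart is a recolouring lemma in the spirit of ``the neighbourhood of a low vertex is rainbow'' from the triangle-free undirected proof. For a vertex $v$ with $d^-(v)=k-1$, criticality yields a $(k-1)$-dicolouring $\phi$ of $D-v$, and since $v$ fits into no colour class, every colour class $c$ must contain a monochromatic dipath from an out-neighbour of $v$ to an in-neighbour of $v$. Using that $D$ has no digon (so $N^+(v)\cap N^-(v)=\emptyset$), I would perform directed Kempe-type swaps along such dipaths to show that if $v$ had too many in-neighbours of small out-degree one could locally recolour $D-v$ so as to free a colour for $v$; quantifying this should force a positive fraction of the $k-1$ in-neighbours of $v$ to have out-degree $(1+\Omega(1))k$, yielding the required $\Omega(k)$ deficiency. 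Aggregating over all low-deficiency vertices, with care about double-counting the high-deficiency vertices used several times, should give $\sum_v\mathrm{def}(v)=(2-o(1))k\,n(D)$ and hence $c_k\ge(2-o(1))k$.

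The main obstacle I anticipate is that digon-freeness is a strictly weaker handle than triangle-freeness: an oriented graph still contains directed triangles, so an in-neighbourhood need not behave like a near-independent set, and directed Kempe swaps are far more delicate than undirected ones since acyclicity --- not merely properness --- must be preserved. A second difficulty is that Theorem~\ref{thm:directed_gallai_subdigraph} is known only at deficiency $0$; extending it quantitatively across the intermediate regime (a ``dideficiency Gallai theorem'' for digon-free digraphs with many low-degree vertices) is likely the real bottleneck, and may instead require strengthening the potential method of Section~\ref{sec:mainproof} so that digons and short dicycles carry an $\Omega(k)$ penalty rather than a constant one. Should the full factor $2$ prove out of reach, a realistic intermediate goal --- already strong enough to confirm the Kostochka--Stiebitz conjecture that $o_k(n)>c\cdot d_k(n)$ for some absolute $c>1$ --- would be the weaker bound $\liminf_k c_k/(2k-3)>\tfrac12$, i.e.\ $\liminf_k c_k/k>1$, obtainable by pushing the present potential argument to give each digon-free low-deficiency vertex a small constant extra charge.
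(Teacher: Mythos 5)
The statement you are addressing is a \emph{conjecture} that the paper explicitly leaves open; there is no proof in the paper to compare against, and your text does not close the gap either. What you have written is a research programme, not a proof. The only part that is actually established is the easy half: the construction of Theorem~\ref{thm:general_construction} gives $o_k(n)<(2k-3)n$ for large $n$, hence $\limsup_k c_k/(2k-3)\le 1$. The entire content of the conjecture is the matching lower bound $c_k\ge (2-o(1))k$, and for that you only sketch a strategy (deficiency counting, a directed Kempe-swap ``rainbow in-neighbourhood'' lemma, a quantitative extension of Theorem~\ref{thm:directed_gallai_subdigraph} beyond the deficiency-$0$ stratum) without proving any of its key steps. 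Indeed you yourself flag the two decisive ingredients as open obstacles: directed Kempe exchanges must preserve acyclicity of colour classes, not just properness, and no ``Gallai-type'' control is known for vertices of small positive deficiency in digon-free dicritical digraphs. As it stands, the argument would not even recover $\liminf_k c_k/k>1$, let alone the factor $2$; the best bound your outline actually yields is the one already proved in Theorem~\ref{theorem:improved_lower_bound_on_ok}, namely $c_k\ge k-\frac34-\frac1{4k-6}$, which gives $c_k/(2k-3)\to\frac12$, far from the conjectured limit $1$.

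Two further cautions. First, be careful with the definition of $c_k$: the paper posits $o_k(n)=c_kn+o(n)$, so before taking any limit you would need to argue that $o_k(n)/n$ converges (or work throughout with $\liminf$ and $\limsup$ separately, as you partially do); your proposal slides between these readings. Second, the analogy with Kostochka--Stiebitz for triangle-free critical graphs is suggestive but not a proof template: in their setting the neighbourhood of a low-degree vertex is an independent set, whereas here $N^+(v)\cup N^-(v)$ can induce many directed triangles even without digons, so the counting that loads $\Omega(k)$ deficiency onto each low vertex has no current substitute. In short, the proposal is a reasonable description of how one might attack the conjecture, and its intermediate goal ($\liminf_k c_k/k>1$, which would already imply the Kostochka--Stiebitz conjecture that $o_k(n)>c\cdot d_k(n)$) is a sensible target, but it is not a proof of the statement and should not be presented as one.
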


We mainly focused on the case $k=3$. Proposition~\ref{prop:O} and Theorem~\ref{thm:main_thm_oriented_critical} yield 
$$\frac{7n+2}{3} \leq o_3(n) \leq \left\lceil\frac{5n}{2}\right\rceil$$
for every even $n \geq 12$.
In the conclusion of \cite{bang2019haj}, Bang-Jensen, Bellitto, Schweser, and Stiebitz  conjectured that every oriented 3-dicritical digraph of $n$ vertices has at least $\frac 5 2 n$ arcs. 
Together with Corollary~\ref{coro:construction_3_dicritical_odd_and_even_order},
this implies $o_3(n) = \lceil\frac{5}{2}n\rceil$ for every $n \geq 12$.
Hence, a natural question is whether the lower bound of Theorem \ref{thm:main_thm_oriented_critical} can be improved. 
We have no reason to think that our bound is tight and we tend to believe the conjecture of \cite{bang2019haj} to be true. 
During this project, we used the same method to obtain lower bounds on $o_3(n)$ that we improved incrementally until we reached the one of Theorem \ref{thm:main_thm_oriented_critical}.  We believe that our method could still be pushed further and provide better lower bounds on $o_3(n)$, at the cost of a longer and sharper case analysis of the structure of the $3$-dicritical oriented graphs. 
However, the question remains as to how far our method can be pushed and whether it can provide a tight bound.

\medskip

Another question that has been touched in this paper is to determine the set of orders $N_k$ for which there exists a $k$-dicritical oriented graph.

The first particular case consists in determining the minimum order $n_k$ of a $k$-dicritical oriented graph.
This is equivalent to determining the minimum integer $n_k$ such that there exists a tournament of order $n_k$ and dichromatic number $k$.
The directed cycles of order at least $3$ are the $2$-dicritical oriented graphs, so  $n_2=3$ and $N_2 = \{n\in \mathbb{N} \mid n\geq 3\}$.
Neumann Lara~\cite{Neumann_lara_small_tournament} proved that $n_3=7$ and $n_4=11$.
Bellitto, Bousquet, Kabela, and Pierron~\cite{BBKP} recently established $n_5=19$. 
For larger values of $k$, $n_k$ is unknown.

Let $D_1$ and $D_2$ be two oriented graphs.
 The  oriented graph $\triangle(1,D_1,D_2)$ is the oriented graph obtained from the disjoint union of $D_1$ and $D_2$ by adding a new vertex $u_0$ and all the arcs from $u_0$ to $D_1$, all the arcs from $D_1$ to $D_2$, and all the arcs from $D_2$ to $u_0$. 
 
 \begin{lemma}[Hoshino and Kawarabayashi~\cite{HoKa15}]\label{lem:triangle}
 Let $k\geq 2$ be an integer.
 If $D_1$ and $D_2$ are $k$-dicritical, then $\triangle(1,D_1,D_2)$ is $(k+1)$-dicritical.
 \end{lemma}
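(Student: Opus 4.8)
The plan is to check the two defining properties of $(k+1)$-dicriticality in turn: that $\dic(\triangle(1,D_1,D_2)) = k+1$, and that every proper subdigraph has dichromatic number at most $k$. Write $\triangle = \triangle(1,D_1,D_2)$ and recall that its arcs come in five types: the arcs of $D_1$, the arcs of $D_2$, the arcs from $u_0$ to $V(D_1)$, the arcs from $V(D_1)$ to $V(D_2)$, and the arcs from $V(D_2)$ to $u_0$. The one structural fact I would isolate first is this: every arc into $u_0$ comes from $V(D_2)$ and every arc out of $u_0$ goes to $V(D_1)$, so any dicycle of $\triangle$ through $u_0$ meets both $V(D_1)$ and $V(D_2)$; and since the only arcs between $V(D_1)$ and $V(D_2)$ point from $D_1$ to $D_2$, any dicycle of $\triangle$ avoiding $u_0$ lies entirely in $D_1$ or entirely in $D_2$. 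This reduces the analysis of monochromatic dicycles to controlling the colour class of $u_0$.

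For the lower bound, suppose for contradiction that $\phi$ is a $k$-dicolouring of $\triangle$. Its restriction to $V(D_1)$ is a dicolouring of $D_1$, which has dichromatic number $k$, hence uses all $k$ colours; likewise $\phi$ uses all $k$ colours on $V(D_2)$. In particular the colour $\phi(u_0)$ occurs at some $v_1 \in V(D_1)$ and at some $v_2 \in V(D_2)$, and then $u_0 v_1$, $v_1 v_2$, $v_2 u_0$ form a monochromatic directed triangle, a contradiction. Hence $\dic(\triangle) \ge k+1$.

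For the subdigraph bound, by monotonicity of $\dic$ and because $\triangle$ has no isolated vertex, it suffices to exhibit a $k$-dicolouring of $\triangle \setminus a$ for every arc $a$ of $\triangle$. Here I would use that $D_i$ is $k$-dicritical, so $D_i$ admits a $k$-dicolouring while $D_i \setminus a$ and $D_i - v$ admit $(k-1)$-dicolourings. If $a$ lies in $D_1$ (resp. $D_2$), colour $D_1 \setminus a$ (resp. $D_2 \setminus a$) with colours $\{1,\dots,k-1\}$, colour the other part with a $k$-dicolouring, and set $\phi(u_0)=k$; then colour $k$ is absent from one part, so no monochromatic dicycle meets $u_0$. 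If $a=u_0 w$ with $w\in V(D_1)$ (resp. $a=w u_0$ with $w\in V(D_2)$), colour $D_1-w$ (resp. $D_2-w$) with $\{1,\dots,k-1\}$, give $w$ and $u_0$ colour $k$, and $k$-dicolour the other part; the only colour-$k$ vertex in that part is $w$, and a monochromatic dicycle through $u_0$ would have to traverse the deleted arc. If $a=w_1 w_2$ with $w_1\in V(D_1)$, $w_2\in V(D_2)$, colour $D_1-w_1$ and $D_2-w_2$ with $\{1,\dots,k-1\}$ each and give $w_1,w_2,u_0$ colour $k$; then the colour-$k$ class is $\{u_0,w_1,w_2\}$, and in $\triangle\setminus a$ the only arcs among these are $u_0 w_1$ and $w_2 u_0$, so it induces an acyclic subdigraph. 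In every case the within-$D_i$ classes are acyclic because a proper subdigraph of $D_i$ was properly dicoloured, and the structural fact handles the rest. Finally, since $\triangle\setminus a$ is $k$-dicolourable for some $a$ and adding one arc raises the dichromatic number by at most one, $\dic(\triangle)\le k+1$, so $\dic(\triangle)=k+1$; combined with the subdigraph bound this shows $\triangle$ is $(k+1)$-dicritical. The main point to get right is the cross-arc case $a=w_1 w_2$, where one must check by hand that $\{u_0,w_1,w_2\}$ carries no dicycle after the deletion; everything else is routine casework.
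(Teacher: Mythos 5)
Your proof is correct and follows essentially the same route as the paper's: the lower bound via the pigeonhole argument forcing a monochromatic triangle through $u_0$, and criticality via the same case analysis on the deleted arc with the same colour assignments (including the key check that $\{u_0,w_1,w_2\}$ induces only the path $w_2 \to u_0 \to w_1$ after removing a cross arc). Your explicit preliminary observation about which dicycles can pass through or avoid $u_0$ is used implicitly in the paper as well.
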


\begin{proof}
Let $D_1$ and $D_2$ be two $k$-dicritical digraphs and $D = \Delta(1,D_1,D_2)$.

\medskip

Let us first show that $D$ is not $(k-1)$-dicolourable.\\
Suppose for a contradiction that there is a $(k-1)$-dicolouring $\phi$ of $D$.
Then, as $D_1$ and $D_2$ are not $(k-2)$-dicolourable, there exist $u_1 \in V(D_1)$
and $u_2 \in V(D_2)$ such that $\phi(u_0)=\phi(u_1)=\phi(u_2)$.
Then $(u_0,u_1,u_2,u_0)$ is a monochromatic directed $3$-cycle in $D$, a contradiction.

\medskip

Let us show that $D\setminus e$ is $(k-1)$-dicolourable, for any arc $e=xy \in A(D)$.\\
If $x,y \in V(D_1)$, then $D_1\setminus e$ has a $(k-2)$-dicolouring $\phi_1$ by assumption.
Moreover, $D_2$ has a $(k-1)$-dicolouring $\phi_2$.
By relabelling the colours, we may assume that $\phi_1$ takes values in $[k-2]$
and $\phi_2$ in $[k-1]$.
Then colour $u_0$ with $k-1$.

Suppose for a contradiction that the resulting colouring $\phi$ has a monochromatic
dicycle $C$ in $D\setminus e$.
If $C$ is coloured $k-1$, then it does not intersect $D_1$.
Moreover, $u_0$ has no out-neighbours coloured $k-1$, so $u_0 \not\in C$.
Thus $C$ is included in $D_2$, contradicting the fact that $\phi_2$ is
a $(k-1)$-dicolouring of $D_2$.
If $C$ is not coloured $k-1$, then $C$ is included in $V(D_1) \cup V(D_2)$.
But there is no arc from $D_2$ to $D_1$ in $D-u_0$.
Hence $C$ is included either in $D_1 \setminus e$ or in $D_2$, a contradiction in both cases.
If $x,y \in V(D_2)$, the proof is identical.

Now suppose $x \in V(D_1)$ and $y \in V(D_2)$.
By assumption, $D_1 -x$ (resp. $D_2 - y$) has a $(k-2)$-dicolouring $\phi_1$
(resp. $\phi_2$), with colours taken in $[k-2]$.
Then we colour $u_0,x$ and $y$ with $k-1$.
Suppose for a contradiction that the resulting colouring $\phi$ has a monochromatic
dicycle $C$ in $D \setminus xy$.
If $C$ is not coloured $k-1$, then it is either included in $D_1-x$ or in $D_2-y$
as there is no arc from $D_2$ to $D_1$.
But this contradicts the fact that $\phi_1$ is a dicolouring of $D_1$
and $\phi_2$ a dicolouring of $D_2$.
Hence $C$ is coloured $k-1$. But the only vertices coloured $k-1$ are $u_0,x$ and $y$,
which do not induce a dicycle in $D \setminus xy$.

The only remaining case (up to symmetry) is $e = u_0 y$ for some $y \in D_1$.
Consider a $(k-2)$-dicolouring $\phi_1$ of $D-y$ with colours taken
in $[k-2]$, and a $(k-1)$-dicolouring $\phi_2$ of $D_2$ with colours taken in
$[k-1]$.
Then colour $u_0$ and $y$ by $k-1$.
Suppose for a contradiction that the resulting colouring $\phi$ has a monochromatic
cycle $C$ in $D\setminus e$.
If $C$ is not coloured $k-1$, then it must be included in $D_1$ or $D_2-y$,
a contradiction. Hence $C$ is coloured $k-1$. Moreover, $C$ must contains $u_0$.
But no out-neighbour of $u_0$ in $D\setminus e$ is coloured $k-1$, a contradiction.

This proves that $D\setminus e$ is $(k-1)$-dicolourable for every arc $e$,
and so $D$ is $k$-dicritical.
\end{proof}

 Lemma~\ref{lem:triangle} implies that $\triangle(1,\vec{C}_3,\vec{C}_{n-4})$ is $3$-dicritical for all $n \geq 7$. Hence $N_3 = \{n\in \mathbb{N} \mid n\geq 7\}$.
 Moreover, by induction it implies the following.
 \begin{corollary}
 There exists a smallest integer $p_k$ such that there exists a $k$-dicritical oriented graph of order $n$ for any $n\geq p_k$.
 \end{corollary}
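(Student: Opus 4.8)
The plan is to prove the corollary by induction on $k$, with Lemma~\ref{lem:triangle} as the only engine. It is convenient to prove the slightly reformulated (and logically equivalent) statement: for every $k \geq 2$ there is an integer $q_k$ such that $N_k \supseteq \{n \in \mathbb{N} : n \geq q_k\}$, where $N_k$ denotes the set of orders of $k$-dicritical oriented graphs. Once this is established, the corollary follows immediately: the set of integers $q$ for which $N_k \supseteq \{n : n \geq q\}$ is non-empty (it contains $q_k$) and consists of non-negative integers, so it has a least element, which we define to be $p_k$.

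For the base case $k = 2$, recall that the $2$-dicritical oriented graphs are exactly the directed cycles of length at least $3$, so $N_2 = \{n : n \geq 3\}$ and $q_2 = 3$ works. For the inductive step, assume $N_k \supseteq \{n : n \geq q_k\}$ and let $n \geq 2q_k + 1$. Write $n - 1 = n_1 + n_2$ with $n_1 = q_k$ and $n_2 = n - 1 - q_k$; then both $n_1 \geq q_k$ and $n_2 \geq q_k$, so by the induction hypothesis there are $k$-dicritical oriented graphs $D_1$ and $D_2$ of orders $n_1$ and $n_2$ respectively. Then $\triangle(1, D_1, D_2)$ is a $(k+1)$-dicritical oriented graph of order $1 + n_1 + n_2 = n$: it is an oriented graph because $D_1, D_2$ are and all added arcs (from $u_0$ to $D_1$, from $D_1$ to $D_2$, from $D_2$ to $u_0$) run "one way", so no digon is created; and it is $(k+1)$-dicritical by Lemma~\ref{lem:triangle}. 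Hence $q_{k+1} = 2q_k + 1$ witnesses the claim for $k+1$.

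There is essentially no deep obstacle here; the two points requiring a line of care are (i) checking that the $\triangle$ operation preserves the property of having no digon, and (ii) arranging $n_1, n_2 \geq q_k$ simultaneously, which is exactly why the threshold can only be guaranteed to satisfy $q_{k+1} \leq 2q_k + 1$ (so that $n - 1 - q_k$ is still at least $q_k$). As a consistency check, this recurrence with $q_2 = 3$ gives $p_3 \leq 7$, matching the exact value $N_3 = \{n : n \geq 7\}$ obtained concretely from $\triangle(1, \vec{C}_3, \vec{C}_{n-4})$ just above. If one wants the argument self-contained without invoking the characterization of $2$-dicritical oriented graphs, one may instead take $k = 3$ (with $N_3 = \{n : n \geq 7\}$, already justified in the text) as the base of the induction.
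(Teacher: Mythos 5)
Your proof is correct and follows essentially the same route as the paper, which simply asserts that Lemma~\ref{lem:triangle} "by induction" yields the corollary; you have filled in the induction (with the explicit recurrence $q_{k+1}\leq 2q_k+1$ and the check that $\triangle(1,D_1,D_2)$ creates no digons) exactly as intended.
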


In view of $N_2$ and $N_3$, one might be tempted to believe that $p_k = n_k$, that is $N_k= \{n\in \mathbb{N} \mid n\geq n_k\}$ for all $k\geq 2$.
This is unfortunately false for $k=4$ as shown by the following proposition.

\begin{proposition}
There is no $4$-dicritical oriented graph on $12$ vertices.
\end{proposition}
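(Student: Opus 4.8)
The plan is to argue by contradiction, assuming there is a $4$-dicritical oriented graph $D$ on $n(D)=12$ vertices, and to derive a contradiction by combining the general lower bound of Theorem~\ref{theorem:improved_lower_bound_on_ok} with a careful structural analysis of the degree sequence. Plugging $k=4$ and $n=12$ into Theorem~\ref{theorem:improved_lower_bound_on_ok} gives $m(D)\ge (4-\tfrac34-\tfrac1{10})\cdot 12 + \tfrac{3}{20} = \tfrac{63}{20}\cdot 12 + \tfrac{3}{20}$, which is slightly above $37$, so $m(D)\ge 38$. On the other hand, by Lemma~\ref{lemma:degeneracy} every vertex has in- and out-degree at least $3$, so $d(v)\ge 6$ for all $v$, giving $m(D)\ge \tfrac12\cdot 6\cdot 12 = 36$; and since $D$ is an oriented graph, every vertex of degree exactly $6$ has exactly three in- and three out-neighbours. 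The strategy is to show that the set $S$ of vertices of degree $6$ is so large, and so constrained by Theorem~\ref{thm:directed_gallai_subdigraph}, that one can directly produce a $3$-dicolouring of $D$, contradicting $\dic(D)=4$.

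First I would bound $|S|$ from below: if $s=|S|$ then $2m(D)\ge 6s + 7(12-s) = 84 - s$, so from $m(D)\ge 38$ we get $s\ge 8$; a more refined count using $m(D)\ge 38$ together with the bound $m(D[S])\le\tfrac32(s-1)$ from Lemma~\ref{lem:oriented-gallai-deg} (valid since $D[S]$ is an oriented Gallai forest by Theorem~\ref{thm:directed_gallai_subdigraph}) should pin $m(D)$ and $s$ down to a very short list of possibilities, perhaps forcing $m(D)\in\{38,39\}$ and $s\in\{8,9,10,11,12\}$. Then I would use Theorem~\ref{thm:directed_gallai_subdigraph}: each block of $D[S]$ is an arc or a directed cycle (no digons since $D$ is oriented, no bidirected complete graphs or bidirected odd cycles). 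Combined with $m(D[S])\le\tfrac32(s-1)$ and the edge count, this severely restricts $D[S]$ and the number of arcs between $S$ and $V(D)\setminus S$.

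The colouring step is the heart of the argument. The idea is to exhibit a partition of $V(D)$ into two acyclic sets together with one more; more precisely, to $3$-dicolour $D$ one needs to partition $V(D)$ into three acyclic induced subdigraphs. Since $|V(D)\setminus S|$ is small (at most $4$, possibly $0$), I would colour $V(D)\setminus S$ cleverly — e.g. one colour each, or grouping acyclically — and then $3$-dicolour $D[S]$ itself: a Gallai forest whose blocks are arcs or directed cycles is easily $2$-dicolourable (indeed, an acyclic‑plus‑structure argument, or the fact that such digraphs have dichromatic number $\le 2$ because each directed-cycle block can be $2$-dicoloured and blocks glue at cut vertices). Then combining a $2$-dicolouring of $D[S]$ with an appropriate colouring of the few remaining vertices — using the sparsity of arcs between $S$ and $V\setminus S$ to avoid creating monochromatic dicycles — should yield a $3$-dicolouring of $D$, the desired contradiction. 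An alternative, possibly cleaner route is to locate a vertex $v$ of degree $6$ all of whose neighbours lie in $S$, delete it, $3$-dicolour $D-v$ by minimality-type reasoning, and show the three in-neighbours (or three out-neighbours) of $v$ cannot all receive the same colour, so $v$ can be coloured.

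The main obstacle I anticipate is the colouring/extension step: ruling out that the $\le 4$ vertices outside $S$, together with the arcs joining them to $S$, obstruct every attempt to extend a $2$-dicolouring of $D[S]$ to a $3$-dicolouring of $D$. This will likely require a finite but genuine case check on how $V(D)\setminus S$ attaches to the Gallai forest $D[S]$, driven by the exact values of $m(D)$ and $s$ permitted by the arithmetic. The arithmetic itself (squeezing $m(D)$ and $s$ into a short list) is routine; the structural case analysis and the explicit construction of the $3$-dicolouring are where the real work lies.
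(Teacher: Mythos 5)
Your approach is fundamentally different from the paper's, and unfortunately it contains a gap that I do not think can be repaired. The decisive error is in the very first counting step: from $2m(D)=\sum_v d(v)\ge 6s+7(12-s)=84-s$ you can only deduce $s\ge 84-2m(D)$, so a \emph{lower} bound $m(D)\ge 38$ gives you nothing about $s$ (the inference ``$m(D)\ge 38$, hence $s\ge 8$'' runs in the wrong direction; you would need $m(D)\le 38$ for that). Since there is no upper bound on $m(D)$ available — a $4$-dicritical oriented graph on $12$ vertices could a priori have anywhere between $38$ and $66$ arcs, and both \eqref{eq:arcS} and \eqref{eq:arcG} are lower bounds — the promised ``squeeze'' pinning $m(D)$ and $s$ to a short list never happens, and the entire structural and colouring analysis that follows has no starting point. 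In particular, $S$ could be empty, in which case Theorem~\ref{thm:directed_gallai_subdigraph} and Lemma~\ref{lem:oriented-gallai-deg} say nothing.

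There is also a more conceptual reason to be pessimistic about this route. Since $n_4=11$, a $4$-dicritical oriented graph on $11$ vertices does exist (a spanning subdigraph of the Paley tournament $P_{11}$), and your argument template — general degree bounds, the Gallai-forest structure of the minimum-degree vertices, and a $2$-dicolouring of that forest plus a third colour outside — does not use the value $12$ in any way that would distinguish it from $11$. The non-existence at order $12$ sitting right next to existence at order $11$ is a delicate fact; the paper's proof goes through the classification results that $P_{11}$ is the \emph{unique} $4$-dichromatic tournament of order $11$ (Neumann-Lara) and that every $4$-dichromatic tournament of order $12$ contains $P_{11}$ (Bellitto, Bousquet, Kabela, and Pierron), then completes a hypothetical counterexample to a maximal $P_{11}$-free oriented graph and derives a contradiction from the overlap of two forced copies of $P_{11}$. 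Some input of this strength appears unavoidable, and a potential/discharging-style count of the kind used elsewhere in the paper for $k=3$ is not it.
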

\begin{proof}
Let us denote by $P_{11}$ the Paley tournament on 11 vertices, whose vertices are labelled from $0$ to $10$ in which $ij$ is an arc if and only if $j-i$ is in $\{1,3,4,5,9\}$ modulo 11.
Neumann-Lara~\cite{Neumann_lara_small_tournament} proved that $P_{11}$ is the unique $4$-dichromatic tournament of order 11, and
Bellitto, Bousquet, Kabela, and Pierron~\cite{BBKP} proved that every $4$-dichromatic tournament of order $12$ contains $P_{11}$.

Assume for a contradiction that there exists a $4$-dicritical oriented graph $D$ of order $12$.
$D$ does not contain $P_{11}$. As long as there are two non-adjacent vertices $a,b$ such that adding the arc $ab$ does not make a $P_{11}$ appear, we add $ab$.
This results in an oriented graph $D'$ (of order 12) such that adding any arc between two non-adjacent vertices make a $P_{11}$ appear. Note that $\dic(D') = 4$ because $D'$ contains $D$ and $n_5 > 12$.

Since $D'$ does not contain $P_{11}$, by Bellitto et al. result, $D'$ is not a tournament.
Therefore there are two non-adjacent vertices $a$ and $b$.

By construction, if we add the arc $ab$ to $D'$, then a $P_{11}$ appears. 
Label the vertices of this $P_{11}$ by $u_0, u_1, \dots, u_{10}$ so that $u_iu_j$ is an arc if and only if $j-i$ is in $\{1,3,4,5,9\}$ modulo 11, $a=u_0$ and $b=u_1$.
This is possible because $P_{11}$ is arc-transitive.

Similarly, adding the arc $ba$ to $D'$ let a $P_{11}$ appear, whose vertices can be labelled 
$v_0, v_1, \dots, v_{10}$ so that $v_iv_j$ is an arc if and only if $j-i$ is in $\{1,3,4,5,9\}$ modulo 11, $b=v_0$ and $a=v_1$.

Set $I^{++} = N^+(a)\cap N^+(b)$, $I^{--} = N^-(a)\cap N^-(b)$,
$I^{+-} = N^+(a)\cap N^-(b)$, and $I^{-+} = N^-(a)\cap N^+(b)$.
Note that those four sets are disjoint and contained in $V(D')\setminus \{a,b\}$.
Now $\{u_4, u_5\}\subseteq I^{++}$, $\{v_4, v_5\}\subseteq I^{++}$,
$\{u_7, u_8\}\subseteq I^{--}$, $\{v_7, v_8\}\subseteq I^{--}$,
$\{v_2, v_6, v_{10}\}\subseteq I^{+-}$,
and $\{u_2, u_6, u_{10}\}\subseteq I^{-+}$.
Since $|V(D')\setminus \{a,b\}|=10$, it follows that $(I^{++}, I^{--}, I^{+-}, I^{-+})$ is a partition of $V(D')\setminus \{a,b\}$, 
 $I^{++}= \{u_4, u_5\} = \{v_4, v_5\}$, 
$I^{--} = \{u_7, u_8\} = \{v_7, v_8\}$,
$ I^{+-} = \{v_2, v_6, v_{10}\} $,
and $I^{-+} = \{u_2, u_6, u_{10}\}$.

Since $u_7u_8$ and $v_7v_8$ are arcs we have $u_7=v_7$, and $u_8=v_8$.
Moreover $v_3$ is in $I^{-+} = \{u_2, u_6, u_{10}\}$.
But $v_3$ dominated both $v_7=u_7$ and $v_8=u_8$, while $u_2$ and $u_6$ do not dominate $u_8$ 
and $u_{10}$ does not dominate $u_7$.
This is a contradiction.
\end{proof}

\section*{Acknowledgements} 
This research was partially supported by the french Agence Nationale de la Recherche under contract Digraphs ANR-19-CE48-0013-01 and contract DAGDigDec (JCJC) ANR-21-CE48-0012, and by the group Casino/ENS Chair on Algorithmics and Machine Learning.

\end{document}